\pdfoutput=1
\documentclass[11pt]{article}

\usepackage[letterpaper,margin=1in]{geometry}
\usepackage{amsmath}
\usepackage{amsthm}
\usepackage{amssymb}
\usepackage{mathtools}
\usepackage{bm}
\usepackage{booktabs}
\usepackage{longtable}
\usepackage{graphicx}
\usepackage{tikz}
\usepackage{natbib}
\usepackage{aliascnt}
\usepackage[hidelinks]{hyperref}
\usepackage{cleveref}

\newcommand{\BlackBox}{\rule{1.5ex}{1.5ex}}

\newtheorem{example}{Example}
\newtheorem{theorem}{Theorem}

\newaliascnt{lemma}{theorem}
\newtheorem{lemma}[lemma]{Lemma}
\aliascntresetthe{lemma}

\newaliascnt{proposition}{theorem}
\newtheorem{proposition}[proposition]{Proposition}
\aliascntresetthe{proposition}

\newaliascnt{remark}{theorem}
\newtheorem{remark}[remark]{Remark}
\aliascntresetthe{remark}

\newaliascnt{corollary}{theorem}
\newtheorem{corollary}[corollary]{Corollary}
\aliascntresetthe{corollary}

\newaliascnt{definition}{theorem}

\aliascntresetthe{definition}

\newaliascnt{conjecture}{theorem}

\aliascntresetthe{conjecture}

\newaliascnt{axiom}{theorem}

\aliascntresetthe{axiom}

\newtheorem{assumption}{Assumption}

\crefname{theorem}{Theorem}{Theorems}
\Crefname{theorem}{Theorem}{Theorems}
\crefname{lemma}{Lemma}{Lemmas}
\Crefname{lemma}{Lemma}{Lemmas}
\crefname{proposition}{Proposition}{Propositions}
\Crefname{proposition}{Proposition}{Propositions}
\crefname{corollary}{Corollary}{Corollaries}
\Crefname{corollary}{Corollary}{Corollaries}
\crefname{definition}{Definition}{Definitions}
\Crefname{definition}{Definition}{Definitions}
\crefname{remark}{Remark}{Remarks}
\Crefname{remark}{Remark}{Remarks}
\crefname{example}{Example}{Examples}
\Crefname{example}{Example}{Examples}
\crefname{conjecture}{Conjecture}{Conjectures}
\Crefname{conjecture}{Conjecture}{Conjectures}
\crefname{axiom}{Axiom}{Axioms}
\Crefname{axiom}{Axiom}{Axioms}
\crefname{assumption}{Assumption}{Assumptions}
\Crefname{assumption}{Assumption}{Assumptions}
\crefformat{equation}{#2(#1)#3}
\Crefformat{equation}{#2(#1)#3}
\crefrangeformat{equation}{#3(#1)#4--#5(#2)#6}
\Crefrangeformat{equation}{#3(#1)#4--#5(#2)#6}
\crefmultiformat{equation}{#2(#1)#3}{ and~#2(#1)#3}{, #2(#1)#3}{, and~#2(#1)#3}
\Crefmultiformat{equation}{#2(#1)#3}{ and~#2(#1)#3}{, #2(#1)#3}{, and~#2(#1)#3}

\newenvironment{keywords}
  {\begin{quote}\small\noindent\textbf{Keywords:}\ }
  {\end{quote}}

\newcommand{\ep}{\varepsilon}

\newcommand{\R}{\mathbb{R}}
\newcommand{\E}{\mathbb{E}}

\newcommand{\bbS}{\mathbb{S}}
\newcommand{\bbZ}{\mathbb{Z}}
\newcommand{\bbN}{\mathbb{N}}
\newcommand{\bbP}{\mathbb{P}}

\newcommand{\bbT}{\mathbb{T}}
\newcommand{\bbB}{\mathbb{B}}

\DeclareMathOperator*{\argmin}{arg\,min}

\DeclareMathOperator{\spn}{span}

\DeclareMathOperator{\Ran}{Ran}

\DeclareMathOperator{\Tr}{Tr}
\DeclareMathOperator{\Var}{Var}
\DeclareMathOperator{\Bias}{Bias}
\DeclareMathOperator{\Cov}{Cov}

\newcommand{\mr}{\mathrm}

\newcommand{\mca}{\mathcal}

\newcommand{\caH}{\mathcal{H}}

\newcommand{\mcaN}{\mathcal{N}}

\newcommand{\xk}[1]{\left(#1\right)}
\newcommand{\zk}[1]{\left[#1\right]}

\providecommand{\ang}[1]{\left\langle{#1}\right\rangle}

\providecommand{\abs}[1]{\left\lvert{#1}\right\rvert}
\providecommand{\norm}[1]{\left\lVert{#1}\right\rVert}

\providecommand{\dd}{~\mathrm{d}}

\newcommand{\given}{\,|\,}

\newcommand{\falling}[2]{(#1)_{#2}}

\begin{document}

\title{Exact Generalization Error Curves of Kernel Ridge Regression for Functional Moment Estimation}

\author{
Yinan Ding\\
\small Zhili College, Tsinghua University\\
\small Haidian District, Beijing 100084, China\\
\small \texttt{dingyn23@mails.tsinghua.edu.cn}
\and
Yicheng Li\thanks{Corresponding author.}\\
\small KLATASDS-MOE, School of Statistics\\
\small East China Normal University\\
\small 3663 North Zhongshan Road, Shanghai 200062, China\\
\small \texttt{ycli@sfs.ecnu.edu.cn}
}

\date{\today}

\maketitle

\begin{abstract}
  Kernel ridge regression is a standard method for functional data analysis, but its exact behavior is less understood.
  We study tensor-product kernel ridge regression for estimating the \(r\)-th moment function of a random function based on noisy discrete observations.
  The formulation includes mean estimation, covariance estimation, and higher-order moment estimation in a single framework.
  Our main result gives a precise \(1+o_{\bbP}(1)\) expansion for the \(L^2\) error at each admissible regularization parameter.
  The expansion consists of bias and three variance terms corresponding respectively to variation across the independent sample paths, latent signal variation at each sample point, and variation from measurement errors,
  identifying the refined error structure underlying functional data.
  As applications, we show that KRR attains the minimax rate for source smoothness \(s\le2\) but becomes suboptimal in the sparse regime for \(s>2\) due to saturation.
  A technical ingredient is a set of concentration inequalities for \(U\)-statistics suited to the dependent product structure of functional observations.
\end{abstract}

\begin{keywords}
  kernel ridge regression, functional data analysis, generalization error, moment estimation, U-statistics
\end{keywords}

\section{Introduction}

In recent years, functional data analysis (FDA) has become a powerful statistical framework for analyzing high-dimensional data,
where the basic observation is a random function or curve rather than a finite-dimensional vector.
These curves are usually observed only through noisy and discretized evaluations
\citep{ramsay1997FunctionalDataAnalysis,wang2016FunctionalDataAnalysis,hsing2015TheoreticalFoundationsFunctional}.
A fundamental problem in FDA is to recover population features of the underlying random function from these discrete observations, such as the mean and covariance functions
\citep{rice1991EstimatingMeanCovariance,yao2005FunctionalDataAnalysis,li2010UniformConvergenceRates}.
These population features are often used for downstream tasks such as functional principal component analysis (FPCA) \citep{hall2006PropertiesPrincipalComponent} and functional regression.

As one of the most popular approaches in FDA, kernel methods,
particularly kernel ridge regression (KRR),
have been widely studied in terms of minimax optimality for mean and covariance estimation
\citep{cai2011OptimalEstimationMean,cai2010NonparametricCovarianceFunction,gupta2026MinimaxOptimalEstimation}.
In these works, two main sources of error are often identified: the finite number of independent curves and the finite number of point evaluations per curve.
Consequently, a phase transition \citep{cai2011OptimalEstimationMean,cai2010NonparametricCovarianceFunction,zhang2016SparseDenseFunctional,gupta2026MinimaxOptimalEstimation} has been established between the sparse observation regime, where each curve is observed only a few times, and the dense observation regime, where each curve is observed many times.

However, such theory about optimal rates does not describe the full regularization path of a kernel estimator,
and only provides an upper bound on the generalization error up to constant or logarithmic factors.
It remains unknown how the generalization error changes with the regularization parameter,
how bad the error can be when the regularization parameter is not optimally tuned,
and how different sources of error contribute to the total error.
Knowing the answers to these questions would contribute to a comprehensive understanding of the behavior of KRR for functional data, and would also be useful for practical applications, such as generalized cross-validation and hyperparameter tuning.
Thus, determining the exact generalization error of KRR, not merely its upper bound, is of both theoretical and practical interest.

In this paper, we unify mean, covariance, and higher-order moment estimation in a single framework of tensor-product KRR.
We provide an exact characterization of the generalization error of the KRR estimator in the form of a \(1+o_{\bbP}(1)\) expansion,
which is valid for all admissible regularization parameters.
This expansion identifies refined contributions from different sources of error, including curve-level variation, latent point variation, measurement noise, and bias.
Simulation studies confirm that the expansion accurately predicts the generalization error curve.

\subsection{Problem Formulation}

Let \(\mca{I}\) be a compact metric input space equipped with a Borel
probability measure \(\rho\), e.g., \(\mca{I}=[0,1]\) with the Lebesgue
measure. Let \(X(\cdot): \mca{I} \to \R\) be a random function, and
\(X_1,\ldots,X_n\) be independent copies of \(X\). For each curve \(X_i\), the
i.i.d. sampling locations \(t_{ij} \in\mca{I}\), \(1\le j\le m\), are drawn
from \(\rho\), and we observe
\begin{equation}
  \label{eq:intro-observation-model}
  \begin{aligned}
    y_{ij}=X_i(t_{ij})+\ep_{ij},
    \qquad
    1\le i\le n,\quad 1\le j\le m,
  \end{aligned}
\end{equation}
where the design points and sample paths are mutually independent.
The measurement errors \( \ep_{ij} \) are mutually independent and are also independent of all
sample paths and all design points, satisfying \(\E\ep_{ij}=0\) and \(\E\ep_{ij}^2=\sigma^2\).

For a fixed integer \(r\ge1\), the estimation target is the \(r\)-th moment function
\begin{equation}
  \label{eq:intro-moment-target}
  \begin{aligned}
    \mu_r(t_1,\ldots,t_r)
    =
    \E(X(t_1)\cdots X(t_r)),
    \qquad (t_1,\ldots,t_r)\in \mca{I}^r.
  \end{aligned}
\end{equation}
By convention, we set \(\mu_0=1\). We also use the covariance kernel of the
\(r\)-fold product process. For
\(\bm{t}=(t_1,\ldots,t_r)\) and \(\bm{s}=(s_1,\ldots,s_r)\), define
\[
  \Sigma_r(\bm{t};\bm{s})
  \coloneqq
  \Cov\left(
        \prod_{a=1}^r X(t_a),
        \prod_{a=1}^r X(s_a)
  \right).
\]
Equivalently, as a function on \(\mca{I}^{2r}\), \(\Sigma_r=\mu_{2r}-\mu_r \otimes\mu_r\),
where \((\mu_r \otimes\mu_r)(\bm{t};\bm{s})=\mu_r(\bm{t})\mu_r(\bm{s})\).
In particular, the mean function is given by \(r=1\):
\[
  m(t)=\mu_1(t)=\E X(t).
\]
The covariance function is obtained from the first two moment functions:
\[
  C(t,s)
  =
  \Cov(X(t),X(s))
  =
  \Sigma_1(t;s)
  =
  \mu_2(t,s)-\mu_1(t)\mu_1(s).
\]
Higher-order moment functions are also useful for describing distributional
features of functional data beyond the mean and covariance; see, for example,
recent work on functional moment regression
\citep{li2026FunctionalMomentsRegression}.
In terms of the generalization error, we are interested in the \(L^2\) excess risk with respect to the product measure \(\rho^{\otimes r}\):
\[
  \E\left[
      \norm{\hat{\mu}_r-\mu_r}_{L^2(\mca{I}^r,\rho^{\otimes r})}^2
  \right]
  =
  \E\left[
      \int_{\mca{I}^r}
      (\hat{\mu}_r(\bm{t})-\mu_r(\bm{t}))^2
      \dd\rho^{\otimes r}(\bm{t})
  \right].
\]

\subsection{Our Contributions}

In this paper, we propose a unified tensor-product KRR estimator \(\hat{\mu}_{r,\lambda}\) for the \(r\)-th moment function \(\mu_r\) from discretely observed functional data.
The estimator is built from products of observations from the same curve, with a convention that removes the diagonal noise bias \citep{cai2010NonparametricCovarianceFunction,terada2026TheoryNonparametricCovariance}.
We refer to \Cref{subsec:moment-krr-estimator} for details of the construction.

Our main result provides the following deterministic equivalent.
Write
\(\mca{T} \coloneqq (t_{ij})\) for the collection
of design points.
Under mild assumptions and for the regularization parameter \(\lambda\) in the admissible range \cref{eq:main-admissible-lambda},
we prove that
\begin{align*}
  \operatorname{MSE}_{r,\lambda}(\mca{T})
  \coloneqq &
  \E\left[
      \norm{\hat{\mu}_{r,\lambda}-\mu_r}_{L^2(\mca{I}^r,\rho^{\otimes r})}^2
      \middle|
      \mca{T}
  \right]                                   \\
  =  &
  (1+o_{\bbP}(1))
  \left(
    \frac{1}{n}V_{\mathrm{curve}}(\lambda;\Sigma_r)
  +
    \frac{1}{mn}V_{\mathrm{point}}(\lambda;\Sigma_r)
  +
    \frac{\sigma^2}{mn}V_{\mathrm{noise}}(\lambda;\mu_{2r-2})
  \right)                                   \\
  & +(1+o_{\bbP}(1))B(\lambda;\mu_r)^2 .
\end{align*}
Here \(B(\lambda;\mu_r)\) and the three \(V\)-terms are deterministic quantities, stated precisely in \Cref{sec:main-theorem}.
The term \(B(\lambda;\mu_r)^2\) is the squared bias;
the term \(V_{\mathrm{curve}}\) captures variation across the \(n\) independent sample paths;
the term \(V_{\mathrm{point}}\) captures the fluctuation from the \(mn\) latent values \(X_i(t_{ij})\);
the term \(V_{\mathrm{noise}}\) accounts for the \(mn\) measurement errors.

The expansion is precise even in constant factors, and it provides a complete characterization of the generalization error curve,
showing how the error changes with the regularization parameter \(\lambda\).
This viewpoint is related to recent learning curve analysis for KRR and spectral algorithms
\citep{bordelon2020SpectrumDependentLearning,cui2022GeneralizationErrorRates,li2023AsymptoticLearningCurves,li2024GeneralizationErrorCurves}.
These existing works focus on nonparametric regression where the observations are independent pairs with independent noise.
In comparison, we study functional data, where different curves are independent but different sampling points on the same curve share the same random function and are therefore correlated.
This difference leads to a substantially different and more complex error structure, which is reflected in the three variance contributions above.

We formulate the source condition using the interpolation spaces \([H]^s\).
This allows misspecification, namely the target need not belong to the RKHS when \(s<1\).
Such assumptions are standard in KRR and spectral regularization theory
\citep{caponnetto2007OptimalRatesRegularized,lin2020OptimalRatesSpectral,fischer2020SobolevNormLearning,zhang2024OptimalityMisspecifiedSpectral},
and have recently been used for mean and covariance estimation with spectral regularization \citep{gupta2026MinimaxOptimalEstimation}.
After optimizing \(\lambda\), our expansion shows that KRR attains the minimax rate for smoothness \(s\le2\), recovering the sparse--dense phase transition known from FDA minimax theory for mean and covariance estimation.
When \(s>2\), it instead reveals the saturation of KRR in the sparse regime, in line with the qualification limit of Tikhonov regularization \citep{li2024SaturationEffectKernel}.

The main contributions are as follows.
\begin{itemize}
  \item We formulate general moment function estimation from discretely observed functional data as tensor-product KRR.
  The construction includes mean, covariance, and higher-order moment estimation in one framework.

  \item We prove a \(1+o_{\bbP}(1)\) generalization error expansion for KRR.
  The expansion consists of four deterministic terms:
  the squared bias \(B^2\) and variance coefficients \(V_{\mathrm{curve}}\), \(V_{\mathrm{point}}\), and \(V_{\mathrm{noise}}\).
  This expansion identifies the refined error structure of functional data that was vague in the literature.

  \item
  As applications of the expansion,
  we show that KRR attains the minimax rate for \(s\le2\) in both the sparse and dense regimes.
  For \(s>2\), we prove its qualification-two saturation and resulting suboptimality in the sparse regime.

  \item On the technical side, we establish a Bernstein-type concentration inequality for operator-valued \(U\)-statistics.
  This inequality is of independent interest and may be useful for other problems in functional data analysis.
\end{itemize}

\subsection{Related Work}

\paragraph{Functional data and moment estimation.}
Functional data analysis has a broad literature; see, for example,
\citet{ramsay1997FunctionalDataAnalysis},
\citet{wang2016FunctionalDataAnalysis}, and
\citet{hsing2015TheoreticalFoundationsFunctional}. A central problem is to
estimate the mean and covariance functions from noisy discrete observations.
Traditional approaches are often based on local smoothing or spline methods
\citep{rice1991EstimatingMeanCovariance,yao2005FunctionalDataAnalysis,li2010UniformConvergenceRates}.
Kernel methods have also been widely used
\citep{cai2011OptimalEstimationMean,cai2010NonparametricCovarianceFunction,gupta2026MinimaxOptimalEstimation}.
For independent design, the sparse--dense transition was made explicit for
mean estimation by \citet{cai2011OptimalEstimationMean} and for covariance
estimation by \citet{cai2010NonparametricCovarianceFunction}; related
phenomena appear in FPCA and in unified sparse/dense asymptotics
\citep{hall2006PropertiesPrincipalComponent,zhang2016SparseDenseFunctional}.
Later work developed covariance estimators with structural constraints such as positive semidefiniteness and low rank \citep{wong2019NonparametricOperatorregularizedCovariance,wang2022LowRankCovarianceFunction}.
\citet{gupta2026MinimaxOptimalEstimation} studied
spectral regularization for mean and covariance estimation
under misspecified source conditions.
More general covariance least-squares estimators have also been analyzed through oracle inequalities
\citep{terada2026TheoryNonparametricCovariance}.
These works mainly give optimized rates for mean and covariance estimators,
while our target is to determine the exact generalization error for a general moment function.

\paragraph{KRR, misspecification, and learning curves.}
KRR is a central method in nonparametric regression and statistical learning
\citep{steinwart2008SupportVectorMachines,caponnetto2007OptimalRatesRegularized}.
It is also a basic example of spectral algorithms. Misspecified settings,
where the target may lie outside the RKHS but inside an interpolation space,
have been studied for KRR and spectral algorithms
\citep{fischer2020SobolevNormLearning,lin2020OptimalRatesSpectral,zhang2024OptimalityMisspecifiedSpectral};
for discretely observed functional data, closely related source conditions are
used in \citet{gupta2026MinimaxOptimalEstimation}.
Beyond optimized rates, a more recent line of work studies learning curves, that
is, how the error changes with the regularization parameter.
\citet{bordelon2020SpectrumDependentLearning} used methods from Gaussian
processes and statistical physics to derive spectrum-dependent learning curves
for kernel regression and wide neural networks.
\citet{cui2022GeneralizationErrorRates} studied KRR under Gaussian design and
characterized the crossover from effectively noiseless to noisy regimes.
\citet{li2023AsymptoticLearningCurves} later gave rigorous matching
asymptotic rates for the KRR learning curve. More recently,
\citet{li2024GeneralizationErrorCurves} obtained a \(1+o_{\bbP}(1)\)
deterministic equivalent for analytic spectral algorithms, which keeps the
leading constant factors.
Compared with these works, our main new feature is the independent-design
functional-data sampling scheme. The observations are grouped by curves, and
the latent signal fluctuation and measurement noise enter the learning curve in
different ways; this leads to the three variance terms in the main
expansion.

\paragraph{\texorpdfstring{\(U\)}{U}-statistic concentration.}
The concentration estimates needed in this paper are not for a single pooled
\(U\)-statistic. A classical \(k\)-th order \(U\)-statistic based on
independent observations \(Z_1,\ldots,Z_N\) is
\[
  \frac{1}{\binom{N}{k}}
  \sum_{1\le i_1<\cdots<i_k \le N}
  h(Z_{i_1},\ldots,Z_{i_k}).
\]
In our analysis the statistic is instead averaged first
within each curve and then over curves:
\[
  \frac{1}{n}\sum_{i=1}^n
  \frac{1}{m(m-1)\cdots(m-k+1)}
  \sum_{j_1,\ldots,j_k \text{ distinct}}
  h(Z_{i,j_1},\ldots,Z_{i,j_k}).
\]
Here \(h\) may be scalar-, vector-, or operator-valued. Classical scalar
inequalities for \(U\)-statistics go back to
\citet{hoeffding1963ProbabilityInequalitiesSums}.
\citet{arcones1995BernsteintypeInequalityUstatistics} used the Hoeffding
decomposition to obtain Bernstein-type bounds for nondegenerate scalar
\(U\)-statistics and \(U\)-processes, with the first projection playing the
leading role.  \citet{terada2026TheoryNonparametricCovariance} recently used a
modified Talagrand inequality for second-order scalar \(U\)-statistics
to obtain oracle inequalities for covariance estimation.

For Hilbert-valued \(U\)-statistics,
\citet{giraudo2025ExponentialInequalityHilbertvalued} proved exponential
inequalities by combining martingale inequalities, decoupling, and the
Hoeffding decomposition. Operator-valued inequalities use a different set of
tools. The matrix case is based on the matrix Laplace-transform method
\citep{tropp2015IntroductionMatrixConcentration}, and
\citet{minsker2017ExtensionsBernsteinsInequality} extended this approach to
self-adjoint operators on separable Hilbert spaces with bounds involving
effective rank.
By decoupling and the Hoeffding decomposition, \citet{sriperumbudur2022ApproximateKernelPCA}
derived a second-order operator-valued \(U\)-statistic inequality,
but its proof contains a gap in the derivation of the moment-generating function.
Rather than relying on that result, we give an independent proof by
inserting Hoeffding's blocking representation into the Tropp--Minsker matrix
Laplace-transform argument, and obtain a general \(k\)-th-order inequality.

 \section{Preliminaries}
\label{sec:preliminaries}

\subsection{RKHS}
\label{subsec:rkhs-preliminaries}

Let \(k:\mca{I}\times\mca{I}\to\R\) be a continuous positive-definite kernel,
and let \(\caH\) be its associated separable reproducing kernel Hilbert space.
We write \(k_x \coloneqq k(x,\cdot)\).
The reproducing property gives
\( f(x)=\ang{f,k_x}_{\caH}, \) for \( f\in\caH,\ x\in\mca{I}. \)
Since \(\mca{I}\) is compact and \(k\) is continuous,
we assume that
\( \kappa^2
  \coloneqq
  \sup_{x\in\mca{I}} k(x,x)
  <\infty . \)
Consequently,
\[
  \norm{f}_{L^2(\mca{I},\rho)}
  \le
  \norm{f}_{L^\infty(\mca{I},\rho)}
  \le
  \kappa\norm{f}_{\caH},
  \qquad f\in\caH,
\]
so \(\caH\) is naturally and continuously embedded into
\(L^2(\mca{I},\rho)\).

The kernel integral operator \(T:L^2(\mca{I},\rho)\to L^2(\mca{I},\rho)\) is
defined by
\begin{equation}
    (Tf)(x)
  \coloneqq
  \int_{\mca{I}} k(x,x')f(x') \dd\rho(x').
\end{equation}
It is positive, self-adjoint, compact, and trace class
\citep{steinwart2012MercersTheoremGeneral}.
By Mercer's
theorem \citep{steinwart2008SupportVectorMachines}, we have the eigendecomposition
\begin{equation}
  T e_j = \lambda_j e_j,\quad
  k(x,x')
  =
  \sum_{j=1}^\infty \lambda_j e_j(x)e_j(x'),
\end{equation}
where \((e_j)_{j\ge1}\) is an orthonormal system in \(L^2(\mca{I},\rho)\) chosen
with continuous representatives, and the positive eigenvalues \((\lambda_j)_{j\ge1}\) are repeated according to multiplicity.
The latter series converges absolutely and uniformly on \(\mca{I}\times\mca{I}\).

Following the literature \citep[e.g.][]{caponnetto2007OptimalRatesRegularized,zhang2024OptimalityMisspecifiedSpectral},
we assume standard polynomial eigenvalue decay for the kernel integral operator \(T\) as follows.
This assumption is satisfied by many Matérn and Sobolev kernels under standard domain and design conditions,
and is connected to the smoothness of the kernel and the underlying function space.

\begin{assumption}[Eigenvalue Decay]
  \label{ass:prelim-eigen-decay}
  There exist \(\beta>1\) and constants
  \(0<c_{\mathrm{eig}}\le C_{\mathrm{eig}}<\infty\) such that
  \[
    c_{\mathrm{eig}}j^{-\beta}
    \le
    \lambda_j
    \le
    C_{\mathrm{eig}}j^{-\beta},
    \qquad j\ge1.
  \]
\end{assumption}

We further introduce the interpolation space of the RKHS \citep{steinwart2012MercersTheoremGeneral}.
For \(s\ge0\), define the fractional power
\[
  T^s f
  =
  \sum_{j=1}^\infty \lambda_j^s \ang{f,e_j}_{L^2} e_j
\]
on its natural domain.  The interpolation space \([H]^s\) is
\[
  [H]^s
  \coloneqq
  \Ran(T^{s/2})
  =
  \left\{
  \sum_{j=1}^{\infty} a_j \lambda_j^{s/2} e_j:
  (a_j)_{j\ge1}\in\ell^2
  \right\}
  \subseteq L^2(\mca{I},\rho).
\]
For \(f=\sum_{j=1}^{\infty} a_j \lambda_j^{s/2} e_j\) and \(g=\sum_{j=1}^{\infty} b_j \lambda_j^{s/2} e_j\),
define the inner product on \([H]^s\) by
\[
  \ang{f,g}_{[H]^s}
  \coloneqq
  \sum_{j=1}^{\infty} a_j b_j
  =
  \ang{T^{-s/2} f,T^{-s/2} g}_{L^2}.
\]
In particular, \([H]^0\subseteq L^2(\mca{I},\rho)\) and \([H]^1=\caH\).
The parameter \(s\) is the smoothness index on this interpolation scale: larger
\(s\) requires faster decay of the coefficients in the eigenbasis, and
therefore represents higher smoothness.

To establish the precise generalization error to the greatest extent possible, we need to impose additional regularity conditions on the RKHS.
We use the regular RKHS condition introduced in \citet{li2024GeneralizationErrorCurves}.
Let \((\theta_m)_{m\ge1}\) be the decreasing sequence of distinct positive eigenvalues of \(T\) and \(V_m\) be the eigenspace associated with \(\theta_m\).
Set \(d_m=\dim(V_m)\).
Let $k_m$ be the projection kernel onto the eigenspace \(V_m\).
Concretely, taking an orthonormal basis \(\{e_{m,\ell}:1\le\ell\le d_m\}\) of \(V_m\),
\( k_m(x,x')
  \coloneqq
  \sum_{\ell=1}^{d_m} e_{m,\ell}(x)e_{m,\ell}(x') \),
which is independent of the chosen orthonormal basis of \(V_m\).
We make the following regularity assumption on the RKHS.

\begin{assumption}[Regular RKHS]
  \label{ass:prelim-regular-rkhs}
  There exists \(M>0\) such that, for every \(N\ge1\),
  \[
    \sup_{x\in\mca{I}}
    \sum_{m=1}^{N} k_m(x,x)
    \le
    M\sum_{m=1}^{N} d_m .
  \]
\end{assumption}

This assumption is satisfied by many commonly used kernels.
In particular, if the eigenfunctions are uniformly bounded, then the RKHS is regular.
Moreover, this regularity condition is also closely connected to the embedding property of the RKHS.
For \(\alpha>0\), say that \(\caH\) has the embedding property of order
\(\alpha\) if \([H]^\alpha\) is continuously embedded into
\(L^\infty(\mca{I},\rho)\).
Equivalently, the embedding norm
\( M_\alpha
  \coloneqq
  \norm{[H]^\alpha\hookrightarrow L^\infty(\mca{I},\rho)} \)
is finite.
The embedding index is defined as
\[
  \alpha_0
  \coloneqq
  \inf\{\alpha>0: M_\alpha<\infty\}.
\]
It is easy to see that $\alpha = 1$ satisfies the embedding property, and \citet[Lemma~10]{fischer2020SobolevNormLearning} has shown that \(\alpha_0 \ge \frac{1}{\beta}\).
By the \(L^\infty\)-embedding characterization of
\citet[Theorem~9]{fischer2020SobolevNormLearning}, the same quantity can be written as
\[
  M_\alpha^2
  =
  \left\|
  \sum_{j=1}^{\infty} \lambda_j^\alpha e_j^2
  \right\|_{L^\infty(\rho)}
  =
  \operatorname*{ess sup}_{x\in\mca{I}}
  \sum_{j=1}^{\infty} \lambda_j^\alpha e_j(x)^2 .
\]
According to Lemma~\ref{lem:appendix-supnorm}, the embedding index satisfies \(\alpha_0=\frac{1}{\beta}\)
under Assumptions~\ref{ass:prelim-eigen-decay} and \ref{ass:prelim-regular-rkhs}.
It can also be found in \citet[Proposition~2.2]{li2024GeneralizationErrorCurves}.
We remark that if we only have the embedding property of order \(\alpha_0 \leq 1\),
then the error decomposition still holds but for a slightly restricted range of \(\lambda\) depending on \(\alpha_0\).

We next introduce a lower analogue of the regular RKHS condition, which is not needed for the proof of the main theorem but is used to obtain a lower bound for \(V_{\mathrm{point}}\) in the noiseless case.

\begin{assumption}[Lower regularity]
  \label{ass:prelim-ae-lower-regularity}
  The spectral projection kernels satisfy
  \[
    \liminf_{N\to\infty}
    \frac{\sum_{m=1}^N k_m(x,x)}
    {\sum_{m=1}^N d_m}
    >
    0
    \qquad
    \text{for }\rho\text{-a.e. }x\in\mca I.
  \]
\end{assumption}

A uniform lower bound \(e_i(x)^2\ge c\), valid for all sufficiently large
\(i\) and all \(x\in\mca{I}\), immediately implies
Assumption~\ref{ass:prelim-ae-lower-regularity}.
We adapt the following examples from \citet[Examples~2.1--2.3 and Appendix~B.1]{li2024GeneralizationErrorCurves} to show that several broad classes of regular RKHSs also satisfy the lower regularity counterpart \cref{ass:prelim-ae-lower-regularity}.

\begin{example}[Shift-invariant periodic kernels]\label{ex:lower-regular-periodic}
  Let \(\mca{I}=\bbT^d=[-\pi,\pi)^d\), and let \(\rho\) be the uniform measure
  on \(\bbT^d\).  Suppose \(k(x,y)=h(x-y)\) is shift-invariant.  The Fourier
  basis \(\{\phi_\ell(x)=\exp(\sqrt{-1}\ang{\ell,x}):\ell\in\bbZ^d\}\) gives
  eigenfunctions of \(T\).  Equivalently, over \(\R\), one may use the
  corresponding sine-cosine basis.  For every eigenspace, the spectral
  projection has constant diagonal, and therefore \(k_m(x,x)=d_m\) for all
  \(x\in\bbT^d\).  Hence the RKHS is regular and satisfies
  Assumption~\ref{ass:prelim-ae-lower-regularity}.  If the
  Fourier eigenvalues satisfy
  \(\lambda_\ell \asymp (1+\norm{\ell}^2)^{-\alpha}\), then
  \(\caH\) is equivalent to the Sobolev space \(H^\alpha(\bbT^d)\), and
  \([H]^s\) is equivalent to \(H^{s\alpha}(\bbT^d)\).
\end{example}

\begin{example}[Dot-product kernels on the sphere]\label{ex:lower-regular-sphere}
  Let \(\mca{I}=\bbS^d\) be the \(d\)-dimensional sphere equipped with the
  uniform measure.  Consider a dot-product kernel
  \(k(x,y)=h(\ang{x,y})\), where \(h\) is a function on \([-1,1]\).  By the
  Funk-Hecke formula \citep[Theorem~1.2.9]{dai2013ApproximationTheoryHarmonic},
  the spherical harmonics of a fixed degree have a common eigenvalue under
  \(T\).  If
  \(d_q\) denotes the dimension of the degree-\(q\) spherical harmonics, then
  \(d_q=\binom{q+d}{q}-\binom{q-2+d}{q-2}\asymp q^{d-1}\).
  \citet[Example~2.2 and Appendix~B.1.1]{li2024GeneralizationErrorCurves}
  showed that this RKHS is regular.  The same formula for the
  spherical-harmonic projection kernels also gives
  Assumption~\ref{ass:prelim-ae-lower-regularity}; see
  Appendix~\ref{app:omitted-proofs}.
\end{example}

\begin{example}[Dot-product kernels on the ball]\label{ex:lower-regular-ball}
  Let \(d\ge2\), let \(\mca{I}=\bbB^d=\{x\in\R^d:\norm{x}\le1\}\), and let \(\rho\) be
  proportional to the classical weight
  \(W(x)=(1-\norm{x}^2)^{-1/2}\).  Consider again a dot-product kernel
  \(k(x,y)=h(\ang{x,y})\).  The analogue of the Funk-Hecke formula on the ball
  \citep[Theorem~11.1.9]{dai2013ApproximationTheoryHarmonic} shows that \(T\)
  acts by a scalar on the space of orthogonal polynomials of degree exactly
  \(q\), whose dimension is \(\binom{q+d-1}{q}\).  As in
  \citet[Example~2.3]{li2024GeneralizationErrorCurves}, this gives a regular
  RKHS.  The same projection-kernel calculation also gives
  Assumption~\ref{ass:prelim-ae-lower-regularity}; see
  Appendix~\ref{app:omitted-proofs}.
\end{example}

\subsection{Moment KRR Estimator}
\label{subsec:moment-krr-estimator}

Before defining the estimator, we first introduce some notation for tensor products.
For a probability measure \(\rho\), \(\rho^{\otimes r}\) denotes the product
measure on \(\mca{I}^r\).  If \(H\) is a Hilbert space, \(H^{\otimes r}\)
denotes its \(r\)-fold Hilbert tensor product; if \(A\) is a linear operator
on \(H\), \(A^{\otimes r}\) denotes the \(r\)-fold tensor-product operator
acting on \(H^{\otimes r}\).
We use the convention \(H^{\otimes 0}=\R\) and interpret an empty product as \(1\).

The observation model is given in \cref{eq:intro-observation-model}.
Fix
\(r\ge1\), assume \(m\ge r\), and let
\(\mu_r\) be the moment target defined in
\cref{eq:intro-moment-target}.
We write \( \mca{T}=(t_{ij})_{1\le i\le n,1\le j\le m} \)
for all design points.
On \(\mca{I}^r\), we use the \(r\)-fold tensor-product RKHS
\[
  \caH^{\otimes r}
  =
  \underbrace{\caH\otimes\cdots\otimes\caH}_{r\text{ times}},
\]
whose reproducing kernel is
\[
  (\bm{x},\bm{x}')
  \mapsto
  \prod_{\ell=1}^{r} k(x_\ell,x'_\ell),
  \qquad
  \bm{x}=(x_1,\ldots,x_r),\quad
  \bm{x}'=(x'_1,\ldots,x'_r).
\]
We write
\[
  K(\bm{x})
  \coloneqq
  k_{x_1} \otimes\cdots\otimes k_{x_r} \in\caH^{\otimes r},
\]
so that \(f(\bm{x})=\ang{f,K(\bm{x})}_{\caH^{\otimes r}}\) for
\(f\in\caH^{\otimes r}\).
The corresponding population tensor-product operator is
\[
  T_r f(\bm{x})
  \coloneqq
  \int_{\mca{I}^r} f(\bm{x}')\prod_{\ell=1}^{r} k(x_\ell,x'_\ell) \dd\rho^{\otimes r}(\bm{x}'),
  \qquad f\in\caH^{\otimes r}.
\]

We denote the index set of distinct coordinates by
\[
  I_m^r
  \coloneqq
  \{(j_1,\ldots,j_r)\in\{1,\ldots,m\}^r:
  j_a \ne j_b \text{ whenever } a\ne b\},
  \qquad
  \falling{m}{r} \coloneqq |I_m^r|.
\]
We also set \(I_m^0=\{()\}\) and \((m)_0=1\).
For \(\bm{j}=(j_1,\ldots,j_r)\in I_m^r\), define the shorthand notation
\[
  t_{i\bm{j}}
  \coloneqq
  (t_{ij_1},\ldots,t_{ij_r}),
  \qquad
  Y_{i\bm{j}}
  \coloneqq
  \prod_{\ell=1}^{r} y_{ij_\ell}.
\]
Since the indices in \(\bm{j}\) are distinct, the corresponding measurement errors are distinct and independent, and we have
\[
  \E(Y_{i\bm{j}} \given \mca{T})
  =
  \mu_r(t_{i\bm{j}}).
\]
For
\(f\in\caH^{\otimes r}\), define the empirical squared loss over the \(r\)-th moment observations as
\begin{equation}
  \mca{L}_{n,m}^{(r)}(f)
  \coloneqq
  \frac{1}{n}\sum_{i=1}^{n}
  \frac{1}{\falling{m}{r}}\sum_{\bm{j}\in I_m^r}
  \zk{Y_{i\bm{j}}-f(t_{i\bm{j}})}^2
  .
\end{equation}
The moment KRR estimator is
\begin{equation}
  \label{eq:krr-objective}
  \hat{\mu}_{r,\lambda}
  \coloneqq
  \argmin_{f\in\caH^{\otimes r}}
  \zk{
  \mca{L}_{n,m}^{(r)}(f)
  +
  \lambda\norm{f}_{\caH^{\otimes r}}^2
  }.
\end{equation}
Using the representer theorem, \cref{eq:krr-objective} is solved explicitly as
\begin{equation}
  \label{eq:hat-mu}
  \hat{\mu}_{r,\lambda}
  =
  (\hat{T}_r+\lambda I)^{-1}\hat{\zeta}_r,
\end{equation}
where
\begin{equation}
  \label{eq:hat-zeta}
  \hat{\zeta}_r
  \coloneqq
  \frac{1}{n}\sum_{i=1}^{n}
  \frac{1}{\falling{m}{r}}\sum_{\bm{j}\in I_m^r}
  Y_{i\bm{j}} K(t_{i\bm{j}}) \in \caH^{\otimes r},
\end{equation}
and \( \hat{T}_r \) is an operator on \(\caH^{\otimes r}\) defined by
\begin{equation}
  \label{eq:hat-T}
    \hat{T}_r
  \coloneqq
  \frac{1}{n}\sum_{i=1}^{n}
  \frac{1}{\falling{m}{r}}\sum_{\bm{j}\in I_m^r}
  K(t_{i\bm{j}})\otimes K(t_{i\bm{j}}).
\end{equation}
Here \(u\otimes v\) denotes the rank-one operator
\(f\mapsto \ang{f,v}_{\caH^{\otimes r}}u\).  Thus \(\hat{T}_r\) is the
empirical counterpart of \(T_r\).
When \(r=1\) and \(r=2\), the estimator \(\hat{\mu}_{r,\lambda}\) corresponds to the estimation of the mean and second moment functions, respectively; the covariance function is obtained after centering \citep{cai2011OptimalEstimationMean,cai2010NonparametricCovarianceFunction}.
 \section{Main Results}
\label{sec:main-results}

We measure performance through the conditional generalization error
\[
  \operatorname{MSE}_{r,\lambda}(\mca{T})
  \coloneqq
  \E\left[
      \norm{\hat{\mu}_{r,\lambda}-\mu_r}_{L^2(\mca{I}^r,\rho^{\otimes r})}^2
      \given
      \mca{T}
  \right].
\]
While this formulation is convenient for our analysis,
the same results hold for the unconditional version or the high-probability version without taking expectations over \(\mca{T}\),
but proving either version would involve technical complications beyond our scope.
The remaining assumption concerns the regularity and nondegeneracy of the signal
process. To state it, write the tensor eigen expansion of the target in \(L^2(\mca{I}^r,\rho^{\otimes r})\) as
\[
  e_{\bm{i}}
  \coloneqq
  e_{i_1} \otimes\cdots\otimes e_{i_r},
  \qquad
  \Lambda_{\bm{i}}
  \coloneqq
  \prod_{a=1}^r \lambda_{i_a},
  \qquad
  \mu_r
  =
  \sum_{\bm{i}\in\bbN^r} c_{\bm{i}} e_{\bm{i}}.
\]

\begin{assumption}[Moment and Source Conditions]
  \label{ass:source}
  There are constants \(s>1/\beta\), \(s_1>1/\beta\), and \(s_2>1/\beta\)
  such that the following conditions hold.
  \begin{enumerate}
    \renewcommand{\labelenumi}{\textup{(\roman{enumi})}}
    \renewcommand{\theenumi}{\roman{enumi}}
    \item For every \(s'<s\), \(\mu_r \in ([H]^{s'})^{\otimes r}\).
    \item \(\mu_r \ne0\). Moreover, if \(s<2\),
    then \(\sum_{\Lambda_{\bm{i}}<\lambda}c_{\bm{i}}^2=\Omega(\lambda^s)\) as \(\lambda\downarrow0\).
    \item For \(q = 2, 4, \ldots, 2r-4\), \(\mu_q\) is bounded by a positive
    number \(C_{\mu_q}\), i.e., \(\abs{\mu_q}\le C_{\mu_q}\).
    \item \(\Sigma_r \in ([H]^{s_1})^{\otimes 2r}\) and \(\mu_{2r-2} \in ([H]^{s_2})^{\otimes 2r-2}\).
    By the embedding property, there exist \(C_{\Sigma_r}, C_{\mu_{2r-2}}>0\)
    such that \(\abs{\Sigma_r}\le C_{\Sigma_r}\) and
    \(\abs{\mu_{2r-2}}\le C_{\mu_{2r-2}}\). Moreover,
    \(\Sigma_r \neq0\) and \(\mu_{2r-2} \neq0\).
  \end{enumerate}
\end{assumption}

\begin{remark}[Moment condition]
    The following stronger moment condition is sufficient for
    Assumption~\ref{ass:source}\textup{(i)}, \textup{(iii)}, and the
    regularity and boundedness requirements in
    Assumption~\ref{ass:source}\textup{(iv)}. Suppose that there exists \(s>1/\beta\)
    such that, for every \(s'<s\),
    \[
      X\in [H]^{s'}\; a.s.
      \qquad
      \text{and}
      \qquad
      \E\norm{X}_{[H]^{s'}}^{2r}<\infty.
    \]
    Indeed, this condition gives the source regularity of \(\mu_r\) by taking
    expectations in the tensor expansion of \(X^{\otimes r}\). It also implies
    the boundedness of the lower even moments in
    Assumption~\ref{ass:source}\textup{(iii)} through the \(L^\infty\) embedding
    of \([H]^{s'}\) for any \(s'\in(1/\beta,s)\). Finally, choosing
    \(s_1,s_2 \in(1/\beta,s)\) yields the regularity and boundedness of
    \(\Sigma_r\) and \(\mu_{2r-2}\) required in
    Assumption~\ref{ass:source}\textup{(iv)}.
\end{remark}

\begin{example}[Exact power law \(\mu_r\)]\label{ex:exact-power-law-mu}
  Suppose \Cref{ass:prelim-eigen-decay} holds and
  \(c_{\bm{i}}^2 \asymp \Lambda_{\bm{i}}^{s+\frac{1}{\beta}}\) for some
  \(s>\frac{1}{\beta}\). Then \(\mu_r\) satisfies
  Assumption~\ref{ass:source}\textup{(i)} and \textup{(ii)}.
  Similar forms are also considered in \citet{cui2022GeneralizationErrorRates} and \citet{li2023AsymptoticLearningCurves}.
\end{example}

\subsection{Main Theorem}\label{sec:main-theorem}

Let us define the deterministic squared bias by
\begin{equation}
  B(\lambda;\mu_r)^2
  \coloneqq
  \norm{
    \lambda(T_r+\lambda I)^{-1}\mu_r
  }_{L^2(\mca{I}^r,\rho^{\otimes r})}^2,
  \qquad
  \text{where}
  \quad
  T_r=T^{\otimes r}.
\end{equation}
The deterministic variance terms are more complicated.
Write \(T_{r,\lambda} \coloneqq T_r+\lambda I\).
Let
\(\bm{t}=(t_1,\ldots,t_r)\), \(\bm{t}'=(t_1',\ldots,t_r')\), and
\(\tilde{\bm{t}}=(t_1,\ldots,t_{r-1})\),
\(\tilde{\bm{t}}'=(t_1',\ldots,t_{r-1}')\).
Denote by \( \E_{\rho} \) the expectation with respect to independent \(\rho\)-distributed random variables
\(t_1,\ldots,t_r,t_1',\ldots,t_r'\) and \(\xi\).
Write \( \ang{\cdot,\cdot}_{L^2} \) for the \(L^2\) inner product in \(L^2(\mca{I}^r,\rho^{\otimes r})\).
Define the three variance terms as
\begin{equation}
  \begin{aligned}
    V_{\mathrm{curve}}(\lambda;\Sigma_r)
    & \coloneqq
    \E_{\rho}\Bigl[
      \Sigma_r(\bm{t};\bm{t}')
      \ang{
        T_{r,\lambda}^{-1} K(\bm{t}),
        T_{r,\lambda}^{-1} K(\bm{t}')
      }_{L^2}
      \Bigr],
    \\
    V_{\mathrm{point}}(\lambda;\Sigma_r)
    & \coloneqq
    r \E_{\rho}\Bigl[
      \Sigma_r(\xi,\tilde{\bm{t}};\xi,\tilde{\bm{t}}')
      \ang{
        T_{r,\lambda}^{-1} K(\xi,\tilde{\bm{t}}),
        T_{r,\lambda}^{-1} K(\xi,\tilde{\bm{t}}')
      }_{L^2}
      \Bigr],
    \\
    V_{\mathrm{noise}}(\lambda;\mu_{2r-2})
    & \coloneqq
    r \E_{\rho}\Bigl[
      \mu_{2r-2}(\tilde{\bm{t}};\tilde{\bm{t}}')
      \ang{
        T_{r,\lambda}^{-1} K(\xi,\tilde{\bm{t}}),
        T_{r,\lambda}^{-1} K(\xi,\tilde{\bm{t}}')
      }_{L^2}
      \Bigr].
  \end{aligned}
\end{equation}
Now, we are ready to state the main theorem of this paper.

\begin{theorem}
  \label{thm:main}
  Suppose Assumptions~\ref{ass:prelim-eigen-decay},
  \ref{ass:prelim-regular-rkhs}, and \ref{ass:source} hold,
  \(\sigma^2>0\), and
  \(\lambda=\lambda(n,m)\to0\). If
  \begin{equation}
    \label{eq:main-admissible-lambda}
    \begin{gathered}
      \lambda=\Omega((mn)^{-\theta})
      \quad\text{for some }0<\theta<\beta,
      \qquad
      \left(\log\frac{1}{\lambda}\right)^{r-1}=o(m),
    \end{gathered}
  \end{equation}
  then
  \begin{equation}
    \label{eq:main-exact-expansion}
    \begin{aligned}
      \operatorname{MSE}_{r,\lambda}(\mca{T})
      = &
      (1+o_{\bbP}(1))
      \left(
        \frac{1}{n}V_{\mathrm{curve}}(\lambda;\Sigma_r)
      +
        \frac{1}{mn}V_{\mathrm{point}}(\lambda;\Sigma_r)
      +
        \frac{\sigma^2}{mn}V_{\mathrm{noise}}(\lambda;\mu_{2r-2})
      \right) \\
      & +
      (1+o_{\bbP}(1))B(\lambda;\mu_r)^2 .
    \end{aligned}
  \end{equation}
\end{theorem}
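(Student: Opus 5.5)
We follow the bias--variance scheme of \citet{li2024GeneralizationErrorCurves}, adapted to the grouped sampling structure. Conditionally on \(\mca{T}\), write \(\hat{\zeta}_r=\tilde{\zeta}_r+(\hat{\zeta}_r-\tilde{\zeta}_r)\), where \(\tilde{\zeta}_r\coloneqq\E(\hat{\zeta}_r\given\mca{T})=\hat{T}_r\mu_r\). (The identity \(\tilde{\zeta}_r=\hat{T}_r\mu_r\) holds in the \(L^2\)/embedding sense when \(\mu_r\notin\caH^{\otimes r}\).) This splits the error as
\[
  \operatorname{MSE}_{r,\lambda}(\mca{T})
  =\norm{\lambda(\hat{T}_r+\lambda)^{-1}\mu_r}_{L^2}^2
  +\E\bigl[\norm{(\hat{T}_r+\lambda)^{-1}(\hat{\zeta}_r-\hat{T}_r\mu_r)}_{L^2}^2\given\mca{T}\bigr],
\]
since the cross term vanishes. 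The bias part is then compared with \(B(\lambda;\mu_r)^2\) exactly as in the i.i.d.\ case. The only new input is a concentration bound
\[
  \norm{T_{r,\lambda}^{-1/2}(\hat{T}_r-T_r)T_{r,\lambda}^{-1/2}}=o_{\bbP}(1)
\]
together with the analogous vector bounds for \(T_{r,\lambda}^{-1/2}(\hat{T}_r-T_r)\mu_r\). These come from the operator-valued \(U\)-statistic Bernstein inequality announced in the introduction, applied to the within-curve \(r\)-th order statistic. The effective-dimension bound \(\mcaN_r(\lambda)\asymp\lambda^{-1/\beta}(\log\frac1\lambda)^{r-1}\) for \(T^{\otimes r}\) and the sup-norm bound \(\sup_{\bm{x}}\norm{T_{r,\lambda}^{-1/2}K(\bm{x})}^2\lesssim \mcaN_r(\lambda)\) (from Assumption~\ref{ass:prelim-regular-rkhs} and Lemma~\ref{lem:appendix-supnorm}) make the relevant Bernstein terms small. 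For \(s<2\), the lower bound (ii) of Assumption~\ref{ass:source} ensures that the errors are relatively small compared to \(B^2\); for \(s\ge2\), the saturated bias \(\asymp\lambda^2\) plays this role.

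For the variance, we replace \((\hat{T}_r+\lambda)^{-1}\) by \(T_{r,\lambda}^{-1}\) at relative cost \(1+o_{\bbP}(1)\), using the same concentration bound. It then remains to compute the conditional second moment of \(T_{r,\lambda}^{-1}(\hat{\zeta}_r-\hat{T}_r\mu_r)\) in \(L^2\). The curves are independent, so the computation reduces to one curve and the sum over pairs \(\bm{j},\bm{j}'\in I_m^r\). Expanding \(Y_{i\bm{j}}Y_{i\bm{j}'}\) with \(y=X+\ep\), the conditional covariance of \(Y_{i\bm{j}},Y_{i\bm{j}'}\) is \(\Sigma_r(t_{i\bm{j}};t_{i\bm{j}'})\) plus noise contributions. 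Noise contributes only when \(\bm{j},\bm{j}'\) share indices; with exactly one shared index, and up to permutation, this gives \(\sigma^2\mu_{2r-2}\) on the remaining coordinates. The pairs are then sorted by overlap size \(k=|\bm{j}\cap\bm{j}'|\):
\begin{itemize}
\item \(k=0\) gives a fraction \(1-O(1/m)\) of the pairs and the \(\frac1nV_{\mathrm{curve}}\) term. Here we also need the \(U\)-statistic law of large numbers: the empirical average over distinct tuples of \(\Sigma_r(\bm t;\bm t')\ang{T_{r,\lambda}^{-1}K(\bm t),T_{r,\lambda}^{-1}K(\bm t')}\) concentrates around its \(\E_\rho\) value.
\item \(k=1\) gives a fraction \(\approx r^2/m\) of the pairs. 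Placing the shared point in \(r\) positions of each tuple and symmetrizing gives the factor \(r\), producing \(\frac{1}{mn}V_{\mathrm{point}}\) and \(\frac{\sigma^2}{mn}V_{\mathrm{noise}}\).
\item \(k\ge2\) contributes terms of order \(m^{-k}\) times quantities involving higher moments \(\mu_q\), \(q\le 2r-4\), which are bounded by Assumption~\ref{ass:source}(iii).
\end{itemize}

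The main obstacle is showing that the \(k\ge2\) terms and the concentration errors are \(o\) of the leading variance. For this we need matching lower bounds: \(V_{\mathrm{curve}}\gtrsim\) const, \(V_{\mathrm{noise}}\gtrsim\lambda^{-1/\beta}(\log\frac1\lambda)^{r-1}\)-type growth, with \(\sigma^2>0\) and \(\mu_{2r-2}\ne0\), and the \(k\)-overlap terms bounded by \(m^{-k}\mcaN\)-type quantities involving \((\log\frac1\lambda)^{r-k}\). The condition \((\log\frac1\lambda)^{r-1}=o(m)\) is exactly what makes each extra overlap lose a factor \(o(1)\) relative to the \(k=1\) term. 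The remaining condition \(\lambda=\Omega((mn)^{-\theta})\) with \(\theta<\beta\) guarantees \(\mcaN_r(\lambda)\log(mn)/(mn)\to0\) in the Bernstein step, because the effective sample size for the operator concentration is \(mn\), not \(n\). This holds since the Hoeffding projection of the within-curve \(U\)-statistic onto single points dominates.

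I would first attempt the lower bounds by diagonalizing in the tensor eigenbasis, \(T_{r,\lambda}^{-1}K(\bm t)=\sum_{\bm i}\frac{\Lambda_{\bm i}}{\Lambda_{\bm i}+\lambda}e_{\bm i}(\bm t)\), and then using regularity to evaluate the resulting sums.
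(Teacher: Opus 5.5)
Your architecture matches the paper's: the conditional bias--variance split, the operator $U$-statistic Bernstein bound for $\Delta_r=T_{r,\lambda}^{-1/2}(\hat T_r-T_r)T_{r,\lambda}^{-1/2}$ with effective sample size $n\lfloor m/r\rfloor$, replacing $\hat T_{r,\lambda}^{-1}$ by $T_{r,\lambda}^{-1}$, sorting the double sum by overlap size, and scalar $U$-statistic concentration of the resulting traces. The problem is the step you call the main obstacle, because it rests on a false estimate.

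For $r\ge2$ it is not true that $V_{\mathrm{noise}}(\lambda;\mu_{2r-2})\gtrsim\lambda^{-1/\beta}(\log\frac1\lambda)^{r-1}$. In fact $V_{\mathrm{noise}}\asymp\lambda^{-1/\beta}$ and $V_{\mathrm{point}}=O(\lambda^{-1/\beta})$, with no logarithm. Expand $\mu_{2r-2}$ (resp.\ $\Sigma_r$) in the basis of its interpolation space and integrate. The $r-1$ non-shared coordinates then carry weights $\prod_{\ell\ne a}\lambda_{i_\ell}^{s_2}$ (resp.\ exponent $s_1$), which are summable since $s_1,s_2>1/\beta$. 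Only the shared coordinate contributes an effective dimension, and it is one-dimensional, $\asymp\lambda^{-1/\beta}$ (last part of \Cref{lem:appendix-effective-dimension}).

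The factor $(\log\frac1\lambda)^{r-1}$ appears only in the remainders. These are controlled through $\sup_{\bm t}\norm{T_{r,\lambda}^{-1}K(\bm t)}_{L^2}^2\le M^r\mcaN_2^{(r)}(\lambda)\asymp\lambda^{-1/\beta}(\log\frac1\lambda)^{r-1}$. So the overlap-$\ge2$ terms are $O(\lambda^{-1/\beta}(\log\frac1\lambda)^{r-1}/(nm^2))$, not of $(\log\frac1\lambda)^{r-k}$ type. The one-overlap fluctuations, relative to $\lambda^{-1/\beta}/(mn)$, are of order $(\log\frac1\lambda)^{r-1}/\sqrt{mn}$.

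This mismatch between a log-free leading term and log-carrying remainders is exactly why $(\log\frac1\lambda)^{r-1}=o(m)$ is needed, and why $v_r\to0$ is used to get $(\log\frac1\lambda)^{r-1}=o(\sqrt{mn})$. Under your accounting, the $k=2$ term would be smaller than the $k=1$ term by a factor $1/(m\log\frac1\lambda)$, which tends to zero with no hypothesis at all. So your bounds contradict your own claim that the condition is ``exactly'' what is needed, and the lower bound you plan to prove is false. The repair is to prove $V_{\mathrm{noise}}\gtrsim\lambda^{-1/\beta}$, by keeping one positive diagonal coefficient of the positive semidefinite coefficient matrix of $\mu_{2r-2}$. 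Then compare every remainder against $\frac1n+\frac{\lambda^{-1/\beta}}{mn}$.

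Two further steps are missing. First, a one-overlap pair can place the shared index at positions $(a,b)$ of the two tuples in $r^2$ ways. Only the $r$ aligned placements $a=b$ produce $V_{\mathrm{point}}$ and $V_{\mathrm{noise}}$, which is where the factor $r$ comes from. You must show the $r(r-1)$ misaligned placements contribute only $O(1)$; symmetrization alone does not give this.
\begin{itemize}
\item For the noise part, it follows from the indicator $\mathbf{1}\{i_a=i_b\}$ produced by integrating out $\xi$.
\item For the $\Sigma_r$ part, the shared point enters through four eigenfunctions $e_{i_1}(\xi)e_{i_2}(\xi)e_{j_1}(\xi)e_{k_2}(\xi)$. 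One then needs $\iint L_{s_1}(x,y)^4\,d\rho(x)\,d\rho(y)\le M_{s_1}^8$, where $L_{s_1}(x,y)=\sum_j\lambda_j^{s_1}e_j(x)e_j(y)$.
\end{itemize}
Without this you cannot pin down the leading constant, which is the whole content of a $1+o_{\bbP}(1)$ expansion.

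Second, in the bias step the vector to concentrate is $T_{r,\lambda}^{-1/2}(\hat T_r-T_r)T_{r,\lambda}^{-1}\mu_r$, not $T_{r,\lambda}^{-1/2}(\hat T_r-T_r)\mu_r$. Its sup-norm term in the vector Bernstein bound involves $\norm{T_{r,\lambda}^{-1}\mu_r}_\infty\lesssim\lambda^{-(2-s'+\alpha)/2}$, with $s'<\min(s,2)$ and $\alpha>1/\beta$ from the $L^\infty$-embedding, and this blows up. Making the resulting term $o(B(\lambda;\mu_r))$ against $B\gtrsim\lambda^{\tilde s/2}$ is where the polynomial margin $\lambda=\Omega((mn)^{-\theta})$, $\theta<\beta$, is really used. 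The operator concentration itself only needs $v_r\to0$.
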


If the observations are noiseless, i.e., \(\sigma^2=0\), then the last variance term in \cref{eq:main-exact-expansion} vanishes. In this case, we have the following result.

\begin{proposition}[Noiseless case]
  \label{cor:main-noiseless}
  Suppose Assumptions~\ref{ass:prelim-eigen-decay}--\ref{ass:source} hold, and \(y_{ij}=X_i(t_{ij})\), equivalently \(\sigma^2=0\). If
  \(\lambda=\lambda(n,m)\to0\) satisfies \cref{eq:main-admissible-lambda},
  then
  \begin{equation}
    \label{eq:main-noiseless-expansion}
    \operatorname{MSE}_{r,\lambda}(\mca{T})
    =
    (1+o_{\bbP}(1))
    \left(
      \frac{1}{n}V_{\mathrm{curve}}(\lambda;\Sigma_r)
    +
      \frac{1}{mn}V_{\mathrm{point}}(\lambda;\Sigma_r)
    \right)
    +
    (1+o_{\bbP}(1))B(\lambda;\mu_r)^2 .
  \end{equation}
\end{proposition}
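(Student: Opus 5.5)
The plan is to rerun the proof of \Cref{thm:main} with \(\ep_{ij}\equiv0\) and check that the only place where \(\sigma^2>0\) was used is the noise variance term. Conditionally on \(\mca{T}\), we decompose
\[
\hat{\mu}_{r,\lambda}-\mu_r
=
\bigl[(\hat{T}_r+\lambda)^{-1}\hat{T}_r\mu_r-\mu_r\bigr]
+
(\hat{T}_r+\lambda)^{-1}\bigl(\hat{\zeta}_r-\hat{T}_r\mu_r\bigr).
\]
The first bracket is deterministic given \(\mca{T}\). The second has conditional mean zero, since \(\E(Y_{i\bm{j}}\given\mca{T})=\mu_r(t_{i\bm{j}})\). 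Hence \(\operatorname{MSE}_{r,\lambda}(\mca{T})\) splits exactly into an empirical squared bias and an empirical variance, with no cross term.

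For the bias, the argument from the main theorem applies verbatim. It compares \(\lambda(\hat{T}_r+\lambda)^{-1}\mu_r\) with \(\lambda T_{r,\lambda}^{-1}\mu_r\) using the \(U\)-statistic operator concentration for \(T_{r,\lambda}^{-1/2}(\hat{T}_r-T_r)T_{r,\lambda}^{-1/2}\) in the admissible range \cref{eq:main-admissible-lambda}. This uses only \Cref{ass:prelim-eigen-decay,ass:prelim-regular-rkhs,ass:source} and never the noise, so it gives \((1+o_{\bbP}(1))B(\lambda;\mu_r)^2\).

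For the variance, with \(\sigma^2=0\) we have \(Y_{i\bm{j}}=\prod_\ell X_i(t_{ij_\ell})\). The conditional covariance of the noise vector \(\hat{\zeta}_r-\hat{T}_r\mu_r\) is then
\[
\frac{1}{n}\cdot\frac{1}{\falling{m}{r}^2}\sum_{\bm{j},\bm{j}'}\Sigma_r(t_{i\bm{j}};t_{i\bm{j}'})\,K(t_{i\bm{j}})\otimes K(t_{i\bm{j}'}),
\]
averaged over \(i\). In the general case this kernel would be \(\Sigma_r\) plus \(\sigma^2\)-weighted diagonal pieces involving \(\mu_{2r-2}\) and lower moments. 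Those pieces are exactly what produced \(\frac{\sigma^2}{mn}V_{\mathrm{noise}}\) and the higher-order corrections controlled via \Cref{ass:source}(iii). Setting \(\sigma^2=0\) removes them identically.

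We then split the pairs \((\bm{j},\bm{j}')\) by overlap size. Disjoint pairs give the curve term \(\frac{1}{n}V_{\mathrm{curve}}\). Pairs sharing exactly one index (with \(r^2\) placements, which by symmetry reduce to \(r\) times the form in \(V_{\mathrm{point}}\)) give \(\frac{1}{mn}V_{\mathrm{point}}\). Pairs with larger overlap are \(O(m^{-2})\) relative to the curve term, up to \((\log\frac1\lambda)^{r-1}\) factors, and hence negligible by \((\log\frac1\lambda)^{r-1}=o(m)\). We then replace the empirical resolvent by \(T_{r,\lambda}^{-1}\) and the empirical \(U\)-statistic averages by their \(\E_\rho\) expectations, exactly as in the main proof.

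The one genuine obstacle is the relative-error claim: the neglected pieces must be \(o\) of the \emph{retained} sum, not of a sum that previously included \(V_{\mathrm{noise}}\). In the main proof, remainders may have been bounded relative to \(\frac{\sigma^2}{mn}V_{\mathrm{noise}}\), which is of the order of the effective dimension \(\mcaN(\lambda)/(mn)\) with \(\mcaN(\lambda)\asymp\lambda^{-r/\beta}\) up to logarithms. To handle this, every remainder bound needs to be re-expressed against \(\frac{1}{n}V_{\mathrm{curve}}+\frac{1}{mn}V_{\mathrm{point}}+B^2\). The bias \(B(\lambda;\mu_r)^2\gtrsim\lambda^{\min(s,2)}\) (by \Cref{ass:source}(ii) and \(\mu_r\ne0\)) provides a lower floor. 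The remainders are products of a concentration error \(o_{\bbP}(1)\) with the main term they perturb, so each is bounded relative to its own leading term and therefore to the retained sum. I would verify this term by term. Wherever a remainder was previously absorbed only into \(V_{\mathrm{noise}}\), I would instead bound it by \(o(1)\cdot\frac{1}{mn}V_{\mathrm{point}}\) plus \(o(1)\cdot B^2\), using \(\Sigma_r\ne0\) and the admissible range of \(\lambda\).
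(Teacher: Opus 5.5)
You have the paper's overall structure. You split conditionally into bias and variance, you handle the bias via \Cref{thm:bias-main} (which indeed never uses the noise), and you observe that for \(\sigma^2=0\) every \(\hat{V}_d\) with \(d\ge1\) vanishes. Hence \(\Var(\lambda)=\hat{V}_0=\frac{1}{n}V_{\mathrm{curve}}+\frac{1}{mn}V_{\mathrm{point}}+o_{\bbP}\bigl(\frac{1}{n}+\frac{\lambda^{-1/\beta}}{mn}\bigr)\) by \Cref{prop:I0-main}. You also correctly locate the crux: this additive error must be made relative.

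Your resolution of that crux fails. The remainders are not ``\(o_{\bbP}(1)\) times their own leading term'':
\begin{itemize}
\item the overlap terms with \(q\ge2\) are bounded through \(\|g_{0,\lambda}\|_\infty\lesssim\mcaN_2^{(r)}(\lambda)\asymp\lambda^{-1/\beta}(\ln\frac{1}{\lambda})^{r-1}\);
\item the \(q\ge1\) fluctuations are bounded through sup-norm Hoeffding bounds;
\item \(\hat{V}_0-\tilde{V}_0\) is bounded through \(\frac{C}{n}\|\Delta_r\|\Tr(T_{r,\lambda}^{-1/2}\hat{G}_0T_{r,\lambda}^{-1/2})=O_{\bbP}\bigl(\sqrt{v_r}(\frac{1}{n}+\frac{\lambda^{-1/\beta}}{mn})\bigr)\).
\end{itemize}
Everything not absorbed by \(\frac{1}{n}\) is measured against the nominal scale \(\lambda^{-1/\beta}/(mn)\), not against \(\frac{1}{mn}V_{\mathrm{point}}\). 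Your claim that \(q\ge2\) overlaps are negligible ``by \((\ln\frac{1}{\lambda})^{r-1}=o(m)\)'' makes the same implicit comparison.

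So you need \(V_{\mathrm{point}}(\lambda;\Sigma_r)\gtrsim\lambda^{-1/\beta}\), and \(\Sigma_r\neq0\) does not give it. It only yields \(V_{\mathrm{curve}}\asymp1\), while \Cref{lem:appendix-I0-expectations} gives only the upper bound \(O(\lambda^{-1/\beta})\). In \(V_{\mathrm{noise}}\) the shared point enters only through \(e_{i_a}(\xi)e_{i_b}(\xi)\) and integrates out by orthonormality. In \(V_{\mathrm{point}}\) it also sits inside \(\Sigma_r(\xi,\cdot\,;\xi,\cdot)\). If the variance of \(X\) lives where the high-frequency eigenfunctions carry little mass, \(V_{\mathrm{point}}\) can be far below \(\lambda^{-1/\beta}\).

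The bias floor cannot rescue this. Take \(\lambda\asymp(mn)^{-\theta}\) with \(\theta\) close to \(\beta\) and \(m\) growing slowly. Then \(\lambda^{-1/\beta}/(mn)\) dominates both \(\frac{1}{n}\) and \(B(\lambda;\mu_r)^2=O(\lambda^{\min(s',2)})\) for \(s'<s\).

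The missing ingredient is \Cref{ass:prelim-ae-lower-regularity}. It is among the hypotheses here, since the range \ref{ass:prelim-eigen-decay}--\ref{ass:source} includes it, but you never invoke it. The paper uses it in \Cref{lem:appendix-I0-vpoint-lower}:
\begin{itemize}
\item positive semidefiniteness of the coefficient blocks of \(\Sigma_r\) makes the \(\xi\)-integrand of \(A_\lambda\) nonnegative;
\item the assumption provides a set \(E_\epsilon\) with \(\rho(E_\epsilon)\ge1-\epsilon\) on which \(\sum_{m\le N}k_m(\xi,\xi)\ge c_\epsilon\sum_{m\le N}d_m\) for all large \(N\);
\item Abel summation then gives \(\sum_m(\cdot)^2k_m(\xi,\xi)\gtrsim\lambda^{-1/\beta}\) on \(E_\epsilon\), hence \(V_{\mathrm{point}}\asymp\lambda^{-1/\beta}\).
\end{itemize}
With that lower bound, \Cref{cor:variance-noiseless} turns the additive remainder into a relative one, and the proposition follows.

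Two smaller slips:
\begin{itemize}
\item The relevant scale is \(\mcaN_p^{(r)}(\lambda)\asymp\lambda^{-1/\beta}(\ln\frac{1}{\lambda})^{r-1}\), with \(V_{\mathrm{noise}}\asymp\lambda^{-1/\beta}\), not \(\lambda^{-r/\beta}\).
\item Of the \(r^2\) one-overlap placements, only the \(r\) aligned ones produce \(V_{\mathrm{point}}\). The \(r(r-1)\) misaligned ones contribute only \(O(1)\), through a fourth-moment bound.
\end{itemize}
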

 \section{Discussion}
\label{sec:discussion}

\subsection{Consequences for Minimax Rates}

Optimizing the regularization parameter \(\lambda\) in the deterministic
equivalent \cref{eq:main-exact-expansion} or
\cref{eq:main-noiseless-expansion} gives the rate achieved by moment KRR.
Suppose that \(\mu_r\in([H]^s)^{\otimes r}\) for some \(s>1/\beta\), and
set \(\tilde{s} \coloneqq \min(s,2)\).
By \Cref{lem:bias-source-upper}, the deterministic squared bias satisfies
\[
  B(\lambda;\mu_r)^2=O(\lambda^{\tilde{s}}).
\]
On the variance side,
\Cref{lem:appendix-I0-expectations,lem:appendix-I1-expectations} give
\[
  V_{\mathrm{curve}}(\lambda;\Sigma_r)
  =
  \Theta(1),
  \qquad
  V_{\mathrm{point}}(\lambda;\Sigma_r)
  =
  O(\lambda^{-1/\beta}),
  \qquad
  V_{\mathrm{noise}}(\lambda;\mu_{2r-2})
  =
  \Theta(\lambda^{-1/\beta}).
\]
If \(\sigma^2>0\), the noise term contributes
\(\lambda^{-1/\beta}/(mn)\).
If \(\sigma^2=0\), Assumption~\ref{ass:prelim-ae-lower-regularity} gives the
corresponding lower bound
\(V_{\mathrm{point}}(\lambda;\Sigma_r)\asymp\lambda^{-1/\beta}\); see
\Cref{lem:appendix-I0-vpoint-lower}. Combining these estimates with
\cref{eq:main-exact-expansion} in the noisy case or with
\cref{eq:main-noiseless-expansion} in the noiseless case, one may choose
\[
  \lambda_\ast\ \text{so that}\quad
  \begin{cases}
    \lambda_\ast \asymp (mn)^{-\beta/(\tilde{s}\beta+1)},
    & m\lesssim n^{1/(\tilde{s}\beta)}, \\[0.4em]
    m^{-\beta} \lesssim\lambda_\ast \lesssim n^{-1/\tilde{s}},
    & m\gtrsim n^{1/(\tilde{s}\beta)} .
  \end{cases}
\]
With this choice, the upper rate becomes
\begin{equation}
  \label{eq:optimal-upper}
  \operatorname{MSE}_{r,\lambda_\ast}(\mca T)
  \lesssim_{\bbP}
  \frac{1}{n}
  +
  (mn)^{-\tilde{s}\beta/(\tilde{s}\beta+1)}
  \lesssim
  \begin{cases}
    (mn)^{-\tilde{s}\beta/(\tilde{s}\beta+1)},
  & m\lesssim n^{1/(\tilde{s}\beta)}
    \quad\text{(sparse)},               \\[0.4em]
    n^{-1},
    & m\gtrsim n^{1/(\tilde{s}\beta)}
    \quad\text{(dense)} .
  \end{cases}
\end{equation}

This rate is known to be minimax optimal for \(r=1\) and \(r=2\)~\citep{cai2011OptimalEstimationMean,cai2010NonparametricCovarianceFunction}, as long as \( s \leq 2 \).
Moreover, we have the following minimax lower bound for arbitrary fixed \(r\ge1\).
Let \( \mca P_{r,s}(R) \)
denote the class of probability measures \(P\) of \( X \) such that \( X\in[H]^s \) almost surely and
\( \E_P\norm{X}_{[H]^s}^{2r}\le R^{2r} \).
Write \(\mu_r(P)\coloneqq\E_P X^{\otimes r}\).

\begin{theorem}
  \label{thm:minimax-lower-bound}
  Suppose Assumption~\ref{ass:prelim-eigen-decay} holds.
  For fixed $r \geq 1$, \( s > 1/\beta \), \( R > 0 \), and \(\sigma^2>0\),
  there exists a constant \(c>0\) such that
  \[
    \inf_{\widehat\mu}
    \sup_{P\in\mca P_{r,s}(R)}
    \E_P
    \norm{
      \widehat\mu-\mu_r(P)
    }_{L^2(\mca I^r,\rho^{\otimes r})}^2
    \ge
    c \xk{\frac{1}{n}
    +
      (mn)^{-s\beta/(s\beta+1)}}
  \]
  for all sufficiently large \(n\), uniformly over integers \(m\ge r\). The
  infimum is over all estimators \( \widehat\mu \) based on the discretely observed curves, and
  \(\E_P\) averages over the latent curves, design points, and Gaussian errors.
\end{theorem}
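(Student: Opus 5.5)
We will prove the two terms of the lower bound separately and then use \(\max(a,b)\ge (a+b)/2\). Both parts use the standard reduction from estimation to testing: Le Cam's two-point method for the parametric term \(1/n\), and Fano's inequality (or Assouad's lemma) for the nonparametric term. In each part we build distributions \(P\in\mca P_{r,s}(R)\) whose \(r\)-th moment functions are well separated in \(L^2(\rho^{\otimes r})\). We then bound the Kullback--Leibler divergence between the induced laws of the full data \(\{(t_{ij},y_{ij})\}\), using Gaussian errors and the same design distribution \(\rho\) in every hypothesis. Since the design is common to all hypotheses, the KL divergence splits into \(n\) independent curve contributions, each an average over designs.

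\emph{Parametric term \(1/n\).} Fix a unit eigenfunction \(e_1\). Let \(X=a\,Z\,e_1\), where \(Z\) is a bounded random variable, for instance \(Z\in\{1+\delta,\,1-\delta\}\) type laws, or \(Z\) Bernoulli with parameter \(p\) perturbed to \(p\pm\delta\). The two laws are chosen so that \(\E Z^r\) differs by order \(\delta\). Then \(\mu_r=a^r\E Z^r\, e_1^{\otimes r}\), and the two targets are separated by \(\asymp\delta\) in \(L^2\). Since \(\norm{e_1}_{[H]^s}=\lambda_1^{-s/2}\) is a constant, a small \(a\) keeps both laws in \(\mca P_{r,s}(R)\). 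Each curve's data is a mixture over \(Z\) of a Gaussian product given \(t_{i\cdot}\). By the data-processing inequality, the KL divergence per curve is at most the KL between the laws of \(Z\), which is \(O(\delta^2)\) uniformly in \(m\). Taking \(\delta\asymp n^{-1/2}\) keeps the total KL bounded and gives the bound \(c/n\), uniformly over \(m\ge r\).

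\emph{Nonparametric term.} Let \(N\asymp (mn)^{1/(s\beta+1)}\), and consider \(f_\omega=\gamma\sum_{k=N+1}^{2N}\omega_k \lambda_k^{s/2}N^{-1/2}e_k\) for \(\omega\in\{0,1\}^N\). The sign-free choice is deliberate, since it keeps \(f_\omega\) in the cube structure without cancellations. Set \(X=Z e_1 + W f_\omega\), with independent bounded \(Z,W\) and \(\E W=1\), \(\E Z^{r-1}\ne0\). Expanding \(\mu_r\), the part linear in \(f_\omega\) is \(\E[Z^{r-1}]\sum_{\text{positions}} e_1^{\otimes(r-1)}\otimes f_\omega\), symmetrized over the \(r\) positions. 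The higher-order terms are of order \(\norm{f_\omega}^2\), which is negligible. So for \(\omega,\omega'\) differing in \(\ell\) coordinates, \(\norm{\mu_r(\omega)-\mu_r(\omega')}_{L^2}^2\gtrsim \gamma^2\ell N^{-1}\lambda_{2N}^{s}\asymp \gamma^2\ell N^{-1-s\beta}\). Membership in \(\mca P_{r,s}(R)\) holds because \(\norm{f_\omega}_{[H]^s}\le\gamma\). A simpler alternative is \(r=1\)-style: \(X=e_1+f_\omega\) deterministic up to a scalar. This is the version we will try first: take \(X\) deterministic, \(X=a e_1+f_\omega\), so that \(\mu_r=X^{\otimes r}\) and there is no curve-level randomness at all. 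The data are then ordinary regression data \(y_{ij}=X(t_{ij})+\ep_{ij}\) with \(mn\) independent observations. Flipping one coordinate changes the KL by \(\frac{mn}{2\sigma^2}\norm{\Delta X}_{L^2}^2\asymp mn\,\gamma^2 N^{-1-s\beta}\asymp\gamma^2\). With \(\gamma\) small, Assouad's lemma gives the risk bound \(\gtrsim N\cdot \gamma^2N^{-1-s\beta}\asymp (mn)^{-s\beta/(s\beta+1)}\). Here we use that \(\norm{X^{\otimes r}-X'^{\otimes r}}_{L^2}\ge c\,\norm{X-X'}_{L^2}\) when \(a\) dominates. This follows since the \(e_1^{\otimes(r-1)}\otimes(\cdot)\) components are orthogonal and the other cross terms are of smaller order.

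\emph{Main obstacle.} The delicate step is the separation of tensor powers in the nonparametric part. Writing \(X=ae_1+f\) and \(X'=ae_1+f'\), the difference \(X^{\otimes r}-X'^{\otimes r}\) contains terms of degree \(\ge2\) in \(f,f'\). We must show these do not cancel the linear term. Our plan is to project onto the subspace \(\spn\{e_1^{\otimes(r-1)}\otimes e_k\}\). Because \(f,f'\perp e_1\), this projection picks out exactly \(a^{r-1}(f-f')\), which gives the needed lower bound without having to control the remainders. The boundedness needed for the \(s>1/\beta\) embedding is automatic for these finite sums. The moment constraint \(\E\norm{X}_{[H]^s}^{2r}\le R^{2r}\) is ensured by choosing \(a,\gamma\) small relative to \(R\).
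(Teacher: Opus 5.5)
Your proposal is correct and follows essentially the same route as the paper. For the $1/n$ term it uses a Le Cam two-point argument with a Bernoulli-type random amplitude and data processing. For the nonparametric term it uses a high-frequency hypercube anchored at $e_1$, with separation obtained by projecting onto tensor subspaces that have one coordinate in $\spn\{e_{N+1},\dots,e_{2N}\}$ and all others equal to $e_1$.

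The remaining differences are cosmetic. You take the curve deterministic ($X=ae_1+f_\omega$), which makes the KL bound an exact regression computation; the paper instead uses a random amplitude $X=Af_\omega$ and reveals $A$. You also use Assouad's lemma on $\{0,1\}^N$, whereas the paper uses Varshamov--Gilbert together with Fano.
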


Combining \cref{thm:minimax-lower-bound} and \cref{eq:optimal-upper},
we see that the KRR estimator achieves the minimax rate as long as \(s\le2\).
When \( s>2 \), the KRR estimator saturates and is not optimal; see \Cref{subsec:discussion-sparse-saturation} for more details.
We also remark that,
thanks to our refined analysis,
there is no logarithmic gap between the upper and lower bounds, which is often present in the literature~\citep{cai2010NonparametricCovarianceFunction,gupta2026MinimaxOptimalEstimation}.

\subsection{Deterministic Equivalent}

\begin{figure}[t]
  \centering
  \includegraphics[width=0.9\linewidth]{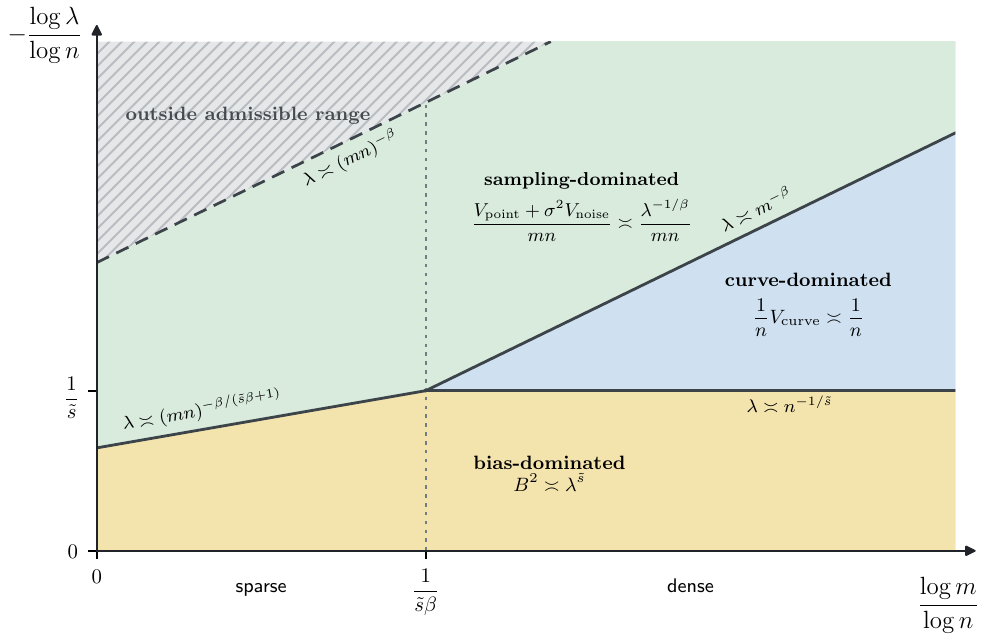}
  \caption{Phase diagram for the three leading terms in the MSE.}
  \label{fig:phase-diagram}
\end{figure}

The main result gives a deterministic equivalent for the
generalization error at each regularization parameter satisfying
\cref{eq:main-admissible-lambda}. In the noisy case this is
\cref{eq:main-exact-expansion}; in the noiseless case this is
\cref{eq:main-noiseless-expansion}.
The interpretation of the four terms in the expansion is as follows:
\begin{itemize}
  \item \(V_{\mathrm{curve}}\) measures variation across the \(n\)
  independent random functions \(X_i\);
  \item \(V_{\mathrm{point}}\) measures variation from the
  \(mn\) latent values \(X_i(t_{ij})\);
  \item \(V_{\mathrm{noise}}\) measures variation from the \(mn\)
  measurement errors \(\varepsilon_{ij}\);
  \item \(B^2\) is the deterministic equivalent of the squared bias.
\end{itemize}
It should be noted that \(V_{\mathrm{point}}\) is caused by the random function
\(X_i\) evaluated at the design points, rather than by the randomness of the design points
alone. When \(r=1\), \(V_{\mathrm{point}}\) is contributed by the diagonal covariance
\(C(t,t)=\Cov(X(t),X(t))\).
For general \(r\), it is contributed by the covariance due to one common sampling point between two product blocks.

Viewed as a function of \(\lambda\), the deterministic equivalent gives an asymptotically exact generalization error curve.
This curve shows how the error depends on the regularization parameter \(\lambda\), the number of curves \(n\), the number of observations per curve \(m\), and the noise level \(\sigma^2\).
It also shows which population objects enter the leading terms: \(\mu_r\) through the bias, \(\Sigma_r\)
through the randomness of the underlying process \(X\), and
\(\mu_{2r-2}\) through the measurement noise.
When \(r=1\), the convention \(\mu_0=1\) implies that \(V_{\mathrm{noise}}(\lambda;\mu_0)\) does not depend on the latent process \(X\), as in classical KRR \citep{li2023AsymptoticLearningCurves}.
As \(\lambda\) decreases, the
deterministic bias term \(B(\lambda;\mu_r)^2\) decreases, while the variance terms \(V_{\mathrm{point}}\) and \(V_{\mathrm{noise}}\) grow at the same power order.
Our result extends the exact generalization error results for kernel ridge regression in the classical regression setting \citep{li2023AsymptoticLearningCurves,li2024GeneralizationErrorCurves}, where the structure of discretely sampled functional data leads to several variance terms rather than a simple bias-variance trade-off.

The deterministic equivalent gives more information than a traditional minimax-rate statement in functional data analysis~\citep{cai2011OptimalEstimationMean,cai2010NonparametricCovarianceFunction}.
Minimax bounds identify the best order after optimizing \(\lambda\), but the leading constants and the separate variance sources are
usually no longer visible.
Here these constants are retained in the
deterministic coefficients \(V_{\mathrm{curve}}\), \(V_{\mathrm{point}}\),
\(V_{\mathrm{noise}}\), and \(B\).
Thus the curve gives an explicit bias--variance trade-off for every admissible \(\lambda\), not only the optimized rate.
Moreover, existing minimax results often mix variation from latent signals \(X_i(t_{ij})\) and variation from measurement errors \(\varepsilon_{ij}\) into one sampling term.
In comparison, our result delicately separates different sources of variation and makes the contribution of each source explicit.
In particular, even when \(\sigma^2=0\), the variation from latent signals remains.

\subsection{Saturation Effect}
\label{subsec:discussion-sparse-saturation}

The saturation effect of KRR is a well-known phenomenon in the classical regression setting \citep{bauer2007RegularizationAlgorithmsLearning,li2024SaturationEffectKernel}:
when the smoothness of the target function exceeds a certain threshold,
KRR fails to achieve the minimax rate no matter how the regularization parameter is chosen.
Using the deterministic equivalent,
we can show that a similar phenomenon exists for discretely observed functional data.

\begin{corollary}[Saturation effect]
  \label{cor:sparse-saturation}
  Suppose either Assumptions~\ref{ass:prelim-eigen-decay},
  \ref{ass:prelim-regular-rkhs}, and \ref{ass:source} hold with
  \(\sigma^2>0\), or Assumptions~\ref{ass:prelim-eigen-decay},
  \ref{ass:prelim-regular-rkhs}, \ref{ass:prelim-ae-lower-regularity}, and
  \ref{ass:source} hold with \(\sigma^2=0\). Suppose also that, for some
  \(s>2\), \( \mu_r \in ([H]^s)^{\otimes r}. \)
  If \(m\lesssim n^{1/(2\beta)}\), then for every \(\lambda=\lambda(n,m)\to0\) satisfying
  \cref{eq:main-admissible-lambda},
  \[
    \operatorname{MSE}_{r,\lambda}(\mca T)
    =
    \Omega_\bbP \left((mn)^{-2\beta/(2\beta+1)}\right).
  \]
\end{corollary}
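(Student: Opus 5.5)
We plan to read the saturation bound directly off the deterministic equivalent. By \Cref{thm:main} (noisy case) or \Cref{cor:main-noiseless} (noiseless case), for every admissible \(\lambda\) it suffices to prove
\[
  B(\lambda;\mu_r)^2+\frac{1}{mn}\max\bigl\{\sigma^2V_{\mathrm{noise}}(\lambda;\mu_{2r-2}),\,V_{\mathrm{point}}(\lambda;\Sigma_r)\bigr\}
  \gtrsim (mn)^{-2\beta/(2\beta+1)} .
\]
The reduction works because all terms in the expansion are nonnegative. For \(V_{\mathrm{curve}}\), the \(L^2\) inner product can be rewritten as \(\ang{T_{r,\lambda}^{-1}T_r T_{r,\lambda}^{-1}K(\bm t),K(\bm t')}\), so \(V_{\mathrm{curve}}\) is the trace of a positive operator against the covariance \(\Sigma_r\). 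Dropping it therefore only loses a lower-order piece. For the variance part we use \(V_{\mathrm{noise}}=\Theta(\lambda^{-1/\beta})\) when \(\sigma^2>0\), from \Cref{lem:appendix-I1-expectations}. In the noiseless case we use \(V_{\mathrm{point}}\asymp\lambda^{-1/\beta}\), from \Cref{lem:appendix-I0-vpoint-lower} under Assumption~\ref{ass:prelim-ae-lower-regularity}. Either way the variance is at least \(c\lambda^{-1/\beta}/(mn)\).

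The main new ingredient is a lower bound on the bias of order \(\lambda^2\) that does not improve as \(s\) grows past \(2\): this is the qualification-two phenomenon. In the eigenbasis,
\[
  B(\lambda;\mu_r)^2=\sum_{\bm i}\frac{\lambda^2}{(\Lambda_{\bm i}+\lambda)^2}c_{\bm i}^2 .
\]
Since \(\mu_r\ne0\) by Assumption~\ref{ass:source}(ii), some \(c_{\bm i_0}\ne0\). For \(\lambda\le\Lambda_{\bm i_0}\) this gives \(B^2\ge \lambda^2c_{\bm i_0}^2/(4\Lambda_{\bm i_0}^2)=c_0\lambda^2\). Note that we only need \(\mu_r\ne0\) here, not the \(\Omega(\lambda^s)\) condition, which applies only when \(s<2\). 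Because \(\lambda\to0\), this bound holds eventually.

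It then remains to minimize \(\lambda^2+\lambda^{-1/\beta}/(mn)\) over \(\lambda>0\). The minimum is attained at \(\lambda\asymp(mn)^{-\beta/(2\beta+1)}\) and equals, up to constants, \((mn)^{-2\beta/(2\beta+1)}\). This lower bound holds for every \(\lambda\), so in particular for every admissible one. Combining it with the \((1+o_{\bbP}(1))\) factors gives the claimed \(\Omega_{\bbP}\) statement.

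The hypothesis \(m\lesssim n^{1/(2\beta)}\) is needed only to make the bound informative. Under it, \((mn)^{-2\beta/(2\beta+1)}\gtrsim n^{-1}\), so the bound exceeds the \(1/n\) term and the minimax rate \((mn)^{-s\beta/(s\beta+1)}\). The argument itself does not use this hypothesis.

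The only delicate step is the \(o_{\bbP}(1)\) bookkeeping. We must check that the expansion holds uniformly enough that a sequence \(\lambda(n,m)\) cannot evade the bound. Since the statement fixes a single sequence \(\lambda(n,m)\), applying the theorem to that sequence suffices. We also need the lower bound on \(V_{\mathrm{noise}}\) or \(V_{\mathrm{point}}\) to hold uniformly for small \(\lambda\), which is exactly what the cited \(\Theta\) and \(\asymp\) statements provide.
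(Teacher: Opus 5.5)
Your proposal is correct and follows essentially the same route as the paper: one nonzero coefficient $c_{\bm i_\ast}$ gives $B(\lambda;\mu_r)^2\gtrsim\lambda^2$, the variance is at least of order $\lambda^{-1/\beta}/(mn)$ by \Cref{lem:appendix-I1-expectations} or \Cref{lem:appendix-I0-vpoint-lower}, and minimizing $\lambda^2+\lambda^{-1/\beta}/(mn)$ over all $\lambda>0$ finishes the argument. The differences are cosmetic: the paper also records the unneeded upper bound $B(\lambda;\mu_r)^2=O(\lambda^2)$, and it keeps the $1/n$ term inside a $\Theta_{\bbP}$ statement rather than dropping it by nonnegativity; you are also right that the hypothesis $m\lesssim n^{1/(2\beta)}$ is not used in the proof itself.
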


For \(s>2\), \cref{cor:sparse-saturation} limits the KRR rate to \( (mn)^{-2\beta/(2\beta+1)} \),
while the minimax lower rate in \cref{thm:minimax-lower-bound} remains \( (mn)^{-s\beta/(s\beta+1)} \).
Hence KRR is no longer minimax optimal in the sparse regime when \(s>2\).
This saturation threshold \( s=2 \) reflects the qualification of Tikhonov regularization known in the literature \citep{bauer2007RegularizationAlgorithmsLearning,li2024SaturationEffectKernel}.
We also note that this effect is visible only in the sparse regime, since the dense regime is governed by the parametric rate \(n^{-1}\).

\subsection{Concentration Inequalities for U-Statistics}

For functional data,
the KRR estimator involves empirical quantities that are averages within curves, such as \cref{eq:hat-zeta,eq:hat-T}.
This naturally leads to concentration problems for \(U\)-statistics.
While \(U\)-statistics are well-studied, the existing results in the FDA literature are often too loose for our purpose of establishing a deterministic equivalent.
For example, naive control of the \(U\)-statistic structure often gives a polynomial dependence on the failure probability \(\delta\), which is too coarse.
Moreover, our analysis also involves operator-valued \(U\)-statistics, which are less studied in the literature.
To this end, we develop a series of \(U\)-statistic concentration inequalities.
The inequalities have the Bernstein form and cover scalar-valued, vector-valued, and operator-valued \(U\)-statistics of general order.
See Appendix~\ref{app:concentration} for more details.
These inequalities are of independent interest and may be useful in other problems in functional data analysis.

 \section{Simulations}
\label{sec:experiments}

In the numerical simulations, we consider the second-moment estimation problem with a one-dimensional Sobolev RKHS.
We demonstrate that our theoretical prediction in \Cref{thm:main} closely matches the empirical error curve.

\medskip\noindent\textbf{Experiment Setup.}
The input space is \(\mca I=[0,1]\) with the uniform measure.
We use the Brownian kernel \(k(x,y)=\min(x,y)\)~\citep{hsing2015TheoreticalFoundationsFunctional},
whose RKHS is the first-order Sobolev space \( H
    =
    \left\{
    f\in \mathrm{AC}[0,1]:
    f(0)=0,\ f'\in L^2[0,1]
    \right\} \).
For this kernel, the eigenvalues of the integral operator satisfy
\(\lambda_j \asymp j^{-2}\), so \(\beta=2\).

The random function is a centered rank-one Gaussian process.  Let
\(Z\sim N(0,1)\) and set
\[
    X(t)=Zg(t),
    \qquad
    g(t)=
    \sum_{j=1}^{\infty}
    \lambda_j^{s/2} j^{-1/2} e_j(t),
\]
where \((\lambda_j,e_j)_{j\ge1}\) is the Mercer system of \(T\) and \(s=0.7\).
The population model has \(\mu_1=0\), and the second moment equals the
covariance function
\[
    C(x,y)
    =
    \mu_2(x,y)
    =
    g(x)g(y)
    =
    \sum_{i,j=1}^{\infty} \lambda_i^{s/2} \lambda_j^{s/2}(ij)^{-1/2} e_i(x)e_j(y).
\]
Since \(s>1/\beta\), one can verify that \(X\) satisfies Assumption~\ref{ass:source}.
Discrete noisy observations are generated according to the model in \Cref{sec:preliminaries} with \(\sigma=1\).
According to \Cref{lem:appendix-I0-expectations,lem:appendix-I0-vpoint-lower,lem:appendix-I1-expectations,ex:bias-borderline-coefficients},
\[
    \frac{1}{n}V_{\mathrm{curve}}
    \asymp
    \frac{1}{n},
    \qquad
    \frac{1}{mn}V_{\mathrm{point}}
    \asymp
    \frac{\lambda^{-1/2}}{mn},
    \qquad
    \frac{\sigma^2}{mn}V_{\mathrm{noise}}
    \asymp
    \frac{\lambda^{-1/2}}{mn},
    \qquad
    B(\lambda;\mu_2)^2
    \asymp
    \lambda^{0.7} \ln\frac{1}{\lambda}.
\]

\medskip\noindent\textbf{Simulation.}
For each repetition, we fit the estimator for all values of \(\lambda\) over the same training data.
We use \(30\) regularization parameters between \(5\times10^{-5}\) and \(2\times10^{-2}\).
The empirical \(L^2\) error is computed by Simpson's rule.
\(50\) repetitions are performed for each configuration of \((n,m)\).
We consider two sample-size configurations: \((n,m)=(100,10)\) and \((n,m)=(10,40)\).
They reflect the sparse regime and the dense regime, respectively.

\begin{figure}[t]
    \centering
    \includegraphics[width=.9\linewidth]{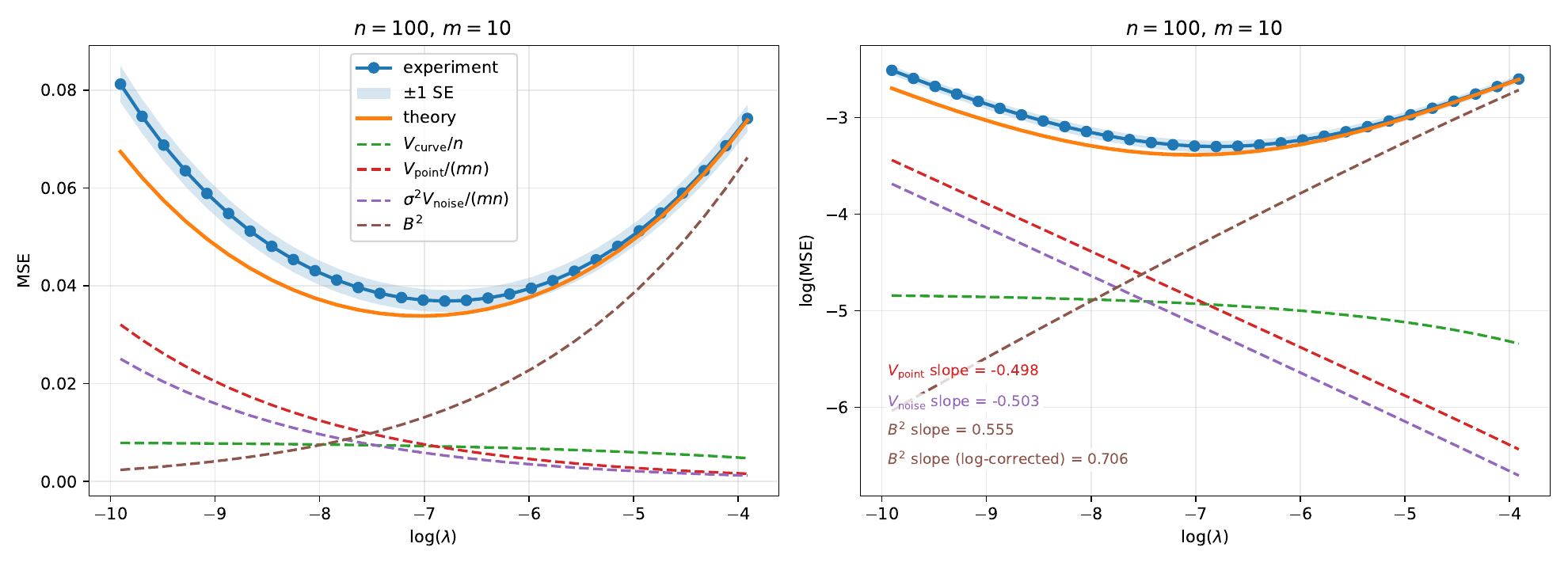}
    \includegraphics[width=.9\linewidth]{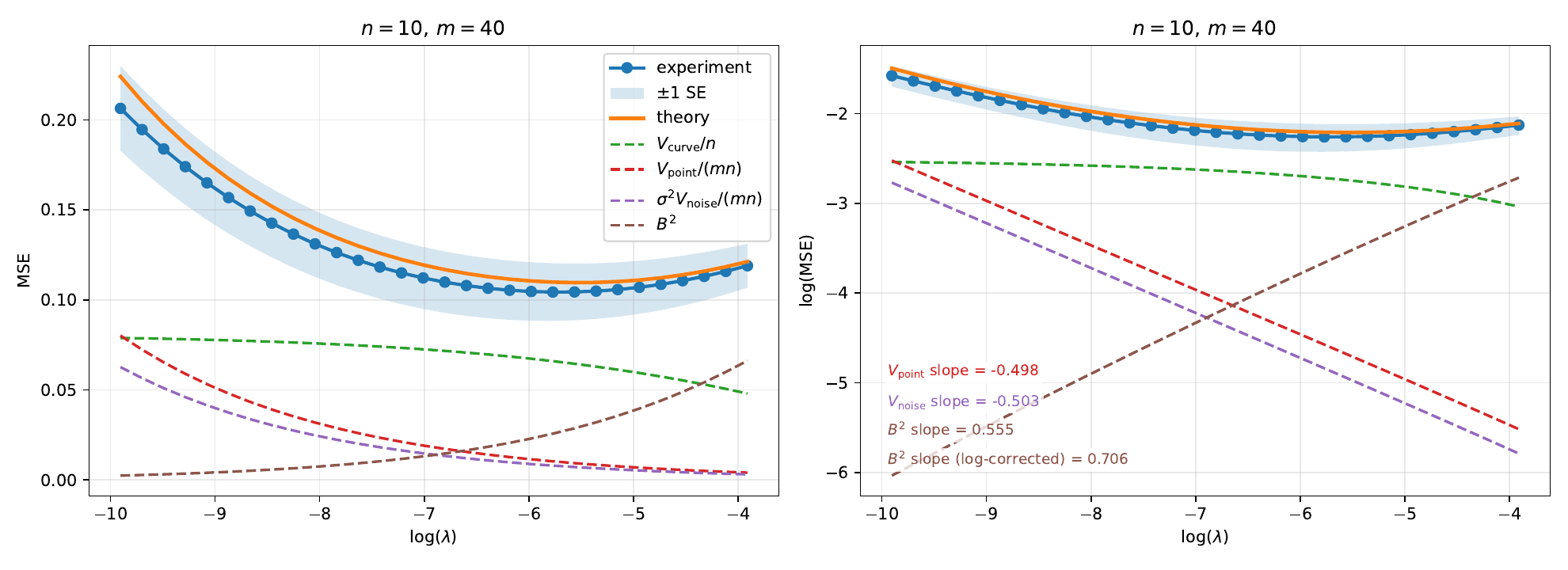}
    \caption{
        Generalization error curves for second-moment estimation.
        The upper and lower rows correspond to the sparse and dense sampling
        regimes, respectively.
        Within each row, the left and right panels use
        the same setting, and the right panel displays the vertical axis on a
        logarithmic scale for better visualization of the slopes.
    }
    \label{fig:r2-lambda-sweeps}
\end{figure}

The results are shown in \Cref{fig:r2-lambda-sweeps}.
The empirical error curve is U-shaped, and it is close to the deterministic curve given by \Cref{thm:main},
showing the predictive power of the asymptotically exact expansion.
Moreover, we fit the slopes of the log-log plots of the three variance terms and the bias term.
With \(\beta=2\), both
\(V_{\mathrm{point}}(\lambda;\Sigma_2)/(mn)\) and \(\sigma^2 V_{\mathrm{noise}}(\lambda;\mu_2)/(mn)\) should scale as
\(\lambda^{-1/\beta}=\lambda^{-1/2}\), and the fitted slopes in the log-log
plots are consistent with this prediction.
For the bias, if we fit the slope of \(\log B(\lambda;\mu_2)^2\) against \(\log\lambda\), the fitted exponent
is smaller than \(0.7\), because the theoretical order contains an additional logarithmic factor \(\ln(1/\lambda)\).
After fitting \(\log B(\lambda;\mu_2)^2-\log\log(1/\lambda)\) against \(\log\lambda\) to remove the logarithmic factor, the fitted exponent is approximately \(0.706\), matching the predicted smoothness index \(s=0.7\).

Comparing the two sampling regimes, we can see that the sparse sampling regime has a sharper U-shape than the dense sampling regime.
In the sparse regime, the variations from the latent signal and the measurement error are both large, so the total variance is dominated by these two terms.
In the dense regime,
the variations from the latent signal and the measurement error are reduced, while
\(V_{\mathrm{curve}}(\lambda;\Sigma_2)/n\) becomes the main variance
contribution over a large range of \(\lambda\).
Overall, these results demonstrate that the theoretical prediction in \Cref{thm:main} closely matches the empirical error curve.
 \section{Proofs}
\label{sec:proofs}

\subsection{Proof Overview}
\label{sec:proof-overview}
The proof begins with the traditional
bias-variance decomposition. Let
\begin{gather}
    \label{eq:tilde-zeta}
    \tilde{\zeta}_r
    \coloneqq
    \E[\hat{\zeta}_r \mid \mca{T}]
    =
    \frac{1}{n}\sum_{i=1}^{n}
    \frac{1}{\falling{m}{r}}\sum_{\bm{j}\in I_m^r}
    \mu_r(t_{i\bm{j}})K(t_{i\bm{j}}),\\
    \label{eq:tilde-mu}
    \tilde{\mu}_{r,\lambda}
    \coloneqq
    \E[\hat{\mu}_{r,\lambda} \mid \mca{T}]
    =
    (\hat{T}_r+\lambda I)^{-1}\tilde{\zeta}_r,
\end{gather}
where we use \(\E[Y_{i\mathbf{j}}\mid \mca{T}]=\mu_r(t_{i\mathbf{j}})\)
on the distinct coordinates \(\mathbf{j}\in I_m^r\). For brevity,
we write \(\hat{T}_{r,\lambda} \coloneqq \hat{T}_r+\lambda I\) and \(T_{r,\lambda} \coloneqq T_r+\lambda I\). Using the reproducing property, we have
\[
    \tilde{\zeta}_r
    =
    \hat{T}_r \mu_r,
    \qquad
    \tilde{\mu}_{r,\lambda}
    =
    \hat{T}_{r,\lambda}^{-1} \hat{T}_r \mu_r.
\]
Adding and subtracting \(\tilde{\mu}_{r,\lambda}\) gives the conditional
bias-variance decomposition
\begin{align}
    \operatorname{MSE}_{r,\lambda}(\mca{T})
     & =
    \E\bigl[
    \|\hat{\mu}_{r,\lambda}-\tilde{\mu}_{r,\lambda}\|_{L^2(\mca{I}^r)}^2
     \big|
    \mca{T}
    \bigr]
    +
    \|\tilde{\mu}_{r,\lambda}-\mu_r\|_{L^2(\mca{I}^r)}^2. \label{eq:bias-variance}
\end{align}
The cross term vanishes because
\(\E[\hat{\mu}_{r,\lambda}-\tilde{\mu}_{r,\lambda} \mid \mca{T}]=0\).
We denote the two terms in \cref{eq:bias-variance} by
\(\Var(\lambda)\) and \(\Bias(\lambda)^2\), respectively.
Substituting \cref{eq:hat-mu}, \cref{eq:hat-zeta}, \cref{eq:tilde-mu}, and \cref{eq:tilde-zeta} into \cref{eq:bias-variance} gives
\begin{equation}\label{eq:variance-master}
    \Var(\lambda)
    =
    \E\left[
    \left\|
    \frac{1}{n}\sum_{i=1}^n \frac{1}{(m)_r}\sum_{\mathbf{j}\in I_m^r}
    \bigl(Y_{i\mathbf{j}}-\mu_r(t_{i\mathbf{j}})\bigr)
    \hat{T}_{r,\lambda}^{-1} K(t_{i\mathbf{j}})
    \right\|_{L^2(\mca{I}^r)}^2
     \middle|
    \mca{T}
    \right],
\end{equation}
\begin{equation}\label{eq:bias-master}
    \Bias(\lambda)^2
    =
    \|\lambda\hat{T}_{r,\lambda}^{-1} \mu_r\|_{L^2(\mca{I}^r)}^2.
\end{equation}
The remaining proof is organized as follows. First, \Cref{sec:variance-expansion}
expands the variance term \(\Var(\lambda)\) in powers of \(\sigma^2\):
\[
    \Var(\lambda)=\sum_{d=0}^r \hat{V}_d,
\]
and shows that the terms with \(d\ge 2\) are higher-order and negligible.
\Cref{sec:variance-v0,sec:variance-v1} estimate the two leading
variance terms \(\hat{V}_0\) and \(\hat{V}_1\),
which represent the variance term in the noiseless case
and the first-order correction to the variance term due to noise, respectively.
We convert \(\hat{V}_0\) and \(\hat{V}_1\) into population traces, which
turns long sums into compact operator quantities and makes the variance analysis
more structured. \Cref{sec:variance-summary} summarizes the variance estimates,
and \Cref{sec:bias-term} estimates the bias term.
The appendices provide the supporting tools: \Cref{app:concentration}
for \(U\)-statistic concentration, \Cref{app:regular-rkhs} for estimates
associated with regular RKHSs, \Cref{app:i0,app:i1} for the detailed trace bounds
corresponding to \(\hat{V}_0\) and \(\hat{V}_1\),
\Cref{app:omitted-proofs} for omitted arguments in the main text,
and \Cref{app:notation} for notation.

\subsection{Variance Expansion in Powers of \texorpdfstring{\(\sigma^2\)}{sigma2}}
\label{sec:variance-expansion}

We now return to the conditional variance term
\[
    \Var(\lambda)
    =
    \E\left[
    \left\|
    \frac{1}{n}\sum_{i=1}^n \frac{1}{(m)_r}\sum_{\mathbf{j}\in I_m^r}
    \bigl(Y_{i\mathbf{j}}-\mu_r(t_{i\mathbf{j}})\bigr)
    \hat{T}_{r,\lambda}^{-1} K(t_{i\mathbf{j}})
    \right\|_{L^2(\mca{I}^r)}^2
     \middle|
    \mca{T}
    \right].
\]
Expanding the square and using the independence across curves,
\begin{equation}\label{eq:variance-double-sum}
    \Var(\lambda)
    =
    \frac{1}{n^2(m)_r^2}
    \sum_{i=1}^n
    \sum_{\mathbf{j},\mathbf{k}\in I_m^r}
    E_i(\mathbf{j},\mathbf{k})
    \left\langle
    \hat{T}_{r,\lambda}^{-1} K(t_{i\mathbf{j}}),
    \hat{T}_{r,\lambda}^{-1} K(t_{i\mathbf{k}})
    \right\rangle_{L^2},
\end{equation}
where
\[
    E_i(\mathbf{j},\mathbf{k})
    \coloneqq
    \E\bigl[
        (Y_{i\mathbf{j}}-\mu_r(t_{i\mathbf{j}}))
        (Y_{i\mathbf{k}}-\mu_r(t_{i\mathbf{k}}))
         \big|
        \mca{T}
        \bigr].
\]
Fix \(i\), \(\mathbf{j}=(j_1,\dots,j_r)\in I_m^r\), and \(\mathbf{k}=(k_1,\dots,k_r)\in I_m^r\).
Recall that
\[
    y_{ij}=X_{i}(t_{ij})+\ep_{ij},
    \qquad
    Y_{i\mathbf{j}}=\prod_{a=1}^r y_{ij_a},
    \qquad
    Y_{i\mathbf{k}}=\prod_{b=1}^r y_{ik_b}.
\]
Let
\[
    J(\mathbf{j}) \coloneqq \{j_1,\dots,j_r\},
    \qquad
    J(\mathbf{k}) \coloneqq \{k_1,\dots,k_r\},
    \qquad
    q(\mathbf{j},\mathbf{k}) \coloneqq |J(\mathbf{j})\cap J(\mathbf{k})|.
\]
For a subset \(A\subseteq J(\mathbf{j})\cap J(\mathbf{k})\), let
\[
    \bm{t}^{A}_{i,\mathbf{j},\mathbf{k}}
\]
denote the \((2r-2|A|)\)-tuple obtained from
\[
    (t_{ij_1},\dots,t_{ij_r},t_{ik_1},\dots,t_{ik_r})
\]
by deleting both copies of \(t_{i\ell}\) for every \(\ell\in A\).
Because \(\mu_q\) is symmetric in its arguments, the order of this tuple is irrelevant.
Expanding the products according to the repeated noise variables gives the
following formula.

\begin{lemma}[Expansion of \(E_i(\mathbf{j},\mathbf{k})\)]
    \label{lem:variance-E-expansion}
    With the notation above,
    \begin{equation}\label{eq:b-polynomial}
        E_i(\mathbf{j},\mathbf{k})
        =
        \Sigma_r(t_{i\mathbf{j}};t_{i\mathbf{k}})
        +
        \sum_{d=1}^{q(\mathbf{j},\mathbf{k})}
        \sigma^{2d}
        \sum_{\substack{A\subseteq J(\mathbf{j})\cap J(\mathbf{k})\\|A|=d}}
        \mu_{2r-2d} \bigl(\bm{t}^{A}_{i,\mathbf{j},\mathbf{k}}\bigr).
    \end{equation}
\end{lemma}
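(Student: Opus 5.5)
We condition on \(\mca{T}\) throughout, so all design points are fixed. Then \(E_i(\mathbf{j},\mathbf{k})=\E[Y_{i\mathbf{j}}Y_{i\mathbf{k}}\mid\mca{T}]-\mu_r(t_{i\mathbf{j}})\mu_r(t_{i\mathbf{k}})\). For the main computation we write
\[
Y_{i\mathbf{j}}Y_{i\mathbf{k}}=\prod_{a=1}^r (X_i(t_{ij_a})+\ep_{ij_a})\prod_{b=1}^r (X_i(t_{ik_b})+\ep_{ik_b}).
\]
We split the index multiset into three groups: the shared indices \(S\coloneqq J(\mathbf{j})\cap J(\mathbf{k})\), the indices appearing only in \(J(\mathbf{j})\), and the indices appearing only in \(J(\mathbf{k})\). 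Since \(\mathbf{j},\mathbf{k}\in I_m^r\), each shared index \(\ell\in S\) contributes exactly the squared factor \((X_i(t_{i\ell})+\ep_{i\ell})^2\). Each unshared index contributes a single factor \(X_i(t_{i\ell})+\ep_{i\ell}\).

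We first take the expectation over the noise, conditionally on \(X_i\) and \(\mca{T}\). This uses that the \(\ep_{i\ell}\) are mutually independent, centered, have variance \(\sigma^2\), and are independent of \(X_i\) and \(\mca{T}\).
\begin{itemize}
\item For an unshared index, \(\E[X_i(t)+\ep\mid X_i]=X_i(t)\).
\item For a shared index, \(\E[(X_i(t)+\ep)^2\mid X_i]=X_i(t)^2+\sigma^2\), because the cross term \(2X_i(t)\E\ep\) vanishes.
\end{itemize}
Only second moments of \(\ep\) are needed here, since each noise variable appears at most twice.

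Multiplying over the independent indices gives
\[
\E[Y_{i\mathbf{j}}Y_{i\mathbf{k}}\mid X_i,\mca{T}]=\prod_{\ell\notin S}X_i(t_{i\ell})\prod_{\ell\in S}\bigl(X_i(t_{i\ell})^2+\sigma^2\bigr).
\]
Here the first product runs over the unshared indices. We expand the product over \(S\) by choosing the subset \(A\subseteq S\) of factors from which \(\sigma^2\) is taken:
\[
\E[Y_{i\mathbf{j}}Y_{i\mathbf{k}}\mid X_i,\mca{T}]=\sum_{A\subseteq S}\sigma^{2|A|}\prod X_i(\cdot),
\]
where for each \(A\) the remaining product runs over the \(2r-2|A|\) coordinates of \(\bm{t}^{A}_{i,\mathbf{j},\mathbf{k}}\).

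Next we take the expectation over \(X_i\), which is independent of \(\mca{T}\). Since the design points are fixed, each product of \(2r-2|A|\) evaluations has expectation \(\mu_{2r-2|A|}(\bm{t}^{A}_{i,\mathbf{j},\mathbf{k}})\), with the convention \(\mu_0=1\). This step needs enough integrability of \(X\) for these moments to exist, which Assumption~\ref{ass:source} supplies; symmetry of \(\mu_q\) makes the ordering of the tuple irrelevant. The term \(A=\emptyset\) equals \(\mu_{2r}(t_{i\mathbf{j}};t_{i\mathbf{k}})\). Subtracting \(\mu_r(t_{i\mathbf{j}})\mu_r(t_{i\mathbf{k}})\) turns it into \(\Sigma_r(t_{i\mathbf{j}};t_{i\mathbf{k}})\), by the identity \(\Sigma_r=\mu_{2r}-\mu_r\otimes\mu_r\). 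Grouping the remaining subsets by \(d=|A|\) from \(1\) to \(q(\mathbf{j},\mathbf{k})\) yields \eqref{eq:b-polynomial}.

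The argument is elementary bookkeeping. The only step needing care is checking that cross terms with a single factor of \(\ep\) vanish, and this is exactly where distinctness within \(\mathbf{j}\) and within \(\mathbf{k}\) is used: without it, higher powers of a single \(\ep\) could appear.
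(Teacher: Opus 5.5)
Your proposal is correct and follows essentially the same argument as the paper. The paper expands \(Y_{i\mathbf{j}}Y_{i\mathbf{k}}\) directly and notes that each \(\ep_{i\ell}\) appears at most twice: single appearances have zero conditional mean, and matched pairs at shared locations contribute \(\sigma^2\). This gives \(\E[Y_{i\mathbf{j}}Y_{i\mathbf{k}}\mid\mca{T}]=\sum_{A}\sigma^{2|A|}\mu_{2r-2|A|}(\bm{t}^{A}_{i,\mathbf{j},\mathbf{k}})\), and subtracting \(\mu_r(t_{i\mathbf{j}})\mu_r(t_{i\mathbf{k}})\) turns the \(A=\emptyset\) term into \(\Sigma_r\). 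Your iterated conditioning — first integrating out the noise factor by factor given \(X_i\) and \(\mca{T}\), then averaging over \(X_i\) — is just a tidier organization of the same bookkeeping.
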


Substituting \Cref{lem:variance-E-expansion} into \cref{eq:variance-double-sum}, we obtain the explicit decomposition
\begin{equation}\label{eq:variance-polynomial}
    \Var(\lambda)
    =
    \sum_{d=0}^r \hat{V}_d,
\end{equation}
where
\begin{equation}\label{eq:Id-definition}
    \hat{V}_0
    \coloneqq
    \frac{1}{n^2(m)_r^2}
    \sum_{i=1}^n
    \sum_{\mathbf{j},\mathbf{k}\in I_m^r}
    \Sigma_r(t_{i\mathbf{j}};t_{i\mathbf{k}})
    \left\langle
    \hat{T}_{r,\lambda}^{-1} K(t_{i\mathbf{j}}),
    \hat{T}_{r,\lambda}^{-1} K(t_{i\mathbf{k}})
    \right\rangle_{L^2},
\end{equation}
and, for \(1\le d\le r\),
\begin{equation}\label{eq:Id-positive-definition}
    \hat{V}_d
    \coloneqq
    \frac{\sigma^{2d}}{n^2(m)_r^2}
    \sum_{i=1}^n
    \sum_{\mathbf{j},\mathbf{k}\in I_m^r}
    \left(
    \sum_{\substack{A\subseteq J(\mathbf{j})\cap J(\mathbf{k})\\|A|=d}}
    \mu_{2r-2d} \bigl(\bm{t}^{A}_{i,\mathbf{j},\mathbf{k}}\bigr)
    \right)
    \left\langle
    \hat{T}_{r,\lambda}^{-1} K(t_{i\mathbf{j}}),
    \hat{T}_{r,\lambda}^{-1} K(t_{i\mathbf{k}})
    \right\rangle_{L^2}.
\end{equation}
This is the exact meaning of the notation:
\(\hat{V}_d\) is precisely the contribution of order \(\sigma^{2d}\) in the variance expansion.

The leading terms of the variance will come from \(d=0\) and \(d=1\).
For the terms with \(d\ge 2\), a rough estimate is enough.

Set \(C_{\mu_0} \coloneqq 1\).  For even \(q\) with \(2\le q\le 2r-2\), use the
constants \(C_{\mu_q}\) from
\Cref{ass:source}\textup{(iii)}--\textup{(iv)}.

Now fix \(d\ge 2\).
If \(q(\mathbf{j},\mathbf{k})<d\), then the inner sum in \cref{eq:Id-positive-definition} is empty.
If \(q(\mathbf{j},\mathbf{k})\ge d\), then there are exactly \(\binom{q(\mathbf{j},\mathbf{k})}{d}\) subsets \(A\subseteq J(\mathbf{j})\cap J(\mathbf{k})\) with \(|A|=d\), so
\begin{equation}\label{eq:overlap-d-bound}
    \left|
    \sum_{\substack{A\subseteq J(\mathbf{j})\cap J(\mathbf{k})\\|A|=d}}
    \mu_{2r-2d} \bigl(\bm{t}^{A}_{i,\mathbf{j},\mathbf{k}}\bigr)
    \right|
    \le
    \binom{q(\mathbf{j},\mathbf{k})}{d} C_{\mu_{2r-2d}}.
\end{equation}
Write
\[
    \mcaN_p^{(r)}(\lambda)
    \coloneqq
    \sum_{i_1,\dots,i_r=1}^\infty
    \bigl(\frac{\Lambda_{\mathbf{i}}}{\Lambda_{\mathbf{i}}+\lambda}\bigr)^p,
    \qquad
    \Lambda_{\mathbf{i}} \coloneqq \prod_{a=1}^r \lambda_{i_a},
\]
we have the following bound, which is the empirical counterpart of
\Cref{lem:appendix-phi-T}.
\begin{lemma}\label{lem:variance-phi-hatT}
    If \(v_r \coloneqq \frac{\lambda^{-1/\beta}(\ln \frac{1}{\lambda})^{r}}{mn}\to 0\),
    then with probability tending to one,
    \begin{equation}\label{eq:phi-hatT-bound-main}
        \sup_{\bm{t}\in \mca{I}^r}
        \|\hat{T}_{r,\lambda}^{-1} K(\bm{t})\|_{L^2}^2
        \le
        C M^r \mcaN_2^{(r)}(\lambda).
    \end{equation}
\end{lemma}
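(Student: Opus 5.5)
The plan is to reduce the empirical quantity to its population counterpart by operator concentration, and then bound the population quantity using the regular RKHS assumption.

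\textbf{Step 1 (factorization).} Write
\[
\hat{T}_{r,\lambda}^{-1}K(\bm{t})=T_{r,\lambda}^{-1/2}\bigl(T_{r,\lambda}^{1/2}\hat{T}_{r,\lambda}^{-1}T_{r,\lambda}^{1/2}\bigr)T_{r,\lambda}^{-1/2}K(\bm{t}).
\]
For any \(f\in\caH^{\otimes r}\) we have \(\|f\|_{L^2}=\|T_r^{1/2}f\|_{\caH^{\otimes r}}\), and \(\|T_r^{1/2}T_{r,\lambda}^{-1/2}\|\le1\). Hence
\[
\|\hat{T}_{r,\lambda}^{-1}K(\bm{t})\|_{L^2}\le \bigl\|T_{r,\lambda}^{1/2}\hat{T}_{r,\lambda}^{-1}T_{r,\lambda}^{1/2}\bigr\|\,\bigl\|T_{r,\lambda}^{-1/2}K(\bm{t})\bigr\|_{\caH^{\otimes r}}.
\]
This replaces the sup over \(\bm t\) of the empirical quantity by one random operator norm times a deterministic sup.

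\textbf{Step 2 (operator concentration).} I will show that
\[
\bigl\|T_{r,\lambda}^{-1/2}(T_r-\hat{T}_r)T_{r,\lambda}^{-1/2}\bigr\|\le 1/2
\]
with probability tending to one. The Neumann series then gives \(\|T_{r,\lambda}^{1/2}\hat{T}_{r,\lambda}^{-1}T_{r,\lambda}^{1/2}\|\le 2\).

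The operator \(\hat{T}_r\) is a curve-averaged \(U\)-statistic of order \(r\) in the design points \(t_{i1},\dots,t_{im}\). Its kernel is \(h(\bm t)=T_{r,\lambda}^{-1/2}K(\bm t)\otimes K(\bm t)T_{r,\lambda}^{-1/2}\), which has mean \(T_{r,\lambda}^{-1/2}T_rT_{r,\lambda}^{-1/2}\). I apply the operator-valued Bernstein inequality for \(U\)-statistics from Appendix~\ref{app:concentration} (Hoeffding blocking combined with the Tropp--Minsker argument). The effective sample size for the leading first-order projection is \(mn\).

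The three inputs are as follows.
\begin{itemize}
\item[(a)] The uniform bound \(\|h(\bm t)\|\le \sup_{\bm t}\|T_{r,\lambda}^{-1/2}K(\bm t)\|^2_{\caH^{\otimes r}}\eqqcolon \mathcal M_r(\lambda)\).
\item[(b)] The variance proxy, which is \(\mathcal M_r(\lambda)\) times \(\|T_{r,\lambda}^{-1}T_r\|\le1\).
\item[(c)] The effective rank, of order \(\mcaN_1^{(r)}(\lambda)\).
\end{itemize}
These give a deviation bound of order
\[
\sqrt{\mathcal M_r(\lambda)\log(\mcaN_1^{(r)}/\delta)/(mn)}+\mathcal M_r(\lambda)\log(\cdot)/(mn).
\]

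\textbf{Step 3 (regular RKHS estimate).} By Mercer,
\[
\|T_{r,\lambda}^{-1/2}K(\bm t)\|_{\caH^{\otimes r}}^2=\sum_{\bm i}\frac{\Lambda_{\bm i}}{\Lambda_{\bm i}+\lambda}\prod_a e_{i_a}(t_a)^2 .
\]
Using Assumption~\ref{ass:prelim-regular-rkhs} via the estimates of Appendix~\ref{app:regular-rkhs} (Lemma~\ref{lem:appendix-supnorm}), I group by eigenspaces and do a summation by parts coordinatewise. The weight \(\Lambda/(\Lambda+\lambda)\) is monotone in each \(\lambda_{i_a}\), so the partial-sum bound \(\sum_{m\le N}k_m(x,x)\le M\sum_{m\le N} d_m\) can be applied one coordinate at a time. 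This gives \(\mathcal M_r(\lambda)\le M^r\mcaN_1^{(r)}(\lambda)\), and the same computation with squared weights gives
\[
\sup_{\bm t}\|T_{r,\lambda}^{-1}K(\bm t)\|_{L^2}^2\le M^r\mcaN_2^{(r)}(\lambda).
\]
Under Assumption~\ref{ass:prelim-eigen-decay}, counting \(\bm i\) with \(\Lambda_{\bm i}\gtrsim\lambda\) gives \(\mcaN_p^{(r)}(\lambda)\asymp\lambda^{-1/\beta}(\ln\frac1\lambda)^{r-1}\). Hence \(\mathcal M_r\log(\mcaN_1^{(r)})/(mn)\lesssim v_r\to0\), which closes Step 2.

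\textbf{Step 4 (sharpening and conclusion).} Combining Steps 1 and 2 naively yields only \(\mcaN_1^{(r)}\), not \(\mcaN_2^{(r)}\). To get the stated bound I write
\[
\hat T_{r,\lambda}^{-1}=T_{r,\lambda}^{-1}+\hat T_{r,\lambda}^{-1}(T_r-\hat T_r)T_{r,\lambda}^{-1}.
\]
The first term gives \(M^r\mcaN_2^{(r)}\) by Step 3. For the correction term I use Step 1 applied to \(T_{r,\lambda}^{-1/2}(T_r-\hat T_r)T_{r,\lambda}^{-1}K(\bm t)\) and bound it by
\[
O\bigl(\sqrt{v_r}\bigr)\cdot\sqrt{\lambda^{-1}}\,\|T_{r,\lambda}^{-1}K(\bm t)\|_{L^2}\text{-type quantities}.
\]
This step is uncertain: the \(\sqrt{\lambda^{-1}}\) factor must not destroy the bound. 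The control should come from a vector-valued concentration applied to \(T_{r,\lambda}^{-1/2}(T_r-\hat T_r)T_{r,\lambda}^{-1}K(\bm t)\), uniform over \(\bm t\) through a covering of the compact \(\mca I^r\) and continuity of \(k\). This yields \(o(1)\cdot\mcaN_2^{(r)}\) and hence the constant \(C\).

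I expect the main obstacle to be Step 2 together with this uniform refinement. The \(U\)-statistic dependence within each curve means the Bernstein bound must have the \(mn\) rather than \(n\) scaling. The degenerate higher Hoeffding components must be shown negligible, using the paper's \(U\)-statistic inequality; they contribute terms of order \(\mathcal M_r/m^{k}n\).
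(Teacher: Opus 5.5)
Your Steps 1--3 already prove the lemma, using the same ingredients as the paper: \Cref{lem:appendix-operator-concentration} for the event $\|T_{r,\lambda}^{1/2}\hat T_{r,\lambda}^{-1}T_{r,\lambda}^{1/2}\|\le 2$, \Cref{lem:appendix-phi-T} for the regular-RKHS sup bounds, and \Cref{lem:appendix-effective-dimension}. The problem is that you misjudge what remains after Step 3. The ``sharpening'' Step 4 is unnecessary, and as sketched it is the only unsound part of the proposal.

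On the event of Step 2, Step 1 gives
\[
\sup_{\bm t\in\mca{I}^r}\|\hat T_{r,\lambda}^{-1}K(\bm t)\|_{L^2}^2
\le 4\sup_{\bm t\in\mca{I}^r}\|T_{r,\lambda}^{-1/2}K(\bm t)\|_{\caH^{\otimes r}}^2
\le 4M^r\mcaN_1^{(r)}(\lambda).
\]
In Step 3 you record that $\mcaN_p^{(r)}(\lambda)\asymp\lambda^{-1/\beta}(\ln\frac{1}{\lambda})^{r-1}$ for every fixed $p$. Hence $\mcaN_1^{(r)}(\lambda)\le C'\mcaN_2^{(r)}(\lambda)$ for small $\lambda$, and the constant $C$ in \eqref{eq:phi-hatT-bound-main} absorbs the ratio. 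The statement asks for nothing sharper than this.

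This is exactly how the paper finishes. It writes the resolvent identity, bounds the correction term by $2\|\Delta_r\|\sqrt{M^r\mcaN_1^{(r)}(\lambda)}$ on the event $\|\Delta_r\|\le 1$, and then invokes $\mcaN_1^{(r)}\asymp\mcaN_2^{(r)}$. Your direct factorization in Step 1 is a slightly shorter route to the same bound.

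You should drop Step 4 as written. The $\lambda^{-1/2}$ loss you worry about comes from measuring $T_{r,\lambda}^{-1}K(\bm t)$ in $L^2$. If you want the resolvent split, factor $T_{r,\lambda}^{-1}K(\bm t)=T_{r,\lambda}^{-1/2}\bigl(T_{r,\lambda}^{-1/2}K(\bm t)\bigr)$. With $\Delta_r=T_{r,\lambda}^{-1/2}(\hat T_r-T_r)T_{r,\lambda}^{-1/2}$, this gives
\[
\bigl\|T_r^{1/2}\hat T_{r,\lambda}^{-1}(\hat T_r-T_r)T_{r,\lambda}^{-1}K(\bm t)\bigr\|_{\caH^{\otimes r}}
\le
\bigl\|T_r^{1/2}T_{r,\lambda}^{-1/2}\bigr\|\,\bigl\|T_{r,\lambda}^{1/2}\hat T_{r,\lambda}^{-1}T_{r,\lambda}^{1/2}\bigr\|\,\|\Delta_r\|\,\bigl\|T_{r,\lambda}^{-1/2}K(\bm t)\bigr\|_{\caH^{\otimes r}}
\le 2\|\Delta_r\|\sqrt{M^r\mcaN_1^{(r)}(\lambda)}.
\]
The only random object here is the single operator $\Delta_r$, which does not depend on $\bm t$. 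So no covering of $\mca{I}^r$ and no vector-valued concentration is needed. Since $\|\Delta_r\|=O_{\bbP}(\sqrt{v_r})$, this even makes the correction $o_{\bbP}$ of the main term, which is what Step 4 was aiming for.

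Similarly, Step 2 needs no separate treatment of degenerate Hoeffding components. The blocking-based \Cref{lem:appendix-U-bernstein-operator} with $k=r$ gives effective sample size $n\lfloor m/r\rfloor$ directly. Your inputs (a)--(c) match the paper's choices $L=2M^r\mcaN_1^{(r)}(\lambda)$ and $S=M^r\mcaN_1^{(r)}(\lambda)\,T_rT_{r,\lambda}^{-1}$.
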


If \(\lambda=\Omega((mn)^{-\theta})\) for some \(\theta<\beta\), then
\[
    v_r
    =
    O \left(
    (mn)^{\theta/\beta-1}
    \left(\ln(mn)\right)^r
    \right)
    =
    o(1),
\]
so this condition is a convenient sufficient condition for
\(v_r \to0\).

Using \cref{eq:overlap-d-bound} and \cref{eq:phi-hatT-bound-main} in \cref{eq:Id-positive-definition}, we get, for \(d\ge 2\), with probability tending to one,
\[
    |\hat{V}_d|
    \le
    \frac{\sigma^{2d} C_{\mu_{2r-2d}} C M^r \mcaN_2^{(r)}(\lambda)}{n^2(m)_r^2}
    \sum_{i=1}^n
    \sum_{\mathbf{j},\mathbf{k}\in I_m^r}
    \binom{q(\mathbf{j},\mathbf{k})}{d}.
\]
The sum over \(i\) contributes a factor \(n\), hence
\begin{equation}\label{eq:Id-count-start}
    |\hat{V}_d|
    \le
    \frac{\sigma^{2d} C_{\mu_{2r-2d}} C M^r \mcaN_2^{(r)}(\lambda)}{n(m)_r^2}
    \sum_{\mathbf{j},\mathbf{k}\in I_m^r}
    \binom{q(\mathbf{j},\mathbf{k})}{d}.
\end{equation}

\begin{lemma}\label{lem:variance-overlap-count}
    For every \(0\le d\le r\),
    \begin{equation}\label{eq:variance-overlap-count}
        \sum_{\mathbf{j},\mathbf{k}\in I_m^r}
        \binom{q(\mathbf{j},\mathbf{k})}{d}
        =
        \sum_{q=d}^r
        \binom{r}{q}^2 q! (m)_{2r-q}\binom{q}{d}.
    \end{equation}
\end{lemma}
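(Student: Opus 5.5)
The identity is purely combinatorial, so I will prove it by grouping the ordered pairs \((\mathbf{j},\mathbf{k})\in I_m^r\times I_m^r\) by their overlap size \(q=q(\mathbf{j},\mathbf{k})\). This gives
\[
  \sum_{\mathbf{j},\mathbf{k}\in I_m^r}\binom{q(\mathbf{j},\mathbf{k})}{d}
  =\sum_{q=0}^r N_q\binom{q}{d},
\]
where \(N_q\) is the number of pairs with \(|J(\mathbf{j})\cap J(\mathbf{k})|=q\). Since \(\binom{q}{d}=0\) for \(q<d\), the sum effectively starts at \(q=d\). It therefore suffices to show
\[
  N_q=\binom{r}{q}^2 q!\,(m)_{2r-q}.
\]

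To count \(N_q\), I will build such a pair in three steps.

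\emph{Step 1: the shared positions.} Choose which \(q\) positions of \(\mathbf{j}\) hold shared indices, in \(\binom{r}{q}\) ways. Independently choose which \(q\) positions of \(\mathbf{k}\) hold shared indices, also in \(\binom{r}{q}\) ways.

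\emph{Step 2: the matching.} Choose a bijection saying which shared position of \(\mathbf{j}\) carries the same value as which shared position of \(\mathbf{k}\). There are \(q!\) such bijections.

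\emph{Step 3: the values.} The pair now involves \(r+r-q=2r-q\) distinct values in total: \(q\) shared values, \(r-q\) values used only in \(\mathbf{j}\), and \(r-q\) values used only in \(\mathbf{k}\). List these value "slots" in a fixed order, say the positions of \(\mathbf{j}\) first and then the unshared positions of \(\mathbf{k}\). Assign them distinct elements of \(\{1,\dots,m\}\) injectively, in \((m)_{2r-q}\) ways.

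This construction produces an element of \(I_m^r\times I_m^r\) with overlap exactly \(q\). Distinctness within \(\mathbf{j}\) and within \(\mathbf{k}\) holds because all slot values are distinct. The overlap is exactly \(q\) because the unshared values of \(\mathbf{j}\) differ from those of \(\mathbf{k}\).

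The one step needing care is showing that this map is a bijection, so that there is no overcounting. Given a pair with overlap \(q\), its data are uniquely determined: the shared positions in \(\mathbf{j}\) are those \(a\) with \(j_a\in J(\mathbf{k})\); similarly for \(\mathbf{k}\); the matching sends \(a\) to the unique \(b\) with \(k_b=j_a\); and the injective value assignment is then read off. Hence
\[
  N_q=\binom{r}{q}^2 q!\,(m)_{2r-q}.
\]
Substituting this into the grouped sum yields the stated identity.

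As a sanity check, the case \(d=0\) gives \(\sum_q N_q=(m)_r^2\), which is the total number of pairs. This confirms the count, with the convention \((m)_k=0\) when \(k>m\).
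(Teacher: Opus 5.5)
Your proposal is correct and follows the paper's proof: both group ordered pairs by overlap size \(q\), count \(\binom{r}{q}^2 q!\,(m)_{2r-q}\) pairs per class by choosing shared positions, matching them and then assigning the \(2r-q\) distinct indices, and weight each class by \(\binom{q}{d}\). Your explicit bijectivity check is a careful addition that the paper leaves implicit.
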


Since \(q\ge d\) in every summand on the right-hand side of
\cref{eq:variance-overlap-count} implies
\((m)_{2r-q}\le (m)_{2r-d}\), this gives
\[
    \sum_{\mathbf{j},\mathbf{k}\in I_m^r}
    \binom{q(\mathbf{j},\mathbf{k})}{d}
    \le
    C_r (m)_{2r-d},
\]
where \(C_r\) depends only on \(r\).
Plugging this into \cref{eq:Id-count-start} on the same high-probability event,
\[
    |\hat{V}_d|
    \le
    \frac{C_r \sigma^{2d} C_{\mu_{2r-2d}} M^r \mcaN_2^{(r)}(\lambda)}{n}
    \frac{(m)_{2r-d}}{(m)_r^2}.
\]
Now
\[
    \frac{(m)_{2r-d}}{(m)_r^2}
    =
    \frac{(m-r)_{r-d}}{(m)_r}
    =
    \frac{1}{(m)_d}
    \frac{(m-r)_{r-d}}{(m-d)_{r-d}}
    \le
    \frac{1}{(m)_d},
\]
so for \(d\ge 2\),
\[
    \frac{(m)_{2r-d}}{(m)_r^2}
    \le
    \frac{1}{m(m-1)}.
\]
Hence
\begin{equation}\label{eq:Id-high-order}
    |\hat{V}_d|
    \le
    \frac{C_r \sigma^{2d} C_{\mu_{2r-2d}} M^r \mcaN_2^{(r)}(\lambda)}{n m(m-1)},
    \qquad
    d\ge 2.
\end{equation}

Summing \cref{eq:Id-high-order} over \(d=2,\dots,r\) and using that \(r\) is fixed,
\[
    \sum_{d=2}^r |\hat{V}_d|
    \le
    \frac{C_r M^r \mcaN_2^{(r)}(\lambda)}{n m(m-1)}
    \sum_{d=2}^r \sigma^{2d} C_{\mu_{2r-2d}}.
\]
Finally, \(\mcaN_2^{(r)}(\lambda)\asymp \lambda^{-1/\beta}(\ln \frac{1}{\lambda})^{r-1}\); see \Cref{lem:appendix-effective-dimension}.
Under the assumptions
\[
    v_r \to 0,
    \qquad
    (\ln \tfrac{1}{\lambda})^{r-1}=o(m),
\]
we therefore obtain
\begin{equation}\label{eq:high-order-negligible}
    \sum_{d=2}^r |\hat{V}_d|
    =
    O_\bbP \left(
    \frac{\lambda^{-1/\beta}(\ln \frac{1}{\lambda})^{r-1}}{n m(m-1)}
    \right)
    =
    o_\bbP \left(
    \frac{\lambda^{-1/\beta}}{mn}
    \right).
\end{equation}
So the only terms that can contribute at the leading order are \(\hat{V}_0\) and \(\hat{V}_1\).

\subsection{Estimating the Noise-free Term \texorpdfstring{\(\hat{V}_0\)}{V0}}
\label{sec:variance-v0}

The following inequality was also used by \citet{li2024GeneralizationErrorCurves}.
\begin{lemma}\label{lem:appendix-operator-concentration}
    Let \(T_{r,\lambda} \coloneqq T_r+\lambda I\).  If \(\lambda\to0\) satisfies
    \(v_r=\frac{\lambda^{-1/\beta}(\ln \frac{1}{\lambda})^{r}}{mn}\to 0\), then
    \[
        \|T_{r,\lambda}^{-1/2}(\hat{T}_r-T_r)T_{r,\lambda}^{-1/2}\|
        =
        O_\bbP(\sqrt{v_r}),
    \]
    and, with probability tending to one,
    \[
        \|T_{r,\lambda}^{-1/2}(\hat{T}_r-T_r)T_{r,\lambda}^{-1/2}\|
        \le \frac{1}{2},
        \qquad
        \|T_{r,\lambda}^{1/2} \hat{T}_{r,\lambda}^{-1} T_{r,\lambda}^{1/2}\|
        \le 2.
    \]
\end{lemma}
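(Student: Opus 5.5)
The second and third claims follow from the first by a Neumann-series argument, so the work lies in proving the first. Write
\[
\Delta \coloneqq T_{r,\lambda}^{-1/2}(\hat T_r-T_r)T_{r,\lambda}^{-1/2}.
\]
Then \(\hat T_{r,\lambda}=T_{r,\lambda}^{1/2}(I+\Delta)T_{r,\lambda}^{1/2}\), and hence \(T_{r,\lambda}^{1/2}\hat T_{r,\lambda}^{-1}T_{r,\lambda}^{1/2}=(I+\Delta)^{-1}\). On the event \(\|\Delta\|\le 1/2\) this inverse has norm at most \(2\). Since \(\|\Delta\|=O_\bbP(\sqrt{v_r})\) and \(v_r\to0\), this event has probability tending to one. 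It therefore suffices to prove the \(O_\bbP(\sqrt{v_r})\) bound.

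For that bound, view \(\hat T_r\) as the curve-averaged \(U\)-statistic-type operator
\[
\frac1n\sum_{i=1}^n\frac{1}{(m)_r}\sum_{\bm j\in I_m^r}h(t_{i j_1},\dots,t_{i j_r}),
\qquad
h(\bm x)=T_{r,\lambda}^{-1/2}K(\bm x)\otimes T_{r,\lambda}^{-1/2}K(\bm x),
\]
with mean \(T_{r,\lambda}^{-1/2}T_rT_{r,\lambda}^{-1/2}\). The design points are i.i.d.\ across both \(i\) and \(j\), and \(h\) depends only on the design, so the within-curve dependence of \(X_i\) plays no role here. The natural tool is the operator-valued Bernstein inequality for \(k\)-th order \(U\)-statistics from Appendix~\ref{app:concentration}. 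That inequality comes from Hoeffding's blocking representation inside the Tropp--Minsker Laplace-transform argument. Blocking writes the within-curve average as an average over permutations of sums of \(\lfloor m/r\rfloor\) independent terms \(h(t_{i\sigma(1)},\dots,t_{i\sigma(r)})\). Across curves this yields effectively \(N\asymp mn/r\) independent summands, and the matrix Laplace transform is convex in the average over permutations.

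The Bernstein parameters come from the regular RKHS assumption. For the uniform bound, Lemma~\ref{lem:appendix-supnorm} and Assumption~\ref{ass:prelim-regular-rkhs} give
\[
\|h(\bm x)\|=\|T_{r,\lambda}^{-1/2}K(\bm x)\|_{\caH^{\otimes r}}^2=\sum_{\bm i}\frac{\Lambda_{\bm i}}{\Lambda_{\bm i}+\lambda}\prod_a e_{i_a}(x_a)^2\le CM^r\mcaN_1^{(r)}(\lambda).
\]
This uses the product structure together with the fact that the regularity bound holds uniformly over partial spectral sums, and it is of order \(\lambda^{-1/\beta}(\ln\frac1\lambda)^{r-1}\) by Lemma~\ref{lem:appendix-effective-dimension}. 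For the variance proxy, \(\E h^2\preceq \sup\|h\|\cdot T_{r,\lambda}^{-1}T_r\), so \(\|\E h^2\|\le CM^r\mcaN_1^{(r)}(\lambda)\). The effective rank \(\Tr(T_{r,\lambda}^{-1}T_r)/\|T_{r,\lambda}^{-1}T_r\|\) is \(\mcaN_1^{(r)}(\lambda)\), which contributes a \(\ln\frac1\lambda\) factor in the Minsker bound.

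Combining these, with probability at least \(1-\delta\),
\[
\|\Delta\|\lesssim \sqrt{\frac{\mcaN_1^{(r)}(\lambda)\,\ln(\mcaN_1^{(r)}/\delta)}{mn}}+\frac{\mcaN_1^{(r)}(\lambda)\,\ln(\mcaN_1^{(r)}/\delta)}{mn}.
\]
For fixed \(\delta\), \(\mcaN_1^{(r)}\ln\mcaN_1^{(r)}\asymp \lambda^{-1/\beta}(\ln\frac1\lambda)^{r}\), so the right-hand side is \(O(\sqrt{v_r}+v_r)=O(\sqrt{v_r})\).

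The main obstacle is the \(U\)-statistic step. A naive bound that treats the \(n\) curves as the only independent units gives a rate in \(n\) rather than \(mn\). The blocking and convexity argument must be carried out correctly inside the operator exponential, keeping the effective sample size \(\asymp mn\) while retaining the effective-rank dimension factor. I would invoke the Appendix~\ref{app:concentration} inequality directly and check that its variance and range parameters match those computed above.
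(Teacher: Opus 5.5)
Your proposal is correct and is essentially the paper's proof. The paper applies Lemma~\ref{lem:appendix-U-bernstein-operator} to $T_{r,\lambda}^{-1/2}(K(\bm{t})\otimes K(\bm{t}))T_{r,\lambda}^{-1/2}$ with centered range $2M^r\mcaN_1^{(r)}(\lambda)$, variance proxy $M^r\mcaN_1^{(r)}(\lambda)\,T_rT_{r,\lambda}^{-1}$ and effective rank $(\|T_r\|+\lambda)\mcaN_1^{(r)}(\lambda)/\|T_r\|\asymp\mcaN_1^{(r)}(\lambda)$, obtains a $C_\delta\sqrt{v_r}$ bound for each fixed $\delta$, and concludes via $T_{r,\lambda}^{1/2}\hat{T}_{r,\lambda}^{-1}T_{r,\lambda}^{1/2}=(I+\Delta)^{-1}$ on $\{\|\Delta\|\le 1/2\}$, exactly as you outline.
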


We now return to the coefficient of \(\sigma^0\) from \Cref{eq:variance-polynomial}.
By definition,
\[
    \hat{V}_0
    =
    \frac{1}{n^2(m)_r^2}
    \sum_{i=1}^n
    \sum_{\mathbf{j},\mathbf{k}\in I_m^r}
    \Sigma_r(t_{i\mathbf{j}};t_{i\mathbf{k}})
    \left\langle
    \hat{T}_{r,\lambda}^{-1} K(t_{i\mathbf{j}}),
    \hat{T}_{r,\lambda}^{-1} K(t_{i\mathbf{k}})
    \right\rangle_{L^2}.
\]
It is convenient to package the covariance kernel into an empirical operator.
Define
\[
    \hat{G}_0
    \coloneqq
    \frac{1}{n(m)_r^2}
    \sum_{i=1}^n
    \sum_{\mathbf{j},\mathbf{k}\in I_m^r}
    \Sigma_r(t_{i\mathbf{j}};t_{i\mathbf{k}})
    K(t_{i\mathbf{k}})\otimes K(t_{i\mathbf{j}}),
    \qquad
    G_0 \coloneqq \E[\hat{G}_0],
\]
where \(u\otimes v\) denotes the rank-one operator \(f\mapsto \ang{f,v}_{\caH^{\otimes r}}u\).

\begin{lemma}\label{lem:I0-G0-psd}
    The operators \(\hat{G}_0\) and \(G_0\) are self-adjoint and positive semidefinite.
\end{lemma}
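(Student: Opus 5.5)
The plan is to recognize \(\hat{G}_0\) as a conditional covariance operator, which makes both claims immediate. For each curve \(i\), define the random element of \(\caH^{\otimes r}\)
\[
  W_i \coloneqq \frac{1}{\falling{m}{r}}\sum_{\bm{j}\in I_m^r}\Bigl(\prod_{a=1}^r X_i(t_{ij_a})-\mu_r(t_{i\bm{j}})\Bigr)K(t_{i\bm{j}}).
\]
Conditional on \(\mca{T}\), the scalar coefficients have covariance \(\E[\cdot\,\cdot\mid\mca{T}]=\Sigma_r(t_{i\bm{j}};t_{i\bm{k}})\), because the paths are independent of the design and \(\Sigma_r\) is the covariance of the product process. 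So, as a first step, we expect the identity
\[
  \hat{G}_0=\frac{1}{n}\sum_{i=1}^n \E\bigl[W_i\otimes W_i \,\big|\, \mca{T}\bigr].
\]
To verify it, expand \(W_i\otimes W_i\) bilinearly into \(\sum_{\bm{j},\bm{k}}(\cdot)(\cdot)K(t_{i\bm{k}})\otimes K(t_{i\bm{j}})\) divided by \(\falling{m}{r}^2\). Then take the conditional expectation termwise, which is justified since the sums are finite and \(\abs{\Sigma_r}\le C_{\Sigma_r}\) with \(\norm{K(\bm{t})}\le\kappa^r\).

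Given this identity, self-adjointness and positivity follow pointwise. For any \(f,g\in\caH^{\otimes r}\) we have \(\ang{(W\otimes W)f,g}=\ang{f,W}\ang{W,g}\), which is symmetric in \(f,g\). We also have \(\ang{(W\otimes W)f,f}=\ang{f,W}^2\ge0\). Both properties survive conditional expectation and averaging over \(i\). So
\[
  \ang{\hat{G}_0 f,f}=\frac1n\sum_i\E\bigl[\ang{f,W_i}^2\mid\mca{T}\bigr]\ge0.
\]
Self-adjointness can also be read off directly from the symmetry \(\Sigma_r(\bm{t};\bm{s})=\Sigma_r(\bm{s};\bm{t})\): swapping \(\bm{j}\leftrightarrow\bm{k}\) maps \(K(t_{i\bm{k}})\otimes K(t_{i\bm{j}})\) to its adjoint.

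For \(G_0=\E[\hat{G}_0]\), note that \(\hat{G}_0\) is uniformly bounded in operator (indeed trace) norm by \(C_{\Sigma_r}\kappa^{2r}\). The expectation is therefore a well-defined Bochner integral. Self-adjointness and \(\ang{G_0f,f}=\E\ang{\hat{G}_0f,f}\ge0\) pass to the limit by linearity and continuity of \(A\mapsto\ang{Af,g}\).

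The only real care is in the first step: making sure the conditional covariance of the coefficients is exactly \(\Sigma_r\). The indices within \(\bm{j}\) are distinct, but \(\bm{j}\) and \(\bm{k}\) may overlap. Overlap is harmless here because \(\Sigma_r(\bm{t};\bm{s})\) is defined for arbitrary argument tuples, including repeated points. There is no noise in this term, so no diagonal correction arises.
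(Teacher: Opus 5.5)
Your proof is correct and is essentially the paper's argument. The paper gets self-adjointness from the symmetry of \(\Sigma_r\), and positivity by writing \(\ang{\hat{G}_0 f,f}_{\caH^{\otimes r}}=\frac{1}{n}\sum_i \E[Z_i(f)^2\mid\mca{T}]\), where \(Z_i(f)\) is exactly your \(\ang{f,W_i}_{\caH^{\otimes r}}\); it then passes both properties to \(G_0=\E[\hat{G}_0]\). Your conditional-covariance-operator packaging and the Bochner-integrability remark for \(G_0\) only spell out what the paper leaves implicit.
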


\begin{proof}
    Since \(\Sigma_r(\bm{t};\bm{t}')=\Sigma_r(\bm{t}';\bm{t})\), the definition of \(\hat{G}_0\) is symmetric in \(\mathbf{j}\) and \(\mathbf{k}\), so \(\hat{G}_0\) is self-adjoint.
    For any \(f\in \caH^{\otimes r}\),
    \[
        \ang{\hat{G}_0 f,f}_{\caH^{\otimes r}}
        =
        \frac{1}{n}
        \sum_{i=1}^n
        \frac{1}{(m)_r^2}
        \sum_{\mathbf{j},\mathbf{k}\in I_m^r}
        \Sigma_r(t_{i\mathbf{j}};t_{i\mathbf{k}})
        f(t_{i\mathbf{j}})f(t_{i\mathbf{k}}).
    \]
    If we set
    \[
        Z_i(f)
        \coloneqq
        \frac{1}{(m)_r}
        \sum_{\mathbf{j}\in I_m^r}
        f(t_{i\mathbf{j}})
        \bigl(
        X_i(t_{ij_1})\cdots X_i(t_{ij_r})
        -
        \mu_r(t_{i\mathbf{j}})
        \bigr),
    \]
    then the right-hand side is exactly
    \[
        \frac{1}{n}\sum_{i=1}^n \E[Z_i(f)^2\mid \mca{T}]
        \ge 0.
    \]
    Hence \(\hat{G}_0\) is positive semidefinite.
    The same properties pass to \(G_0=\E[\hat{G}_0]\).
\end{proof}

With this notation, \(\hat{V}_0\) has the operator form
\begin{equation}\label{eq:I0-operator-form}
    \hat{V}_0
    =
    \frac{1}{n}
    \Tr\bigl(
    T_r^{1/2}
    \hat{T}_{r,\lambda}^{-1}
    \hat{G}_0
    \hat{T}_{r,\lambda}^{-1}
    T_r^{1/2}
    \bigr).
\end{equation}
To separate the empirical covariance part from the empirical sampling operator, we introduce
\begin{equation}\label{eq:I0-tilde-pop}
    \tilde{V}_0
    \coloneqq
    \frac{1}{n}
    \Tr\bigl(
    T_r^{1/2}
    T_{r,\lambda}^{-1}
    \hat{G}_0
    T_{r,\lambda}^{-1}
    T_r^{1/2}
    \bigr),
    \qquad
    V_0
    \coloneqq
    \frac{1}{n}
    \Tr\bigl(
    T_r^{1/2}
    T_{r,\lambda}^{-1}
    G_0
    T_{r,\lambda}^{-1}
    T_r^{1/2}
    \bigr).
\end{equation}
Because \(\hat{G}_0\) and \(G_0\) are positive semidefinite, these traces can also be written as Hilbert-Schmidt norms:
\[
    \hat{V}_0
    =
    \frac{1}{n}
    \|T_r^{1/2} \hat{T}_{r,\lambda}^{-1} \hat{G}_0^{1/2}\|_{HS}^2,
    \qquad
    \tilde{V}_0
    =
    \frac{1}{n}
    \|T_r^{1/2} T_{r,\lambda}^{-1} \hat{G}_0^{1/2}\|_{HS}^2,
    \qquad
    V_0
    =
    \frac{1}{n}
    \|T_r^{1/2} T_{r,\lambda}^{-1} G_0^{1/2}\|_{HS}^2.
\]

The comparison naturally splits into two pieces:
\begin{equation}\label{eq:I0-basic-split}
    \hat{V}_0-V_0
    =
    (\hat{V}_0-\tilde{V}_0)
    +
    (\tilde{V}_0-V_0).
\end{equation}
The second term already has the form
\begin{equation}\label{eq:I0-second-gap}
    \tilde{V}_0-V_0
    =
    \frac{1}{n}
    \Tr\bigl(
    T_r^{1/2}
    T_{r,\lambda}^{-1}
    (\hat{G}_0-G_0)
    T_{r,\lambda}^{-1}
    T_r^{1/2}
    \bigr).
\end{equation}
For the first term, the resolvent identity gives
\[
    \hat{T}_{r,\lambda}^{-1}-T_{r,\lambda}^{-1}
    =
    \hat{T}_{r,\lambda}^{-1}(T_r-\hat{T}_r)T_{r,\lambda}^{-1}.
\]
Hence
\begin{align}
    |\hat{V}_0-\tilde{V}_0|
    \le\  &
    \frac{1}{n}
    \|T_r^{1/2}(\hat{T}_{r,\lambda}^{-1}-T_{r,\lambda}^{-1})\hat{G}_0^{1/2}\|_{HS}
    \notag         \\
          & \times
    \Bigl(
    \|T_r^{1/2} \hat{T}_{r,\lambda}^{-1} \hat{G}_0^{1/2}\|_{HS}
    +
    \|T_r^{1/2} T_{r,\lambda}^{-1} \hat{G}_0^{1/2}\|_{HS}
    \Bigr). \label{eq:I0-first-gap-start}
\end{align}
Set
\[
    \Delta_r
    \coloneqq
    T_{r,\lambda}^{-1/2}(\hat{T}_r-T_r)T_{r,\lambda}^{-1/2},
    \qquad
    H_0
    \coloneqq
    T_{r,\lambda}^{-1/2} \hat{G}_0^{1/2}.
\]
Then
\[
    T_r^{1/2}(\hat{T}_{r,\lambda}^{-1}-T_{r,\lambda}^{-1})\hat{G}_0^{1/2}
    =
    T_r^{1/2} T_{r,\lambda}^{-1/2}
    \Delta_r
    T_{r,\lambda}^{1/2} \hat{T}_{r,\lambda}^{-1} T_{r,\lambda}^{1/2}
    H_0.
\]
Therefore, by \(T_r^{1/2} T_{r,\lambda}^{-1/2} \le I\),
\[
    \|T_r^{1/2}(\hat{T}_{r,\lambda}^{-1}-T_{r,\lambda}^{-1})\hat{G}_0^{1/2}\|_{HS}
    \le
    \|\Delta_r\|
    \|T_{r,\lambda}^{1/2} \hat{T}_{r,\lambda}^{-1} T_{r,\lambda}^{1/2}\|
    \|H_0\|_{HS}.
\]
By \Cref{lem:appendix-operator-concentration}, when $v_r \to0$, with probability tending to one we are on the event
\[
    \|\Delta_r\|\le \frac{1}{2},
    \qquad
    \|T_{r,\lambda}^{1/2} \hat{T}_{r,\lambda}^{-1} T_{r,\lambda}^{1/2}\|\le 2.
\]
Hence
\begin{equation}\label{eq:I0-difference-HS}
    \|T_r^{1/2}(\hat{T}_{r,\lambda}^{-1}-T_{r,\lambda}^{-1})\hat{G}_0^{1/2}\|_{HS}
    \le
    2\|\Delta_r\| \|H_0\|_{HS}.
\end{equation}
Also,
\begin{align}
    \|T_r^{1/2} T_{r,\lambda}^{-1} \hat{G}_0^{1/2}\|_{HS}
     & =
    \|T_r^{1/2} T_{r,\lambda}^{-1/2} H_0\|_{HS}
    \notag \\
     & \le
    \|H_0\|_{HS}, \label{eq:I0-base-HS}
\end{align}
and therefore
\begin{align}
    \|T_r^{1/2} \hat{T}_{r,\lambda}^{-1} \hat{G}_0^{1/2}\|_{HS}
     & \le
    \|T_r^{1/2}(\hat{T}_{r,\lambda}^{-1}-T_{r,\lambda}^{-1})\hat{G}_0^{1/2}\|_{HS}
    +
    \|T_r^{1/2} T_{r,\lambda}^{-1} \hat{G}_0^{1/2}\|_{HS}
    \notag \\
     & \le
    (2\|\Delta_r\|+1)\|H_0\|_{HS}. \label{eq:I0-hat-HS}
\end{align}
Substituting \Cref{eq:I0-difference-HS,eq:I0-base-HS,eq:I0-hat-HS} into \Cref{eq:I0-first-gap-start}, we get
\begin{align*}
    |\hat{V}_0-\tilde{V}_0|
     & \le
    \frac{1}{n}
    \bigl(
    2\|\Delta_r\| \|H_0\|_{HS}
    \bigr)
    \bigl(
    (2\|\Delta_r\|+1)\|H_0\|_{HS}
    +
    \|H_0\|_{HS}
    \bigr) \\
     & =
    \frac{1}{n}
    4\|\Delta_r\|(\|\Delta_r\|+1)\|H_0\|_{HS}^2.
\end{align*}
Since \(\|\Delta_r\|\le 1/2\) on the same event, \(4\|\Delta_r\|(\|\Delta_r\|+1)\le 6\|\Delta_r\|\). Using \(\|H_0\|_{HS}^2=\Tr(T_{r,\lambda}^{-1/2} \hat{G}_0 T_{r,\lambda}^{-1/2})\), we obtain
\begin{equation}\label{eq:I0-first-gap-trace}
    |\hat{V}_0-\tilde{V}_0|
    \le
    \frac{C}{n}
    \|\Delta_r\|
    \Tr\bigl(
    T_{r,\lambda}^{-1/2} \hat{G}_0 T_{r,\lambda}^{-1/2}
    \bigr).
\end{equation}
Finally,
\begin{equation}\label{eq:I0-trace-split}
    \Tr\bigl(
    T_{r,\lambda}^{-1/2} \hat{G}_0 T_{r,\lambda}^{-1/2}
    \bigr)
    \le
    \Tr\bigl(
    T_{r,\lambda}^{-1/2} G_0 T_{r,\lambda}^{-1/2}
    \bigr)
    +
    \left|
    \Tr\bigl(
    T_{r,\lambda}^{-1/2}
    (\hat{G}_0-G_0)
    T_{r,\lambda}^{-1/2}
    \bigr)
    \right|.
\end{equation}

\begin{proposition}\label{prop:I0-main}
    Assume \(\Sigma_r \neq 0\), suppose $\lambda\to0$ satisfies
    \[
        \left(\ln \frac{1}{\lambda}\right)^{r-1}=o(m),
        \qquad
        v_r \to 0.
    \]
    Then
    \begin{gather*}
        V_0
        =
        \frac{1}{n}V_{\mathrm{curve}}(\lambda;\Sigma_r)
        +
        \frac{1}{mn}V_{\mathrm{point}}(\lambda;\Sigma_r)
        +
        o \left(
        \frac{\lambda^{-1/\beta}}{mn}
        \right),    \\
        \frac{1}{n}V_{\mathrm{curve}}(\lambda;\Sigma_r)
        \asymp
        \frac{1}{n},
        \quad
        \frac{1}{mn}V_{\mathrm{point}}(\lambda;\Sigma_r)
        =
        O \left(
        \frac{\lambda^{-1/\beta}}{mn}
        \right),     \\
        \hat{V}_0
        =
        V_0
        +
        o_\bbP \left(
        \frac{1}{n}
        +
        \frac{\lambda^{-1/\beta}}{mn}
        \right).
    \end{gather*}
\end{proposition}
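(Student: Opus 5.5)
The plan is to first compute \(G_0=\E[\hat{G}_0]\) exactly by sorting the pairs \((\mathbf{j},\mathbf{k})\in I_m^r\times I_m^r\) according to their overlap \(q=q(\mathbf{j},\mathbf{k})\). Then I will translate each overlap class into the population quantities \(V_{\mathrm{curve}}\), \(V_{\mathrm{point}}\) and a remainder, bound the orders of these quantities, and finally control \(\hat{V}_0-V_0\) through the split \cref{eq:I0-basic-split}.

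\emph{Step 1: expansion of \(V_0\).} For \(q=0\), the points \(t_{i\mathbf{j}},t_{i\mathbf{k}}\) are \(2r\) independent \(\rho\)-draws. Their relative count is \((m)_{2r}/(m)_r^2=1-O(1/m)\), and after conjugation by \(T_r^{1/2}T_{r,\lambda}^{-1}\) they give exactly
\[
  \frac1n V_{\mathrm{curve}}(\lambda;\Sigma_r)\bigl(1-O(1/m)\bigr).
\]
This uses the identity
\[
  \Tr\bigl(T_r^{1/2}T_{r,\lambda}^{-1}(K(\bm t')\otimes K(\bm t))T_{r,\lambda}^{-1}T_r^{1/2}\bigr)=\ang{T_{r,\lambda}^{-1}K(\bm t),T_{r,\lambda}^{-1}K(\bm t')}_{L^2}.
\]
Since \(V_{\mathrm{curve}}=O(1)\), the \(O(1/(mn))\) correction is \(o(\lambda^{-1/\beta}/(mn))\).

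For \(q=1\), the relative count is \(\binom{r}{1}^2(m)_{2r-1}/(m)_r^2=r^2/m\,(1+O(1/m))\). The shared coordinate may sit in any position of either block. Because \(\Sigma_r\), \(K\) and \(T_r\) are invariant under permuting coordinates within a block, every such configuration reduces to the one with the shared point \(\xi\) in the first slot. This yields \(\frac{1}{mn}V_{\mathrm{point}}\), with the combinatorial prefactor matched to the definition, up to a \(1+O(1/m)\) factor.

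For \(q\ge2\), \cref{lem:variance-overlap-count} gives relative count \(O(m^{-2})\). I will bound each such term by \(C_{\Sigma_r}\sup_{\bm t}\|T_{r,\lambda}^{-1}K(\bm t)\|_{L^2}^2\). The deterministic analogue of \cref{lem:variance-phi-hatT} (\Cref{lem:appendix-phi-T}) bounds this by \(CM^r\mcaN_2^{(r)}(\lambda)\asymp\lambda^{-1/\beta}(\ln\frac1\lambda)^{r-1}\). The total is then \(O(\lambda^{-1/\beta}(\ln\frac1\lambda)^{r-1}/(m^2n))=o(\lambda^{-1/\beta}/(mn))\), and this is exactly where \((\ln\frac1\lambda)^{r-1}=o(m)\) enters.

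\emph{Step 2: orders of the deterministic terms.} Expand \(L^2\) inner products in the tensor eigenbasis:
\[
  \ang{T_{r,\lambda}^{-1}K(\bm t),T_{r,\lambda}^{-1}K(\bm t')}_{L^2}=\sum_{\bm i}\Bigl(\frac{\Lambda_{\bm i}}{\Lambda_{\bm i}+\lambda}\Bigr)^2e_{\bm i}(\bm t)e_{\bm i}(\bm t').
\]
Hence \(V_{\mathrm{curve}}=\sum_{\bm i}(\Lambda_{\bm i}/(\Lambda_{\bm i}+\lambda))^2\ang{\Sigma_r,e_{\bm i}\otimes e_{\bm i}}\). All coefficients \(\ang{\Sigma_r,e_{\bm i}\otimes e_{\bm i}}\) are nonnegative, since \(\Sigma_r\) is a covariance kernel. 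Their sum is the trace of \(\Sigma_r\) as an operator, which is finite because \(\Sigma_r\in([H]^{s_1})^{\otimes2r}\) with \(s_1>1/\beta\). The trace is strictly positive because \(\Sigma_r\neq0\). By monotone convergence, \(V_{\mathrm{curve}}\to\Tr\Sigma_r\in(0,\infty)\), so \(\frac1nV_{\mathrm{curve}}\asymp\frac1n\).

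For \(V_{\mathrm{point}}\), I will use the same expansion with the shared variable \(\xi\) integrated out. This leaves factors \(\int e_{i_1}(\xi)e_{i'_1}(\xi)\Sigma_r(\xi,\cdot;\xi,\cdot)\,d\rho(\xi)\). I will bound these using boundedness of \(\Sigma_r\), positive semidefiniteness of \(\Sigma_r(\xi,\cdot;\xi,\cdot)\) for fixed \(\xi\), and the regular-RKHS bound \(\sup_\xi\sum_{m\le N}k_m(\xi,\xi)\le M\sum_{m\le N}d_m\). This should give \(O(\sum_{i}\lambda_i^2/(\lambda_i+\lambda)^2)=O(\lambda^{-1/\beta})\); I expect it to be the content of \Cref{lem:appendix-I0-expectations}.

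\emph{Step 3: fluctuations.} For \(\hat{V}_0-\tilde{V}_0\), I use \cref{eq:I0-first-gap-trace} and \cref{eq:I0-trace-split}. By \Cref{lem:appendix-operator-concentration}, \(\|\Delta_r\|=O_\bbP(\sqrt{v_r})=o_\bbP(1)\). Moreover, \(\Tr(T_{r,\lambda}^{-1/2}G_0T_{r,\lambda}^{-1/2})\) admits the same overlap decomposition as in Step 1, now with the weight \(\Lambda_{\bm i}/(\Lambda_{\bm i}+\lambda)\). This gives \(O(1+\lambda^{-1/\beta}/m)\), and therefore a contribution \(o_\bbP(\frac1n+\frac{\lambda^{-1/\beta}}{mn})\).

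The remaining pieces are \(\tilde V_0-V_0\) in \cref{eq:I0-second-gap} and the fluctuation of \(\Tr(T_{r,\lambda}^{-1/2}(\hat G_0-G_0)T_{r,\lambda}^{-1/2})\). Each is \(\frac1n\sum_i\) of a scalar \(U\)-statistic of order up to \(2r\) within curve \(i\), and I will apply the scalar Bernstein-type \(U\)-statistic inequality of Appendix~\ref{app:concentration}.

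This is the step I expect to be the main obstacle. The kernel \(h\) of the \(U\)-statistic has sup-norm of order \(\mcaN_2^{(r)}(\lambda)\), which can be large. So a crude bound fails, and the argument must use the Hoeffding decomposition with variances of the projections. The leading first-order projection has variance \(O(1)\) times the square of its mean scale. The higher projections carry extra powers of \(1/m\) together with a sup-norm of order \(\lambda^{-1/\beta}\) and logarithmic factors. I would verify term by term that the resulting deviation is \(O_\bbP\bigl(n^{-1/2}(1+\lambda^{-1/\beta}/m)+\mcaN_2^{(r)}(\lambda)/(mn)\bigr)\). Multiplied by \(\frac1n\), this is \(o_\bbP(\frac1n+\frac{\lambda^{-1/\beta}}{mn})\) under \(v_r\to0\) and \((\ln\frac1\lambda)^{r-1}=o(m)\).
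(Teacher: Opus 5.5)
There is a genuine gap in your treatment of the one-overlap class $q=1$ in Step~1, and it matters for every $r\ge2$. It is true that $\Sigma_r$ is invariant under separate permutations of the two blocks. The other factor, however, $\ang{T_{r,\lambda}^{-1}K(\bm t),T_{r,\lambda}^{-1}K(\bm t')}_{L^2}=\sum_{\bm i}\xk{\frac{\Lambda_{\bm i}}{\Lambda_{\bm i}+\lambda}}^2\prod_{\ell=1}^r e_{i_\ell}(t_\ell)e_{i_\ell}(t'_\ell)$, pairs slot $\ell$ of $\bm t$ with slot $\ell$ of $\bm t'$. It is therefore invariant only under \emph{simultaneous} permutations of both blocks.

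Consequently the $r^2$ placements of the shared point $\xi$ (slot $a$ of the first block, slot $b$ of the second) do not all reduce to the first-slot configuration. They split into two classes:
\begin{itemize}
\item the $r$ aligned placements $a=b$, each contributing $A_\lambda\coloneqq\E\bigl[\Sigma_r(\xi,\tilde{\bm t};\xi,\tilde{\bm t}')\ang{T_{r,\lambda}^{-1}K(\xi,\tilde{\bm t}),T_{r,\lambda}^{-1}K(\xi,\tilde{\bm t}')}_{L^2}\bigr]$;
\item the $r(r-1)$ misaligned placements $a\neq b$, each contributing the analogous expectation $B_\lambda$ with arguments $(\xi,t_2,\dots,t_r;t_1',\xi,t_3',\dots,t_r')$.
\end{itemize}
Since $V_{\mathrm{point}}=rA_\lambda$, the factor $r$ in its definition counts only the aligned placements. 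If all $r^2$ placements really equalled $A_\lambda$, your $q=1$ term would be $\frac{r}{mn}V_{\mathrm{point}}$, not $\frac{1}{mn}V_{\mathrm{point}}$. The correct $q=1$ contribution is $\frac{1}{mn}\bigl(V_{\mathrm{point}}(\lambda;\Sigma_r)+r(r-1)B_\lambda\bigr)(1+O(1/m))$. So the claimed expansion of $V_0$ requires $B_\lambda=o(\lambda^{-1/\beta})$, and your proposal gives no argument for this.

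The same misaligned term reappears in your Step~3 bound $\Tr(T_{r,\lambda}^{-1/2}G_0T_{r,\lambda}^{-1/2})=O(1+\lambda^{-1/\beta}/m)$. There it must likewise be shown to be $O(\lambda^{-1/\beta})$. The crude sup-norm bound $\mcaN_1^{(r)}(\lambda)$ loses a factor $(\ln\frac1\lambda)^{r-1}$ that $\sqrt{v_r}$ need not absorb.

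This is a genuinely different term. In $B_\lambda$ the shared point carries two different eigen-indices $i_1,i_2$, so integrating in $\xi$ does not produce the diagonal sum $\sum_i\xk{\frac{\lambda_i}{\lambda_i+\lambda}}^2e_i(\xi)^2$ that drives the $\lambda^{-1/\beta}$ growth of $A_\lambda$. This still has to be proved. The paper does so in \Cref{lem:appendix-I0-expectations}, showing $B_\lambda=O(1)$: it expands $\Sigma_r$ in $([H]^{s_1})^{\otimes 2r}$, applies Cauchy--Schwarz, and uses $\iint L_{s_1}(x,y)^4\,d\rho(x)\,d\rho(y)\le M_{s_1}^8$ with $L_{s_1}(x,y)=\sum_j\lambda_j^{s_1}e_j(x)e_j(y)$.

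A cruder bound would also do. For fixed $\xi$, the kernel factor is an orthonormal expansion in the remaining $2r-2$ variables with coefficients $\xk{\frac{\Lambda_{\bm i}}{\Lambda_{\bm i}+\lambda}}^2e_{i_1}(\xi)e_{i_2}(\xi)$. Cauchy--Schwarz against the bounded function $\Sigma_r(\xi,t_2,\dots,t_r;t_1',\xi,t_3',\dots,t_r')$, together with \Cref{lem:appendix-supnorm} applied in the $i_1$ and $i_2$ coordinates, gives $|B_\lambda|\le C_{\Sigma_r}M\sqrt{\mcaN_4^{(r)}(\lambda)}=o(\lambda^{-1/\beta})$. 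The same argument with $\mcaN_2^{(r)}$ handles the resolvent trace.

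Apart from this, your plan follows the paper. In Step~3 you overestimate the difficulty: no Hoeffding decomposition is needed. For $q=0$, the $U$-statistic Bernstein inequality \Cref{lem:appendix-U-bernstein-scalar} (effective sample size $n\lfloor m/(2r)\rfloor$) suffices, combined with the second-moment bound $C_{\Sigma_r}^2\mcaN_4^{(r)}(\lambda)$ for the kernel $\Sigma_r(\bm t;\bm t')\ang{T_{r,\lambda}^{-1}K(\bm t),T_{r,\lambda}^{-1}K(\bm t')}_{L^2}$, which follows from orthonormality. For $q\ge1$, the weights $O(m^{-q})$ absorb the large sup-norm in a plain Hoeffding bound.
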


\begin{proof}
    By \Cref{prop:appendix-I0-pop-phi},
    \[
        V_0
        =
        \frac{1}{n}V_{\mathrm{curve}}(\lambda;\Sigma_r)
        +
        \frac{1}{mn}V_{\mathrm{point}}(\lambda;\Sigma_r)
        +
        o \left(
        \frac{\lambda^{-1/\beta}}{mn}
        \right).
    \]
    Since \(\Sigma_r \neq 0\) and \(\lambda\to0\), the expectation bounds in
    \Cref{lem:appendix-I0-expectations}, together with the definition of
    \(V_{\mathrm{curve}}\), give
    \[
        \frac{1}{n}V_{\mathrm{curve}}(\lambda;\Sigma_r)
        \asymp
        \frac{1}{n}.
    \]
    The corresponding one-overlap bound and the definition of
    \(V_{\mathrm{point}}\) give
    \[
        \frac{1}{mn}V_{\mathrm{point}}(\lambda;\Sigma_r)
        =
        O \left(
        \frac{\lambda^{-1/\beta}}{mn}
        \right).
    \]

    Since \(\lambda\to0\), we have \(\ln \frac{1}{\lambda}\to\infty\). Therefore
    \begin{equation}\label{eq:I0-vr-log-over-mn}
        \frac{\lambda^{-1/\beta} \xk{\ln \frac{1}{\lambda}}^{r-1}}{mn}
        =
        \frac{v_r}{\ln \frac{1}{\lambda}}
        \to 0.
    \end{equation}
    Also,
    \begin{equation}\label{eq:I0-log-over-sqrtmn}
        \frac{\xk{\ln \frac{1}{\lambda}}^{r-1}}{\sqrt{mn}}
        =
        \lambda^{1/(2\beta)}
        \xk{\ln \frac{1}{\lambda}}^{r/2-1}
        \sqrt{v_r}
        \to 0,
    \end{equation}
    because \(v_r \to0\) and \(\lambda^c(\ln \frac{1}{\lambda})^a\to0\) as \(\lambda\to0\) for every \(c>0\) and every fixed \(a\in\R\).

    We next estimate the two gaps in \Cref{eq:I0-basic-split}.

    For \(\tilde{V}_0-V_0\), \Cref{prop:appendix-I0-fluctuation-phi}
    and \Cref{eq:I0-second-gap} imply
    \begin{align*}
        |\tilde{V}_0-V_0|
        =\  &
        \frac{1}{n}
        O_\bbP
        \Biggl(
        \frac{\lambda^{-1/(2\beta)}(\ln \frac{1}{\lambda})^{(r-1)/2}}{m^{1/2} n^{1/2}}
        +
        \lambda^{-1/\beta}
        \left(\ln \frac{1}{\lambda}\right)^{r-1}
        \left(
        \frac{1}{mn}
        +
        \frac{1}{m^{3/2} n^{1/2}}
        \right)
        \Biggr).
    \end{align*}
    The first term is
    \[
        \frac{1}{n}
        \sqrt{
            \frac{\lambda^{-1/\beta}(\ln \frac{1}{\lambda})^{r-1}}{mn}
        }
        =
        o \left(\frac{1}{n}\right)
    \]
    by \Cref{eq:I0-vr-log-over-mn}. The second term is
    \[
        \frac{1}{n}
        \frac{\lambda^{-1/\beta}(\ln \frac{1}{\lambda})^{r-1}}{mn}
        =
        o \left(\frac{1}{n}\right)
    \]
    again by \Cref{eq:I0-vr-log-over-mn}. For the third term,
    \[
        \frac{1}{n}
        \frac{\lambda^{-1/\beta}(\ln \frac{1}{\lambda})^{r-1}}{m^{3/2} n^{1/2}}
        =
        \frac{\lambda^{-1/\beta}}{mn}
        \frac{(\ln \frac{1}{\lambda})^{r-1}}{\sqrt{mn}}
        =
        o \left(
        \frac{\lambda^{-1/\beta}}{mn}
        \right)
    \]
    by \Cref{eq:I0-log-over-sqrtmn}. Hence
    \[
        \tilde{V}_0-V_0
        =
        o_\bbP \left(
        \frac{1}{n}
        +
        \frac{\lambda^{-1/\beta}}{mn}
        \right).
    \]

    We now estimate the first gap in \Cref{eq:I0-basic-split}, namely \(\hat{V}_0-\tilde{V}_0\).
    By \Cref{lem:appendix-operator-concentration},
    \[
        \|\Delta_r\|=O_\bbP(\sqrt{v_r}).
    \]
    Next, \Cref{prop:appendix-I0-pop-resolvent} gives
    \[
        \Tr\bigl(
        T_{r,\lambda}^{-1/2} G_0 T_{r,\lambda}^{-1/2}
        \bigr)
        =
        O \left(
        1+\frac{\lambda^{-1/\beta}}{m}
        \right),
    \]
    while \Cref{prop:appendix-I0-fluctuation-resolvent} gives
    \begin{align*}
            & \left|
        \Tr\bigl(
        T_{r,\lambda}^{-1/2}
        (\hat{G}_0-G_0)
        T_{r,\lambda}^{-1/2}
        \bigr)
        \right|      \\
        =\  &
        O_\bbP \left(
        \frac{\lambda^{-1/(2\beta)}(\ln \frac{1}{\lambda})^{(r-1)/2}}{m^{1/2} n^{1/2}}
        +
        \lambda^{-1/\beta}
        \left(\ln \frac{1}{\lambda}\right)^{r-1}
        \left(
        \frac{1}{mn}
        +
        \frac{1}{m^{3/2} n^{1/2}}
        \right)
        \right).
    \end{align*}
    The first term above is
    \[
        \sqrt{
            \frac{\lambda^{-1/\beta}(\ln \frac{1}{\lambda})^{r-1}}{mn}
        }
        =
        o(1)
    \]
    by \Cref{eq:I0-vr-log-over-mn}. The second term is
    \[
        \frac{\lambda^{-1/\beta}(\ln \frac{1}{\lambda})^{r-1}}{mn}
        =
        o(1)
    \]
    by \Cref{eq:I0-vr-log-over-mn}, and the last one is
    \[
        \frac{\lambda^{-1/\beta}}{m}
        \frac{(\ln \frac{1}{\lambda})^{r-1}}{\sqrt{mn}}
        =
        o \left(
        \frac{\lambda^{-1/\beta}}{m}
        \right)
    \]
    by \Cref{eq:I0-log-over-sqrtmn}. Hence
    \[
        \Tr\bigl(
        T_{r,\lambda}^{-1/2} \hat{G}_0 T_{r,\lambda}^{-1/2}
        \bigr)
        =
        O_\bbP \left(
        1+\frac{\lambda^{-1/\beta}}{m}
        \right).
    \]
    Substituting this into the high-probability inequality
    \Cref{eq:I0-first-gap-trace}, we obtain
    \[
        |\hat{V}_0-\tilde{V}_0|
        =
        O_\bbP \left(
        \sqrt{v_r}
        \left(
            \frac{1}{n}
            +
            \frac{\lambda^{-1/\beta}}{mn}
            \right)
        \right)
        =
        o_\bbP \left(
        \frac{1}{n}
        +
        \frac{\lambda^{-1/\beta}}{mn}
        \right).
    \]
    Combining the estimates for \(\tilde{V}_0-V_0\) and \(\hat{V}_0-\tilde{V}_0\) with \Cref{eq:I0-basic-split} proves the claim for \(\hat{V}_0\).
\end{proof}

\subsection{Estimating the First Noise Term \texorpdfstring{\(\hat{V}_1\)}{V1}}
\label{sec:variance-v1}

Throughout this subsection, we work in the noisy case \(\sigma^2>0\).
By definition,
\begin{equation}
    \label{eq:I1-definition}
    \hat{V}_1
    =
    \frac{\sigma^2}{n^2(m)_r^2}
    \sum_{i=1}^n
    \sum_{\mathbf{j},\mathbf{k}\in I_m^r}
    \left(
    \sum_{q\in J(\mathbf{j})\cap J(\mathbf{k})}
    \mu_{2r-2} \bigl(\bm{t}^{\{q\}}_{i,\mathbf{j},\mathbf{k}}\bigr)
    \right)
    \left\langle
    \hat{T}_{r,\lambda}^{-1} K(t_{i\mathbf{j}}),
    \hat{T}_{r,\lambda}^{-1} K(t_{i\mathbf{k}})
    \right\rangle_{L^2}.
\end{equation}
For \(\xi\in \mca{I}\) and \(\tilde{\bm{t}}=(t_1,\dots,t_{r-1})\in \mca{I}^{r-1}\), define the insertion sum
\begin{equation}\label{eq:I1-insertion-sum}
    S_r(\xi;\tilde{\bm{t}})
    \coloneqq
    \sum_{a=1}^r
    K(t_1,\dots,t_{a-1},\xi,t_a,\dots,t_{r-1})
    \in
    \caH^{\otimes r}.
\end{equation}
Also, for \(1\le l\le m\), let
\begin{equation}\label{eq:I1-index-set}
    I_{m,l}^{r-1}
    \coloneqq
    \{
    \mathbf{j}=(j_1,\dots,j_{r-1})\in I_m^{r-1}:\ l\notin J(\mathbf{j})
    \}.
\end{equation}
Then \(|I_{m,l}^{r-1}|=|I_{m-1}^{r-1}|=(m-1)_{r-1}\). For notational convenience, when
\[
    \tilde{\bm{t}}=(t_1,\dots,t_{r-1}),
    \qquad
    \tilde{\bm{t}}'=(t_1',\dots,t_{r-1}'),
\]
we write
\[
    \mu_{2r-2}(\tilde{\bm{t}};\tilde{\bm{t}}')
    \coloneqq
    \mu_{2r-2}(t_1,\dots,t_{r-1},t_1',\dots,t_{r-1}').
\]
In \Cref{eq:I1-definition}, the common location can be chosen as \(l\), while the remaining coordinates form two ordered \((r-1)\)-tuples in \(I_{m,l}^{r-1}\).
The shared point may appear in any position of each \(r\)-block, and \Cref{eq:I1-insertion-sum} sums over these possible insertions.
Therefore
\begin{equation}\label{eq:I1-raw-sum}
    \hat{V}_1
    =
    \frac{\sigma^2}{n^2 m^2(m-1)_{r-1}^2}
    \sum_{i=1}^n
    \sum_{l=1}^m
    \sum_{\mathbf{j},\mathbf{k}\in I_{m,l}^{r-1}}
    \hat{g}_{1,\lambda}
    \bigl(
    t_{il};
    t_{i\mathbf{j}};
    t_{i\mathbf{k}}
    \bigr),
\end{equation}
where
\begin{equation}\label{eq:I1-ghat-kernel}
    \hat{g}_{1,\lambda}(\xi;\tilde{\bm{t}};\tilde{\bm{t}}')
    \coloneqq
    \mu_{2r-2}(\tilde{\bm{t}};\tilde{\bm{t}}')
    \ang{
        \hat{T}_{r,\lambda}^{-1} S_r(\xi;\tilde{\bm{t}}),
        \hat{T}_{r,\lambda}^{-1} S_r(\xi;\tilde{\bm{t}}')
    }_{L^2}.
\end{equation}

As in the noise-free case, it is convenient to package the kernel into an empirical operator.
Define
\begin{equation}\label{eq:I1-g1-operator}
    \hat{G}_1
    \coloneqq
    \frac{1}{n m (m-1)_{r-1}^2}
    \sum_{i=1}^n
    \sum_{l=1}^m
    \sum_{\mathbf{j},\mathbf{k}\in I_{m,l}^{r-1}}
    \mu_{2r-2}(t_{i\mathbf{j}};t_{i\mathbf{k}})
    S_r(t_{il};t_{i\mathbf{k}})
    \otimes
    S_r(t_{il};t_{i\mathbf{j}}),
    \qquad
    G_1 \coloneqq \E[\hat{G}_1].
\end{equation}

\begin{lemma}\label{lem:I1-G1-psd}
    The operators \(\hat{G}_1\) and \(G_1\) are self-adjoint and positive semidefinite.
\end{lemma}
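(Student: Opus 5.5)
The approach mirrors the proof of \Cref{lem:I0-G0-psd}: self-adjointness comes from symmetry of the summation, and positivity comes from rewriting the quadratic form as a conditional second moment of a random quantity built from independent noise variables.

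\textbf{Self-adjointness.} In \cref{eq:I1-g1-operator}, the coefficient satisfies \(\mu_{2r-2}(\tilde{\bm{t}};\tilde{\bm{t}}')=\mu_{2r-2}(\tilde{\bm{t}}';\tilde{\bm{t}})\), since \(\mu_{2r-2}\) is symmetric in all its arguments. The adjoint of \(u\otimes v\) is \(v\otimes u\). Therefore, swapping the summation indices \(\mathbf{j}\leftrightarrow\mathbf{k}\), which range over the same set \(I_{m,l}^{r-1}\), maps \(\hat{G}_1\) to \(\hat{G}_1^*\). Hence \(\hat{G}_1=\hat{G}_1^*\).

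\textbf{Positivity.} Fix \(f\in\caH^{\otimes r}\). Pairing the rank-one operators with \(f\) gives
\[
  \ang{\hat{G}_1 f,f}_{\caH^{\otimes r}}
  =
  \frac{1}{nm(m-1)_{r-1}^2}\sum_{i=1}^n\sum_{l=1}^m
  \sum_{\mathbf{j},\mathbf{k}\in I_{m,l}^{r-1}}
  \mu_{2r-2}(t_{i\mathbf{j}};t_{i\mathbf{k}})\,
  a_{il}(\mathbf{j})\,a_{il}(\mathbf{k}),
\]
where \(a_{il}(\mathbf{j})\coloneqq\ang{f,S_r(t_{il};t_{i\mathbf{j}})}=\sum_{a=1}^r f(t_{ij_1},\dots,t_{il},\dots,t_{ij_{r-1}})\), with \(t_{il}\) inserted in slot \(a\). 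This quantity depends only on \(\mca{T}\).

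Since \(\mu_{2r-2}(\tilde{\bm{t}};\tilde{\bm{t}}')=\E\bigl[\prod_a X(t_a)\prod_b X(t_b')\bigr]\), for each fixed \((i,l)\) define
\[
  W_{il}\coloneqq\sum_{\mathbf{j}\in I_{m,l}^{r-1}} a_{il}(\mathbf{j})\prod_{a=1}^{r-1}X_i(t_{ij_a}).
\]
Then the inner double sum equals \(\E[W_{il}^2\mid\mca{T}]\ge0\). So \(\ang{\hat{G}_1f,f}\) is a nonnegative combination of conditional second moments, and \(\hat{G}_1\succeq0\).

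The identity uses only \(\E[X(t_1)\cdots X(t_{2r-2})]=\mu_{2r-2}(t_1,\dots,t_{2r-2})\) and Fubini, which is justified by the boundedness of \(\mu_{2r-2}\) in Assumption~\ref{ass:source}(iv) and by the finiteness of the sums. For \(r=1\), we have \(\mu_0=1\) and \(W_{il}=a_{il}=f(t_{il})\), so the form is \(\sum_{i,l} f(t_{il})^2\ge0\), consistent with the general argument.

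\textbf{Passing to \(G_1\).} Since \(G_1=\E[\hat{G}_1]\) is an expectation, taken as a Bochner integral in trace class or equivalently weakly, of self-adjoint positive semidefinite operators, it inherits both properties. Concretely, \(\ang{G_1f,f}=\E\ang{\hat{G}_1f,f}\ge0\) and \(\ang{G_1f,g}=\ang{f,G_1g}\). The norms \(\|S_r\|\le r\kappa^r\) and \(|\mu_{2r-2}|\le C_{\mu_{2r-2}}\) give the integrability needed to interchange expectation with the inner products.

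The only step requiring care is recognizing \(\mu_{2r-2}\) as the uncentered second-moment kernel of the \((r-1)\)-fold products \(\prod_a X_i(t_{ij_a})\). Once this is noted, positivity follows directly.
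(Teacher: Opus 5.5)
Your proof is correct and follows the paper's argument. Self-adjointness comes from the symmetry of $\mu_{2r-2}$ under $\mathbf{j}\leftrightarrow\mathbf{k}$, and positivity holds because, for each fixed $(i,l)$, the matrix $\bigl(\mu_{2r-2}(t_{i\mathbf{j}};t_{i\mathbf{k}})\bigr)_{\mathbf{j},\mathbf{k}}$ is the conditional second-moment matrix of the products $X_i(t_{ij_1})\cdots X_i(t_{ij_{r-1}})$; your identity $\E[W_{il}^2\mid\mca{T}]\ge0$ expresses exactly this, and the only slip is your opening sentence's reference to noise variables, since the random quantity is built from the latent values $X_i(t_{ij})$, as your argument itself correctly uses.
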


\begin{proof}
    The definition of \(\hat{G}_1\) is symmetric in \(\mathbf{j}\) and \(\mathbf{k}\), because \(\mu_{2r-2}(\tilde{\bm{t}};\tilde{\bm{t}}')=\mu_{2r-2}(\tilde{\bm{t}}';\tilde{\bm{t}})\).
    Hence \(\hat{G}_1\) is self-adjoint.
    For any \(f\in \caH^{\otimes r}\),
    \[
        \ang{\hat{G}_1 f,f}_{\caH^{\otimes r}}
        =
        \frac{1}{n m (m-1)_{r-1}^2}
        \sum_{i=1}^n
        \sum_{l=1}^m
        \sum_{\mathbf{j},\mathbf{k}\in I_{m,l}^{r-1}}
        \mu_{2r-2}(t_{i\mathbf{j}};t_{i\mathbf{k}})
        a_{i,l,\mathbf{j}}(f)a_{i,l,\mathbf{k}}(f),
    \]
    where
    \[
        a_{i,l,\mathbf{j}}(f)
        \coloneqq
        \ang{
        f,
        S_r(t_{il};t_{i\mathbf{j}})
        }_{\caH^{\otimes r}}.
    \]
    For each fixed \(i\) and \(l\), the matrix
    \[
        \bigl(
        \mu_{2r-2}(t_{i\mathbf{j}};t_{i\mathbf{k}})
        \bigr)_{\mathbf{j},\mathbf{k}\in I_{m,l}^{r-1}}
    \]
    is positive semidefinite.
    Indeed, if
    \[
        Z_{i,\mathbf{j}}
        \coloneqq
        X_i(t_{ij_1})\cdots X_i(t_{ij_{r-1}}),
        \qquad
        \mathbf{j}\in I_{m,l}^{r-1},
    \]
    then
    \[
        \mu_{2r-2}(t_{i\mathbf{j}};t_{i\mathbf{k}})
        =
        \E[Z_{i,\mathbf{j}}Z_{i,\mathbf{k}}\mid \mca{T}].
    \]
    Hence the corresponding quadratic form is nonnegative.
    Therefore \(\ang{\hat{G}_1 f,f}_{\caH^{\otimes r}}\ge0\), and \(\hat{G}_1\) is positive semidefinite.
    The same properties pass to \(G_1=\E[\hat{G}_1]\).
\end{proof}

With this notation,
\begin{equation}\label{eq:I1-operator-form}
    \hat{V}_1
    =
    \frac{\sigma^2}{mn}
    \Tr\bigl(
    T_r^{1/2}
    \hat{T}_{r,\lambda}^{-1}
    \hat{G}_1
    \hat{T}_{r,\lambda}^{-1}
    T_r^{1/2}
    \bigr).
\end{equation}
We also define the mixed and population versions
\begin{equation}\label{eq:I1-tilde-pop}
    \tilde{V}_1
    \coloneqq
    \frac{\sigma^2}{mn}
    \Tr\bigl(
    T_r^{1/2}
    T_{r,\lambda}^{-1}
    \hat{G}_1
    T_{r,\lambda}^{-1}
    T_r^{1/2}
    \bigr),
    \qquad
    V_1
    \coloneqq
    \frac{\sigma^2}{mn}
    \Tr\bigl(
    T_r^{1/2}
    T_{r,\lambda}^{-1}
    G_1
    T_{r,\lambda}^{-1}
    T_r^{1/2}
    \bigr).
\end{equation}
Because \(\hat{G}_1\) and \(G_1\) are positive semidefinite,
\begin{equation*}
    \hat{V}_1
    =
    \frac{\sigma^2}{mn}
    \|T_r^{1/2} \hat{T}_{r,\lambda}^{-1} \hat{G}_1^{1/2}\|_{HS}^2,
    \quad
    \tilde{V}_1
    =
    \frac{\sigma^2}{mn}
    \|T_r^{1/2} T_{r,\lambda}^{-1} \hat{G}_1^{1/2}\|_{HS}^2,
    \quad
    V_1
    =
    \frac{\sigma^2}{mn}
    \|T_r^{1/2} T_{r,\lambda}^{-1} G_1^{1/2}\|_{HS}^2.
\end{equation*}

As before,
\begin{equation}\label{eq:I1-basic-split}
    \hat{V}_1-V_1
    =
    (\hat{V}_1-\tilde{V}_1)
    +
    (\tilde{V}_1-V_1),
\end{equation}
with
\begin{equation}\label{eq:I1-second-gap}
    \tilde{V}_1-V_1
    =
    \frac{\sigma^2}{mn}
    \Tr\bigl(
    T_r^{1/2}
    T_{r,\lambda}^{-1}
    (\hat{G}_1-G_1)
    T_{r,\lambda}^{-1}
    T_r^{1/2}
    \bigr).
\end{equation}
The Hilbert-Schmidt comparison used earlier for \(\hat{V}_0-\tilde{V}_0\) applies verbatim here, after replacing \(\hat{G}_0\) by \(\hat{G}_1\).
Recalling
\[
    \Delta_r
    \coloneqq
    T_{r,\lambda}^{-1/2}(\hat{T}_r-T_r)T_{r,\lambda}^{-1/2},
\]
we work on the event
\[
    \|\Delta_r\|\le \frac{1}{2},
    \qquad
    \|T_{r,\lambda}^{1/2} \hat{T}_{r,\lambda}^{-1} T_{r,\lambda}^{1/2}\|\le 2,
\]
which has probability tending to one when \(v_r \to0\) by \Cref{lem:appendix-operator-concentration}.
On this event,
\begin{equation}\label{eq:I1-first-gap-trace}
    |\hat{V}_1-\tilde{V}_1|
    \le
    \frac{C\sigma^2}{mn}
    \|\Delta_r\|
    \Tr\bigl(
    T_{r,\lambda}^{-1/2} \hat{G}_1 T_{r,\lambda}^{-1/2}
    \bigr).
\end{equation}
Finally,
\begin{equation}\label{eq:I1-trace-split}
    \Tr\bigl(
    T_{r,\lambda}^{-1/2} \hat{G}_1 T_{r,\lambda}^{-1/2}
    \bigr)
    \le
    \Tr\bigl(
    T_{r,\lambda}^{-1/2} G_1 T_{r,\lambda}^{-1/2}
    \bigr)
    +
    \left|
    \Tr\bigl(
    T_{r,\lambda}^{-1/2}
    (\hat{G}_1-G_1)
    T_{r,\lambda}^{-1/2}
    \bigr)
    \right|.
\end{equation}

\begin{proposition}\label{prop:I1-main}
    Assume \(\mu_{2r-2} \neq 0\) and \(\sigma^2>0\), and suppose \(\lambda\to0\) satisfies
    \[
        \left(\ln \frac{1}{\lambda}\right)^{r-1}=o(m),
        \qquad
        v_r
        =
        \frac{\lambda^{-1/\beta}(\ln \frac{1}{\lambda})^r}{mn}
        \to 0.
    \]
    Then
    \begin{gather*}
        V_1
        =
        \frac{\sigma^2}{mn}V_{\mathrm{noise}}(\lambda;\mu_{2r-2})
        +
        o \left(
        \frac{\lambda^{-1/\beta}}{mn}
        \right),
        \qquad
        \frac{\sigma^2}{mn}V_{\mathrm{noise}}(\lambda;\mu_{2r-2})
        \asymp
        \frac{\lambda^{-1/\beta}}{mn}, \\
        \hat{V}_1
        =
        V_1
        +
        o_\bbP \left(
        \frac{\lambda^{-1/\beta}}{mn}
        \right).
    \end{gather*}
\end{proposition}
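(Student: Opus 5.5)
The proof will mirror that of \Cref{prop:I0-main}, with $\hat G_1$ in place of $\hat G_0$ and the prefactor $\sigma^2/(mn)$ in place of $1/n$. There are three parts: identify the population trace $V_1$ with the deterministic noise term, show that this term has exact order $\lambda^{-1/\beta}/(mn)$, and control the two empirical gaps in \Cref{eq:I1-basic-split}.

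\textbf{Population part.} Computing $G_1=\E[\hat G_1]$ means splitting the triple sum in \Cref{eq:I1-g1-operator} according to how $J(\mathbf{j})$ and $J(\mathbf{k})$ overlap.
\begin{itemize}
\item When the two $(r-1)$-tuples are disjoint, all $2r-1$ points $t_{il},t_{i\mathbf{j}},t_{i\mathbf{k}}$ are distinct and i.i.d.\ $\rho$. This happens for a fraction $1-O(1/m)$ of the pairs. Plugging in, the trace becomes exactly $\E_\rho[\mu_{2r-2}(\tilde{\bm t};\tilde{\bm t}')\langle T_{r,\lambda}^{-1}S_r(\xi;\tilde{\bm t}),T_{r,\lambda}^{-1}S_r(\xi;\tilde{\bm t}')\rangle_{L^2}]$.
\item Expanding $S_r$ gives $r^2$ position pairs $(a,b)$. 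By symmetry of $\mu_{2r-2}$ and exchangeability of the i.i.d.\ coordinates, I expect the diagonal pairs $a=b$ to reproduce $r$ copies of the same-position term. That is exactly $V_{\mathrm{noise}}$, which explains the factor $r$ in its definition.
\item The off-diagonal pairs $a\ne b$ put the shared point $\xi$ in different tensor slots. Their inner product decouples $\xi$ across slots, so it should behave like a "zero overlap" term of order $O(1)$ (or at most $O(\lambda^{-1/\beta}\cdot\text{lower})$). I will show this is $o(\lambda^{-1/\beta})$ using the regular-RKHS bounds of \Cref{app:regular-rkhs}.
\item The overlapping pairs carry an extra $1/m$. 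Each contributes at most $\lambda^{-1/\beta}(\ln\frac1\lambda)^{r-1}$, bounded via $\mcaN_2^{(r)}$ and \Cref{lem:variance-phi-hatT}-type sup bounds. Since $(\ln\frac1\lambda)^{r-1}=o(m)$, their total is $o(\lambda^{-1/\beta})$.
\end{itemize}
This should be packaged as an appendix proposition in \Cref{app:i1}, the analogue of \Cref{prop:appendix-I0-pop-phi}.

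\textbf{Order of $V_{\mathrm{noise}}$.} The upper bound $V_{\mathrm{noise}}=O(\lambda^{-1/\beta})$ follows from boundedness of $\mu_{2r-2}$ and the sum $\sum_{\bm i}\Lambda_{\bm i}/(\Lambda_{\bm i}+\lambda)^2$, restricted by the regular RKHS condition so that only one coordinate effectively carries $\xi$. For the lower bound I plan to expand $\mu_{2r-2}\in([H]^{s_2})^{\otimes 2r-2}$, which is positive semidefinite as a kernel in $(\tilde{\bm t};\tilde{\bm t}')$, as $\sum_k\nu_k\psi_k\otimes\psi_k$. Then $V_{\mathrm{noise}}=r\sum_k\nu_k\E_\xi\|\E_{\tilde{\bm t}}[\psi_k(\tilde{\bm t})T_{r,\lambda}^{-1}K(\xi,\tilde{\bm t})]\|_{L^2}^2$. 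Integrating out $\tilde{\bm t}$ leaves the one-dimensional quantity $\sum_j \lambda_j e_j(\xi)^2/(\lambda_j\Lambda'+\lambda)^2$ weighted by a fixed nonzero function, and I would lower-bound it by $c\lambda^{-1/\beta}$ via \Cref{lem:appendix-I1-expectations}. I will simply invoke that lemma for the $\asymp$ statement.

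\textbf{Empirical gaps.} On the event of \Cref{lem:appendix-operator-concentration}, \Cref{eq:I1-first-gap-trace} and \Cref{eq:I1-trace-split} reduce the first gap to bounding $\Tr(T_{r,\lambda}^{-1/2}G_1T_{r,\lambda}^{-1/2})=O(\lambda^{-1/\beta})$ (same computation as above) plus the fluctuation of $\Tr(T_{r,\lambda}^{-1/2}(\hat G_1-G_1)T_{r,\lambda}^{-1/2})$. The gap $\tilde V_1-V_1$ in \Cref{eq:I1-second-gap} is a similar trace fluctuation. Both traces are scalar $U$-statistics of order $2r-1$ within curves, averaged over $n$ curves. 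I will apply the Bernstein-type $U$-statistic inequality of \Cref{app:concentration} with the Hoeffding decomposition. The leading linear projection has variance $O(\lambda^{-1/\beta}(\ln\frac1\lambda)^{r-1}/(mn))$ relative to the mean. Higher projections gain extra factors $1/m$, at the price of sup-norm bounds $\lambda^{-1/\beta}(\ln\frac1\lambda)^{r-1}$. This yields relative error $O_\bbP(\sqrt{v_r}+(\ln\frac1\lambda)^{r-1}/m)=o_\bbP(1)$, and multiplying by $\sigma^2/(mn)$ and $\|\Delta_r\|=O_\bbP(\sqrt{v_r})$ gives $o_\bbP(\lambda^{-1/\beta}/(mn))$.

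I expect the main obstacle to be this fluctuation bound, specifically keeping the logarithmic factors from the tensor effective dimension below the $o(m)$ and $v_r\to0$ thresholds in every Hoeffding component. The off-diagonal insertion estimate in the population part is the second delicate point.
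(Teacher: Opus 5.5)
Your proposal is correct and follows the paper's proof. The matching steps are:
\begin{itemize}
\item the overlap decomposition, whose $q=0$ term is $d_{r,0,m}\E[g_{1,\lambda}]$ and whose $q\ge1$ terms are $O(\|g_{1,\lambda}\|_\infty/m)$;
\item the split of $\E[g_{1,\lambda}]$ into the diagonal insertions $a=b$, which give exactly $V_{\mathrm{noise}}$, and the off-diagonal ones, which are $O(1)$ (the paper gets this directly from $\int e_{i_a}e_{i_b}\,d\rho=\delta_{i_ai_b}$ and $s_2>1/\beta$, with no regular-RKHS input);
\item \Cref{lem:appendix-I1-expectations} for the orders of $V_{\mathrm{noise}}$ and $\E[f_{1,\lambda}]$;
\item the operator-concentration event together with the trace split for $\hat V_1-\tilde V_1$.
\end{itemize}
The only place the paper is simpler is the fluctuation step you flag as the main obstacle. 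The paper uses just the plain Hoeffding $U$-statistic bound with the sup-norms $\|g_{1,\lambda}\|_\infty,\|f_{1,\lambda}\|_\infty\lesssim\lambda^{-1/\beta}(\ln\frac1\lambda)^{r-1}$. This already gives relative error $(\ln\frac1\lambda)^{r-1}/\sqrt{mn}=\lambda^{1/(2\beta)}(\ln\frac1\lambda)^{r/2-1}\sqrt{v_r}\to0$, so no Hoeffding decomposition or projection-variance computation is needed.
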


\begin{proof}
    By \Cref{prop:appendix-I1-pop-phi,lem:appendix-I1-expectations}
    and since \(\mu_{2r-2} \neq0\),
    \[
        V_1
        =
        \frac{\sigma^2}{mn}V_{\mathrm{noise}}(\lambda;\mu_{2r-2})
        +
        O \left(
        \frac{\sigma^2}{mn}
        \right)
        +
        O \left(
        \frac{\sigma^2 \lambda^{-1/\beta}
            \left(\ln \frac{1}{\lambda}\right)^{r-1}}{m^2 n}
        \right).
    \]
    Moreover,
    \(V_{\mathrm{noise}}(\lambda;\mu_{2r-2})\asymp\lambda^{-1/\beta}\).
    Since \(\sigma^2>0\) is fixed, \(\lambda^{-1/\beta} \to\infty\), and
    \(\left(\ln \frac{1}{\lambda}\right)^{r-1}=o(m)\), the two remainder terms are
    \(o(\lambda^{-1/\beta}/(mn))\), so
    \[
        V_1
        =
        \frac{\sigma^2}{mn}V_{\mathrm{noise}}(\lambda;\mu_{2r-2})
        +
        o \left(
        \frac{\lambda^{-1/\beta}}{mn}
        \right),
        \qquad
        V_1
        \asymp
        \frac{\lambda^{-1/\beta}}{mn}.
    \]

    Next, \Cref{prop:appendix-I1-fluctuation-phi} yields
    \[
        \tilde{V}_1-V_1
        =
        O_\bbP \left(
        \frac{\sigma^2 \lambda^{-1/\beta}
            \left(\ln \frac{1}{\lambda}\right)^{r-1}}{mn\sqrt{mn}}
        \right).
    \]
    By \Cref{eq:I0-log-over-sqrtmn},
    \[
        \frac{\left(\ln \frac{1}{\lambda}\right)^{r-1}}{\sqrt{mn}}
        \to 0,
    \]
    so
    \begin{equation}\label{eq:I1-second-gap-small}
        \tilde{V}_1-V_1
        =
        o_\bbP \left(
        \frac{\lambda^{-1/\beta}}{mn}
        \right).
    \end{equation}

    For the first gap, \Cref{lem:appendix-operator-concentration} gives
    \[
        \|\Delta_r\|
        =
        O_\bbP(\sqrt{v_r}).
    \]
    Also, \Cref{prop:appendix-I1-pop-resolvent} gives
    \[
        \Tr\bigl(
        T_{r,\lambda}^{-1/2} G_1 T_{r,\lambda}^{-1/2}
        \bigr)
        =
        O \left(
        \lambda^{-1/\beta}
        \right),
    \]
    while \Cref{prop:appendix-I1-fluctuation-resolvent} gives
    \[
        \Tr\bigl(
        T_{r,\lambda}^{-1/2}
        (\hat{G}_1-G_1)
        T_{r,\lambda}^{-1/2}
        \bigr)
        =
        O_\bbP \left(
        \frac{\lambda^{-1/\beta}
            \left(\ln \frac{1}{\lambda}\right)^{r-1}}{\sqrt{mn}}
        \right)
        =
        o_\bbP(\lambda^{-1/\beta}),
    \]
    again by \Cref{eq:I0-log-over-sqrtmn}.
    Hence \Cref{eq:I1-trace-split} implies
    \[
        \Tr\bigl(
        T_{r,\lambda}^{-1/2} \hat{G}_1 T_{r,\lambda}^{-1/2}
        \bigr)
        =
        O_\bbP \left(
        \lambda^{-1/\beta}
        \right).
    \]
    Substituting this into the high-probability inequality
    \Cref{eq:I1-first-gap-trace},
    \[
        \hat{V}_1-\tilde{V}_1
        =
        O_\bbP \left(
        \sigma^2 \sqrt{v_r}
        \frac{\lambda^{-1/\beta}}{mn}
        \right)
        =
        o_\bbP \left(
        \frac{\lambda^{-1/\beta}}{mn}
        \right).
    \]
    Combining this with \Cref{eq:I1-second-gap-small} and \Cref{eq:I1-basic-split} proves the claim.
\end{proof}

\subsection{Summary of the Variance Term}
\label{sec:variance-summary}

At this point the variance analysis is complete, so we collect the previous three sections into one statement.

\begin{theorem}\label{thm:variance-summary}
    Assume \(\Sigma_r \neq 0\), \(\mu_{2r-2} \neq 0\), and \(\sigma^2>0\).
    Suppose \(\lambda\to0\) satisfies
    \[
        \left(\ln \frac{1}{\lambda}\right)^{r-1}=o(m),
        \qquad
        v_r
        =
        \frac{\lambda^{-1/\beta}(\ln \frac{1}{\lambda})^r}{mn}
        \to 0.
    \]
    Then
    \[
        \Var(\lambda)
        =
        \bigl(
        1+o_\bbP(1)
        \bigr)
        (V_0+V_1).
    \]
    Equivalently,
    \begin{equation*}
        \Var(\lambda)
        =
        \bigl(
        1+o_\bbP(1)
        \bigr)
        \Bigl(
        \frac{1}{n}V_{\mathrm{curve}}(\lambda;\Sigma_r)
        +
        \frac{1}{mn}V_{\mathrm{point}}(\lambda;\Sigma_r)
        +
        \frac{\sigma^2}{mn}V_{\mathrm{noise}}(\lambda;\mu_{2r-2})
        \Bigr).
    \end{equation*}
\end{theorem}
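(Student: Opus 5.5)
The plan is to assemble the three previous estimates through the decomposition \cref{eq:variance-polynomial},
\[
    \Var(\lambda)=\hat{V}_0+\hat{V}_1+\sum_{d=2}^r\hat{V}_d,
\]
and compare each piece with the deterministic target
\[
    D \coloneqq \frac{1}{n}V_{\mathrm{curve}}(\lambda;\Sigma_r)+\frac{1}{mn}V_{\mathrm{point}}(\lambda;\Sigma_r)+\frac{\sigma^2}{mn}V_{\mathrm{noise}}(\lambda;\mu_{2r-2}).
\]
The key observation is that \(D\) dominates the common error scale \(\frac{1}{n}+\frac{\lambda^{-1/\beta}}{mn}\). All error terms are \(o\) or \(o_\bbP\) of this scale, so they become \(o_\bbP(D)\), which gives the multiplicative form.

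First, I establish the lower bound \(D\gtrsim \frac{1}{n}+\frac{\lambda^{-1/\beta}}{mn}\).
\begin{itemize}
    \item By \Cref{prop:I0-main}, \(\frac1n V_{\mathrm{curve}}\asymp\frac1n\), using \(\Sigma_r\neq0\).
    \item By \Cref{prop:I1-main}, \(\frac{\sigma^2}{mn}V_{\mathrm{noise}}\asymp\frac{\lambda^{-1/\beta}}{mn}\), using \(\mu_{2r-2}\neq0\) and \(\sigma^2>0\).
    \item \(V_{\mathrm{point}}\ge0\), since it is \(\Tr\) of a PSD-type quantity (it arises from the PSD operator \(G_0\) via \Cref{lem:I0-G0-psd}). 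Even without this, \(V_{\mathrm{point}}/(mn)=O(\lambda^{-1/\beta}/(mn))\), so \(D\) is at least of the stated order up to constants.
\end{itemize}
The same two propositions give the matching upper bound, so \(D\asymp\frac1n+\frac{\lambda^{-1/\beta}}{mn}\).

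Second, I express each piece of the variance in terms of \(D\).
\begin{itemize}
    \item \Cref{prop:I0-main} gives
    \[
        \hat{V}_0=\frac1nV_{\mathrm{curve}}+\frac1{mn}V_{\mathrm{point}}+o_\bbP\Bigl(\frac1n+\frac{\lambda^{-1/\beta}}{mn}\Bigr).
    \]
    \item \Cref{prop:I1-main} gives
    \[
        \hat{V}_1=\frac{\sigma^2}{mn}V_{\mathrm{noise}}+o_\bbP\Bigl(\frac{\lambda^{-1/\beta}}{mn}\Bigr).
    \]
    \item \cref{eq:high-order-negligible} gives \(\sum_{d\ge2}|\hat V_d|=o_\bbP(\lambda^{-1/\beta}/(mn))\).
\end{itemize}
Both propositions and \cref{eq:high-order-negligible} require exactly the hypotheses \((\ln\frac1\lambda)^{r-1}=o(m)\) and \(v_r\to0\), which are assumed in the theorem.

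Summing the three items, \(\Var(\lambda)=D+o_\bbP(D)=(1+o_\bbP(1))D\). For the first displayed form, the propositions also give \(V_0+V_1=D+o(D)\), hence \(\Var(\lambda)=(1+o_\bbP(1))(V_0+V_1)\).

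The only delicate point is the first step: the relative error must be controlled uniformly across regimes, whether \(1/n\) dominates or \(\lambda^{-1/\beta}/(mn)\) dominates. This is why I state all remainders in the form \(o(\frac1n+\frac{\lambda^{-1/\beta}}{mn})\) and use the two-sided comparison \(D\asymp\frac1n+\frac{\lambda^{-1/\beta}}{mn}\). That comparison relies on the non-vanishing of both \(V_{\mathrm{curve}}\) and \(V_{\mathrm{noise}}\).
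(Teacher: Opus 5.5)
Your proposal is correct and follows the paper's proof essentially verbatim. You sum the expansions from \Cref{prop:I0-main}, \Cref{prop:I1-main} and \cref{eq:high-order-negligible}, then convert the additive $o_\bbP\bigl(\frac1n+\frac{\lambda^{-1/\beta}}{mn}\bigr)$ remainders into relative ones using $D\asymp\frac1n+\frac{\lambda^{-1/\beta}}{mn}$.

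One small correction concerns the lower bound on $D$, a point the paper leaves implicit.

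\emph{The justification for $V_{\mathrm{point}}\ge0$.} This does not follow from \Cref{lem:I0-G0-psd}, which only gives $V_0\ge0$ as a whole. It follows because, for fixed $\xi$, both $(\tilde{\bm t},\tilde{\bm t}')\mapsto\Sigma_r(\xi,\tilde{\bm t};\xi,\tilde{\bm t}')$ and the Gram kernel of $T_{r,\lambda}^{-1}K(\xi,\cdot)$ are positive semidefinite. Their Schur product therefore integrates to something nonnegative. Equivalently, the coefficient computation in the proof of \Cref{lem:appendix-I0-vpoint-lower} writes $A_\lambda$ as a sum of nonnegative terms, and that step does not use \Cref{ass:prelim-ae-lower-regularity}.

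\emph{The ``even without this'' fallback.} Drop it. An $O(\lambda^{-1/\beta})$ bound on a possibly negative term cannot exclude cancellation against $\sigma^2V_{\mathrm{noise}}$.
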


\begin{proof}
    By \Cref{eq:variance-polynomial,eq:high-order-negligible,prop:I0-main,prop:I1-main},
    \[
        \Var(\lambda)
        =
        V_0
        +
        V_1
        +
        o_\bbP \left(
        \frac{1}{n}
        +
        \frac{\lambda^{-1/\beta}}{mn}
        \right).
    \]

    The deterministic equivalents in \Cref{prop:I0-main,prop:I1-main} give
    \begin{align*}
        V_0
         & =
        \frac{1}{n}V_{\mathrm{curve}}(\lambda;\Sigma_r)
        +
        \frac{1}{mn}V_{\mathrm{point}}(\lambda;\Sigma_r)
        +
        o \left(
        \frac{\lambda^{-1/\beta}}{mn}
        \right), \\
        V_1
         & =
        \frac{\sigma^2}{mn}V_{\mathrm{noise}}(\lambda;\mu_{2r-2})
        +
        o \left(
        \frac{\lambda^{-1/\beta}}{mn}
        \right),
    \end{align*}
    and hence
    \begin{align*}
        \Var(\lambda)
         & =
        \frac{1}{n}V_{\mathrm{curve}}(\lambda;\Sigma_r)
        +
        \frac{1}{mn}V_{\mathrm{point}}(\lambda;\Sigma_r)
        \\
         & \quad+
        \frac{\sigma^2}{mn}V_{\mathrm{noise}}(\lambda;\mu_{2r-2})
        +
        o_\bbP \left(
        \frac{1}{n}
        +
        \frac{\lambda^{-1/\beta}}{mn}
        \right).
    \end{align*}
    The same propositions imply that both \(V_0+V_1\) and the deterministic
    equivalent in the preceding display are
    \(\asymp \frac{1}{n}+\frac{\lambda^{-1/\beta}}{mn}\).
    Thus the two additive remainders above are relative \(o_\bbP(1)\), which gives
    both claimed multiplicative forms.
\end{proof}

\begin{proposition}[Noise-free variance expansion]\label{cor:variance-noiseless}
    Assume \(\Sigma_r \neq 0\), \(\sigma^2=0\), and suppose \Cref{ass:prelim-ae-lower-regularity} holds.
    If \(\lambda\to0\) satisfies
    \[
        \left(\ln \frac{1}{\lambda}\right)^{r-1}=o(m),
        \qquad
        v_r
        =
        \frac{\lambda^{-1/\beta}(\ln \frac{1}{\lambda})^r}{mn}
        \to 0,
    \]
    then
    \[
        \Var(\lambda)
        =
        \hat{V}_0
        =
        \bigl(
        1+o_\bbP(1)
        \bigr)
        \left(
        \frac{1}{n}V_{\mathrm{curve}}(\lambda;\Sigma_r)
        +
        \frac{1}{mn}V_{\mathrm{point}}(\lambda;\Sigma_r)
        \right).
    \]
\end{proposition}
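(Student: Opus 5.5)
When \(\sigma^2=0\), every term \(\hat{V}_d\) with \(d\ge1\) in \cref{eq:variance-polynomial} carries a factor \(\sigma^{2d}\) and vanishes identically. Hence \(\Var(\lambda)=\hat{V}_0\) exactly, with no remainder to control. The plan is to reuse \Cref{prop:I0-main}, which does not involve \(\sigma^2\) at all. It gives
\[
    \hat{V}_0
    =
    \frac{1}{n}V_{\mathrm{curve}}(\lambda;\Sigma_r)
    +
    \frac{1}{mn}V_{\mathrm{point}}(\lambda;\Sigma_r)
    +
    o_\bbP\Bigl(\frac{1}{n}+\frac{\lambda^{-1/\beta}}{mn}\Bigr).
\]
The only issue is converting this additive remainder into a multiplicative \(1+o_\bbP(1)\) factor. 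In the noisy case (\Cref{thm:variance-summary}) this was automatic, because \(V_{\mathrm{noise}}\asymp\lambda^{-1/\beta}\) supplied the \(\lambda^{-1/\beta}/(mn)\) scale. Here that term is absent, so we need a matching lower bound for the deterministic main term.

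The key step is therefore the lower bound
\[
    \frac{1}{n}V_{\mathrm{curve}}+\frac{1}{mn}V_{\mathrm{point}}
    \gtrsim
    \frac{1}{n}+\frac{\lambda^{-1/\beta}}{mn}.
\]
The \(1/n\) part comes directly from \Cref{prop:I0-main}, which gives \(V_{\mathrm{curve}}\asymp 1\). For the second part, we invoke \Cref{lem:appendix-I0-vpoint-lower}, which uses \Cref{ass:prelim-ae-lower-regularity} to show \(V_{\mathrm{point}}(\lambda;\Sigma_r)\gtrsim\lambda^{-1/\beta}\).

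The mechanism we expect behind that lemma is as follows. Write \(V_{\mathrm{point}}\) as \(r\,\E_\xi\) of the trace of a positive semidefinite operator built from the kernel \(\Sigma_r(\xi,\cdot;\xi,\cdot)\). Then lower bound \(\ang{T_{r,\lambda}^{-1}K(\xi,\tilde{\bm t}),T_{r,\lambda}^{-1}K(\xi,\tilde{\bm t}')}_{L^2}\) using the spectral sum in the first coordinate,
\[
    \sum_j \frac{\lambda_j^2}{(\lambda_j+\lambda)^2}e_j(\xi)^2 .
\]
By the lower regularity assumption this sum is \(\gtrsim\lambda^{-1/\beta}\) for a.e.\ \(\xi\). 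The remaining \(\tilde{\bm t}\) factors are handled by the nondegeneracy \(\Sigma_r\ne0\), evaluated on the diagonal \(\xi\) slot.

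This lower bound is the main obstacle. Positivity of the integrand is not pointwise, since \(\Sigma_r\) is only a positive semidefinite kernel. So I would first use positive semidefiniteness to rewrite the quantity as the Hilbert–Schmidt norm of an operator, restrict to a high-frequency band in the first coordinate, and then apply Fatou's lemma together with the a.e.\ liminf condition.

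With the lower bound in hand, the remainder \(o_\bbP(1/n+\lambda^{-1/\beta}/(mn))\) is \(o_\bbP\) of the main term, which yields the stated \(1+o_\bbP(1)\) form.
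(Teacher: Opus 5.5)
Your proposal is correct and follows the paper's proof: with \(\sigma^2=0\) you have \(\Var(\lambda)=\hat{V}_0\), \Cref{prop:I0-main} gives the expansion with an additive \(o_\bbP\bigl(\frac{1}{n}+\frac{\lambda^{-1/\beta}}{mn}\bigr)\) remainder, and \(V_{\mathrm{curve}}\asymp1\) together with \Cref{lem:appendix-I0-vpoint-lower} (\(V_{\mathrm{point}}\asymp\lambda^{-1/\beta}\)) makes that remainder relatively negligible. Your sketch of that lemma's mechanism is not needed, since you invoke the lemma directly, and it is somewhat loose: the paper fixes \(i_2,\dots,i_r\), so the first-coordinate spectral sum carries the rescaled parameter \(\lambda/\prod_{\ell\ge2}\lambda_{i_\ell}\), and pointwise nonnegativity in \(\xi\) comes from the principal positive semidefinite coefficient block after integrating out \(\tilde{\bm t},\tilde{\bm t}'\); this does not affect the argument.
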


\begin{proof}
    When \(\sigma^2=0\), all terms \(\hat{V}_d\) with \(d\ge1\) vanish in
    \Cref{eq:variance-polynomial}, so \(\Var(\lambda)=\hat{V}_0\).  By
    \Cref{prop:I0-main},
    \[
        \hat{V}_0
        =
        \frac{1}{n}V_{\mathrm{curve}}(\lambda;\Sigma_r)
        +
        \frac{1}{mn}V_{\mathrm{point}}(\lambda;\Sigma_r)
        +
        o_\bbP \left(
        \frac{1}{n}
        +
        \frac{\lambda^{-1/\beta}}{mn}
        \right).
    \]
    Moreover, \Cref{lem:appendix-I0-expectations} gives
    \(V_{\mathrm{curve}}(\lambda;\Sigma_r)\asymp1\), and
    \Cref{lem:appendix-I0-vpoint-lower} gives
    \(V_{\mathrm{point}}(\lambda;\Sigma_r)\asymp\lambda^{-1/\beta}\).
    Therefore the additive remainder is of smaller order than the displayed
    deterministic leading term.
\end{proof}

\subsection{Estimating the Bias Term}
\label{sec:bias-term}

We now turn to the second term in \Cref{eq:bias-variance}, namely
\[
    \|\lambda\hat{T}_{r,\lambda}^{-1} \mu_r\|_{L^2(\mca I^r)}^2.
\]
Recall the \(L^2(\mca I^r)\)-expansion of \(\mu_r\):
\[
    \mu_r
    =
    \sum_{\mathbf{i}=1}^{\infty}
    c_{\mathbf{i}} e_{\mathbf{i}},
    \qquad
    e_{\mathbf{i}}
    =
    e_{i_1} \otimes\cdots\otimes e_{i_r},
    \qquad
    \Lambda_{\mathbf{i}}
    =
    \prod_{a=1}^r \lambda_{i_a}.
\]
The deterministic quantity that governs the bias is
\begin{equation}\label{eq:bias-population-scale}
    B(\lambda;\mu_r)
    \coloneqq
    \|\lambda T_{r,\lambda}^{-1} \mu_r\|_{L^2(\mca I^r)}.
\end{equation}

\begin{lemma}\label{lem:bias-source-upper}
    If \(\mu_r \in([H]^s)^{\otimes r}\) for some \(s>0\), then
    \[
        B(\lambda;\mu_r)
        =
        O \bigl(
        \lambda^{\min(s,2)/2}
        \bigr),
    \]
    or equivalently,
    \[
        B(\lambda;\mu_r)^2
        =
        O \bigl(
        \lambda^{\min(s,2)}
        \bigr).
    \]
\end{lemma}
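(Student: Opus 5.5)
The plan is to diagonalize everything in the tensor eigenbasis \(\{e_{\mathbf{i}}\}\) of \(T_r=T^{\otimes r}\). This basis is orthonormal in \(L^2(\mca I^r,\rho^{\otimes r})\), and \(T_r e_{\mathbf{i}}=\Lambda_{\mathbf{i}}e_{\mathbf{i}}\) with \(\Lambda_{\mathbf{i}}=\prod_a\lambda_{i_a}\). Writing \(\mu_r=\sum_{\mathbf{i}}c_{\mathbf{i}}e_{\mathbf{i}}\), Parseval gives
\[
  B(\lambda;\mu_r)^2
  =
  \sum_{\mathbf{i}}
  \Bigl(\frac{\lambda}{\Lambda_{\mathbf{i}}+\lambda}\Bigr)^2 c_{\mathbf{i}}^2 .
\]
The first step is therefore to identify the source norm. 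The tensor product \(([H]^s)^{\otimes r}\) of the interpolation spaces has orthonormal basis \(\{\lambda_{i_1}^{s/2}e_{i_1}\otimes\cdots\otimes\lambda_{i_r}^{s/2}e_{i_r}\}=\{\Lambda_{\mathbf{i}}^{s/2}e_{\mathbf{i}}\}\), because the tensor product of orthonormal bases is an orthonormal basis of the Hilbert tensor product. Hence the assumption \(\mu_r\in([H]^s)^{\otimes r}\) is equivalent to
\[
  R^2 \coloneqq \sum_{\mathbf{i}} \Lambda_{\mathbf{i}}^{-s}c_{\mathbf{i}}^2 < \infty ,
\]
with \(R\) the corresponding norm. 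This is exactly the tensorized analogue of the definition of \([H]^s\), so no genuine interpolation theory is needed.

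The second step is the scalar inequality
\[
  \sup_{x>0} \frac{\lambda^2 x^{s}}{(x+\lambda)^2} \le \lambda^{\min(s,2)} \cdot C_s ,
\]
valid for \(x\in(0,\kappa^{2r}]\), which bounds the range of the \(\Lambda_{\mathbf{i}}\). We split into two cases.
\begin{itemize}
  \item If \(s\le2\), write \(\lambda^2x^s/(x+\lambda)^2=\lambda^s\,(x/(x+\lambda))^s(\lambda/(x+\lambda))^{2-s}\le\lambda^s\), since both factors lie in \([0,1]\).
  \item If \(s>2\), use \(\lambda^2x^s/(x+\lambda)^2\le\lambda^2x^{s-2}\le\lambda^2\kappa^{2r(s-2)}\), which uses the boundedness of the spectrum: \(\Lambda_{\mathbf{i}}\le\lambda_1^r\le\kappa^{2r}\).
\end{itemize}

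Finally, combine the two steps:
\[
  B(\lambda;\mu_r)^2
  =
  \sum_{\mathbf{i}}
  \frac{\lambda^2\Lambda_{\mathbf{i}}^s}{(\Lambda_{\mathbf{i}}+\lambda)^2}
  \,\Lambda_{\mathbf{i}}^{-s}c_{\mathbf{i}}^2
  \le
  C_s\lambda^{\min(s,2)}R^2 ,
\]
and taking square roots gives the claim. Only indices with \(c_{\mathbf{i}}\ne0\) matter, and these have \(\Lambda_{\mathbf{i}}>0\) because the eigenvalues are positive, so the division is legitimate.

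The only point requiring care, and hence the main potential obstacle, is the identification of \(([H]^s)^{\otimes r}\) with the weighted sequence space. One must check that the Hilbert tensor product of the spaces \(\Ran(T^{s/2})\) coincides with \(\Ran(T_r^{s/2})\) inside \(L^2(\mca I^r)\) with equal norms. I would verify this directly on the basis: the map \(T_r^{s/2}=(T^{s/2})^{\otimes r}\) is an isometry from \(L^2(\mca I^r)\), restricted to the orthogonal complement of its kernel, onto its range equipped with the tensor norm. Everything else is elementary.
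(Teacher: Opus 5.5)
Your proposal is correct and follows the paper's proof essentially verbatim. Like the paper, you expand \(\mu_r\) in the tensor eigenbasis with coefficients \(a_{\mathbf{i}}\Lambda_{\mathbf{i}}^{s/2}\) and bound \(\lambda^2 x^s/(x+\lambda)^2\) on the bounded spectrum, splitting into \(s\le 2\) and \(s>2\). One cosmetic fix: the supremum in your displayed scalar inequality should be taken over \(x\in(0,\kappa^{2r}]\) rather than all \(x>0\), which is what your \(s>2\) case actually uses.
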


\begin{proof}
    Write
    \[
        \mu_r
        =
        \sum_{\mathbf{i}=1}^{\infty}
        a_{\mathbf{i}}
        \Lambda_{\mathbf{i}}^{s/2}
        e_{\mathbf{i}},
        \qquad
        \sum_{\mathbf{i}=1}^{\infty} a_{\mathbf{i}}^2
        =
        \|\mu_r\|_{([H]^s)^{\otimes r}}^2.
    \]
    Then
    \begin{align*}
        B(\lambda;\mu_r)^2
         & =
        \sum_{\mathbf{i}=1}^{\infty}
        a_{\mathbf{i}}^2
        \Lambda_{\mathbf{i}}^s
        \left(
        \frac{\lambda}{\Lambda_{\mathbf{i}}+\lambda}
        \right)^2 \\
         & \le
        \lambda^2
        \sup_{x\in(0,\lambda_1^r]}
        \frac{x^s}{(x+\lambda)^2}
        \sum_{\mathbf{i}=1}^{\infty} a_{\mathbf{i}}^2.
    \end{align*}
    If \(s\le2\), the supremum is \(O(\lambda^{s-2})\).  If \(s>2\), it is
    \(O(1)\).  This gives the claim.
\end{proof}

\begin{proposition}\label{prop:bias-population-size}
    Suppose \Cref{ass:source} holds, and set \(\tilde{s} \coloneqq \min(s,2)\).
    Then:
    \begin{enumerate}
        \item
              for every \(s'<s\),
              \[
                  B(\lambda;\mu_r)
                  =
                  O \bigl(
                  \lambda^{\min(s',2)/2}
                  \bigr);
              \]
        \item
              \[
                  B(\lambda;\mu_r)
                  =
                  \Omega(\lambda^{\tilde{s}/2}).
              \]
    \end{enumerate}
\end{proposition}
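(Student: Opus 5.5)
Part 1 follows directly from \Cref{lem:bias-source-upper}. By Assumption~\ref{ass:source}\textup{(i)}, for every \(s'<s\) we have \(\mu_r\in([H]^{s'})^{\otimes r}\). Applying \Cref{lem:bias-source-upper} with \(s'\) in place of \(s\) gives \(B(\lambda;\mu_r)=O(\lambda^{\min(s',2)/2})\). No further work is needed, since the lemma's constant is the \(([H]^{s'})^{\otimes r}\)-norm, which is finite.

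For part 2 we write the squared bias spectrally:
\[
  B(\lambda;\mu_r)^2=\sum_{\bm{i}} c_{\bm{i}}^2\Bigl(\frac{\lambda}{\Lambda_{\bm{i}}+\lambda}\Bigr)^2 .
\]
Every summand is nonnegative, so lower bounds come from restricting the sum. We split into two cases according to \(\tilde s\).

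\textbf{Case \(s\ge2\), so \(\tilde s=2\).} Since \(\mu_r\neq0\) (Assumption~\ref{ass:source}\textup{(ii)}), there is some \(\bm{i}_0\) with \(c_{\bm{i}_0}\neq0\). Keeping only that term,
\[
  B(\lambda;\mu_r)^2\ge c_{\bm{i}_0}^2\lambda^2/(\Lambda_{\bm{i}_0}+\lambda)^2\ge c_{\bm{i}_0}^2\lambda^2/(2\Lambda_{\bm{i}_0})^2
\]
once \(\lambda\le\Lambda_{\bm{i}_0}\). This gives \(B^2=\Omega(\lambda^2)\), hence \(B=\Omega(\lambda)\).

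\textbf{Case \(s<2\), so \(\tilde s=s\).} We keep only the indices with \(\Lambda_{\bm{i}}<\lambda\). For these, \(\lambda/(\Lambda_{\bm{i}}+\lambda)>1/2\), so
\[
  B(\lambda;\mu_r)^2\ge\frac14\sum_{\Lambda_{\bm{i}}<\lambda}c_{\bm{i}}^2=\Omega(\lambda^s)
\]
by the tail condition in Assumption~\ref{ass:source}\textup{(ii)}. Taking square roots gives \(B=\Omega(\lambda^{s/2})\).

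Neither case has a genuine obstacle. The only points needing care are that \(\lambda\to0\) so the single-term bound in the first case applies eventually, and that the borderline \(s=2\) falls into the first case, which requires only \(\mu_r\neq0\).
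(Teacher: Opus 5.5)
Your proposal is correct and follows the paper's proof essentially verbatim: part 1 is \Cref{lem:bias-source-upper} applied with \(s'\) in place of \(s\), and part 2 restricts the spectral sum to the indices with \(\Lambda_{\bm{i}}<\lambda\) and uses Assumption~\ref{ass:source}\textup{(ii)} when \(s<2\), and to a single nonzero coefficient when \(s\ge2\). One small bookkeeping point: \Cref{lem:bias-source-upper} needs \(s'>0\); for \(s'\le0\) the claim follows trivially from \(B(\lambda;\mu_r)\le\norm{\mu_r}_{L^2}\).
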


\begin{proof}
    The first claim follows from \Cref{lem:bias-source-upper}, applied with
    \(s'\) in place of \(s\).

    For the lower bound, first assume \(s<2\).
    Then
    \begin{align*}
        B(\lambda;\mu_r)^2
         & =
        \sum_{\mathbf{i}=1}^{\infty}
        c_{\mathbf{i}}^2
        \left(
        \frac{\lambda}{\Lambda_{\mathbf{i}}+\lambda}
        \right)^2 \\
         & \ge
        \sum_{\Lambda_{\mathbf{i}}<\lambda}
        c_{\mathbf{i}}^2
        \left(
        \frac{\lambda}{\Lambda_{\mathbf{i}}+\lambda}
        \right)^2 \\
         & \ge
        \frac{1}{4}
        \sum_{\Lambda_{\mathbf{i}}<\lambda} c_{\mathbf{i}}^2
        =
        \Omega(\lambda^s).
    \end{align*}
    Therefore \(B(\lambda;\mu_r)=\Omega(\lambda^{s/2})\).

    If \(s\ge 2\), pick any multi-index \(\mathbf{i}_\ast\) such that
    \(c_{\mathbf{i}_\ast}\neq 0\).
    Then
    \[
        B(\lambda;\mu_r)^2
        \ge
        c_{\mathbf{i}_\ast}^2
        \left(
        \frac{\lambda}{\Lambda_{\mathbf{i}_\ast}+\lambda}
        \right)^2
        \asymp
        \lambda^2.
    \]
    Hence \(B(\lambda;\mu_r)=\Omega(\lambda)\).
    Since \(\tilde{s}=\min(s,2)\), this proves the second claim.
\end{proof}

\begin{example}[A borderline coefficient sequence]\label{ex:bias-borderline-coefficients}
    This example gives a concrete borderline form of the squared bias
    \(B(\lambda;\mu_r)^2\).
    Suppose that, for some \(s>0\), the coefficients in the
    \(L^2(\mca I^r)\)-expansion
    \[
        \mu_r
        =
        \sum_{\mathbf{i}=1}^{\infty}
        b_{\mathbf{i}}e_{\mathbf{i}}
    \]
    satisfy
    \[
        |b_{\mathbf{i}}|
        \asymp
        \Lambda_{\mathbf{i}}^{(s+1/\beta)/2}.
    \]
    Then, as \(\lambda\to0\),
    \[
        B(\lambda;\mu_r)^2
        \asymp
        \begin{cases}
            \lambda^s \left(\ln\frac{1}{\lambda}\right)^{r-1},
             & s<2, \\
            \lambda^2 \left(\ln\frac{1}{\lambda}\right)^r,
             & s=2, \\
            \lambda^2,
             & s>2.
        \end{cases}
    \]
    The proof is deferred to \Cref{app:omitted-proofs}.
\end{example}

\begin{theorem}\label{thm:bias-main}
    Suppose \Cref{ass:source} holds.
    Suppose also that \(\lambda\to0\) satisfies
    \[
        \lambda
        =
        \Omega\bigl(
        (mn)^{-\theta}
        \bigr)
    \]
    for some fixed \(\theta<\beta\).
    Then
    \[
        \|\lambda\hat{T}_{r,\lambda}^{-1} \mu_r\|_{L^2(\mca I^r)}^2
        =
        \bigl(
        1+o_\bbP(1)
        \bigr)
        B(\lambda;\mu_r)^2.
    \]
\end{theorem}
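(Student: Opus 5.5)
We compare the empirical bias \(\lambda\hat{T}_{r,\lambda}^{-1}\mu_r\) with its population counterpart \(\lambda T_{r,\lambda}^{-1}\mu_r\) through the resolvent identity, and show that the difference is \(o_\bbP(B(\lambda;\mu_r))\) in \(L^2\). By the triangle inequality this gives the \((1+o_\bbP(1))\) statement after squaring. We write
\[
  \lambda\hat{T}_{r,\lambda}^{-1}\mu_r-\lambda T_{r,\lambda}^{-1}\mu_r
  =
  \hat{T}_{r,\lambda}^{-1}(T_r-\hat{T}_r)\,\lambda T_{r,\lambda}^{-1}\mu_r .
\]
Measuring the \(L^2\) norm as \(\|T_r^{1/2}\cdot\|_{\caH^{\otimes r}}\) and inserting \(T_{r,\lambda}^{\pm1/2}\), we bound it by
\(\|T_{r,\lambda}^{1/2}\hat{T}_{r,\lambda}^{-1}T_{r,\lambda}^{1/2}\|\cdot\|T_{r,\lambda}^{-1/2}(\hat{T}_r-T_r)f_\lambda\|_{\caH^{\otimes r}}\), where \(f_\lambda\coloneqq\lambda T_{r,\lambda}^{-1}\mu_r\). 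By \Cref{lem:appendix-operator-concentration}, the first factor is at most \(2\) with probability tending to one; the hypothesis \(\lambda=\Omega((mn)^{-\theta})\) with \(\theta<\beta\) gives \(v_r\to0\). The task therefore reduces to showing
\[
  \|T_{r,\lambda}^{-1/2}(\hat{T}_r-T_r)f_\lambda\|_{\caH^{\otimes r}} = o_\bbP(B(\lambda;\mu_r)).
\]

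The key observation is that \(\hat{T}_r f_\lambda\) is a vector-valued \(U\)-statistic averaged within and across curves, with kernel \(h(\bm t)=f_\lambda(\bm t)K(\bm t)\) and mean \(T_rf_\lambda\). We would apply the vector-valued Bernstein inequality for such \(U\)-statistics from \Cref{app:concentration}. Its leading term is governed by the first Hoeffding projection, with effective sample size \(mn\); the degenerate parts only add higher-order corrections. This requires two inputs. The variance proxy is \(\E\|T_{r,\lambda}^{-1/2}K(\bm t)\|^2 f_\lambda(\bm t)^2\le \sup_{\bm t}\|T_{r,\lambda}^{-1/2}K(\bm t)\|^2\,\|f_\lambda\|_{L^2}^2\). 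Through the regular RKHS estimates of \Cref{app:regular-rkhs}, the supremum is \(\lesssim M^r\mcaN_1^{(r)}(\lambda)\asymp\lambda^{-1/\beta}(\ln\frac1\lambda)^{r-1}\). The sup-norm bound needs \(\|f_\lambda\|_{L^\infty}\). For this we would use the embedding \(([H]^{\alpha})^{\otimes r}\hookrightarrow L^\infty\) for \(\alpha\) slightly above \(1/\beta\). Then \(\|f_\lambda\|_{L^\infty}\lesssim\|f_\lambda\|_{[H]^\alpha}\le\lambda^{-\alpha/2}\) times a bias-type quantity, obtained from the spectral bound \(\sup_x x^{s'-\alpha}\lambda^2/(x+\lambda)^2\) with \(s'<s\) chosen close to \(s\), possibly with \(\|\mu_r\|\) as a crude constant.

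Combining these, we expect
\[
  \|T_{r,\lambda}^{-1/2}(\hat{T}_r-T_r)f_\lambda\|
  =
  O_\bbP\Bigl(\sqrt{v_r}\,B(\lambda;\mu_r)+\tfrac{\lambda^{-1/(2\beta)}(\ln\frac1\lambda)^{c}}{mn}\|f_\lambda\|_{L^\infty}\Bigr).
\]
The first term is \(o(B)\) since \(v_r\to0\). For the second term, we compare against the lower bound \(B\gtrsim\lambda^{\tilde s/2}\) of \Cref{prop:bias-population-size}. The \(L^\infty\) bound is \(\lambda^{(\min(s',2)-\alpha)/2}\) up to logarithmic factors, so the ratio to \(B\) is \(\lambda^{-\alpha/2-1/(2\beta)-\epsilon}/(mn)\approx\lambda^{-1/\beta-\epsilon}/(mn)\to0\), because \(\lambda\gtrsim(mn)^{-\theta}\) with \(\theta<\beta\) leaves a polynomial margin. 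Choosing \(\alpha\) and \(s'\) close enough to \(1/\beta\) and \(s\) closes the argument.

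The main obstacle will be the \(L^\infty\) control of \(f_\lambda\) relative to \(B\). In the misspecified case \(s<2\), possibly with \(s\le1\), there is only a gap of order \(\lambda^{(s-s')/2}\) between the upper and lower bounds on \(B\), so every power must be tracked carefully against the margin \(\beta-\theta\). If the direct bound fails, we would truncate \(\mu_r\) spectrally at level \(\Lambda_{\bm i}\gtrsim\lambda\). The high-frequency tail is handled in \(L^2\), using the lower bound (ii) of \Cref{ass:source}, and the low-frequency part is handled in \(L^\infty\) via the regular-RKHS sup bound. The other point needing care is that the \(U\)-statistic structure yields the correct \(1/(mn)\) rather than \(1/n\) for the leading fluctuation. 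Since the mean here is unweighted by \(\Sigma_r\) (the design is i.i.d.), only the design-point randomness enters, and the first-projection term should scale like a sum of \(mn\) i.i.d.\ variables.
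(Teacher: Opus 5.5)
Your proposal is correct and follows essentially the same route as the paper. The paper uses the resolvent identity on the high-probability event of \Cref{lem:appendix-operator-concentration}. It then applies the vector-valued $U$-statistic Bernstein inequality (\Cref{lem:appendix-U-bernstein-vector}) to the kernel $(T_{r,\lambda}^{-1}\mu_r)(\bm{t})\,T_{r,\lambda}^{-1/2}K(\bm{t})$. The variance proxy is $M^r\mcaN_1^{(r)}(\lambda)\lambda^{-2}B(\lambda;\mu_r)^2$, and the sup bound $\lambda^{-(2-s'+\alpha)/2}\sqrt{\mcaN_1^{(r)}(\lambda)}$ comes from the $L^\infty$ embedding with $\alpha$ slightly above $1/\beta$. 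Finally it compares against $B(\lambda;\mu_r)=\Omega(\lambda^{\tilde{s}/2})$, using $1/(mn)=O(\lambda^{1/\theta})$ and choosing $s'$ close to $\tilde{s}$ and $\alpha$ close to $1/\beta$.

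Your spectral-truncation fallback is not needed. Also, the paper's inequality obtains the effective sample size $n\lfloor m/r\rfloor$ directly from Hoeffding's blocking with the full kernel variance, rather than through a first-projection argument.
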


\begin{proof}
    Set
    \[
        \Bias(\lambda)
        \coloneqq
        \|\lambda\hat{T}_{r,\lambda}^{-1} \mu_r\|_{L^2(\mca I^r)},
        \qquad
        B(\lambda;\mu_r)
        \coloneqq
        \|\lambda T_{r,\lambda}^{-1} \mu_r\|_{L^2(\mca I^r)}.
    \]
    By the triangle inequality,
    \[
        |\Bias(\lambda)-B(\lambda;\mu_r)|
        \le
        \|(\lambda\hat{T}_{r,\lambda}^{-1}-\lambda T_{r,\lambda}^{-1})\mu_r\|_{L^2(\mca I^r)}.
    \]

    The resolvent identity gives
    \[
        \lambda\hat{T}_{r,\lambda}^{-1}-\lambda T_{r,\lambda}^{-1}
        =
        \lambda\hat{T}_{r,\lambda}^{-1}(T_r-\hat{T}_r)T_{r,\lambda}^{-1}.
    \]
    Therefore
    \begin{align}
        \|(\lambda\hat{T}_{r,\lambda}^{-1}-\lambda T_{r,\lambda}^{-1})\mu_r\|_{L^2}
        =\    &
        \lambda
        \|T_r^{1/2} \hat{T}_{r,\lambda}^{-1}(T_r-\hat{T}_r)T_{r,\lambda}^{-1} \mu_r\|_{\caH^{\otimes r}}\notag \\
        \le\  &
        \lambda
        \|T_r^{1/2} T_{r,\lambda}^{-1/2}\|
        \|T_{r,\lambda}^{1/2} \hat{T}_{r,\lambda}^{-1} T_{r,\lambda}^{1/2}\|
        \|T_{r,\lambda}^{-1/2}(\hat{T}_r-T_r)T_{r,\lambda}^{-1} \mu_r\|_{\caH^{\otimes r}}.
        \label{eq:bias-resolvent-difference}
    \end{align}
    Since \(\lambda=\Omega((mn)^{-\theta})\), we have
    \[
        v_r
        =
        \frac{\lambda^{-1/\beta}(\ln \frac{1}{\lambda})^r}{mn}
        =
        O \left(
        (mn)^{\theta/\beta-1}
        \left(\ln(mn)\right)^r
        \right)
        =
        o(1),
    \]
    because \(\theta<\beta\).
    On the high-probability event from \Cref{lem:appendix-operator-concentration},
    \[
        \|T_r^{1/2} T_{r,\lambda}^{-1/2}\|\le 1,
        \qquad
        \|T_{r,\lambda}^{1/2} \hat{T}_{r,\lambda}^{-1} T_{r,\lambda}^{1/2}\|
        \le 2,
    \]
    so \cref{eq:bias-resolvent-difference} becomes
    \begin{equation}\label{eq:bias-difference-main}
        \|(\lambda\hat{T}_{r,\lambda}^{-1}-\lambda T_{r,\lambda}^{-1})\mu_r\|_{L^2}
        \le
        2\lambda
        \|T_{r,\lambda}^{-1/2}(\hat{T}_r-T_r)T_{r,\lambda}^{-1} \mu_r\|_{\caH^{\otimes r}}.
    \end{equation}

    Define the scalar function
    \[
        q_\lambda(\bm{t})
        \coloneqq
        (T_{r,\lambda}^{-1} \mu_r)(\bm{t}),
        \qquad
        \bm{t}=(t_1,\ldots,t_r)\in \mca I^r,
    \]
    and the \(\caH^{\otimes r}\)-valued kernel
    \[
        \xi_\lambda(\bm{t})
        \coloneqq
        q_\lambda(\bm{t})
        T_{r,\lambda}^{-1/2} K(\bm{t}).
    \]
    Then applying the reproducing property of \(K\) gives
    \[
        T_{r,\lambda}^{-1/2} \hat{T}_r T_{r,\lambda}^{-1} \mu_r
        =
        \frac{1}{n}\sum_{i=1}^n \frac{1}{(m)_r}
        \sum_{\mathbf{j}\in I_m^r}
        \xi_\lambda(t_{i\mathbf{j}}),
    \]
    and
    \[
        \E[\xi_\lambda(\bm{t})]
        =
        T_{r,\lambda}^{-1/2} T_r T_{r,\lambda}^{-1} \mu_r.
    \]
    Hence
    \begin{equation}\label{eq:bias-xi-centered}
        T_{r,\lambda}^{-1/2}(\hat{T}_r-T_r)T_{r,\lambda}^{-1} \mu_r
        =
        \frac{1}{n}\sum_{i=1}^n \frac{1}{(m)_r}
        \sum_{\mathbf{j}\in I_m^r}
        \xi_\lambda(t_{i\mathbf{j}})
        -
        \E[\xi_\lambda(\bm{t})].
    \end{equation}

    We next estimate the variance and the uniform bound of \(\xi_\lambda\).
    First,
    \begin{align}
        \E[q_\lambda(\bm{t})^2]
         & =
        \|q_\lambda\|_{L^2(\mca I^r)}^2 \notag \\
         & =
        \|T_{r,\lambda}^{-1} \mu_r\|_{L^2(\mca I^r)}^2
        =
        \lambda^{-2} B(\lambda;\mu_r)^2.
        \label{eq:bias-q-L2}
    \end{align}
    Also, by \Cref{lem:appendix-phi-T},
    \[
        \|T_{r,\lambda}^{-1/2} K(\bm{t})\|_{\caH^{\otimes r}}^2
        \le
        M^r \mcaN_1^{(r)}(\lambda),
        \qquad
        \bm{t}\in \mca I^r.
    \]
    Therefore
    \begin{equation}\label{eq:bias-xi-variance}
        \E\|\xi_\lambda(\bm{t})\|_{\caH^{\otimes r}}^2
        \le
        M^r \mcaN_1^{(r)}(\lambda)\lambda^{-2} B(\lambda;\mu_r)^2.
    \end{equation}

    To bound \(q_\lambda\) uniformly, choose any
    \[
        s'\in(1/\beta,\min(s,2)),
        \qquad
        \alpha\in(1/\beta,s').
    \]
    Since \(\mu_r \in([H]^{s'})^{\otimes r}\), we may write
    \[
        \mu_r
        =
        \sum_{\mathbf{i}=1}^{\infty}
        a_{\mathbf{i}}
        \Lambda_{\mathbf{i}}^{s'/2}
        e_{\mathbf{i}},
        \qquad
        \sum_{\mathbf{i}=1}^{\infty} a_{\mathbf{i}}^2
        =
        \|\mu_r\|_{([H]^{s'})^{\otimes r}}^2.
    \]
    Hence
    \[
        q_\lambda(\bm{t})
        =
        \sum_{\mathbf{i}=1}^{\infty}
        a_{\mathbf{i}}
        \frac{\Lambda_{\mathbf{i}}^{s'/2}}{\Lambda_{\mathbf{i}}+\lambda}
        e_{\mathbf{i}}(\bm{t}).
    \]
    By Cauchy-Schwarz,
    \begin{align*}
        |q_\lambda(\bm{t})|^2
        \le\  &
        \|\mu_r\|_{([H]^{s'})^{\otimes r}}^2
        \sum_{\mathbf{i}=1}^{\infty}
        \frac{\Lambda_{\mathbf{i}}^{s'}}{(\Lambda_{\mathbf{i}}+\lambda)^2}
        e_{\mathbf{i}}(\bm{t})^2 \\
        =\    &
        \|\mu_r\|_{([H]^{s'})^{\otimes r}}^2
        \sum_{\mathbf{i}=1}^{\infty}
        \frac{\Lambda_{\mathbf{i}}^{s'-\alpha}}{(\Lambda_{\mathbf{i}}+\lambda)^2}
        \Lambda_{\mathbf{i}}^\alpha
        e_{\mathbf{i}}(\bm{t})^2.
    \end{align*}
    Since \(s'<2\), we have
    \[
        \sup_{x>0}
        \frac{x^{s'-\alpha}}{(x+\lambda)^2}
        \lesssim
        \lambda^{s'-\alpha-2}.
    \]
    Moreover,
    \[
        \sum_{\mathbf{i}=1}^{\infty}
        \Lambda_{\mathbf{i}}^\alpha e_{\mathbf{i}}(\bm{t})^2
        =
        \prod_{\ell=1}^r
        \left(
        \sum_{j=1}^{\infty}
        \lambda_j^\alpha e_j(t_\ell)^2
        \right)
        \le
        M_\alpha^{2r},
    \]
    because \(\alpha>1/\beta\).
    Therefore
    \begin{equation}\label{eq:bias-q-sup}
        \|q_\lambda\|_{\infty}
        =
        O \left(
        \lambda^{-(2-s'+\alpha)/2}
        \right).
    \end{equation}
    Combining this with \Cref{lem:appendix-phi-T}, we obtain
    \begin{equation}\label{eq:bias-xi-sup}
        \|\xi_\lambda\|_{\infty}
        =
        O \left(
        \lambda^{-(2-s'+\alpha)/2}
        \sqrt{
            \mcaN_1^{(r)}(\lambda)
        }
        \right).
    \end{equation}

    Now apply \Cref{lem:appendix-U-bernstein-vector} to the kernel \(\xi_\lambda\).
    Using \Cref{eq:bias-xi-centered,eq:bias-xi-variance,eq:bias-xi-sup} and the fact that
    \(\lfloor m/r\rfloor\asymp m\) because \(r\) is fixed, we get
    \begin{equation}\label{eq:bias-centered-operator-bound}
        \|T_{r,\lambda}^{-1/2}(\hat{T}_r-T_r)T_{r,\lambda}^{-1} \mu_r\|_{\caH^{\otimes r}}
        =
        O_\bbP \left(
        \lambda^{-1} B(\lambda;\mu_r)
        \sqrt{
            \frac{
                \mcaN_1^{(r)}(\lambda)
            }{
                mn
            }
        }
        +
        \lambda^{-(2-s'+\alpha)/2}
        \frac{
            \sqrt{
                \mcaN_1^{(r)}(\lambda)
            }
        }{
            mn
        }
        \right).
    \end{equation}
    Combining \Cref{eq:bias-centered-operator-bound} with the
    high-probability inequality \Cref{eq:bias-difference-main},
    \begin{equation}\label{eq:bias-difference-pre-final}
        \|(\lambda\hat{T}_{r,\lambda}^{-1}-\lambda T_{r,\lambda}^{-1})\mu_r\|_{L^2}
        =
        O_\bbP \left(
        B(\lambda;\mu_r)
        \sqrt{
            \frac{
                \mcaN_1^{(r)}(\lambda)
            }{
                mn
            }
        }
        +
        \lambda^{1-(2-s'+\alpha)/2}
        \frac{
            \sqrt{
                \mcaN_1^{(r)}(\lambda)
            }
        }{
            mn
        }
        \right).
    \end{equation}

    We now compare the two terms with \(B(\lambda;\mu_r)\).
    By \Cref{lem:appendix-effective-dimension},
    \[
        \mcaN_1^{(r)}(\lambda)
        \asymp
        \lambda^{-1/\beta}
        \left(
        \ln \frac{1}{\lambda}
        \right)^{r-1}.
    \]
    Hence
    \[
        \frac{\mcaN_1^{(r)}(\lambda)}{mn}
        \lesssim
        \lambda^{1/\theta-1/\beta}
        \left(
        \ln \frac{1}{\lambda}
        \right)^{r-1}
        \to 0,
    \]
    so the first term on the right-hand side of \Cref{eq:bias-difference-pre-final} is
    \(o_\bbP(B(\lambda;\mu_r))\).

    Let \(\tilde{s}=\min(s,2)\).
    By \Cref{prop:bias-population-size},
    \[
        B(\lambda;\mu_r)
        =
        \Omega(\lambda^{\tilde{s}/2}).
    \]
    Therefore the second term in \Cref{eq:bias-difference-pre-final}, divided by \(B(\lambda;\mu_r)\), is
    \[
        O_\bbP \left(
        \frac{
                \lambda^{1-(2-s'+\alpha)/2}
                \sqrt{
                    \mcaN_1^{(r)}(\lambda)
                }
            }{
                mn B(\lambda;\mu_r)
            }
        \right)
        =
        O_\bbP \left(
        \frac{
            \lambda^{(s'-\tilde{s}-\alpha-1/\beta)/2}
            \left(
            \ln \frac{1}{\lambda}
            \right)^{(r-1)/2}
        }{
            mn
        }
        \right).
    \]
    Since \(\lambda=\Omega((mn)^{-\theta})\), we have
    \[
        \frac{1}{mn}
        =
        O(\lambda^{1/\theta}).
    \]
    Hence the previous display is bounded by
    \[
        O_\bbP \left(
        \lambda^\kappa
        \left(
            \ln \frac{1}{\lambda}
            \right)^{(r-1)/2}
        \right),
        \qquad
        \kappa
        \coloneqq
        \frac{1}{\theta}
        +
        \frac{
            s'-\tilde{s}-\alpha-1/\beta
        }{
            2
        }.
    \]
    Because \(\theta<\beta\), we have \(1/\theta>1/\beta\).
    We may therefore choose \(s'<\min(s,2)\) sufficiently close to \(\tilde{s}\) and \(\alpha>1/\beta\) sufficiently close to \(1/\beta\) so that \(\kappa>0\).
    For this choice,
    \[
        \lambda^\kappa
        \left(
        \ln \frac{1}{\lambda}
        \right)^{(r-1)/2}
        \to 0.
    \]
    Hence the second term in \Cref{eq:bias-difference-pre-final} is also \(o_\bbP(B(\lambda;\mu_r))\).

    We have proved
    \[
        \|(\lambda\hat{T}_{r,\lambda}^{-1}-\lambda T_{r,\lambda}^{-1})\mu_r\|_{L^2}
        =
        o_\bbP(B(\lambda;\mu_r)).
    \]
    Combining this with the initial triangle inequality yields
    \[
        \Bias(\lambda)
        =
        B(\lambda;\mu_r)
        +
        o_\bbP(B(\lambda;\mu_r)).
    \]
    Since both quantities are nonnegative, this immediately implies
    \[
        \Bias(\lambda)^2
        =
        \bigl(
        1+o_\bbP(1)
        \bigr)
        B(\lambda;\mu_r)^2.
    \]
\end{proof}
 \section{Conclusion}
\label{sec:conclusion}

We studied KRR for estimating the \(r\)-th moment function of a random function from noisy, discretely observed curves.
Using a tensor-product RKHS, the construction covers mean, covariance, and higher-order moment estimation.
For every admissible regularization parameter, our main result gives a \(1+o_{\bbP}(1)\) deterministic equivalent of the \(L^2\) generalization error.
It describes how the error varies with \(\lambda\), the number of curves \(n\), the number of observations per curve \(m\), and the noise level.
It also identifies four terms representing different sources of error: the bias term and the curve-level, point-level, and noise variance terms.
As applications, we derived the optimal regularization parameter and showed that KRR attains the minimax rate when \(s\le2\), whereas the saturation effect makes it suboptimal in the sparse regime when \(s>2\).
These results provide a complete picture of the generalization error of KRR for functional data.

On the technical side, we formulate the estimation of the \(r\)-th moment function as KRR on the tensor-product RKHS \(\caH^{\otimes r}\), with within-curve products at distinct sampling locations serving as regression responses.
This construction suggests a general route to other multivariate functional estimation problems.
Moreover, we provide a set of Bernstein-type concentration inequalities for \(U\)-statistics arising from functional data.
These inequalities are useful under the dependent product structure of functional observations, and they may be of independent interest.

Several future directions are worth exploring.
First, the sampling design can affect the minimax rate in functional data analysis \citep{cai2011OptimalEstimationMean}.
Extending the present analysis to the common design would require a substantive modification, because observations from different curves then share the same sampling locations and hence obey a different sampling geometry.
Second, spectral algorithms form a well-studied class of regularization methods that includes KRR~\citep{bauer2007RegularizationAlgorithmsLearning}, and their exact generalization error curves have been established in nonparametric regression~\citep{li2024GeneralizationErrorCurves}.
For functional data, building upon the optimal rates for spectral algorithms~\citep{gupta2026MinimaxOptimalEstimation},
it would be natural to establish analogous deterministic equivalents for these methods.
 
\section*{Acknowledgments}

The authors thank colleagues and seminar participants for helpful comments on earlier versions of this manuscript.
The authors received no specific funding for this work and declare no competing interests.

\appendix
\section{Concentration Inequalities}
\label{app:concentration}

This appendix collects the concentration tools used implicitly in the trace estimates.
The scalar \(U\)-statistic inequalities are obtained from moment-generating-function bounds together with Hoeffding's blocking representation \citep{hoeffding1963ProbabilityInequalitiesSums}.
The operator-valued inequality combines the same blocking representation with the matrix Laplace-transform method of \citet{tropp2015IntroductionMatrixConcentration} and its Hilbert-space extension with effective rank due to \citet{minsker2017ExtensionsBernsteinsInequality}.
The vector-valued inequality is then derived from the operator-valued inequality by a standard self-adjoint dilation.

\subsection{Classical Inequalities}

\begin{lemma}[Hoeffding's inequality]\label{lem:appendix-hoeffding}
    Let \(\xi,\xi_1,\dots,\xi_n\) be i.i.d.\ real-valued random variables such that
    \[
        \E\exp \bigl(
            \lambda(\xi-\E\xi)
        \bigr)
        \le
        \exp \left(
            \frac{\lambda^2 B^2}{2}
        \right),
        \qquad
        \lambda\in\R.
    \]
    Then for any \(\delta\in(0,1)\), with probability at least \(1-\delta\),
    \[
        \left|
            \frac{1}{n}\sum_{i=1}^n \xi_i-\E\xi
        \right|
        \le
        \sqrt{
            \frac{2B^2 \ln(2/\delta)}{n}
        }.
    \]
    In particular, if
    \[
        \mr{ess sup} \xi-\mr{ess inf} \xi\le 2B
    \]
    almost surely, then the same conclusion holds.
\end{lemma}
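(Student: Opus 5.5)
This is the classical Hoeffding inequality, and I would prove it by the Chernoff (exponential Markov) method applied to the centered sum.

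\textbf{Tail bound for the centered sum.} Set \(S_n\coloneqq\sum_{i=1}^n(\xi_i-\E\xi)\). By independence and the assumed sub-Gaussian moment-generating-function bound, for every \(\lambda\in\R\),
\[
  \E e^{\lambda S_n}=\prod_{i=1}^n\E e^{\lambda(\xi_i-\E\xi)}\le \exp\bigl(n\lambda^2B^2/2\bigr).
\]
For \(u>0\) and \(\lambda>0\), Markov's inequality gives
\[
  \bbP(S_n\ge u)\le \exp\bigl(-\lambda u+n\lambda^2B^2/2\bigr).
\]
Optimizing at \(\lambda=u/(nB^2)\) yields \(\exp(-u^2/(2nB^2))\). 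Applying the same argument to \(-S_n\) and taking a union bound gives
\[
  \bbP(|S_n|\ge u)\le 2\exp\bigl(-u^2/(2nB^2)\bigr).
\]

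\textbf{Conversion to the stated deviation.} I would choose \(u\) so that the right-hand side equals \(\delta\), that is, \(u=\sqrt{2nB^2\ln(2/\delta)}\). Dividing by \(n\) gives exactly the stated deviation \(\sqrt{2B^2\ln(2/\delta)/n}\) with probability at least \(1-\delta\).

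\textbf{The bounded case.} Here the only real content is Hoeffding's lemma: if \(a\le\xi\le b\) almost surely with \(b-a\le2B\), then
\[
  \E e^{\lambda(\xi-\E\xi)}\le e^{\lambda^2(b-a)^2/8}\le e^{\lambda^2B^2/2}.
\]
This is exactly the hypothesis of the first part, so the conclusion follows. I would prove Hoeffding's lemma in the standard way:
\begin{itemize}
  \item Reduce to a centered variable \(X=\xi-\E\xi\), supported in an interval \([a',b']\) of length at most \(2B\) that contains \(0\).
  \item Use convexity of \(x\mapsto e^{\lambda x}\) to bound \(e^{\lambda X}\) by the chord through \((a',e^{\lambda a'})\) and \((b',e^{\lambda b'})\), and take expectations using \(\E X=0\).
  \item Write the logarithm of the resulting bound as \(\varphi(h)\) with \(h=\lambda(b'-a')\), and check that \(\varphi(0)=\varphi'(0)=0\) and \(\varphi''\le1/4\). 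This last inequality holds because \(\varphi''\) has the form \(p(1-p)\).
  \item Conclude by Taylor's theorem that \(\varphi(h)\le h^2/8\).
\end{itemize}

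The only mildly delicate point is this convexity and Taylor argument in Hoeffding's lemma. It is standard, and if citing \citet{hoeffding1963ProbabilityInequalitiesSums} is acceptable I would simply invoke it.
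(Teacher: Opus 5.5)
Your proof is correct and is the standard Chernoff argument: the sub-Gaussian MGF bound, Markov's inequality optimized at $\lambda=u/(nB^2)$, a two-sided union bound, and Hoeffding's lemma for the bounded case. The paper gives no proof of its own and simply calls the lemma classical, and it later relies on exactly this MGF-only form when it transfers the bound to $U$-statistics. The one case you leave implicit is $B=0$, where your choice of $\lambda$ is undefined; there the hypothesis together with Jensen's inequality forces $\xi=\E\xi$ almost surely, so the conclusion is trivial.
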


\begin{lemma}[Bernstein's inequality]\label{lem:appendix-bernstein}
    Let \(\xi,\xi_1,\dots,\xi_n\) be i.i.d.\ real-valued random variables.
    Suppose at least one of the following conditions holds:
    \begin{enumerate}
        \item
        \[
            |\xi-\E\xi|\le L
            \quad\text{a.s.},
            \qquad
            \Var(\xi)\le \sigma^2.
        \]
        \item
        \[
            \E|\xi-\E\xi|^p
            \le
            \frac{1}{2}p!\sigma^2 L^{p-2},
            \qquad
            p=2,3,\dots.
        \]
        \item
        \[
            \E\exp \bigl(
                \lambda(\xi-\E\xi)
            \bigr)
            \le
            \exp \left(
                \frac{\lambda^2 \sigma^2}{2(1-|\lambda|L)}
            \right),
            \qquad
            |\lambda|<\frac{1}{L}.
        \]
    \end{enumerate}
    Then for any \(\delta\in(0,1)\), with probability at least \(1-\delta\),
    \[
        \left|
            \frac{1}{n}\sum_{i=1}^n \xi_i-\E\xi
        \right|
        \le
        \sqrt{
            \frac{2\sigma^2 \ln(2/\delta)}{n}
        }
        +
        \frac{L\ln(2/\delta)}{n}.
    \]
    Moreover, condition \(1\Rightarrow2\Rightarrow3\).
\end{lemma}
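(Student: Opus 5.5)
The statement to prove is the classical Bernstein inequality for i.i.d.\ real variables, together with the chain of implications \(1\Rightarrow2\Rightarrow3\). I will prove the chain first. The tail bound then only needs condition~3, which I will handle by a Chernoff argument.

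\textbf{Step \(1\Rightarrow2\).} Write \(\eta=\xi-\E\xi\). For \(p\ge2\), boundedness gives
\[
\E|\eta|^p\le L^{p-2}\E\eta^2\le \sigma^2L^{p-2}\le \tfrac12 p!\,\sigma^2L^{p-2},
\]
since \(p!/2\ge1\).

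\textbf{Step \(2\Rightarrow3\).} Fix \(|\lambda|<1/L\). Using \(\E\eta=0\), expand the exponential:
\[
\E e^{\lambda\eta}\le 1+\sum_{p\ge2}\frac{|\lambda|^p\E|\eta|^p}{p!}\le 1+\frac{\lambda^2\sigma^2}{2}\sum_{p\ge2}(|\lambda|L)^{p-2}=1+\frac{\lambda^2\sigma^2}{2(1-|\lambda|L)}.
\]
Then \(1+x\le e^x\) gives condition~3.

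\textbf{Tail bound under condition~3.} Set \(S=\sum_{i}(\xi_i-\E\xi)\). By independence,
\[
\E e^{\lambda S}\le \exp\Bigl(\frac{n\lambda^2\sigma^2}{2(1-\lambda L)}\Bigr),\qquad 0<\lambda<1/L.
\]
Markov's inequality then gives \(\bbP(S\ge t)\le \exp\bigl(-\lambda t+\frac{n\lambda^2\sigma^2}{2(1-\lambda L)}\bigr)\). I will choose \(\lambda=t/(n\sigma^2+Lt)\), which lies in \((0,1/L)\). With this choice \(1-\lambda L=n\sigma^2/(n\sigma^2+Lt)\), so the exponent equals
\[
-\frac{t^2}{n\sigma^2+Lt}+\frac{t^2}{2(n\sigma^2+Lt)}=-\frac{t^2}{2(n\sigma^2+Lt)}.
\]
Hence \(\bbP(S\ge t)\le\exp\bigl(-t^2/(2(n\sigma^2+Lt))\bigr)\). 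The same holds for \(-S\), because condition~3 is symmetric in \(\lambda\).

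\textbf{Inverting the bound.} This is the only place that needs care with constants. Setting the right-hand side to \(\delta/2\) and writing \(u=\ln(2/\delta)\), I solve \(t^2=2u(n\sigma^2+Lt)\). The positive root is
\[
t=Lu+\sqrt{L^2u^2+2un\sigma^2}\le 2Lu+\sqrt{2un\sigma^2}.
\]
This gives the bound with \(2L\ln(2/\delta)/n\) in place of \(L\ln(2/\delta)/n\).

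To get the stated constant \(L\), I will use the sharper sub-gamma MGF bound \(\exp\bigl(\lambda^2\sigma^2/(2(1-\lambda L/3))\bigr)\) that boundedness gives under condition~1, and Boucheron–Lugosi–Massart's sub-gamma tail \(\sqrt{2v u}+cu\) with \(c=L\) under condition~3. Concretely, from condition~3, \(S\) is sub-gamma with variance factor \(v=n\sigma^2\) and scale \(c=L\). Its Cramér transform satisfies \(\psi^{*}(t)\ge v\,h_1(ct/v)/c^2\) with \(h_1(x)=1+x-\sqrt{1+2x}\). Inverting \(h_1\) gives exactly \(\bbP(S\ge\sqrt{2vu}+cu)\le e^{-u}\). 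Dividing by \(n\) and taking a union bound over both signs yields the claim with probability at least \(1-\delta\).
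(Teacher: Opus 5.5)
Your proof is correct. The paper gives no proof to compare it with: it states this lemma, together with Hoeffding's, as classical and only remarks that the moment-generating-function form is what gets used later. Your Chernoff argument suits that use well. After the first line it needs only the bound $\log\E e^{\lambda S}\le n\lambda^2\sigma^2/(2(1-|\lambda|L))$ on the centered sum, not independence itself. That is exactly what the proof of \Cref{lem:appendix-U-bernstein-scalar} relies on when it applies condition~3 to the non-i.i.d.\ quantity $U_{m,k}(h)-\theta$ with effective sample size $nq$.

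For the write-up, drop the detour through $\lambda=t/(n\sigma^2+Lt)$, since, as you note, it only yields $2L\ln(2/\delta)/n$.

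\begin{itemize}
\item Go straight to the exact Legendre transform. With $v=n\sigma^2$ and $h_1(x)=1+x-\sqrt{1+2x}$, one has $\sup_{0<\lambda<1/L}\bigl(\lambda t-\tfrac{v\lambda^2}{2(1-L\lambda)}\bigr)=\tfrac{v}{L^2}h_1(Lt/v)$, attained at $\lambda=L^{-1}\bigl(1-(1+2Lt/v)^{-1/2}\bigr)$. Since $h_1^{-1}(u)=u+\sqrt{2u}$, this gives $\bbP(S\ge\sqrt{2vu}+Lu)\le e^{-u}$, which is the stated constant $L$ directly.
\item The remark about the scale $L/3$ under condition~1 is unnecessary, because $1\Rightarrow2\Rightarrow3$ already reduces that case to condition~3.
\item In $2\Rightarrow3$, justify interchanging $\E$ with the power series by Tonelli: $\E e^{|\lambda\eta|}<\infty$ for $|\lambda|<1/L$.
\item Handle the degenerate cases separately. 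If $\sigma=0$, then $\eta=0$ almost surely by equality in Jensen; note also that your choice of $\lambda$ would then sit at the excluded endpoint $1/L$. If $L=0$, you are in the sub-Gaussian case and $\lambda$ ranges over all of $\R$.
\end{itemize}
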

The two inequalities above are classical.
In what follows we only need the moment-generating-function conditions.

\subsection{Scalar \texorpdfstring{\(U\)}{U}-statistics}

For a measurable kernel \(h:\mca{X}^k \to\R\) and an i.i.d.\ array
\[
    \{X_{ij}:1\le i\le n,\ 1\le j\le m\},
\]
write
\[
    U_{m,k}^{(i)}(h)
    \coloneqq
    \frac{1}{(m)_k}
    \sum_{\mathbf{j}\in I_m^k}
    h(X_{i,j_1},\dots,X_{i,j_k}),
    \qquad
    U_{m,k}(h)
    \coloneqq
    \frac{1}{n}\sum_{i=1}^n U_{m,k}^{(i)}(h).
\]

The following inequality is due to
\citet{hoeffding1963ProbabilityInequalitiesSums}, through Hoeffding's blocking
representation.
\begin{lemma}[Bernstein's inequality for scalar \(U\)-statistics]\label{lem:appendix-U-bernstein-scalar}
    Let \(h:\mca{X}^k \to\R\) be measurable, and let
    \[
        \theta \coloneqq \E h(T_1,\dots,T_k),
    \]
    where \(T_1,\dots,T_k\) are i.i.d.\ copies of \(X_{11}\).
    Assume
    \[
        \Var\bigl(
            h(T_1,\dots,T_k)
        \bigr)
        \le \sigma^2,
        \qquad
        \|h-\theta\|_\infty\le L.
    \]
    Then for any \(\delta\in(0,1)\), with probability at least \(1-\delta\),
    \[
        \left|
            U_{m,k}(h)-\theta
        \right|
        \le
        \sqrt{
            \frac{2\sigma^2 \ln(2/\delta)}
            {n\lfloor m/k\rfloor}
        }
        +
        \frac{L\ln(2/\delta)}
        {n\lfloor m/k\rfloor}.
    \]
\end{lemma}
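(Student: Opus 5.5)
The plan is Hoeffding's blocking representation followed by a convexity (Jensen) argument on the moment generating function, which reduces the within-curve \(U\)-statistic to an average of \(n\lfloor m/k\rfloor\) i.i.d.\ terms, and then the classical Bernstein argument of Lemma~\ref{lem:appendix-bernstein}.

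\emph{Step 1 (blocking).} Set \(p\coloneqq\lfloor m/k\rfloor\). For a fixed curve \(i\) and a permutation \(\pi\) of \(\{1,\dots,m\}\), define the block average
\[
  W_i(\pi)\coloneqq\frac{1}{p}\sum_{b=1}^{p}
  h\bigl(X_{i,\pi(bk-k+1)},\dots,X_{i,\pi(bk)}\bigr).
\]
Since every ordered \(k\)-tuple of distinct indices appears equally often as a block when \(\pi\) ranges over all permutations, we get \(U_{m,k}^{(i)}(h)=\frac{1}{m!}\sum_{\pi}W_i(\pi)\). Averaging over curves then gives
\[
  U_{m,k}(h)-\theta=\frac{1}{(m!)^n}\sum_{\pi_1,\dots,\pi_n}
  \frac{1}{n}\sum_{i=1}^n\bigl(W_i(\pi_i)-\theta\bigr).
\]
This is an average over the product of permutation groups.

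\emph{Step 2 (MGF via Jensen).} For \(\eta\in\R\), convexity of \(\exp\) gives
\[
  \E e^{\eta(U_{m,k}(h)-\theta)}\le\frac{1}{(m!)^n}\sum_{\pi_1,\dots,\pi_n}
  \E\exp\Bigl(\frac{\eta}{n}\sum_i (W_i(\pi_i)-\theta)\Bigr).
\]
For fixed \(\pi_1,\dots,\pi_n\), the \(np\) block summands use disjoint sets of the i.i.d.\ variables \(X_{ij}\). They are therefore i.i.d.\ copies of \(h(T_1,\dots,T_k)\), and each summand equals
\[
  \E\exp\Bigl(\frac{\eta}{np}\sum_{\ell=1}^{np}(\zeta_\ell-\theta)\Bigr),
  \qquad \zeta_\ell \text{ i.i.d.}\sim h(T_1,\dots,T_k).
\]
By \(\|h-\theta\|_\infty\le L\) and \(\Var\le\sigma^2\), the implication \(1\Rightarrow3\) of Lemma~\ref{lem:appendix-bernstein} yields the one-variable bound \(\E e^{t(\zeta-\theta)}\le\exp\bigl(\frac{t^2\sigma^2}{2(1-|t|L)}\bigr)\) for \(|t|<1/L\). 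Applying it with \(t=\eta/(np)\) and multiplying over the \(np\) factors, the MGF of \(U_{m,k}(h)-\theta\) is at most
\[
  \exp\Bigl(\frac{\eta^2\sigma^2/(np)}{2(1-|\eta|L/(np))}\Bigr).
\]
This is exactly the Bernstein MGF of the mean of \(N\coloneqq np\) i.i.d.\ variables.

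\emph{Step 3 (tail).} Chernoff's bound, optimized over \(\eta\) as in the proof of Lemma~\ref{lem:appendix-bernstein}, gives each one-sided tail \(\le\delta/2\) at the level
\[
  \sqrt{2\sigma^2\ln(2/\delta)/N}+L\ln(2/\delta)/N.
\]
A union bound over the two sides finishes the proof.

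The only delicate point is Step 1: checking that the permutation average reproduces the normalization \(1/(m)_k\) exactly even when \(k\nmid m\), and that blocks from distinct curves stay independent, which holds because the curves are independent. The rest is routine.
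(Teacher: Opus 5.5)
Your proposal is correct and follows essentially the same route as the paper: Hoeffding's blocking representation, Jensen's inequality for the moment generating function over the permutation average, the Bernstein moment-generating-function bound for an average of \(n\lfloor m/k\rfloor\) i.i.d.\ terms, and then the standard Chernoff tail. There are two cosmetic differences, neither of which affects the argument: you skip the paper's preliminary symmetrization of \(h\), which your observation that each block position is uniformly distributed over ordered distinct \(k\)-tuples makes unnecessary, and you use independent permutations \(\pi_1,\dots,\pi_n\) per curve where the paper uses a single common permutation \(\tau\).
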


\begin{proof}
    Define the symmetrized kernel
    \[
        h^{\mr{sym}}(x_1,\dots,x_k)
        \coloneqq
        \frac{1}{k!}
        \sum_{\tau\in\mathfrak{S}_k}
        h(x_{\tau(1)},\dots,x_{\tau(k)}).
    \]
    Then
    \[
        U_{m,k}^{(i)}(h)
        =
        \frac{1}{\binom{m}{k}}
        \sum_{1\le j_1<\cdots<j_k \le m}
        h^{\mr{sym}}(X_{ij_1},\dots,X_{ij_k}),
    \]
    and
    \[
        \E h^{\mr{sym}}(T_1,\dots,T_k)=\theta.
    \]
    Moreover,
    \[
        \Var\bigl(
            h^{\mr{sym}}(T_1,\dots,T_k)
        \bigr)
        \le \sigma^2,
        \qquad
        \|h^{\mr{sym}}-\theta\|_\infty\le L,
    \]
    so we may assume from the outset that \(h\) is symmetric.

    Let
    \[
        \bar{h}(x_1,\dots,x_k) \coloneqq h(x_1,\dots,x_k)-\theta,
    \]
    and write
    \[
        q \coloneqq \lfloor m/k\rfloor.
    \]
    For every \(\tau\in\mathfrak{S}_m\), define
    \[
        Y_\tau
        \coloneqq
        \frac{1}{nq}
        \sum_{i=1}^n
        \sum_{\ell=1}^{q}
        \bar{h}\bigl(
            X_{i,\tau(k(\ell-1)+1)},
            \dots,
            X_{i,\tau(k\ell)}
        \bigr).
    \]
    Hoeffding's blocking representation gives
    \[
        U_{m,k}(h)-\theta
        =
        \frac{1}{m!}
        \sum_{\tau\in\mathfrak{S}_m} Y_\tau.
    \]
    For fixed \(\tau\), the variable \(Y_\tau\) is the average of \(nq\) i.i.d.\ centered random variables, each distributed as \(\bar{h}(T_1,\dots,T_k)\).
    Hence condition \(1\) of \Cref{lem:appendix-bernstein} implies the moment-generating-function bound
    \[
        \E\exp(\lambda Y_\tau)
        \le
        \exp \left(
            \frac{\lambda^2 \sigma^2}
            {2nq\left(1-\frac{|\lambda|L}{nq}\right)}
        \right),
        \qquad
        |\lambda|<\frac{nq}{L}.
    \]
    Since the \(Y_\tau\) are identically distributed, Jensen's inequality gives
    \begin{align*}
        \E\exp \bigl(
            \lambda(U_{m,k}(h)-\theta)
        \bigr)
        &=
        \E\exp \left(
            \frac{\lambda}{m!}
            \sum_{\tau\in\mathfrak{S}_m}Y_\tau
        \right)\\
        &\le
        \frac{1}{m!}
        \sum_{\tau\in\mathfrak{S}_m}
        \E\exp(\lambda Y_\tau)\\
        &\le
        \exp \left(
            \frac{\lambda^2 \sigma^2}{2nq\left(1-\frac{|\lambda|L}{nq}\right)}
        \right),
        \qquad
        |\lambda|<\frac{nq}{L}.
    \end{align*}
    Thus \(U_{m,k}(h)-\theta\) itself satisfies condition \(3\) of \Cref{lem:appendix-bernstein} with effective sample size \(nq\).
    Applying that lemma completes the proof.
\end{proof}

\begin{lemma}[Hoeffding's inequality for scalar \(U\)-statistics]\label{lem:appendix-U-hoeffding-scalar}
    Let \(h:\mca{X}^k \to\R\) be measurable, and let
    \[
        \theta \coloneqq \E h(T_1,\dots,T_k),
    \]
    where \(T_1,\dots,T_k\) are i.i.d.\ copies of \(X_{11}\).
    Assume
    \[
        \mr{ess sup} h-\mr{ess inf} h\le 2L.
    \]
    Then for any \(\delta\in(0,1)\), with probability at least \(1-\delta\),
    \[
        \left|
            U_{m,k}(h)-\theta
        \right|
        \le
        L\sqrt{
            \frac{2\ln(2/\delta)}
            {n\lfloor m/k\rfloor}
        }.
    \]
\end{lemma}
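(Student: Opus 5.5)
The plan is to reuse the proof of \Cref{lem:appendix-U-bernstein-scalar}, replacing the Bernstein moment-generating-function input with the sub-Gaussian one from \Cref{lem:appendix-hoeffding}. First, symmetrize the kernel as before. The symmetrized kernel \(h^{\mr{sym}}\) is an average of permuted copies of \(h\), so its essential range is contained in that of \(h\). Hence \(\mr{ess sup} h^{\mr{sym}}-\mr{ess inf} h^{\mr{sym}}\le 2L\), its mean is still \(\theta\), and \(U_{m,k}^{(i)}(h)=U_{m,k}^{(i)}(h^{\mr{sym}})\). We may therefore assume \(h\) is symmetric.

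Next, set \(q=\lfloor m/k\rfloor\) and \(\bar h=h-\theta\), and define the block averages \(Y_\tau\) exactly as in the proof of \Cref{lem:appendix-U-bernstein-scalar}. By Hoeffding's blocking representation,
\[
U_{m,k}(h)-\theta=\frac{1}{m!}\sum_{\tau\in\mathfrak{S}_m}Y_\tau .
\]
For fixed \(\tau\), the blocks use disjoint index sets within each curve, and curves are independent. So \(Y_\tau\) is an average of \(nq\) i.i.d.\ centered copies of \(\bar h(T_1,\dots,T_k)\), each supported in an interval of length at most \(2L\).

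By Hoeffding's lemma, each summand satisfies \(\E\exp(s\bar h)\le\exp(s^2L^2/2)\) for all real \(s\). Multiplying over the \(nq\) independent summands gives
\[
\E\exp(\lambda Y_\tau)\le\exp\!\left(\frac{\lambda^2L^2}{2nq}\right),\qquad \lambda\in\R .
\]
Convexity of the exponential (Jensen over the uniform average in \(\tau\)) then transfers this bound to \(U_{m,k}(h)-\theta\) with the same constant.

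Finally, apply the Chernoff bound to both tails: \(\bbP(\pm(U_{m,k}(h)-\theta)\ge\varepsilon)\le\exp(-nq\varepsilon^2/(2L^2))\). Setting the sum of the two tails equal to \(\delta\) yields \(\varepsilon=L\sqrt{2\ln(2/\delta)/(nq)}\), which is the stated bound. Equivalently, one can invoke \Cref{lem:appendix-hoeffding} in its moment-generating-function form with \(B^2=L^2/(nq)\) and a single sample.

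The only point needing care is this last step: \Cref{lem:appendix-hoeffding} is phrased for averages of i.i.d.\ variables, while here the mgf bound holds for the non-i.i.d.\ average \(U_{m,k}(h)-\theta\). The clean fix is to run the Chernoff argument directly, which involves no real obstacle. The remaining steps are routine: the range condition survives symmetrization, and Hoeffding's lemma gives the constant \(L^2/2\) for a range of length \(2L\).
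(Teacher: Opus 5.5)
Your proposal is correct and follows the paper's route exactly. The paper's proof just says to rerun the argument of the scalar Bernstein $U$-statistic lemma (symmetrization, Hoeffding's blocking representation, an mgf bound for each $Y_\tau$, Jensen over the permutation average) with the Hoeffding mgf bound in place of the Bernstein one. Your version fills in those details, and running the Chernoff step directly cleanly handles the fact that the final mgf bound is for the non-i.i.d.\ average $U_{m,k}(h)-\theta$ rather than for an i.i.d.\ sample mean.
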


\begin{proof}
    Since Hoeffding's inequality also follows from a moment-generating-function condition, we can apply the same proof as in \Cref{lem:appendix-U-bernstein-scalar}.
    One only replaces the Bernstein moment-generating-function bound by \Cref{lem:appendix-hoeffding}.
\end{proof}

\subsection{Operator-valued \texorpdfstring{\(U\)}{U}-statistics}

The next lemma is an operator-valued \(U\)-statistic analogue of Minsker's Bernstein inequality.
The concentration mechanism is not new: in the matrix case it is the matrix Laplace-transform argument of \citet{tropp2015IntroductionMatrixConcentration}, refined to effective rank and extended to separable Hilbert spaces by \citet{minsker2017ExtensionsBernsteinsInequality}.
The additional step here is to insert Hoeffding's blocking representation into that operator Bernstein argument, so that the effective sample size becomes \(n\lfloor m/k\rfloor\).

\begin{lemma}[Operator Bernstein for \(U\)-statistics]\label{lem:appendix-U-bernstein-operator}
    Let \(P\) be a probability measure on \(\mca{X}\).
    Let \(H\) be a separable Hilbert space, and let
    \[
        Z:\mca{X}^k \to \mca{S}(H)
    \]
    be a measurable kernel taking values in the self-adjoint Hilbert-Schmidt operators on \(H\).
    Define
    \[
        D \coloneqq \E Z(X_1,\dots,X_k),
    \]
    and
    \[
        \hat{D}
        \coloneqq
        \frac{1}{n}\sum_{i=1}^n
        \frac{1}{(m)_k}
        \sum_{\mathbf{j}\in I_m^k}
        Z(X_{i,j_1},\dots,X_{i,j_k}),
    \]
    where \(X_{ij} \overset{\mr{i.i.d.}}{\sim}P\) and \(m\ge k\).
    Suppose there exist \(L>0\) and a positive trace-class operator \(S\) such that
    \[
        \|Z-D\|\le L
        \quad\text{a.s.},
        \qquad
        \E\bigl[(Z(X_1,\dots,X_k)-D)^2\bigr]\preceq S.
    \]
    If \(S=0\), then \(\hat{D}=D\) almost surely.
    Otherwise, for every \(\delta\in(0,1)\),
    \[
        \bbP \left(
            \|\hat{D}-D\|
            >
            \sqrt{
                \frac{2u_\delta\|S\|}
                {n\lfloor m/k\rfloor}
            }
            +
            \frac{2u_\delta L}
            {3n\lfloor m/k\rfloor}
        \right)
        \le
        \delta,
    \]
    where
    \[
        u_\delta
        \coloneqq
        \ln\frac{4d}{\delta},
        \qquad
        d \coloneqq \frac{\Tr S}{\|S\|}.
    \]
    Here \(d\) is the effective rank of \(S\).
\end{lemma}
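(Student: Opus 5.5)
The plan is to follow the scalar argument of \Cref{lem:appendix-U-bernstein-scalar}, replacing the scalar moment-generating function by the operator Laplace transform of \citet{minsker2017ExtensionsBernsteinsInequality}. If \(S=0\), then \(\E(Z-D)^2=0\), so \(Z=D\) almost surely and \(\hat{D}=D\); this case is trivial. In general, I first symmetrize \(Z\) over \(\mathfrak{S}_k\). This leaves \(\hat{D}\) and \(D\) unchanged. It also preserves the bound \(\|Z-D\|\le L\) by the triangle inequality, and the bound \(\E(Z^{\mr{sym}}-D)^2\preceq S\) by operator convexity of \(A\mapsto A^2\). Next, with \(q=\lfloor m/k\rfloor\), I write Hoeffding's blocking representation
\[
\hat{D}-D=\frac{1}{m!}\sum_{\tau\in\mathfrak{S}_m}Y_\tau,\qquad
Y_\tau=\frac{1}{nq}\sum_{i=1}^n\sum_{\ell=1}^{q}\bigl(Z(X_{i,\tau(k(\ell-1)+1)},\dots,X_{i,\tau(k\ell)})-D\bigr).
\]
For each fixed \(\tau\), \(Y_\tau\) is an average of \(N\coloneqq nq\) i.i.d.\ centered self-adjoint operators \(W_p/N\), with \(\|W_p\|\le L\) and \(\E W_p^2\preceq S\).

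Minsker's argument bounds \(\bbP(\lambda_{\max}(\hat{D}-D)\ge t)\) through \(\E\Tr\phi(\theta(\hat{D}-D))\), where \(\phi(x)=e^x-x-1\). This function is convex and nonnegative, vanishes at \(0\), and is monotone on \([0,\infty)\). The point of using \(\phi\) instead of \(\exp\) is that the trace stays finite in infinite dimensions. The key new step is to pass the blocking average through the trace. The map \(A\mapsto\Tr\phi(A)\) is convex on self-adjoint operators, since trace functions of convex scalar functions are convex. Jensen's inequality therefore gives
\[
\E\Tr\phi\bigl(\theta(\hat{D}-D)\bigr)\le\frac{1}{m!}\sum_\tau\E\Tr\phi(\theta Y_\tau)=\E\Tr\phi(\theta Y_{\mr{id}}),
\]
because the \(Y_\tau\) are identically distributed. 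This is exactly where the scalar proof used Jensen on \(\exp\). After this step we are reduced to a sum of \(N\) independent terms.

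For \(Y_{\mr{id}}\) I will invoke Minsker's bound on \(\E\Tr\phi(\theta\sum_p W_p/N)\). It comes from Lieb's concavity theorem together with the Bernstein matrix MGF bound \(\E e^{\theta W/N}\preceq\exp\bigl(g(\theta)\E W^2/N^2\bigr)\), where \(g(\theta)=\frac{\theta^2/2}{1-\theta L/(3N)}\). The result is roughly
\[
\E\Tr\phi(\theta Y_{\mr{id}})\le\Tr\bigl(\exp(g(\theta)S/N)-I\bigr)\le d\,e^{g(\theta)\|S\|/N}.
\]
The Markov step \(\bbP(\lambda_{\max}\ge t)\le\E\Tr\phi(\theta\cdot)/\phi(\theta t)\) and the usual optimization over \(\theta\) then yield a tail bound of the form \(2d\exp\bigl(-\frac{Nt^2/2}{\|S\|+Lt/3}\bigr)\), up to Minsker's correction factor, valid for \(t\) above a threshold. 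I apply the same argument to \(-(\hat{D}-D)\) and take a union bound, which gives the constant \(4d\). Inverting the tail at level \(\delta\) gives the stated deviation \(\sqrt{2u_\delta\|S\|/N}+2u_\delta L/(3N)\).

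I expect the main obstacle to be the Jensen step. The convexity of \(\Tr\phi\) on (possibly infinite-dimensional) trace-class perturbations has to hold, and the resulting quantity has to match exactly the input required by Minsker's proof. His proof bounds \(\Tr\phi\) of the sum only after Lieb's theorem, so I must enter his argument at the level of \(\E\Tr\phi(\theta Y_{\mr{id}})\) and not at a final probability. This is the gap noted for \citet{sriperumbudur2022ApproximateKernelPCA}. A second issue is the side condition in Minsker's inequality requiring \(t\) larger than about \(\sqrt{\|S\|/N}+L/(3N)\). I would handle this by noting that the claimed deviation already exceeds the threshold because \(u_\delta\ge\ln 4>1\). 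If needed, I would instead use the finite-rank approximation of \(H\) to reduce to the matrix case and apply Tropp's intrinsic-dimension bound, with the Jensen step done on \(\Tr\exp\).
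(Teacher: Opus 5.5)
Your proposal is correct and is essentially the paper's proof. Both arguments symmetrize, use Hoeffding's blocking representation, pass the permutation average through the convex map $A\mapsto\Tr\phi(A)$ by Jensen, bound each block sum by Minsker's estimate $\Tr(\exp(g(\theta)S/N)-I)\le d\,e^{g(\theta)\|S\|/N}$ with $N=n\lfloor m/k\rfloor$, and then apply Markov with $\phi$, the choice $\theta=t/(\|S\|+Lt/3)$, a two-sided union bound and inversion. The paper does the Lieb/Minsker step for matrices first and passes to separable $H$ via the compressions $P_j(\hat D-D)P_j$ and Fatou's lemma, which is your fallback and the standard justification. It gets the per-side factor $2$ from $e^u/\phi(u)\le 1+6/u^2$ once $Nt^2/(\|S\|+Lt/3)\ge\sqrt6$, which holds for the claimed $t$ because $2u_\delta\ge 2\ln 4>\sqrt6$; you need the value $\ln 4$ here, since $u_\delta>1$ alone is not enough.
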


\begin{proof}
    If \(S=0\), then
    \[
        \E\|(Z-D)u\|_H^2
        =
        \ang{\E[(Z-D)^2]u,u}
        =
        0,
        \qquad u\in H,
    \]
    and hence \(Z=D\) almost surely, so there is nothing to prove.

    Define the symmetrized kernel
    \[
        \tilde{Z}(x_1,\dots,x_k)
        \coloneqq
        \frac{1}{k!}
        \sum_{\tau\in\mathfrak{S}_k}
        Z(x_{\tau(1)},\dots,x_{\tau(k)}).
    \]
    Then
    \[
        \hat{D}
        =
        \frac{1}{n}\sum_{i=1}^n
        \frac{1}{\binom{m}{k}}
        \sum_{1\le j_1<\cdots<j_k \le m}
        \tilde{Z}(X_{ij_1},\dots,X_{ij_k}).
    \]
    Also \(\E\tilde{Z}=D\), \(\|\tilde{Z}-D\|\le L\) almost surely, and for every \(u\in H\),
    \begin{align*}
        \ang{
            \E(\tilde{Z}-D)^2u,u
        }
        &=
        \E\|(\tilde{Z}-D)u\|_H^2\\
        &=
        \E\left\|
            \frac{1}{k!}
            \sum_{\tau\in\mathfrak{S}_k}
            \bigl(
                Z_\tau-D
            \bigr)u
        \right\|_H^2\\
        &\le
        \frac{1}{k!}
        \sum_{\tau\in\mathfrak{S}_k}
        \E\|(Z_\tau-D)u\|_H^2\\
        &=
        \E\|(Z-D)u\|_H^2\\
        &\le
        \ang{Su,u},
    \end{align*}
    where \(Z_\tau(x_1,\dots,x_k) \coloneqq Z(x_{\tau(1)},\dots,x_{\tau(k)})\).
    Thus we may assume from the outset that \(Z\) is symmetric.

    Write
    \[
        q \coloneqq \lfloor m/k\rfloor,
        \qquad
        \tilde{Z}_0 \coloneqq Z-D.
    \]
    For \(\tau\in\mathfrak{S}_m\), set
    \[
        Y_\tau
        \coloneqq
        \frac{1}{nq}
        \sum_{i=1}^n
        \sum_{\ell=1}^{q}
        \tilde{Z}_0 \bigl(
            X_{i,\tau(k(\ell-1)+1)},
            \dots,
            X_{i,\tau(k\ell)}
        \bigr).
    \]
    Then Hoeffding's blocking representation \citep{hoeffding1963ProbabilityInequalitiesSums} gives
    \[
        \hat{D}-D
        =
        \frac{1}{m!}
        \sum_{\tau\in\mathfrak{S}_m} Y_\tau.
    \]
    For each fixed \(\tau\), the operator \(Y_\tau\) is the average of \(nq\) i.i.d.\ centered self-adjoint Hilbert-Schmidt operators.
    Moreover,
    \[
        \|Y_\tau\|
        \le L
        \quad\text{a.s.},
        \qquad
        \sum_{i=1}^n \sum_{\ell=1}^q
        \E\bigl[
            \tilde{Z}_0(X_{i,\tau(k(\ell-1)+1)},\dots,X_{i,\tau(k\ell)})^2
        \bigr]
        \preceq
        nq S.
    \]

    We first work in the finite-dimensional matrix case \(H=\mathbb{C}^N\).
    Let
    \[
        \phi(x) \coloneqq e^x-x-1.
    \]
    For each fixed permutation \(\tau\), apply the matrix Laplace-transform estimate used in the proof of Minsker's Bernstein inequality with effective rank \citep[Theorem~3.1]{minsker2017ExtensionsBernsteinsInequality}; this estimate itself is based on Tropp's matrix Laplace-transform method \citep[Chapter~3]{tropp2015IntroductionMatrixConcentration}.
    It gives, for every \(\theta>0\),
    \[
        \E\Tr\phi \bigl(
            \theta nq Y_\tau
        \bigr)
        \le
        \Tr \left(
            \exp \left(
                \frac{\phi(\theta L)}{L^2} nq S
            \right)-I
        \right).
    \]
    Now diagonalize
    \[
        A
        \coloneqq
        \frac{\phi(\theta L)}{L^2} nq S.
    \]
    Since \(A\) is positive and each eigenvalue of \(A\) is at most \(\|A\|\), we have
    \[
        \Tr(e^A-I)
        =
        \sum_j (e^{\lambda_j(A)}-1)
        \le
        \frac{e^{\|A\|}}{\|A\|}
        \sum_j \lambda_j(A)
        =
        \frac{\Tr A}{\|A\|}e^{\|A\|}.
    \]
    Hence
    \begin{equation}\label{eq:appendix-operator-U-mgf-tau}
        \E\Tr\phi \bigl(
            \theta nq Y_\tau
        \bigr)
        \le
        \frac{\Tr S}{\|S\|}
        \exp \left(
            \frac{\phi(\theta L)}{L^2} nq \|S\|
        \right).
    \end{equation}

    Since \(\phi\) is convex, the map \(A\mapsto \Tr\phi(A)\) is convex on self-adjoint matrices; see, for example, the convex trace-function fact used in \citet{minsker2017ExtensionsBernsteinsInequality}.
    Therefore Jensen's inequality, now applied to Hoeffding's average over permutations, and \cref{eq:appendix-operator-U-mgf-tau} give
    \begin{align}
        \E\Tr\phi \bigl(
            \theta nq (\hat{D}-D)
        \bigr)
        &=
        \E\Tr\phi \left(
            \frac{\theta nq}{m!}
            \sum_{\tau\in\mathfrak{S}_m}Y_\tau
        \right)\notag\\
        &\le
        \frac{1}{m!}
        \sum_{\tau\in\mathfrak{S}_m}
        \E\Tr\phi \bigl(
            \theta nq Y_\tau
        \bigr)\notag\\
        &\le
        \frac{\Tr S}{\|S\|}
        \exp \left(
            \frac{\phi(\theta L)}{L^2} nq \|S\|
        \right).
        \label{eq:appendix-operator-U-mgf-hat}
    \end{align}

    The remaining finite-dimensional tail calculation is the standard Bernstein optimization following Minsker's proof; we include the details to keep track of the \(U\)-statistic effective sample size \(nq\).
    We now estimate the upper tail.
    Since \(\phi\) is increasing and nonnegative on \([0,\infty)\), Markov's inequality implies
    \begin{align*}
        & \bbP \left(
            \lambda_{\max}(\hat{D}-D)>t
        \right)\\
        =\ &
        \bbP \left(
            \lambda_{\max} \bigl(
                \theta nq(\hat{D}-D)
            \bigr)
            >
            \theta nq t
        \right)\\
        \le\ &
        \bbP \left(
            \Tr\phi \bigl(
                \theta nq(\hat{D}-D)
            \bigr)
            >
            \phi(\theta nq t)
        \right)\\
        \le\ &
        \frac{
            \E\Tr\phi \bigl(
                \theta nq(\hat{D}-D)
            \bigr)
        }{
            \phi(\theta nq t)
        }\\
        \le\ &
        \frac{\Tr S}{\|S\|}
        \exp \left(
            nq\left[
                \frac{\phi(\theta L)}{L^2}\|S\|
                -
                \theta t
            \right]
        \right)
        \frac{e^{\theta nq t}}{\phi(\theta nq t)},
    \end{align*}
    where in the last step we used \cref{eq:appendix-operator-U-mgf-hat}.

    Next, whenever \(\theta L<3\),
    \[
        \frac{\phi(\theta L)}{L^2}
        =
        \frac{1}{L^2}\sum_{j=2}^{\infty} \frac{(\theta L)^j}{j!}
        \le
        \frac{\theta^2}{2}
        \sum_{j=2}^{\infty}
        \left(
            \frac{\theta L}{3}
        \right)^{j-2}
        =
        \frac{\theta^2}{2(1-\theta L/3)}.
    \]
    Also, for every \(u>0\),
    \[
        \frac{e^u}{\phi(u)}
        =
        1+
        \frac{1+u}{e^u-u-1}
        \le
        1+\frac{6}{u^2}.
    \]
    Therefore, for \(\theta<3/L\),
    \begin{equation}\label{eq:appendix-operator-U-upper-tail-pre}
        \bbP \left(
            \lambda_{\max}(\hat{D}-D)>t
        \right)
        \le
        \frac{\Tr S}{\|S\|}
        \exp \left(
            nq\left[
                \frac{\theta^2\|S\|}{2(1-\theta L/3)}
                -
                \theta t
            \right]
        \right)
        \left(
            1+\frac{6}{(\theta nq t)^2}
        \right).
    \end{equation}

    Choose
    \[
        \theta \coloneqq \frac{t}{\|S\|+Lt/3}.
    \]
    Then \(\theta L<3\), and \cref{eq:appendix-operator-U-upper-tail-pre} becomes
    \begin{equation}\label{eq:appendix-operator-U-upper-tail-theta}
        \bbP \left(
            \lambda_{\max}(\hat{D}-D)>t
        \right)
        \le
        \frac{\Tr S}{\|S\|}
        \exp \left(
            -
            nq
            \frac{t^2/2}{\|S\|+Lt/3}
        \right)
        \left(
            1+\frac{6}{(\theta nq t)^2}
        \right).
    \end{equation}
    If
    \begin{equation}\label{eq:appendix-operator-U-large-t}
        \frac{t^2}{\|S\|+Lt/3}
        \ge
        \frac{\sqrt 6}{nq},
    \end{equation}
    then \(\theta nq t\ge \sqrt 6\), hence
    \[
        1+\frac{6}{(\theta nq t)^2}\le 2.
    \]
    Under \cref{eq:appendix-operator-U-large-t}, \cref{eq:appendix-operator-U-upper-tail-theta} reduces to
    \begin{equation}\label{eq:appendix-operator-U-upper-tail-final}
        \bbP \left(
            \lambda_{\max}(\hat{D}-D)>t
        \right)
        \le
        2\frac{\Tr S}{\|S\|}
        \exp \left(
            -
            nq
            \frac{t^2/2}{\|S\|+Lt/3}
        \right).
    \end{equation}

    Apply the same argument to \(-Z\).
    This gives the identical bound for
    \[
        \bbP \left(
            \lambda_{\max}(D-\hat{D})>t
        \right)
        =
        \bbP \left(
            \lambda_{\min}(\hat{D}-D)<-t
        \right).
    \]
    Hence, still in the matrix case,
    \begin{equation}\label{eq:appendix-operator-U-two-sided-matrix}
        \bbP \left(
            \|\hat{D}-D\|>t
        \right)
        \le
        4\frac{\Tr S}{\|S\|}
        \exp \left(
            -
            nq
            \frac{t^2/2}{\|S\|+Lt/3}
        \right)
    \end{equation}
    whenever \cref{eq:appendix-operator-U-large-t} holds.

    We now pass to general separable \(H\), following Minsker's finite-rank approximation argument for self-adjoint Hilbert-Schmidt operators.
    Let \(L_1 \subset L_2 \subset\cdots\) be finite-dimensional subspaces with dense union, and let \(P_j\) be the orthogonal projection onto \(L_j\).
    Since \(\hat{D}-D\) is Hilbert-Schmidt, we have
    \[
        \|(\hat{D}-D)-P_j(\hat{D}-D)P_j\|\to 0.
    \]
    Therefore, by Fatou's lemma,
    \[
        \bbP \left(
            \|\hat{D}-D\|>t
        \right)
        \le
        \liminf_{j\to\infty}
        \bbP \left(
            \|P_j(\hat{D}-D)P_j\|>t
        \right).
    \]
    Moreover,
    \[
        \E\bigl[
            (P_j(Z-D)P_j)^2
        \bigr]
        \preceq
        P_j S P_j,
    \]
    and
    \[
        \|P_j(Z-D)P_j\|\le L
        \quad\text{a.s.}
    \]
    Since \(S\) is trace-class, \(P_j S P_j \to S\) in trace norm, hence
    \[
        \Tr(P_j S P_j)\to \Tr S,
        \qquad
        \|P_j S P_j\|\to \|S\|.
    \]
    Applying the finite-dimensional estimate \cref{eq:appendix-operator-U-two-sided-matrix} to the compressed operators and then letting \(j\to\infty\), we obtain the same tail bound in the Hilbert-space case:
    \begin{equation}\label{eq:appendix-operator-U-two-sided}
        \bbP \left(
            \|\hat{D}-D\|>t
        \right)
        \le
        4\frac{\Tr S}{\|S\|}
        \exp \left(
            -
            nq
            \frac{t^2/2}{\|S\|+Lt/3}
        \right),
    \end{equation}
    again under \cref{eq:appendix-operator-U-large-t}.

    Finally, set
    \[
        u_\delta \coloneqq \ln\frac{4\Tr S}{\delta\|S\|}.
    \]
    If
    \[
        \frac{nq t^2}{2(\|S\|+Lt/3)}\ge u_\delta,
    \]
    then \cref{eq:appendix-operator-U-two-sided} gives \(\bbP(\|\hat{D}-D\|>t)\le\delta\).
    This inequality is equivalent to
    \[
        t^2-\frac{2u_\delta L}{3nq}t-\frac{2u_\delta\|S\|}{nq}\ge 0,
    \]
    so it is enough to require
    \[
        t
        \ge
        \frac{u_\delta L}{3nq}
        +
        \sqrt{
            \frac{2u_\delta\|S\|}{nq}
            +
            \frac{u_\delta^2 L^2}{9n^2 q^2}
        }.
    \]
    Since
    \[
        2u_\delta
        =
        2\ln\frac{4\Tr S}{\delta\|S\|}
        \ge
        2\ln 4
        \ge
        \sqrt 6,
    \]
    this choice of \(t\) also implies \cref{eq:appendix-operator-U-large-t}.
    Finally, use \(\sqrt{a+b}\le \sqrt a+\sqrt b\) to obtain the simpler sufficient condition
    \[
        t
        \ge
        \sqrt{
            \frac{2u_\delta\|S\|}{nq}
        }
        +
        \frac{2u_\delta L}{3nq}.
    \]
    This proves the claim.
\end{proof}

\begin{remark}
    \citet[Supplement, Theorem~E.3]{sriperumbudur2022ApproximateKernelPCA}
    also state a Bernstein-type inequality for a second-order operator-valued
    \(U\)-statistic.  We do not use that result here.  In their
    moment-generating-function argument, the equality immediately after the
    display marked \((*)\) replaces
    \[
        \sum_{i\ne j}
        (\delta_{X_i}-\delta_{X_i'})
        (\delta_{X_j}-\delta_{X_j'})Z
    \]
    by an expression involving two independent Rademacher sequences
    \((\varepsilon_i^{(1)})_i\) and \((\varepsilon_i^{(2)})_i\).
    Even if each difference \(\delta_{X_i}-\delta_{X_i'}\) were symmetric
    about zero, such a symmetrization would only introduce one Rademacher sign
    for that difference, not two independent signs.  Moreover, the preceding
    expectation over \((X_i')_{i=1}^n\) is taken conditionally on the fixed
    sample \((X_i)_{i=1}^n\); under this conditional law,
    \(\delta_{X_i}-\delta_{X_i'}\) is not symmetric about zero.  Therefore the
    Rademacherization step between \((*)\) and \((\dagger)\) is not justified.
    \Cref{lem:appendix-U-bernstein-operator} avoids this gap by inserting
    Hoeffding's blocking representation directly into the Tropp--Minsker
    Laplace-transform argument.
\end{remark}

\subsection{Vector-valued \texorpdfstring{\(U\)}{U}-statistics}

\begin{lemma}[Bernstein's inequality for vector-valued \(U\)-statistics]\label{lem:appendix-U-bernstein-vector}
    Let \((\mca X,P)\) be a measurable space, let \(H\) be a separable Hilbert space, and let
    \[
        \xi:\mca X^k \to H
    \]
    be measurable.
    Define
    \[
        \theta \coloneqq \E\xi(X_1,\dots,X_k),
    \]
    and
    \[
        \hat{\theta}
        \coloneqq
        \frac{1}{n}\sum_{i=1}^n
        \frac{1}{(m)_k}
        \sum_{\mathbf{j}\in I_m^k}
        \xi(X_{i,j_1},\dots,X_{i,j_k}),
    \]
    where \(X_{ij} \overset{\mr{i.i.d.}}{\sim}P\) and \(m\ge k\).
    Assume
    \[
        \|\xi-\theta\|_H\le L
        \quad\text{a.s.},
        \qquad
        \E\|\xi-\theta\|_H^2 \le \sigma^2.
    \]
    Then for every \(\delta\in(0,1)\), with probability at least \(1-\delta\),
    \[
        \|\hat{\theta}-\theta\|_H
        \le
        \sqrt{
            \frac{
                2\sigma^2 \ln(8/\delta)
            }{
                n\lfloor m/k\rfloor
            }
        }
        +
        \frac{
            2L\ln(8/\delta)
        }{
            3n\lfloor m/k\rfloor
        }.
    \]
\end{lemma}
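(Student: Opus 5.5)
We reduce the vector-valued statement to \Cref{lem:appendix-U-bernstein-operator} through the self-adjoint dilation mentioned at the start of this appendix. Set \(\tilde H\coloneqq \R\oplus H\). For \(v\in H\) define the self-adjoint finite-rank operator
\[
  \mca{D}(v)\coloneqq
  \begin{pmatrix} 0 & v^\ast \\ v & 0 \end{pmatrix},
  \qquad
  \mca{D}(v)(a,u)=(\ang{v,u}_H,\ a v),
\]
so that \(\mca{D}(v)\) is linear in \(v\) and rank at most two (hence Hilbert--Schmidt). Its square is
\[
  \mca{D}(v)^2=\operatorname{diag}\bigl(\|v\|_H^2,\ v\otimes v\bigr),
\]
which gives \(\|\mca{D}(v)\|=\|v\|_H\). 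Apply the operator lemma to the kernel \(Z\coloneqq\mca{D}\circ\xi\). By linearity, \(D=\E Z=\mca{D}(\theta)\) and \(\hat D=\mca{D}(\hat\theta)\). Therefore \(\|\hat D-D\|=\|\hat\theta-\theta\|_H\) and \(\|Z-D\|=\|\xi-\theta\|_H\le L\) almost surely.

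For the variance proxy, \(\E(Z-D)^2=\operatorname{diag}\bigl(\E\|\xi-\theta\|^2,\ \E[(\xi-\theta)\otimes(\xi-\theta)]\bigr)\). The second block is a positive trace-class operator \(C\) with \(\|C\|\le\Tr C=\E\|\xi-\theta\|^2\le\sigma^2\). We take \(S\coloneqq\operatorname{diag}(\sigma^2,C)\), which dominates \(\E(Z-D)^2\). Then \(\|S\|=\sigma^2\) and \(\Tr S\le 2\sigma^2\), so the effective rank satisfies \(d=\Tr S/\|S\|\le2\). The lemma gives \(u_\delta=\ln(4d/\delta)\le\ln(8/\delta)\), and the stated bound follows directly with \(\|S\|=\sigma^2\) and \(L\) as given.

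The case \(\sigma^2=0\) is handled separately. Then \(\xi=\theta\) almost surely, so the bound is trivial. This also covers \(S=0\).

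The only point needing care is choosing \(S\) so that \(\|S\|\) equals \(\sigma^2\) and not something larger. A naive bound would use \(\E\|\xi-\theta\|^2\) in both diagonal blocks, which could inflate \(\Tr S\). Padding the scalar block to exactly \(\sigma^2\), together with \(\|C\|\le\sigma^2\), keeps both \(\|S\|=\sigma^2\) and \(d\le2\). One should also check that \(S\) is trace class, which holds because \(C\) is.
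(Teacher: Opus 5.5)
Your proof is correct and follows the paper's route: the same self-adjoint dilation (the paper uses $H\oplus\R$ rather than $\R\oplus H$), fed into the operator Bernstein lemma for $U$-statistics with effective rank at most $2$ so that $u_\delta\le\ln(8/\delta)$. The only difference is the choice of $S$. The paper takes $S=\E(Z-D)^2$ exactly, which already gives $d=2$ and $\|S\|=\E\|\xi-\theta\|_H^2\le\sigma^2$, so padding the scalar block is not needed to prevent any inflation of $\Tr S$; it does, however, keep $S\neq0$ whenever $\sigma>0$.
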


\begin{proof}
    If \(\sigma=0\), then \(\xi=\theta\) almost surely, so the conclusion is trivial.
    Assume henceforth that \(\sigma>0\).

    Let \(H\oplus\R\) be the orthogonal direct sum, and for \(u\in H\) define
    \[
        \mca J(u)
        \coloneqq
        \begin{pmatrix}
            0 & u\\
            u^* & 0
        \end{pmatrix},
    \]
    where \(u:\R\to H\) is the rank-one operator \(a\mapsto au\), and
    \(u^*:H\to\R\) is its adjoint \(v\mapsto \ang{u,v}_H\).
    Then \(\mca J(u)\) is a self-adjoint Hilbert-Schmidt operator on \(H\oplus\R\), and the map
    \[
        \mca J:H\to \mca S(H\oplus\R)
    \]
    is linear.

    A direct computation shows that
    \[
        \mca J(u)^2
        =
        \begin{pmatrix}
            u\otimes u & 0\\
            0 & \|u\|_H^2
        \end{pmatrix}.
    \]
    Hence
    \[
        \|\mca J(u)\|=\|u\|_H,
    \]
    so \(\mca J\) is an isometry from \(H\) into \(\mca S(H\oplus\R)\).

    Apply \Cref{lem:appendix-U-bernstein-operator} to the operator-valued kernel
    \[
        Z \coloneqq \mca J\circ \xi.
    \]
    Its expectation is
    \[
        D
        =
        \E Z(X_1,\dots,X_k)
        =
        \mca J(\theta),
    \]
    by linearity of \(\mca J\).
    Also,
    \[
        \|Z-D\|
        =
        \|\mca J(\xi-\theta)\|
        =
        \|\xi-\theta\|_H
        \le L
        \quad\text{a.s.}
    \]

    Set
    \[
        S \coloneqq \E\bigl[(Z-D)^2\bigr].
    \]
    Since
    \[
        (Z-D)^2
        =
        \begin{pmatrix}
            (\xi-\theta)\otimes(\xi-\theta) & 0\\
            0 & \|\xi-\theta\|_H^2
        \end{pmatrix},
    \]
    we get
    \[
        S
        =
        \begin{pmatrix}
            \E[(\xi-\theta)\otimes(\xi-\theta)] & 0\\
            0 & \E\|\xi-\theta\|_H^2
        \end{pmatrix}.
    \]
    Therefore
    \[
        \|S\|
        =
        \E\|\xi-\theta\|_H^2
        \le \sigma^2,
    \]
    because the lower-right block already has norm \(\E\|\xi-\theta\|_H^2\), while the upper-left block has operator norm at most the same quantity.
    Moreover,
    \[
        \Tr S
        =
        \Tr\E\bigl[(Z-D)^2\bigr]
        =
        \E\Tr\bigl((Z-D)^2\bigr)
        =
        2\E\|\xi-\theta\|_H^2,
    \]
    because
    \[
        \Tr\bigl((\xi-\theta)\otimes(\xi-\theta)\bigr)
        =
        \|\xi-\theta\|_H^2,
    \]
    so
    \[
        \frac{\Tr S}{\|S\|}=2.
    \]
    Thus the effective rank in \Cref{lem:appendix-U-bernstein-operator} is \(d=2\).

    Finally, by linearity of \(\mca J\),
    \[
        \frac{1}{n}\sum_{i=1}^n
        \frac{1}{(m)_k}
        \sum_{\mathbf{j}\in I_m^k}
        Z(X_{i,j_1},\dots,X_{i,j_k})
        -
        D
        =
        \mca J(\hat{\theta}-\theta).
    \]
    Applying \Cref{lem:appendix-U-bernstein-operator} and using the isometry
    \(\|\mca J(u)\|=\|u\|_H\) gives
    \[
        \|\hat{\theta}-\theta\|_H
        \le
        \sqrt{
            \frac{
                2\|S\|\ln(8/\delta)
            }{
                n\lfloor m/k\rfloor
            }
        }
        +
        \frac{
            2L\ln(8/\delta)
        }{
            3n\lfloor m/k\rfloor
        }
        \le
        \sqrt{
            \frac{
                2\sigma^2 \ln(8/\delta)
            }{
                n\lfloor m/k\rfloor
            }
        }
        +
        \frac{
            2L\ln(8/\delta)
        }{
            3n\lfloor m/k\rfloor
        }.
    \]
    This proves the claim.
\end{proof}

\begin{remark}
    Exponential inequalities for Hilbert-valued \(U\)-statistics are available in the literature \citep{giraudo2025ExponentialInequalityHilbertvalued}.
    The particular form needed here follows directly from the operator-valued inequality above.
\end{remark}
 \section{Regular RKHS}
\label{app:regular-rkhs}

This appendix collects the regular-RKHS consequences and tensor eigenvalue estimates used in the main proofs.

\begin{lemma}\label{lem:appendix-supnorm}
    Suppose Assumption~\ref{ass:prelim-regular-rkhs} holds, and let
    \(f:[0,\lambda_1]\to[0,\infty)\) be non-decreasing with
    \(\sum_{j=1}^\infty f(\lambda_j)<\infty\).
    Then
    \[
        \sup_{x\in \mca{I}}
        \sum_{j=1}^{\infty} f(\lambda_j)e_j(x)^2
        \le
        M\sum_{j=1}^{\infty} f(\lambda_j).
    \]
    In particular, if \(s>1/\beta\), then
    \[
        M_s^2
        \coloneqq
        \sup_{x\in \mca{I}}\sum_{j=1}^\infty \lambda_j^s e_j(x)^2
        <\infty.
    \]
    Consequently, for every \(q\ge 1\) and every \(h\in ([H]^s)^{\otimes q}\),
    \[
        \|h\|_{\infty}
        \le
        M_s^q \|h\|_{([H]^s)^{\otimes q}}.
    \]
\end{lemma}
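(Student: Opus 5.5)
The plan is to reduce everything to a statement about the eigenspace-grouped sum and then apply the regularity assumption via an Abel summation. First I group the eigenvalues. Let $(\theta_p)_{p\ge1}$ be the distinct eigenvalues in decreasing order, with eigenspaces $V_p$ and projection kernels $k_p$. Since $f(\lambda_j)$ depends only on the eigenvalue, we have
\[
\sum_j f(\lambda_j)e_j(x)^2=\sum_p f(\theta_p)k_p(x,x)
\qquad\text{and}\qquad
\sum_j f(\lambda_j)=\sum_p f(\theta_p)d_p .
\]

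Set $a_p\coloneqq f(\theta_p)$. Because $f$ is non-decreasing and $\theta_p$ is decreasing, $a_p$ is non-increasing and nonnegative. Summability gives $a_p\to0$ whenever infinitely many $d_p$ are positive; if the sum is finite, the claim is trivial. Write $A_N(x)=\sum_{p\le N}k_p(x,x)$ and $D_N=\sum_{p\le N}d_p$. Abel summation then gives
\[
\sum_{p\le N}a_p k_p(x,x)=a_N A_N(x)+\sum_{p<N}(a_p-a_{p+1})A_p(x).
\]
All coefficients $a_N$ and $a_p-a_{p+1}$ are nonnegative, so Assumption~\ref{ass:prelim-regular-rkhs} lets us replace each $A_p(x)$ by $MD_p$. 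Undoing the Abel summation on the right yields
\[
\sum_{p\le N}a_p k_p(x,x)\le M\sum_{p\le N}a_pd_p\le M\sum_j f(\lambda_j).
\]
Letting $N\to\infty$ (the terms are nonnegative, so this is a monotone limit) and taking the supremum over $x$ gives the first claim. This rearrangement is the only real step; the delicate points are that nonnegativity of the increments requires monotonicity of $f$, and that the bound must hold for every $x$, which it does because Assumption~\ref{ass:prelim-regular-rkhs} is a uniform supremum bound.

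For the second claim I take $f(x)=x^s$, which is non-decreasing. By Assumption~\ref{ass:prelim-eigen-decay},
\[
\sum_j\lambda_j^s\le C_{\mathrm{eig}}^s\sum_j j^{-s\beta}<\infty
\]
since $s\beta>1$, so $M_s^2\le M\sum_j\lambda_j^s<\infty$.

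For the tensor statement, write $h=\sum_{\bm i}a_{\bm i}\Lambda_{\bm i}^{s/2}e_{\bm i}$ with $\sum_{\bm i} a_{\bm i}^2=\|h\|^2_{([H]^s)^{\otimes q}}$. Cauchy--Schwarz pointwise gives
\[
|h(\bm t)|^2\le \|h\|^2\sum_{\bm i}\Lambda_{\bm i}^s e_{\bm i}(\bm t)^2=\|h\|^2\prod_{\ell=1}^q\sum_j\lambda_j^se_j(t_\ell)^2\le\|h\|^2M_s^{2q},
\]
where the middle equality holds because the sum over multi-indices factorizes. Absolute convergence, which follows from the bound just shown, justifies evaluating the series pointwise with the continuous representatives.
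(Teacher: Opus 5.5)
Your proof is correct and follows the paper's argument essentially step for step. Like the paper, you group by eigenspaces, apply finite-$N$ Abel summation with the non-increasing weights $f(\theta_p)$ against the partial-sum bound of Assumption~\ref{ass:prelim-regular-rkhs}, specialize to $f(x)=x^s$, and finish the tensor bound by Cauchy--Schwarz on the factorized sum. The only superfluous piece is your aside about $a_p\to0$: the Abel summation is done at finite $N$ and the passage $N\to\infty$ is a monotone limit, so it is never needed.
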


\begin{proof}
    Let \((\theta_m)_{m\ge1}\), \(d_m\), and \(k_m\) be as in
    Assumption~\ref{ass:prelim-regular-rkhs}.  Fix \(x\in \mca I\), and set
    \[
        b_m \coloneqq k_m(x,x),
        \qquad
        c_m \coloneqq M d_m,
        \qquad
        m\ge1.
    \]
    By Assumption~\ref{ass:prelim-regular-rkhs},
    \[
        \sum_{\ell=1}^m b_\ell
        =
        \sum_{\ell=1}^m k_\ell(x,x)
        \le
        M\sum_{\ell=1}^m d_\ell
        =
        \sum_{\ell=1}^m c_\ell,
        \qquad
        m\ge1.
    \]
    Since \((\theta_m)\) is decreasing and \(f\) is non-decreasing,
    \((f(\theta_m))_{m\ge1}\) is non-increasing.  Abel summation implies that
    for every \(N\ge1\),
    \[
        \sum_{m=1}^N f(\theta_m)k_m(x,x)
        =
        \sum_{m=1}^N f(\theta_m)b_m
        \le
        \sum_{m=1}^N f(\theta_m)c_m
        =
        M\sum_{m=1}^N f(\theta_m)d_m.
    \]
    Letting \(N\to\infty\) gives
    \[
        \sum_{j=1}^{\infty} f(\lambda_j)e_j(x)^2
        =
        \sum_{m=1}^{\infty} f(\theta_m)k_m(x,x)
        \le
        M\sum_{m=1}^{\infty} f(\theta_m)d_m
        =
        M\sum_{j=1}^{\infty} f(\lambda_j).
    \]
    Since \(x\) was arbitrary, the first claim follows.

    Apply this with \(f(x)=x^s\).
    Because \(\lambda_j \asymp j^{-\beta}\) and \(s>1/\beta\), we have
    \(\sum_{j=1}^\infty \lambda_j^s<\infty\), hence
    \[
        M_s^2
        \le
        M\sum_{j=1}^{\infty} \lambda_j^s
        <
        \infty.
    \]

    Finally, let \(h\in([H]^s)^{\otimes q}\) and write
    \[
        h(t_1,\dots,t_q)
        =
        \sum_{i_1,\dots,i_q=1}^{\infty}
        a_{i_1,\dots,i_q}
        \prod_{\ell=1}^q
        \lambda_{i_\ell}^{s/2} e_{i_\ell}(t_\ell),
        \qquad
        \sum_{i_1,\dots,i_q=1}^{\infty} a_{i_1,\dots,i_q}^2
        =
        \|h\|_{([H]^s)^{\otimes q}}^2.
    \]
    By Cauchy-Schwarz,
    \begin{align*}
        |h(t_1,\dots,t_q)|^2
         & \le
        \left(
        \sum_{i_1,\dots,i_q=1}^{\infty} a_{i_1,\dots,i_q}^2
        \right)
        \left(
        \sum_{i_1,\dots,i_q=1}^{\infty}
        \prod_{\ell=1}^q
        \lambda_{i_\ell}^{s} e_{i_\ell}(t_\ell)^2
        \right) \\
         & =
        \|h\|_{([H]^s)^{\otimes q}}^2
        \prod_{\ell=1}^q
        \left(
        \sum_{j=1}^{\infty} \lambda_j^s e_j(t_\ell)^2
        \right) \\
         & \le
        M_s^{2q}\|h\|_{([H]^s)^{\otimes q}}^2.
    \end{align*}
    Taking square roots proves the final inequality.
\end{proof}

The following standard estimate is Exercise~2.1.18(c) in
\citet{montgomery2006MultiplicativeNumberTheory}.

\begin{lemma}\label{lem:appendix-tensor-eigenvalue-counting}
    For \(R\ge1\), define
    \[
        A_r(R)
        \coloneqq
        \#\{(i_1,\dots,i_r)\in \bbN^r:\ i_1 \cdots i_r \le R\}.
    \]
    Then
    \[
        A_r(R)\asymp R(\ln(eR))^{r-1}.
    \]
\end{lemma}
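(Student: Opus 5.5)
The plan is to reduce the count to a one-dimensional lattice-point count and then use the hyperbola/induction method. Set \(N(R)\coloneqq\#\{i\in\bbN: i\le R\}=\lfloor R\rfloor\). Then \(A_1(R)=\lfloor R\rfloor\asymp R\) for \(R\ge1\), and for \(r\ge2\) we have the recursion
\[
  A_r(R)=\sum_{i_r\le R} A_{r-1}(R/i_r),
\]
obtained by conditioning on the last coordinate. Note that \(A_{r-1}(R/i_r)\ge1\) whenever \(i_r\le R\), since the all-ones tuple is counted. We argue by induction on \(r\), assuming \(A_{r-1}(x)\asymp x(\ln(ex))^{r-2}\) uniformly for \(x\ge1\).

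For the upper bound, the induction hypothesis gives
\[
  A_r(R)\lesssim \sum_{i\le R}\frac{R}{i}\bigl(\ln(eR/i)\bigr)^{r-2}\le R(\ln(eR))^{r-2}\sum_{i\le R}\frac1i,
\]
and \(\sum_{i\le R}1/i\le 1+\ln R\le \ln(eR)\). This yields \(A_r(R)\lesssim R(\ln(eR))^{r-1}\).

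For the lower bound, restrict to \(i\le\sqrt R\). There \(R/i\ge\sqrt R\), so \(\ln(eR/i)\ge\tfrac12\ln(eR)\). Hence
\[
  A_r(R)\gtrsim R(\ln(eR))^{r-2}\,2^{-(r-2)}\sum_{i\le\sqrt R}\frac1i\gtrsim R(\ln(eR))^{r-1},
\]
using \(\sum_{i\le\sqrt R}1/i\ge \ln(\lfloor\sqrt R\rfloor+1)\gtrsim \ln(eR)\) for \(R\ge1\). The constants depend only on \(r\).

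The only care needed is near \(R\) close to \(1\), where the logarithms are small. There the factor \(e\) inside \(\ln(eR)\) keeps every quantity bounded below by a constant, and \(A_r(R)\ge1\) trivially. So no step is a genuine obstacle; the main bookkeeping point is ensuring the harmonic-sum lower bound \(\sum_{i\le\sqrt R}1/i\gtrsim\ln(eR)\) holds uniformly down to \(R=1\), which follows since the sum is at least \(1\) and \(\ln(eR)\le 1+2\ln(\lfloor\sqrt R\rfloor+1)\).
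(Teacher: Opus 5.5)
Your proof is correct, and it takes a different route from the paper. The paper gives no argument at all: it cites the estimate as Exercise~2.1.18(c) of Montgomery--Vaughan. There the count is read number-theoretically as the summatory function \(A_r(R)=\sum_{n\le R} d_r(n)\) of the \(r\)-fold divisor function. For this function the classical result is the sharper asymptotic \(A_r(R)\sim R(\ln R)^{r-1}/(r-1)!\) as \(R\to\infty\).

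Your argument is an induction on \(r\) through \(A_r(R)=\sum_{i\le R}A_{r-1}(R/i)\). You get the upper bound from \(\sum_{i\le R}1/i\le\ln(eR)\). You get the lower bound by restricting to \(i\le\sqrt R\), where \(\ln(eR/i)\ge\tfrac12\ln(eR)\). This yields only two-sided bounds with constants depending on \(r\). That is exactly what the paper needs downstream: in \Cref{lem:appendix-effective-dimension}, \(A_r\) is evaluated at \((c^r/u)^{1/\beta}\) and \((C^r/u)^{1/\beta}\), and constant factors are absorbed.

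What your route buys is that it is elementary and self-contained. Thanks to the \(e\) inside the logarithm, it also holds uniformly for all \(R\ge1\), so bounded \(R\) needs no separate treatment. The cited result buys the exact leading constant \(1/(r-1)!\), which the paper never uses.

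One cosmetic point: what you do is plain induction by conditioning on the last coordinate, not Dirichlet's hyperbola method, so you can drop that label.
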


\begin{lemma}\label{lem:appendix-log-power-integrals}
    Let \(y\ge 1\) and \(a,b\in\mathbb{R}\).
    If \(b>-1\), then, as \(y\to\infty\),
    \[
        \int_1^y x^{-a}(\ln x)^b\dd x
        \asymp
        \begin{cases}
            y^{1-a}(\ln y)^b, & a<1, \\
            (\ln y)^{b+1},    & a=1, \\
            1,                & a>1.
        \end{cases}
    \]
    Moreover,
    \[
        \int_y^\infty x^{-a}(\ln x)^b\dd x
        \asymp
        \begin{cases}
            y^{1-a}(\ln y)^b, & a>1,\ b\in\mathbb{R}, \\
            (\ln y)^{b+1},    & a=1,\ b<-1,
        \end{cases}
    \]
    in the parameter regimes where the tail integral is finite.  If
    \(a<1\), or if \(a=1\) and \(b\ge -1\), the tail integral is infinite.
\end{lemma}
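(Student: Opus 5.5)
The statement is Lemma~\ref{lem:appendix-log-power-integrals}, an elementary asymptotic for \(\int x^{-a}(\ln x)^b\dd x\). The plan is the substitution \(x=e^u\), which turns both integrals into \(\int u^b e^{(1-a)u}\dd u\) over \([0,\ln y]\) or \([\ln y,\infty)\). The three regimes \(a<1\), \(a=1\), \(a>1\) then correspond to exponential growth, pure powers of \(u\), and exponential decay of the integrand. Throughout, write \(Y\coloneqq\ln y\to\infty\) and \(c\coloneqq 1-a\).

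\emph{Head integral.} If \(a=1\), the integral is \(\int_0^Y u^b\dd u=Y^{b+1}/(b+1)\); the hypothesis \(b>-1\) makes it converge at \(0\). If \(a>1\), so \(c<0\), the integral increases in \(Y\) to \(\int_0^\infty u^b e^{cu}\dd u=\Gamma(b+1)/|c|^{b+1}\), which is finite and positive because \(b>-1\); hence it is \(\asymp1\). If \(a<1\), so \(c>0\), split at \(Y-1\).
\begin{itemize}
  \item On \([Y-1,Y]\) we have \(u\asymp Y\) and \(e^{cu}\asymp e^{cY}\), so this piece is \(\asymp Y^b e^{cY}=y^{1-a}(\ln y)^b\). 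This already gives the lower bound.
  \item For the upper bound on \([1,Y-1]\), write \(u^b e^{cu}=\frac{1}{c}\,u^b\,\frac{d}{du}e^{cu}\) and integrate by parts. The remainder is \(\frac{b}{c}\int u^{b-1}e^{cu}\dd u\), which is a factor \(O(1/Y)\) smaller on the bulk of the range, so the total is \(O(Y^be^{cY})\). If \(b<0\) one can skip this and bound \(u^b\) directly.
  \item The piece on \([0,1]\) is \(O(1)\), which is negligible.
\end{itemize}
The main obstacle is doing this bookkeeping cleanly and uniformly in the sign of \(b\). I would handle it by the comparison \(u^b\le C(Y)^{b}\) for \(u\in[Y/2,Y]\), together with a crude bound \(e^{cY/2}\mathrm{poly}(Y)=o(Y^be^{cY})\) for the range \([0,Y/2]\).

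\emph{Tail integral.} Consider \(\int_Y^\infty u^b e^{cu}\dd u\).
\begin{itemize}
  \item If \(c<0\), use the same split. The piece on \([Y,Y+1]\) gives the lower bound \(\gtrsim Y^be^{cY}\). For the upper bound, on \([Y,2Y]\) we have \(u^b\asymp Y^b\), giving \(O(Y^be^{cY}/|c|)\); on \([2Y,\infty)\) the bound \(u^b\le e^{|c|u/2}\) for large \(u\) gives \(O(e^{cY})\cdot e^{-|c|Y/2}\), which is negligible. Hence the tail is \(\asymp y^{1-a}(\ln y)^b\).
  \item If \(c=0\) and \(b<-1\), the tail is \(\int_Y^\infty u^b\dd u=Y^{b+1}/|b+1|\), which is \(\asymp(\ln y)^{b+1}\).
  \item Divergence: if \(c>0\) the integrand \(u^be^{cu}\to\infty\); if \(c=0\) and \(b\ge-1\) we get \(\int_Y^\infty u^b\dd u=\infty\). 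Both yield the infinite cases claimed.
\end{itemize}
All implied constants depend only on \(a\) and \(b\), which is what the \(\asymp\) statements require.
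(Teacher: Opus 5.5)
Your argument largely follows the paper's proof: the same substitution \(u=\ln x\), the same monotone limit to \(\Gamma(b+1)/|c|^{b+1}\) for the head when \(a>1\), and for \(a<1\) the same lower-bound window \([Y-1,Y]\) and the same upper-bound split at \(Y/2\). The integration-by-parts sketch is unnecessary; the split you fall back on is exactly what the paper does. The one genuine difference is the tail for \(a>1\). The paper writes it as \((a-1)^{-b-1}\Gamma(b+1,(a-1)\ln y)\) and quotes the DLMF large-\(z\) asymptotic \(\Gamma(s,z)\sim z^{s-1}e^{-z}\). You instead bound it by hand, using \([Y,Y+1]\) for the lower bound and \([Y,2Y]\cup[2Y,\infty)\) for the upper bound. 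Your route is self-contained and keeps the dependence of the constants on \((a,b)\) explicit; the paper's route is shorter but relies on an external special-function expansion.

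There is one arithmetic slip in your tail estimate. From \(u^b\le e^{|c|u/2}\) you get \(u^be^{cu}\le e^{cu/2}\), hence
\[
\int_{2Y}^\infty u^be^{cu}\,\mathrm{d}u\le \frac{2}{|c|}e^{cY},
\]
not \(O(e^{cY})\,e^{-|c|Y/2}\). Moreover, \(e^{cY}\) is not \(O(Y^be^{cY})\) when \(b<0\), so as written the far piece is not shown to be negligible in that case. Either of two fixes works:
\begin{itemize}
\item Use \(u^b\le e^{|c|u/4}\) for large \(u\). This gives \(u^be^{cu}\le e^{3cu/4}\) and a far piece of order \(e^{3cY/2}=e^{cY}e^{-|c|Y/2}\), which is the bound you stated.
\item For \(b\le 0\), note simply that \(u^b\le Y^b\) on all of \([Y,\infty)\), so the whole tail is at most \(Y^be^{cY}/|c|\).
\end{itemize}
With that correction the proof is complete.
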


\begin{proof}
    The change of variables \(t=\ln x\) gives
    \[
        \int_1^y x^{-a}(\ln x)^b\dd x
        =
        \int_0^{\ln{y}} e^{-(a-1)t} t^b \dd t,
        \qquad
        \int_y^\infty x^{-a}(\ln x)^b\dd x
        =
        \int_{\ln y}^\infty e^{-(a-1)t} t^b \dd t .
    \]
    The condition \(b>-1\) is exactly the local integrability condition at
    \(t=0\) for the first integral.

    We use the standard incomplete-gamma notation and large-variable
    asymptotics from \citet{NIST:DLMF}: the definitions of \(\gamma(s,z)\) and
    \(\Gamma(s,z)\) are given in
    \citet[\href{https://dlmf.nist.gov/8.2.E1}{(8.2.1)}--\href{https://dlmf.nist.gov/8.2.E2}{(8.2.2)}]{NIST:DLMF},
    and the fixed-parameter large-\(z\) expansion of \(\Gamma(s,z)\) is given
    in \citet[\href{https://dlmf.nist.gov/8.11.E2}{(8.11.2)}]{NIST:DLMF}.

    First consider the integral over \([1,y]\).  If \(a>1\), \(a-1>0\),
    \begin{align*}
        \int_0^{\ln y} e^{-(a-1)t} t^b \dd t
        = &
        (a-1)^{-b-1}\gamma(b+1,(a-1)\ln y) \\
        = &
        (a-1)^{-b-1}\Gamma(b+1)
        -
        (a-1)^{-b-1}\Gamma(b+1,(a-1)\ln y)
        >0.
    \end{align*}
    Since \(\Gamma(b+1,(a-1)\ln y)\to 0\), this integral converges to a
    positive finite constant, and hence is \(\asymp1\).

    If \(a=1\), the identity
    \[
        \int_0^{\ln y} t^b \dd t
        =
        \frac{(\ln y)^{b+1}}{b+1}
    \]
    gives the middle case.

    If \(a<1\), \(1-a>0\).  The contribution comes from the upper endpoint.
    Put \(L \coloneqq \ln y\).  For the upper bound, split the integral over
    \([0,L/2]\) and \([L/2,L]\).  The first part is bounded by
    \(C e^{(1-a)L/2} L^{b+1}\), which is
    \(O(e^{(1-a)L} L^b)\); on \([L/2,L]\), \(t^b \le C_b L^b\), and hence
    \[
        \int_{L/2}^{L} e^{(1-a)t} t^b \dd t
        \le
        C e^{(1-a)L} L^b
        =
        C y^{1-a}(\ln y)^b.
    \]
    For the lower bound, if \(L\ge2\), then
    \(t^b\ge c_bL^b\) for every \(t\in[L-1,L]\), where \(c_b>0\) depends
    only on \(b\).  Hence
    \[
        \int_{L-1}^{L} e^{(1-a)t} t^b \dd t
        \ge
        c_b e^{(1-a)(L-1)}L^b
        =
        C y^{1-a}(\ln y)^b.
    \]
    This proves the case \(a<1\).

    We now turn to the tail integral.  If \(a>1\), \(a-1>0\),
    \[
        \int_{\ln y}^\infty e^{-(a-1)t} t^b \dd t
        =
        (a-1)^{-b-1}\Gamma(b+1,(a-1)\ln y).
    \]
    The DLMF large-\(z\) asymptotic gives
    \[
        \Gamma(b+1,(a-1)\ln y)
        \sim
        e^{-(a-1)\ln y}((a-1)\ln y)^b,
    \]
    and therefore the tail is
    \(\asymp y^{1-a}(\ln y)^b\).  If \(a=1\), then
    \[
        \int_{\ln y}^\infty t^b \dd t
        =
        \frac{(\ln y)^{b+1}}{-b-1},
        \qquad
        b<-1.
    \]
    The same display also shows divergence when \(b\ge-1\); when \(a<1\),
    the exponential factor \(e^{(1-a)t}\) makes the tail diverge.
\end{proof}

\begin{lemma}[Tensor effective dimension]\label{lem:appendix-effective-dimension}
    Let
    \[
        \Lambda_{\mathbf{i}}
        \coloneqq
        \prod_{a=1}^r \lambda_{i_a},
        \qquad
        \mathbf{i}=(i_1,\dots,i_r)\in\bbN^r,
    \]
    and let \((\lambda_j^{(r)})_{j\ge1}\) be the non-increasing
    rearrangement of
    \(\{\Lambda_{\mathbf{i}}:\mathbf{i}\in\bbN^r\}\), counted with
    multiplicity.  Then, as \(u\downarrow0\),
    \[
        N_r(u)
        \coloneqq
        \#\{j\ge1:\lambda_j^{(r)} \ge u\}
        \asymp
        u^{-1/\beta}
        \left(\ln\frac{1}{u}\right)^{r-1}.
    \]
    Equivalently, for every \(j\ge1\),
    \[
        \lambda_j^{(r)}
        \asymp
        j^{-\beta}(\ln(ej))^{\beta(r-1)}.
    \]
    Moreover, for every fixed \(p\ge 1\), define
    \[
        \mcaN_p^{(r)}(\lambda)
        \coloneqq
        \sum_{i_1,\dots,i_r=1}^{\infty}
        \bigl(
        \frac{\Lambda_{\mathbf{i}}}{\Lambda_{\mathbf{i}}+\lambda}
        \bigr)^p.
    \]
    Then, as \(\lambda\downarrow0\),
    \[
        \mcaN_p^{(r)}(\lambda)
        \asymp
        \lambda^{-1/\beta}
        \left(\ln \frac{1}{\lambda}\right)^{r-1}.
    \]
    Moreover, if \(s>1/\beta\), then for every \(1\le a\le r\), as
    \(\lambda\downarrow0\),
    \[
        \sum_{i_1,\dots,i_r=1}^{\infty}
        \left(
        \prod_{\ell\neq a} \lambda_{i_\ell}^s
        \right)
        \bigl(
        \frac{\Lambda_{\mathbf{i}}}{\Lambda_{\mathbf{i}}+\lambda}
        \bigr)^p
        \asymp
        \lambda^{-1/\beta}.
    \]
\end{lemma}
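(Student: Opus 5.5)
The proof rests on a single counting reduction: compare every \(\Lambda_{\mathbf{i}}\) with \((i_1\cdots i_r)^{-\beta}\) and then apply \Cref{lem:appendix-tensor-eigenvalue-counting}. By \Cref{ass:prelim-eigen-decay},
\[
  c_{\mathrm{eig}}^r (i_1\cdots i_r)^{-\beta}\le\Lambda_{\mathbf{i}}\le C_{\mathrm{eig}}^r(i_1\cdots i_r)^{-\beta}.
\]
Hence \(\Lambda_{\mathbf{i}}\ge u\) implies \(i_1\cdots i_r\le (C_{\mathrm{eig}}^r/u)^{1/\beta}\), and conversely \(i_1\cdots i_r\le (c_{\mathrm{eig}}^r/u)^{1/\beta}\) implies \(\Lambda_{\mathbf{i}}\ge u\). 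This sandwiches the count as
\[
  A_r\bigl((c_{\mathrm{eig}}^r/u)^{1/\beta}\bigr)\le N_r(u)\le A_r\bigl((C_{\mathrm{eig}}^r/u)^{1/\beta}\bigr).
\]
\Cref{lem:appendix-tensor-eigenvalue-counting} then gives \(N_r(u)\asymp u^{-1/\beta}(\ln\frac1u)^{r-1}\), because constant rescalings of \(R\) only change \(\ln(eR)\) by bounded factors.

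The statement about \(\lambda_j^{(r)}\) follows by inversion. We have \(\lambda_j^{(r)}\ge u\) exactly when \(j\le N_r(u)\). Set \(u_\pm\coloneqq C_\pm j^{-\beta}(\ln(ej))^{\beta(r-1)}\) for suitable constants \(C_\pm\). Plugging \(u_\pm\) into the asymptotics of \(N_r\) shows \(N_r(u_-)\ge j\) and \(N_r(u_+)<j\) for all large \(j\), since \(\ln(1/u_\pm)\asymp\ln j\). Small \(j\) are absorbed into the constants.

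For \(\mcaN_p^{(r)}(\lambda)\), I split the sum at \(\Lambda_{\mathbf{i}}\ge\lambda\).
\begin{itemize}
  \item \emph{Head.} Every summand lies in \([2^{-p},1]\), so the head is \(\asymp N_r(\lambda)\). This already gives the lower bound.
  \item \emph{Tail.} I decompose into dyadic shells \(\Lambda_{\mathbf{i}}\in[\lambda2^{-k-1},\lambda2^{-k})\), \(k\ge0\). Each summand there is at most \(2^{-kp}\), and the shell has at most \(N_r(\lambda2^{-k-1})\lesssim 2^{k/\beta}\lambda^{-1/\beta}\bigl(\ln\tfrac1\lambda+k\bigr)^{r-1}\) elements. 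Using \((\ln\frac1\lambda+k)^{r-1}\le C(\ln\frac1\lambda)^{r-1}(1+k)^{r-1}\), the tail is bounded by \(\lambda^{-1/\beta}(\ln\frac1\lambda)^{r-1}\sum_k 2^{-k(p-1/\beta)}(1+k)^{r-1}\). This series converges because \(p\ge1>1/\beta\).
\end{itemize}

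For the weighted sum, fix \(a\) and write \(P\coloneqq\prod_{\ell\ne a}\lambda_{i_\ell}\), which is bounded by \(\lambda_1^{r-1}\). The inner sum over \(i_a\) is the one-dimensional quantity
\[
  \mcaN_p^{(1)}(\lambda/P)=\sum_j\Bigl(\frac{\lambda_j}{\lambda_j+\lambda/P}\Bigr)^p.
\]
I will use a one-dimensional bound that is uniform in \(\mu>0\): \(\mcaN_p^{(1)}(\mu)\lesssim\mu^{-1/\beta}\).
\begin{itemize}
  \item For small \(\mu\), this is the \(r=1\) case just proved.
  \item For large \(\mu\), \(\mcaN_p^{(1)}(\mu)\le\mu^{-p}\sum_j\lambda_j^p\lesssim\mu^{-p}\le\mu^{-1/\beta}\).
  \item For \(\mu\) in the intermediate range, the bound holds by continuity and monotonicity.
\end{itemize}
Applying this uniform bound gives the upper estimate
\[
  \lambda^{-1/\beta}\sum_{i_{-a}}\prod_{\ell\ne a}\lambda_{i_\ell}^{s+1/\beta},
\]
which is finite because \(\beta(s+1/\beta)>1\). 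For the lower bound I keep only the term \(i_\ell=1\) for all \(\ell\ne a\). There \(P=\lambda_1^{r-1}\) is a fixed constant, so the term is \(\asymp\mcaN_p^{(1)}(\lambda/\lambda_1^{r-1})\asymp\lambda^{-1/\beta}\).

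I expect the main obstacle to be the bookkeeping rather than any single idea. Two places need care. First, the log-inversion in the \(\lambda_j^{(r)}\) statement must be written with explicit constants. Second, the uniform one-dimensional bound on \(\mcaN_p^{(1)}(\mu)\) must cover every \(\mu>0\), because \(\lambda/P\) is unbounded as \(P\to0\). If the uniform bound gives trouble in the intermediate range, I would fall back to the same dyadic-shell argument, applied directly to the weighted sum.
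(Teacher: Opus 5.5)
Your proposal is correct, and its skeleton matches the paper's. The shared steps are the sandwich of $N_r(u)$ between two values of $A_r$, the inversion giving $\lambda_j^{(r)}$, the head/tail split at $\Lambda_{\mathbf i}=\lambda$, the reduction of the weighted sum to a one-dimensional effective dimension at level $\lambda/P$, and the lower bound obtained by setting $i_\ell=1$.

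Two sub-steps are done differently.

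\textbf{Tail of $\mcaN_p^{(r)}$.} The paper passes to the rearranged sequence and uses $\lambda_j^{(r)}\asymp j^{-\beta}(\ln(ej))^{\beta(r-1)}$. It then bounds $\sum_{j>N}(\lambda_j^{(r)})^p$ by an integral comparison together with the incomplete-gamma tail asymptotics from \Cref{lem:appendix-log-power-integrals}. Your dyadic-shell argument uses only the upper counting bound $N_r(u)\lesssim u^{-1/\beta}(\ln\frac1u)^{r-1}$, uniformly for small $u$. This is more elementary: it needs neither eventual monotonicity of $x^{-\beta p}(\ln(ex))^{\beta p(r-1)}$ nor any special-function asymptotics. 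The cost is that you get only an upper bound on the tail, but that is all the statement requires. The paper's route additionally shows the tail is itself of exact order $\lambda^{-1/\beta}(\ln\frac1\lambda)^{r-1}$, and it reuses machinery the paper needs later anyway (\Cref{ex:bias-borderline-coefficients,ex:exact-power-law-mu}).

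\textbf{Uniform one-dimensional bound.} The paper uses the scaling identity $\int_0^\infty(1+\rho x^\beta)^{-p}\,\mathrm{d}x=\rho^{-1/\beta}\int_0^\infty(1+y^\beta)^{-p}\,\mathrm{d}y$, which gives the bound for every $\rho>0$ in one line with an explicit constant. Your three-regime patching also works. In the middle regime you need only monotonicity of $\mu\mapsto\mcaN_p^{(1)}(\mu)$ and finiteness of $\mcaN_p^{(1)}(\mu_0)$; continuity is not required. The scaling argument is shorter, and with it the fallback you mention is unnecessary.
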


\begin{proof}
    Since \(\lambda_j \asymp j^{-\beta}\), there are constants
    \(0<c\le C<\infty\) such that
    \[
        c^r(i_1 \cdots i_r)^{-\beta}
        \le
        \Lambda_{\mathbf{i}}
        \le
        C^r(i_1 \cdots i_r)^{-\beta}.
    \]
    Therefore, for all sufficiently small \(u>0\),
    \[
        A_r \left((c^r/u)^{1/\beta}\right)
        \le
        N_r(u)
        \le
        A_r \left((C^r/u)^{1/\beta}\right).
    \]
    The counting estimate in
    \Cref{lem:appendix-tensor-eigenvalue-counting} gives
    \[
        N_r(u)
        \asymp
        u^{-1/\beta}
        \left(\ln\frac{1}{u}\right)^{r-1},
        \qquad u\downarrow0.
    \]

    We next invert this counting estimate.  Put
    \[
        b_j
        \coloneqq
        j^{-\beta}(\ln(ej))^{\beta(r-1)}.
    \]
    For each fixed \(A>0\), as \(j\to\infty\),
    \begin{align*}
        N_r(Ab_j)
         & \asymp
        (Ab_j)^{-1/\beta}
        \left(\ln\frac{1}{Ab_j}\right)^{r-1} \\
         & =
        A^{-1/\beta} j(\ln(ej))^{-(r-1)}
        \left(\ln\frac{1}{Ab_j}\right)^{r-1}
        \asymp
        A^{-1/\beta} j,
    \end{align*}
    because \(\ln(1/(Ab_j))\asymp\ln(ej)\).  Choosing \(A\) large gives
    \(N_r(Ab_j)<j\) for all sufficiently large \(j\), and hence
    \(\lambda_j^{(r)} \le Ab_j\).  Choosing \(A\) small gives
    \(N_r(Ab_j)\ge j\) for all sufficiently large \(j\), and hence
    \(\lambda_j^{(r)} \ge Ab_j\).  Adjusting the constants to cover the
    finitely many remaining indices proves
    \[
        \lambda_j^{(r)}
        \asymp
        j^{-\beta}(\ln(ej))^{\beta(r-1)}.
    \]

    We now prove the effective-dimension estimate.  By the definition of
    \((\lambda_j^{(r)})_{j\ge1}\),
    \[
        \mcaN_p^{(r)}(\lambda)
        =
        \sum_{j=1}^{\infty}
        \left(
        \frac{\lambda_j^{(r)}}{\lambda_j^{(r)}+\lambda}
        \right)^p.
    \]
    Let \(N \coloneqq N_r(\lambda)\).  Since the sequence
    \((\lambda_j^{(r)})_{j\ge1}\) is non-increasing, the split at the level
    \(\lambda\) gives
    \begin{align*}
        \mcaN_p^{(r)}(\lambda)
         & =
        \sum_{j\le N}
        \left(
        \frac{\lambda_j^{(r)}}{\lambda_j^{(r)}+\lambda}
        \right)^p
        +
        \sum_{j>N}
        \left(
        \frac{\lambda_j^{(r)}}{\lambda_j^{(r)}+\lambda}
        \right)^p \\
         & \asymp
        N
        +
        \lambda^{-p}
        \sum_{j>N}
        \bigl(\lambda_j^{(r)} \bigr)^p .
    \end{align*}
    Indeed, if \(j\le N\), then
    \(\lambda_j^{(r)}/(\lambda_j^{(r)}+\lambda)\in[1/2,1]\); if \(j>N\),
    then
    \[
        \frac{\lambda_j^{(r)}}{2\lambda}
        \le
        \frac{\lambda_j^{(r)}}{\lambda_j^{(r)}+\lambda}
        \le
        \frac{\lambda_j^{(r)}}{\lambda}.
    \]
    The first summand already gives the lower bound
    \(N_r(\lambda)\asymp\lambda^{-1/\beta}
    (\ln(1/\lambda))^{r-1}\).  It remains to control the second summand.

    The eigenvalue bound proved above gives
    \(\lambda_j^{(r)} \asymp j^{-\beta}(\ln(ej))^{\beta(r-1)}\).  Hence
    \[
        \sum_{j>N}
        \bigl(\lambda_j^{(r)} \bigr)^p
        \asymp
        \sum_{j>N}
        j^{-\beta p}(\ln(ej))^{\beta p(r-1)}.
    \]
    The function \(x\mapsto x^{-\beta p}(\ln(ex))^{\beta p(r-1)}\) is eventually
    decreasing.  Hence, for small enough \(\lambda\), integral comparison gives
    \[
        \sum_{j>N}
        j^{-\beta p}(\ln(ej))^{\beta p(r-1)}
        \asymp
        \int_N^\infty x^{-\beta p}(\ln(ex))^{\beta p(r-1)}\dd x .
    \]
    Since \(\ln(ex)\asymp\ln x\) on \([N,\infty)\) for \(N\) large, the
    tail estimate in \Cref{lem:appendix-log-power-integrals}, with
    \(a=\beta p>1\) and \(b=\beta p(r-1)\), gives
    \[
        \int_N^\infty x^{-\beta p}(\ln(ex))^{\beta p(r-1)}\dd x
        \asymp
        N^{1-\beta p}(\ln(eN))^{\beta p(r-1)}.
    \]
    Since
    \[
        N
        \asymp
        \lambda^{-1/\beta}
        \left(\ln\frac{1}{\lambda}\right)^{r-1},
        \qquad
        \ln(eN)\asymp \ln\frac{1}{\lambda},
    \]
    it follows that
    \[
        \lambda^{-p}
        \sum_{j>N}
        \bigl(\lambda_j^{(r)} \bigr)^p
        \asymp
        \lambda^{-1/\beta}
        \left(\ln\frac{1}{\lambda}\right)^{r-1}.
    \]
    Together with the split above, this proves
    \[
        \mcaN_p^{(r)}(\lambda)
        \asymp
        \lambda^{-1/\beta}
        \left(
        \ln \frac{1}{\lambda}
        \right)^{r-1}.
    \]

    For the second claim, by symmetry it is enough to treat \(a=r\).
    Write
    \[
        \alpha_{i_1,\dots,i_{r-1}}
        \coloneqq
        \prod_{\ell=1}^{r-1} \lambda_{i_\ell}.
    \]
    Then
    \begin{align*}
            & \sum_{i_1,\dots,i_r=1}^{\infty}
        \left(
        \prod_{\ell=1}^{r-1} \lambda_{i_\ell}^s
        \right)
        \bigl(
        \frac{\Lambda_{\mathbf{i}}}{\Lambda_{\mathbf{i}}+\lambda}
        \bigr)^p                              \\
        =\  &
        \sum_{i_1,\dots,i_{r-1}=1}^{\infty}
        \left(
        \prod_{\ell=1}^{r-1} \lambda_{i_\ell}^s
        \right)
        \sum_{i_r=1}^{\infty}
        \left(
        \frac{\alpha_{i_1,\dots,i_{r-1}}\lambda_{i_r}}{\alpha_{i_1,\dots,i_{r-1}}\lambda_{i_r}+\lambda}
        \right)^p.
    \end{align*}
    The lower bound follows by keeping only the term \(i_1=\cdots=i_{r-1}=1\):
    \[
        \lambda_1^{s(r-1)}
        \sum_{i_r=1}^{\infty}
        \left(
        \frac{\lambda_{1}^{r-1} \lambda_{i_r}}
        {\lambda_{1}^{r-1} \lambda_{i_r}+\lambda}
        \right)^p
        \asymp
        \lambda^{-1/\beta},
    \]
    where we used the effective-dimension estimate just proved in the case
    \(r=1\), with regularization parameter \(\lambda/\lambda_1^{r-1}\).

    For the upper bound, note that for every \(\rho>0\),
    \begin{align*}
        \sum_{j=1}^{\infty}
        \left(
        \frac{\lambda_j}{\lambda_j+\rho}
        \right)^p
         & \le
        C\sum_{j=1}^{\infty}
        \frac{1}{(1+\rho j^\beta)^p}                     \\
         & \le
        C\int_0^\infty \frac{1}{(1+\rho x^\beta)^p}\dd x \\
         & =
        C\rho^{-1/\beta}
        \int_0^\infty \frac{1}{(1+y^\beta)^p}\dd y
        \le
        C_p \rho^{-1/\beta}.
    \end{align*}
    This estimate is valid for every \(\rho>0\); in particular, it does not require \(\rho\to0\).
    Applying it with \(\rho=\lambda/\alpha_{i_1,\dots,i_{r-1}}\),
    \begin{align*}
              & \sum_{i_1,\dots,i_r=1}^{\infty}
        \left(
        \prod_{\ell=1}^{r-1} \lambda_{i_\ell}^s
        \right)
        \bigl(
        \frac{\Lambda_{\mathbf{i}}}{\Lambda_{\mathbf{i}}+\lambda}
        \bigr)^p                                \\
        \le\  &
        C_p
        \sum_{i_1,\dots,i_{r-1}=1}^{\infty}
        \left(
        \prod_{\ell=1}^{r-1} \lambda_{i_\ell}^s
        \right)
        \left(
        \frac{\lambda}{\alpha_{i_1,\dots,i_{r-1}}}
        \right)^{-1/\beta}                      \\
        =\    &
        C_p \lambda^{-1/\beta}
        \sum_{i_1,\dots,i_{r-1}=1}^{\infty}
        \prod_{\ell=1}^{r-1} \lambda_{i_\ell}^{s+1/\beta}
        \asymp
        \lambda^{-1/\beta},
    \end{align*}
    because \(s>1/\beta\) implies \(\sum_{j=1}^\infty \lambda_j^{s+1/\beta}<\infty\).
\end{proof}

\begin{lemma}\label{lem:appendix-phi-T}
    For the population operator,
    \[
        \sup_{\bm{t}\in \mca{I}^r}
        \|T_{r,\lambda}^{-1} K(\bm{t})\|_{L^2}^2
        \le
        M^r \mcaN_2^{(r)}(\lambda),
    \]
    and
    \[
        \sup_{\bm{t}\in \mca{I}^r}
        \|T_{r,\lambda}^{-1/2} K(\bm{t})\|_{\caH^{\otimes r}}^2
        \le
        M^r \mcaN_1^{(r)}(\lambda).
    \]
\end{lemma}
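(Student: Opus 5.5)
Both bounds come from the tensor Mercer expansion combined with the regularity estimate of \Cref{lem:appendix-supnorm}, applied one coordinate at a time.

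\emph{Expansion in the tensor eigenbasis.} The tensor eigenpairs of \(T_r\) are \((\Lambda_{\mathbf{i}},e_{\mathbf{i}})\), and the product Mercer expansion gives
\[
  K(\bm{t})=\sum_{\mathbf{i}}\Lambda_{\mathbf{i}}e_{\mathbf{i}}(\bm{t})e_{\mathbf{i}}
  \quad\text{as a function.}
\]
Applying \(T_{r,\lambda}^{-1}\) coefficientwise yields
\[
  \|T_{r,\lambda}^{-1}K(\bm{t})\|_{L^2}^2
  =\sum_{\mathbf{i}}\Bigl(\frac{\Lambda_{\mathbf{i}}}{\Lambda_{\mathbf{i}}+\lambda}\Bigr)^2 e_{\mathbf{i}}(\bm{t})^2 .
\]
For the \(\caH^{\otimes r}\) norm we use the fact that \(\{\Lambda_{\mathbf{i}}^{1/2}e_{\mathbf{i}}\}\) is an orthonormal basis of (the closure of) \(\caH^{\otimes r}\), and that \(K(\bm{t})\) has coefficients \(\Lambda_{\mathbf{i}}^{1/2}e_{\mathbf{i}}(\bm{t})\) in this basis. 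This gives
\[
  \|T_{r,\lambda}^{-1/2}K(\bm{t})\|_{\caH^{\otimes r}}^2
  =\sum_{\mathbf{i}}\frac{\Lambda_{\mathbf{i}}}{\Lambda_{\mathbf{i}}+\lambda}\, e_{\mathbf{i}}(\bm{t})^2 .
\]
Both quantities therefore have the form \(\sum_{\mathbf{i}}F(\Lambda_{\mathbf{i}})\prod_a e_{i_a}(t_a)^2\), with \(F(x)=(x/(x+\lambda))^p\) for \(p=2\) and \(p=1\) respectively. In both cases \(F\) is nonnegative and non-decreasing.

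\emph{Coordinatewise reduction.} We bound the sum by induction on the coordinates. Fix \(i_2,\dots,i_r\) and set \(f(x)\coloneqq F(x\,\lambda_{i_2}\cdots\lambda_{i_r})\). This \(f\) is non-decreasing on \([0,\lambda_1]\) and summable over \(j\), since \(F(x)\le (x/\lambda)^p\) and \(\sum_j\lambda_j<\infty\). \Cref{lem:appendix-supnorm} then gives, uniformly in \(t_1\),
\[
  \sum_{i_1}F(\Lambda_{\mathbf{i}})\,e_{i_1}(t_1)^2\le M\sum_{i_1}F(\Lambda_{\mathbf{i}}).
\]
Multiply by \(\prod_{a\ge2}e_{i_a}(t_a)^2\) and sum over \(i_2,\dots,i_r\). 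Exchanging the order of summation is legitimate because all terms are nonnegative. The new summand \(G(\lambda_{i_2})\coloneqq\sum_{i_1}F(\lambda_{i_1}\lambda_{i_2}\cdots)\) is again a non-decreasing function of \(\lambda_{i_2}\) with finite sum, so \Cref{lem:appendix-supnorm} applies to the second coordinate. Repeating this \(r\) times yields
\[
  \sup_{\bm t}\sum_{\mathbf{i}}F(\Lambda_{\mathbf{i}})\,e_{\mathbf{i}}(\bm{t})^2\le M^r\sum_{\mathbf{i}}F(\Lambda_{\mathbf{i}})=M^r\mcaN_p^{(r)}(\lambda),
\]
which is the claim in both cases.

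\emph{Main obstacle.} The step needing care is the justification of the identities in the first step. One has to check that the tensor Mercer series for \(K(\bm t)\) converges in \(L^2\) and in \(\caH^{\otimes r}\), and that the nonnegative rearrangements are valid; this can be handled by monotone convergence. A second point is that \Cref{lem:appendix-supnorm} is stated for functions on \([0,\lambda_1]\) and requires monotonicity. This holds for every intermediate \(G\), since each is a sum of functions that are non-decreasing in the variable being peeled off.
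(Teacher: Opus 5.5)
Your proposal is correct and matches the paper's proof: both norms are expanded in the tensor eigenbasis as $\sum_{\mathbf{i}} F(\Lambda_{\mathbf{i}})\prod_a e_{i_a}(t_a)^2$ with $F(x)=(x/(x+\lambda))^p$, $p\in\{2,1\}$, and \Cref{lem:appendix-supnorm} is then applied one coordinate at a time, using that $u\mapsto F(Au)$ is non-decreasing. Your check that each intermediate sum $G$ stays non-decreasing and summable is the detail the paper compresses into ``repeating the same application coordinate by coordinate.''
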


\begin{proof}
    Since \(x\mapsto x/(x+\lambda)\) is increasing on \((0,\infty)\), both
    \[
        u\mapsto
        \left(
        \frac{Au}{Au+\lambda}
        \right)^2
        \qquad\text{and}\qquad
        u\mapsto
        \frac{Au}{Au+\lambda},
        \qquad A>0,
    \]
    are non-decreasing functions of \(u\).

    Using the spectral decomposition of \(T_r\),
    \[
        \|T_{r,\lambda}^{-1} K(\bm{t})\|_{L^2}^2
        =
        \sum_{\mathbf{i}=1}^\infty
        \bigl(
        \frac{\Lambda_{\mathbf{i}}}{\Lambda_{\mathbf{i}}+\lambda}
        \bigr)^2
        \prod_{\ell=1}^r e_{i_\ell}(t_\ell)^2,
    \]
    and
    \[
        \|T_{r,\lambda}^{-1/2} K(\bm{t})\|_{\caH^{\otimes r}}^2
        =
        \sum_{\mathbf{i}=1}^\infty
        \frac{\Lambda_{\mathbf{i}}}{\Lambda_{\mathbf{i}}+\lambda}
        \prod_{\ell=1}^r e_{i_\ell}(t_\ell)^2.
    \]
    We estimate the first display; the second one is identical.
    The functions displayed at the start of the proof are summable along
    \((\lambda_j)_{j\ge1}\), since they are bounded by a constant multiple of
    \(u\) and \(\sum_j \lambda_j<\infty\).
    For fixed \(i_2,\dots,i_r\), apply
    \Cref{lem:appendix-supnorm} with
    \[
        f(u)
        =
        \left(
        \frac{u\prod_{\ell=2}^r \lambda_{i_\ell}}
        {u\prod_{\ell=2}^r \lambda_{i_\ell}+\lambda}
        \right)^2.
    \]
    Here and below, an empty product is interpreted as \(1\).
    Repeating the same application coordinate by coordinate yields
    \[
        \sup_{\bm{t}\in \mca{I}^r}
        \sum_{\mathbf{i}=1}^\infty
        \bigl(
        \frac{\Lambda_{\mathbf{i}}}{\Lambda_{\mathbf{i}}+\lambda}
        \bigr)^2
        \prod_{\ell=1}^r e_{i_\ell}(t_\ell)^2
        \le
        M^r
        \sum_{\mathbf{i}=1}^\infty
        \bigl(
        \frac{\Lambda_{\mathbf{i}}}{\Lambda_{\mathbf{i}}+\lambda}
        \bigr)^2
        =
        M^r \mcaN_2^{(r)}(\lambda).
    \]
    The same iteration with
    \(\frac{\Lambda_{\mathbf{i}}}{\Lambda_{\mathbf{i}}+\lambda}\)
    gives the second inequality.
\end{proof}
 \section{Trace Bounds for the Noise-free Term}
\label{app:i0}

This appendix collects the four trace estimates used in \Cref{prop:I0-main}.
We separate the estimates into three parts:
the overlap decomposition of the double sums, the bounds for the two auxiliary kernels,
and finally the four trace estimates themselves.
Throughout this appendix, assume $m\ge 2r$.

\subsection{Overlap Decomposition}

For a measurable kernel \(h:\mca{I}^r \times \mca{I}^r \to \R\), define the separate symmetrization
\[
    h^{\mr{sym}}(\bm{t};\bm{t}')
    \coloneqq
    \frac{1}{(r!)^2}
    \sum_{\pi,\rho\in \mathfrak{S}_r}
    h\bigl(
    t_{\pi(1)},\dots,t_{\pi(r)};
    t_{\rho(1)}',\dots,t_{\rho(r)}'
    \bigr).
\]
For \(0\le q\le r\), define the \((2r-q)\)-variable kernel
\[
    h_q(t_1,\dots,t_{2r-q})
    \coloneqq
    h^{\mr{sym}}\bigl(
    (t_1,\dots,t_r);
    (t_1,\dots,t_q,t_{r+1},\dots,t_{2r-q})
    \bigr).
\]
For a \(q\)-variable function \(u\), write
\[
    U_{m,q}^{(i)}(u)
    \coloneqq
    \frac{1}{(m)_q}\sum_{\bm{\ell}\in I_m^q} u(t_{i\bm{\ell}}),
    \qquad
    U_{m,q}(u)
    \coloneqq
    \frac{1}{n}\sum_{i=1}^n U_{m,q}^{(i)}(u).
\]
We also abbreviate
\[
    c_{r,q,m}
    \coloneqq
    \binom{r}{q}^2 q! \frac{(m)_{2r-q}}{(m)_r^2},
    \qquad
    0\le q\le r.
\]

\begin{lemma}\label{lem:appendix-I0-overlap}
    For every measurable \(h:\mca{I}^r \times \mca{I}^r \to \R\) and every \(1\le i\le n\),
    \[
        \frac{1}{(m)_r^2}
        \sum_{\mathbf{j},\mathbf{k}\in I_m^r}
        h(t_{i\mathbf{j}};t_{i\mathbf{k}})
        =
        \sum_{q=0}^r
        c_{r,q,m}
        U_{m,2r-q}^{(i)}(h_q).
    \]
\end{lemma}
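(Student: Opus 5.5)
We split the double sum according to the overlap pattern and then show that each overlap class is a symmetrized \(U\)-statistic. Fix \(i\). The pairs \((\mathbf{j},\mathbf{k})\in I_m^r\times I_m^r\) are partitioned by \(q=q(\mathbf{j},\mathbf{k})=|J(\mathbf{j})\cap J(\mathbf{k})|\in\{0,\dots,r\}\). So it suffices to prove, for each fixed \(q\),
\[
  \sum_{\substack{\mathbf{j},\mathbf{k}\in I_m^r\\ q(\mathbf{j},\mathbf{k})=q}} h(t_{i\mathbf{j}};t_{i\mathbf{k}})
  =
  \binom{r}{q}^2 q!\sum_{\bm{\ell}\in I_m^{2r-q}} h_q(t_{i\bm{\ell}}).
\]
Dividing by \((m)_r^2\) and using \(U^{(i)}_{m,2r-q}(h_q)=(m)_{2r-q}^{-1}\sum_{\bm\ell}h_q(t_{i\bm\ell})\) then gives exactly the coefficient \(c_{r,q,m}\). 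As a consistency check, taking \(h\equiv1\) recovers the count in \Cref{lem:variance-overlap-count} with \(d=0\).

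To prove the displayed identity, we build an explicit many-to-one correspondence. Each pair \((\mathbf{j},\mathbf{k})\) with overlap \(q\) determines an unordered structure: a \(q\)-set of shared indices, an \((r-q)\)-set of indices used only by \(\mathbf{j}\), and an \((r-q)\)-set used only by \(\mathbf{k}\), all disjoint. It also determines the orderings of \(\mathbf{j}\) and \(\mathbf{k}\) within these sets. Conversely, we start from a tuple \(\bm\ell=(\ell_1,\dots,\ell_{2r-q})\in I_m^{2r-q}\). We read \(\ell_1,\dots,\ell_q\) as the shared indices, \(\ell_{q+1},\dots,\ell_r\) as the \(\mathbf{j}\)-only indices, and \(\ell_{r+1},\dots,\ell_{2r-q}\) as the \(\mathbf{k}\)-only indices. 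A pair with overlap \(q\) then corresponds to \(\bm\ell\) together with permutations \(\pi,\rho\in\mathfrak{S}_r\) giving the positions in \(\mathbf{j}\) and in \(\mathbf{k}\).

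Next we count multiplicities. The map \((\bm\ell,\pi,\rho)\mapsto(\mathbf{j},\mathbf{k})\) is onto the overlap-\(q\) pairs. Each fiber has size \(q!\,((r-q)!)^2\), since one may reorder the shared block and the two private blocks of \(\bm\ell\) and compensate in \(\pi,\rho\). Hence
\[
  \sum_{q(\mathbf{j},\mathbf{k})=q}h
  =\frac{1}{q!((r-q)!)^2}\sum_{\bm\ell}\sum_{\pi,\rho}h\bigl(t_{i\ell_{\pi(\cdot)}};\ t_{i\ell'_{\rho(\cdot)}}\bigr)
  =\frac{(r!)^2}{q!((r-q)!)^2}\sum_{\bm\ell}h_q(t_{i\bm\ell}),
\]
where \(\ell'=(\ell_1,\dots,\ell_q,\ell_{r+1},\dots,\ell_{2r-q})\), and the last step is the definition of \(h^{\mr{sym}}\) and \(h_q\). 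Finally, \((r!)^2/(q!((r-q)!)^2)=\binom{r}{q}^2q!\). Summing over \(q\) completes the argument.

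The only delicate step is the bookkeeping of the fiber size: permutations of \(\bm\ell\) that mix the shared block with a private block do not give the same pair, while permutations within the blocks do. This is why the factor is \(q!((r-q)!)^2\) and not \((2r-q)!\). We would verify it directly by checking that \((\mathbf{j},\mathbf{k})\) determines the three index sets uniquely, so only within-block reorderings give the same pair. No probabilistic input is needed, because the identity holds pointwise for each fixed design.
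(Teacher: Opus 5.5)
Your proof is correct and follows the same route as the paper: you split the pairs by overlap size \(q\), count \(\binom{r}{q}^2 q!\,(m)_{2r-q}\) configurations, and recognize the average over the two block orderings as \(h_q\). Your explicit surjection \((\bm\ell,\pi,\rho)\mapsto(\mathbf{j},\mathbf{k})\) with fibers of size \(q!\,((r-q)!)^2\) is a careful version of the step the paper states only briefly as ``averaging over the separate permutations.''
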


\begin{proof}
    We classify the ordered pairs \((\mathbf{j},\mathbf{k})\in I_m^r \times I_m^r\) by the overlap size
    \[
        q=q(\mathbf{j},\mathbf{k})=|J(\mathbf{j})\cap J(\mathbf{k})|.
    \]
    For fixed \(q\), there are \(\binom{r}{q}^2 q!\) ways to choose the overlapping positions and match them.
    Once this is done, the remaining \(2r-q\) distinct sample indices can be chosen in \((m)_{2r-q}\) ordered ways.
    Averaging over the separate permutations of the first and second \(r\)-blocks produces the symmetrized kernel \(h_q\), and the claimed identity follows after dividing by \((m)_r^2\).
\end{proof}

\subsection{Two Auxiliary Kernels}

For \(\bm{t},\bm{t}'\in\mca{I}^r\), define
\begin{align}
    g_{0,\lambda}(\bm{t};\bm{t}')
     &\coloneqq
    \Sigma_r(\bm{t};\bm{t}')
    \ang{
        T_{r,\lambda}^{-1} K(\bm{t}),
        T_{r,\lambda}^{-1} K(\bm{t}')
    }_{L^2(\mca{I}^r)},
    \label{eq:I0-gpop} \\
    f_{0,\lambda}(\bm{t};\bm{t}')
     &\coloneqq
    \Sigma_r(\bm{t};\bm{t}')
    \ang{
        T_{r,\lambda}^{-1/2} K(\bm{t}),
        T_{r,\lambda}^{-1/2} K(\bm{t}')
    }_{\caH^{\otimes r}}.
    \label{eq:I0-fpop}
\end{align}
By construction,
\begin{align}
    \Tr\bigl(
    T_r^{1/2} T_{r,\lambda}^{-1} \hat{G}_0 T_{r,\lambda}^{-1} T_r^{1/2}
    \bigr)
     & =
    \sum_{q=0}^r
    c_{r,q,m}
    U_{m,2r-q} \bigl((g_{0,\lambda})_q\bigr),
    \label{eq:appendix-g0-trace-expansion} \\
    \Tr\bigl(
    T_{r,\lambda}^{-1/2} \hat{G}_0 T_{r,\lambda}^{-1/2}
    \bigr)
     & =
    \sum_{q=0}^r
    c_{r,q,m}
    U_{m,2r-q} \bigl((f_{0,\lambda})_q\bigr). \label{eq:appendix-f0-trace-expansion}
\end{align}

\begin{lemma}\label{lem:appendix-I0-kernel-bounds}
    Let \(C_{\Sigma_r}\) be the constant supplied by
    Assumption~\ref{ass:source}\textup{(iv)}, so that
    \(\abs{\Sigma_r}\le C_{\Sigma_r}\).
    Then
    \begin{align*}
        \|g_{0,\lambda}\|_\infty
         & \le
        C_{\Sigma_r} M^r \mcaN_2^{(r)}(\lambda),
         &
        \E[g_{0,\lambda}^2]
         & \le
        C_{\Sigma_r}^2 \mcaN_4^{(r)}(\lambda), \\
        \|f_{0,\lambda}\|_\infty
         & \le
        C_{\Sigma_r} M^r \mcaN_1^{(r)}(\lambda),
         &
        \E[f_{0,\lambda}^2]
         & \le
        C_{\Sigma_r}^2 \mcaN_2^{(r)}(\lambda).
    \end{align*}
\end{lemma}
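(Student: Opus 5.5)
The four bounds have the same structure: a bounded factor \(\Sigma_r\) times an inner product of feature vectors. The sup-norm bounds will come from Cauchy--Schwarz plus \Cref{lem:appendix-phi-T}. The second-moment bounds will come from an exact spectral computation of \(\E[\ang{\cdot,\cdot}^2]\) under \(\bm{t},\bm{t}'\overset{\mathrm{i.i.d.}}{\sim}\rho^{\otimes r}\).

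\emph{Sup-norm bounds.} For \(g_{0,\lambda}\), I will use \(\abs{\Sigma_r}\le C_{\Sigma_r}\) and Cauchy--Schwarz in \(L^2(\mca I^r)\):
\[
\abs{g_{0,\lambda}(\bm t;\bm t')}\le C_{\Sigma_r}\|T_{r,\lambda}^{-1}K(\bm t)\|_{L^2}\|T_{r,\lambda}^{-1}K(\bm t')\|_{L^2}\le C_{\Sigma_r}M^r\mcaN_2^{(r)}(\lambda).
\]
The last step is the first inequality of \Cref{lem:appendix-phi-T}. The argument for \(f_{0,\lambda}\) is identical, working in \(\caH^{\otimes r}\) and using the second inequality of that lemma, which gives \(C_{\Sigma_r}M^r\mcaN_1^{(r)}(\lambda)\).

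\emph{Second moments.} First bound \(g_{0,\lambda}^2\le C_{\Sigma_r}^2\ang{T_{r,\lambda}^{-1}K(\bm t),T_{r,\lambda}^{-1}K(\bm t')}_{L^2}^2\). Then expand using the Mercer system. In the orthonormal basis \(e_{\bm i}\) of \(L^2(\mca I^r)\), one has \(T_{r,\lambda}^{-1}K(\bm t)=\sum_{\bm i}\frac{\Lambda_{\bm i}}{\Lambda_{\bm i}+\lambda}e_{\bm i}(\bm t)e_{\bm i}\). Therefore
\[
\ang{T_{r,\lambda}^{-1}K(\bm t),T_{r,\lambda}^{-1}K(\bm t')}_{L^2}=\sum_{\bm i}\Bigl(\frac{\Lambda_{\bm i}}{\Lambda_{\bm i}+\lambda}\Bigr)^2e_{\bm i}(\bm t)e_{\bm i}(\bm t').
\]
Squaring this and taking the expectation over independent \(\bm t,\bm t'\) reduces it via orthonormality, \(\E[e_{\bm i}(\bm t)e_{\bm j}(\bm t)]=\delta_{\bm i\bm j}\), applied in each variable separately:
\[
\E\Bigl[\sum_{\bm i,\bm j}a_{\bm i}a_{\bm j}e_{\bm i}(\bm t)e_{\bm j}(\bm t)e_{\bm i}(\bm t')e_{\bm j}(\bm t')\Bigr]=\sum_{\bm i}a_{\bm i}^2.
\]
With \(a_{\bm i}=(\Lambda_{\bm i}/(\Lambda_{\bm i}+\lambda))^2\), this is exactly \(\mcaN_4^{(r)}(\lambda)\).

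For \(f_{0,\lambda}\), the analogous coefficients are \(a_{\bm i}=\Lambda_{\bm i}/(\Lambda_{\bm i}+\lambda)\). This uses \(\ang{K(\bm t),T_{r,\lambda}^{-1}K(\bm t')}_{\caH^{\otimes r}}=\sum_{\bm i}\frac{\Lambda_{\bm i}}{\Lambda_{\bm i}+\lambda}e_{\bm i}(\bm t)e_{\bm i}(\bm t')\), which follows because \(\{\Lambda_{\bm i}^{1/2}e_{\bm i}\}\) is orthonormal in \(\caH^{\otimes r}\). The sum of squares then gives \(\mcaN_2^{(r)}(\lambda)\).

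The main point needing care is justifying the interchange of expectation with the double series. I would handle it via absolute convergence: the sup-norm bound above shows the series is uniformly bounded, and the partial sums converge in \(L^2(\rho^{\otimes 2r})\) because the coefficient sequence is square-summable. Equivalently, I can recognize \(\E[\ang{\cdot,\cdot}^2]\) as the squared Hilbert--Schmidt norm of the integral operator with that kernel, namely \(\|T_r^2T_{r,\lambda}^{-2}\|_{HS}^2\), and respectively \(\|T_rT_{r,\lambda}^{-1}\|_{HS}^2\) for \(f_{0,\lambda}\). This avoids manipulating the series directly.
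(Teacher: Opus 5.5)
Your proposal is correct and matches the paper's proof essentially step for step. The sup-norm bounds come from Cauchy--Schwarz with \Cref{lem:appendix-phi-T}, and the second moments come from the Mercer expansion of the kernel inner products together with orthonormality in \(\bm t\) and \(\bm t'\), which yields \(\mcaN_4^{(r)}(\lambda)\) and \(\mcaN_2^{(r)}(\lambda)\). Your justification of the series interchange is sound, as is the equivalent reading of the two second moments as \(\|T_r^2T_{r,\lambda}^{-2}\|_{HS}^2\) and \(\|T_rT_{r,\lambda}^{-1}\|_{HS}^2\); the paper leaves this point implicit.
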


\begin{proof}
    The sup-norm bound for \(g_{0,\lambda}\) follows from Cauchy-Schwarz and \Cref{lem:appendix-phi-T}:
    \[
        |g_{0,\lambda}(\bm{t};\bm{t}')|
        \le
        C_{\Sigma_r}
        \|T_{r,\lambda}^{-1} K(\bm{t})\|_{L^2}
        \|T_{r,\lambda}^{-1} K(\bm{t}')\|_{L^2}
        \le
        C_{\Sigma_r} M^r \mcaN_2^{(r)}(\lambda).
    \]
    For the second moment, write
    \[
        \ang{
            T_{r,\lambda}^{-1} K(\bm{t}),
            T_{r,\lambda}^{-1} K(\bm{t}')
        }_{L^2}
        =
        \sum_{\mathbf{i}=1}^\infty
        \bigl(
        \frac{\Lambda_{\mathbf{i}}}{\Lambda_{\mathbf{i}}+\lambda}
        \bigr)^2
        \prod_{\ell=1}^r e_{i_\ell}(t_\ell)e_{i_\ell}(t_\ell'),
    \]
    hence orthonormality gives
    \[
        \E[g_{0,\lambda}^2]
        \le
        C_{\Sigma_r}^2
        \sum_{\mathbf{i}=1}^\infty
        \bigl(
        \frac{\Lambda_{\mathbf{i}}}{\Lambda_{\mathbf{i}}+\lambda}
        \bigr)^4
        =
        C_{\Sigma_r}^2 \mcaN_4^{(r)}(\lambda).
    \]
    The proof for \(f_{0,\lambda}\) is the same, using
    \[
        \ang{
            T_{r,\lambda}^{-1/2} K(\bm{t}),
            T_{r,\lambda}^{-1/2} K(\bm{t}')
        }_{\caH^{\otimes r}}
        =
        \sum_{\mathbf{i}=1}^\infty
        \frac{\Lambda_{\mathbf{i}}}{\Lambda_{\mathbf{i}}+\lambda}
        \prod_{\ell=1}^r e_{i_\ell}(t_\ell)e_{i_\ell}(t_\ell').
    \]
    Therefore
    \[
        \|f_{0,\lambda}\|_\infty
        \le
        C_{\Sigma_r} M^r \mcaN_1^{(r)}(\lambda),
        \qquad
        \E[f_{0,\lambda}^2]
        \le
        C_{\Sigma_r}^2 \mcaN_2^{(r)}(\lambda).
    \]
\end{proof}

\begin{lemma}\label{lem:appendix-I0-expectations}
    Recall from \Cref{lem:appendix-I0-overlap} that, for a kernel \(h\), the notation \(h_1\) means the one-overlap kernel obtained by forcing exactly one common design point between the two \(r\)-blocks.
    In particular, \((g_{0,\lambda})_1\) and \((f_{0,\lambda})_1\) are the one-overlap versions of \(g_{0,\lambda}\) and \(f_{0,\lambda}\).
    We have
    \[
        V_{\mathrm{curve}}
        =
        \E[g_{0,\lambda}]
        =
        O(1),
        \qquad
        \E[f_{0,\lambda}]
        =
        O(1),
    \]
    and
    \begin{gather*}
        r^2 \E[(g_{0,\lambda})_1]
        =
        V_{\mathrm{point}}(\lambda;\Sigma_r)
        +
        O(1),
        \qquad
        V_{\mathrm{point}}(\lambda;\Sigma_r)
        =
        O(\lambda^{-1/\beta}),
        \\
        \E[(f_{0,\lambda})_1]
        =
        O(\lambda^{-1/\beta}).
    \end{gather*}
    If \(\Sigma_r \neq 0\), then \(\E[g_{0,\lambda}]\) is bounded away from zero for all sufficiently small \(\lambda\).
\end{lemma}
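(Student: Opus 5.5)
The plan is to expand every quantity in the tensor eigenbasis and use orthonormality together with the boundedness of $\Sigma_r$ and the regularity results of \Cref{app:regular-rkhs}. Write $w_{\mathbf i}\coloneqq \Lambda_{\mathbf i}/(\Lambda_{\mathbf i}+\lambda)$.

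\textbf{The zero-overlap expectations.} Since $\langle T_{r,\lambda}^{-1}K(\bm t),T_{r,\lambda}^{-1}K(\bm t')\rangle_{L^2}=\sum_{\mathbf i}w_{\mathbf i}^2e_{\mathbf i}(\bm t)e_{\mathbf i}(\bm t')$, we get
\[
\E[g_{0,\lambda}]=\sum_{\mathbf i} w_{\mathbf i}^2\,\langle \Sigma_r,e_{\mathbf i}\otimes e_{\mathbf i}\rangle_{L^2(\mca I^{2r})}.
\]
Termwise dominated convergence with $w_{\mathbf i}\le1$ is not directly enough, because $\sum_{\mathbf i}|\langle\Sigma_r,e_{\mathbf i}\otimes e_{\mathbf i}\rangle|$ need not be finite a priori. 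I will instead use that $\Sigma_r$ is the kernel of a positive operator: the covariance operator of $\prod_a X(t_a)$ on $L^2(\mca I^r)$. By \Cref{ass:source}(iv) it is bounded, so $\Sigma_r\in L^2$ and the operator is Hilbert--Schmidt. Moreover, $\Sigma_r\in([H]^{s_1})^{\otimes 2r}$ with $s_1>1/\beta$, so the diagonal $\Sigma_r(\bm t;\bm t)$ is bounded and the operator is trace class, with trace $\int\Sigma_r(\bm t;\bm t)\,d\rho^{\otimes r}\le C_{\Sigma_r}$.

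Hence $c_{\mathbf i}\coloneqq\langle\Sigma_r,e_{\mathbf i}\otimes e_{\mathbf i}\rangle\ge0$ and $\sum_{\mathbf i}c_{\mathbf i}<\infty$. This gives $\E g_{0,\lambda}\le\sum_{\mathbf i}c_{\mathbf i}=O(1)$, and by monotone convergence $\E g_{0,\lambda}\uparrow\sum_{\mathbf i}c_{\mathbf i}$ as $\lambda\downarrow0$. That limit equals the trace of the covariance operator, which is positive when $\Sigma_r\ne0$; the two facts together give the lower bound for small $\lambda$. The same argument with weights $w_{\mathbf i}$ instead of $w_{\mathbf i}^2$ handles $\E f_{0,\lambda}$.

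\textbf{The one-overlap terms.} Unwinding the separate symmetrization, $(g_{0,\lambda})_1$ averages over the $r^2$ placements of the shared point $\xi$, one position in each block. Exactly one of the placement pairs in each block has matching positions, so $r^2\E[(g_{0,\lambda})_1]$ splits as follows.

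\emph{Matched placements.} These are the $r$ pairs in which $\xi$ sits in the same slot $a$ of both blocks. By symmetry of $\Sigma_r$ and of the tensor structure, each contributes the same amount, and together they give exactly $V_{\mathrm{point}}$. It may be necessary to check that the symmetrization conventions reproduce the factor $r$ precisely.

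\emph{Mismatched placements.} These are pairs with $a\ne b$, in which $\xi$ occupies slot $a$ of $\bm t$ and slot $b$ of $\bm t'$. Expanding in the eigenbasis and integrating over $\xi$ produces $\int e_{i_a}(\xi)e_{i'_b}(\xi)\,d\rho=\delta$, which ties indices at different positions. I expect to bound these by Cauchy--Schwarz with $|\Sigma_r|\le C_{\Sigma_r}$, and then use the Parseval identity $\E_{\bm t}\big[(T_{r,\lambda}^{-1}K(\bm t))(\bm s)^2\big]=\sum_{\mathbf i}w_{\mathbf i}^2e_{\mathbf i}(\bm s)^2$ together with regularity, to show they are $O(1)$. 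This is the main obstacle. A crude sup bound gives only $O(\mcaN_2^{(r)})$, which is too large, so the source regularity of $\Sigma_r$ must be used. The plan is to write $\Sigma_r=\sum_{\mathbf k,\mathbf k'}a_{\mathbf k\mathbf k'}\Lambda_{\mathbf k}^{s_1/2}\Lambda_{\mathbf k'}^{s_1/2}e_{\mathbf k}\otimes e_{\mathbf k'}$, integrate out every variable except $\xi$, and observe that a mismatched coordinate forces summability factors $\lambda_j^{s_1}$ on coordinates not carrying the $\lambda$-weight. The resulting bound is a sum like the last display of \Cref{lem:appendix-effective-dimension} but with no free index, hence $O(1)$.

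\textbf{The upper bounds.} For $V_{\mathrm{point}}=O(\lambda^{-1/\beta})$ and $\E[(f_{0,\lambda})_1]=O(\lambda^{-1/\beta})$, bound $|\Sigma_r|$ via its $([H]^{s_1})^{\otimes2r}$ expansion in the $\tilde{\bm t},\tilde{\bm t}'$ coordinates. After integrating over $\tilde{\bm t},\tilde{\bm t}'$, one obtains sums of the form $\sum_{\mathbf i}\big(\prod_{\ell\ne a}\lambda_{i_\ell}^{s_1}\big)w_{\mathbf i}^p$, weighted by $\sup_\xi$ factors controlled through \Cref{lem:appendix-supnorm}. The last estimate of \Cref{lem:appendix-effective-dimension} then gives $\asymp\lambda^{-1/\beta}$, without the logarithmic loss.
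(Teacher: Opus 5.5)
Your zero-overlap argument and your matched placements are fine. The positivity/trace argument with monotone convergence is essentially the paper's. The cleanest justification of $\sum_{\mathbf i}c_{\mathbf i}<\infty$ is $|c_{\mathbf i}|\le\|\Sigma_r\|_{([H]^{s_1})^{\otimes 2r}}\Lambda_{\mathbf i}^{s_1}$ together with $\sum_{\mathbf i}\Lambda_{\mathbf i}^{s_1}<\infty$. Write $A_\lambda$ for the matched and $B_\lambda$ for the mismatched contribution. Your count $r^2\E[(g_{0,\lambda})_1]=rA_\lambda+r(r-1)B_\lambda$ with $V_{\mathrm{point}}=rA_\lambda$ is correct. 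Your bound on $A_\lambda$ via \Cref{lem:appendix-supnorm} and the last part of \Cref{lem:appendix-effective-dimension} is the paper's. The gap is the claim $B_\lambda=O(1)$, which you rightly flag as the main obstacle but do not resolve.

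The identity you invoke, that integrating over $\xi$ yields $\int e_{i_a}e_{i_b}\,d\rho=\delta_{i_ai_b}$, is false for the noise-free term. The reason is that $\Sigma_r(\xi,t_2,\dots,t_r;t_1',\xi,t_3',\dots,t_r')$ itself depends on $\xi$ through both blocks. The Kronecker delta is what happens for $g_{1,\lambda}$, where $\mu_{2r-2}$ does not involve the shared point; that is exactly the difference between the two cases.

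Take $a=1$, $b=2$, write $w_{\mathbf i}=\Lambda_{\mathbf i}/(\Lambda_{\mathbf i}+\lambda)$, and expand $\Sigma_r=\sum a_{\mathbf j;\mathbf k}\Lambda_{\mathbf j}^{s_1/2}\Lambda_{\mathbf k}^{s_1/2}e_{\mathbf j}\otimes e_{\mathbf k}$. Integrating over the non-shared variables forces $j_\ell=i_\ell$ for $\ell\ne1$ and $k_\ell=i_\ell$ for $\ell\ne2$, leaving
\[
B_\lambda=\sum_{\mathbf i}\sum_{j_1,k_2}a_{j_1,i_2,\dots,i_r;\,i_1,k_2,i_3,\dots,i_r}\,(\lambda_{j_1}\lambda_{k_2}\lambda_{i_1}\lambda_{i_2})^{s_1/2}\Bigl(\prod_{\ell\ge3}\lambda_{i_\ell}^{s_1}\Bigr)w_{\mathbf i}^2\int e_{j_1}e_{k_2}e_{i_1}e_{i_2}\,d\rho .
\]
The $\xi$-integral is a product of four eigenfunctions with no orthogonality relation. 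The indices $i_1,i_2,j_1,k_2$ carry only the half power $\lambda^{s_1/2}$, which is not summable unless $s_1>2/\beta$. So ``a sum with no free index, hence $O(1)$'' does not follow.

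The missing idea has two parts.
\begin{itemize}
\item First, apply Cauchy--Schwarz against the coefficient array. The map $(\mathbf i,j_1,k_2)\mapsto(j_1,i_2,\dots,i_r;\,i_1,k_2,i_3,\dots,i_r)$ is injective, so that factor is at most $\|\Sigma_r\|_{([H]^{s_1})^{\otimes 2r}}$. This upgrades the half powers to full powers $\lambda^{s_1}$.
\item Second, control the quartic term. The paper shows
\[
\sum_{i_1,i_2,j_1,k_2}(\lambda_{i_1}\lambda_{i_2}\lambda_{j_1}\lambda_{k_2})^{s_1}\Bigl(\int e_{i_1}e_{i_2}e_{j_1}e_{k_2}\,d\rho\Bigr)^2=\iint L_{s_1}(x,y)^4\,d\rho(x)\,d\rho(y)\le M_{s_1}^8,
\]
with $L_{s_1}(x,y)=\sum_j\lambda_j^{s_1}e_j(x)e_j(y)$. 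Alternatively, apply the Cauchy--Schwarz pointwise in $\xi$ and use $\sum_j\lambda_j^{s_1}e_j(\xi)^2\le M_{s_1}^2$ for each of the four indices.
\end{itemize}
Either route uses only $w_{\mathbf i}\le1$, so the same fix also handles the mismatched part of $\E[(f_{0,\lambda})_1]$.
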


\begin{proof}
    Expand \(\Sigma_r\) in the tensor basis:
    \[
        \Sigma_r(\bm{t};\bm{t}')
        =
        \sum_{\mathbf{j},\mathbf{k}=1}^\infty
        a_{\mathbf{j};\mathbf{k}}
        \prod_{\ell=1}^r \lambda_{j_\ell}^{s_1/2} e_{j_\ell}(t_\ell)
        \prod_{\ell=1}^r \lambda_{k_\ell}^{s_1/2} e_{k_\ell}(t_\ell').
    \]
    Then
    \[
        \E[g_{0,\lambda}]
        =
        \sum_{\mathbf{i}=1}^\infty
        a_{\mathbf{i};\mathbf{i}}
        \Lambda_{\mathbf{i}}^{s_1}
        \bigl(
        \frac{\Lambda_{\mathbf{i}}}{\Lambda_{\mathbf{i}}+\lambda}
        \bigr)^2
        \le
        \sum_{\mathbf{i}=1}^\infty
        a_{\mathbf{i};\mathbf{i}}\Lambda_{\mathbf{i}}^{s_1}
        <
        \infty
    \]
    because \(s_1>\frac{1}{\beta}\). The same computation with one power of \(\frac{\Lambda_{\mathbf{i}}}{\Lambda_{\mathbf{i}}+\lambda}\) gives \(\E[f_{0,\lambda}]=O(1)\).

    If \(\Sigma_r \neq 0\), then since \(\Sigma_r\) is positive semidefinite there exists some multi-index \(\mathbf{i}^\ast\) with \(a_{\mathbf{i}^\ast;\mathbf{i}^\ast}>0\).
    Therefore
    \[
        \E[g_{0,\lambda}]
        \ge
        a_{\mathbf{i}^\ast;\mathbf{i}^\ast}
        \Lambda_{\mathbf{i}^\ast}^{s_1}
        \left(
        \frac{\Lambda_{\mathbf{i}^\ast}}{\Lambda_{\mathbf{i}^\ast}+\lambda}
        \right)^2,
    \]
    which is bounded below by a positive constant when \(\lambda\) is small enough.

    It remains to estimate the one-overlap expectations.
    It follows from symmetry that
    \[
        \E[(g_{0,\lambda})_1]
        =
        \frac{1}{r}
        \E\bigl[
            g_{0,\lambda}(\xi,t_2,\dots,t_r;\xi,t_2',\dots,t_r')
            \bigr]
        +
        \frac{r-1}{r}
        \E\bigl[
            g_{0,\lambda}(\xi,t_2,\dots,t_r;t_1',\xi,t_3',\dots,t_r')
            \bigr],
    \]
    where the second term is absent when \(r=1\).
    Denote the first and second expectations by \(A_\lambda\) and
    \(B_\lambda\), respectively, with \(B_\lambda \coloneqq 0\) when \(r=1\).
    By the definition of
    \(V_{\mathrm{point}}(\lambda;\Sigma_r)\),
    \[
        V_{\mathrm{point}}(\lambda;\Sigma_r)
        =
        r A_\lambda.
    \]

    We begin with \(A_\lambda\).
    Expanding \(\Sigma_r\) and the kernel inner product, then using orthonormality in the variables \(t_2,\dots,t_r,t_2',\dots,t_r'\), we get
    \begin{align*}
        A_\lambda
        =\  & \E\bigl[
            g_{0,\lambda}(\xi,t_2,\dots,t_r;\xi,t_2',\dots,t_r')
        \bigr]         \\
        =\  &
        \int
        \sum_{\mathbf{i}=1}^\infty
        \sum_{j_1,k_1=1}^\infty
        a_{j_1,i_2,\dots,i_r;k_1,i_2,\dots,i_r}
        \\
        & \qquad\times
        \lambda_{j_1}^{s_1/2} \lambda_{k_1}^{s_1/2}
        e_{j_1}(\xi)e_{k_1}(\xi)
        \left(
        \prod_{\ell=2}^r \lambda_{i_\ell}^{s_1}
        \right)
        \bigl(
        \frac{\Lambda_{\mathbf{i}}}{\Lambda_{\mathbf{i}}+\lambda}
        \bigr)^2
        e_{i_1}(\xi)^2
         d\rho(\xi).
    \end{align*}
    Therefore
    \begin{align*}
        |A_\lambda|
        \le\  &
        \sup_{i_2,\dots,i_r,\xi}
        \left|
        \sum_{j_1,k_1=1}^\infty
        a_{j_1,i_2,\dots,i_r;k_1,i_2,\dots,i_r}
        \lambda_{j_1}^{s_1/2} \lambda_{k_1}^{s_1/2}
        e_{j_1}(\xi)e_{k_1}(\xi)
        \right|              \\
              & \qquad\times
        \int
        \sum_{\mathbf{i}=1}^\infty
        \left(
        \prod_{\ell=2}^r \lambda_{i_\ell}^{s_1}
        \right)
        \bigl(
        \frac{\Lambda_{\mathbf{i}}}{\Lambda_{\mathbf{i}}+\lambda}
        \bigr)^2
        e_{i_1}(\xi)^2
         d\rho(\xi).
    \end{align*}
    For each fixed \(i_2,\dots,i_r\), the coefficient block
    \[
        \bigl(
        a_{j_1,i_2,\dots,i_r;k_1,i_2,\dots,i_r}
        \bigr)_{j_1,k_1 \ge 1}
    \]
    is a partial \(\ell^2\)-sum of the coefficient tensor of \(\Sigma_r\), hence
    \[
        \sum_{j_1,k_1=1}^\infty
        a_{j_1,i_2,\dots,i_r;k_1,i_2,\dots,i_r}^2
        \le
        \|\Sigma_r\|_{([H]^{s_1})^{\otimes 2r}}^2.
    \]
    By Cauchy-Schwarz and \Cref{lem:appendix-supnorm},
    \begin{align*}
              & \left|
        \sum_{j_1,k_1=1}^\infty
        a_{j_1,i_2,\dots,i_r;k_1,i_2,\dots,i_r}
        \lambda_{j_1}^{s_1/2} \lambda_{k_1}^{s_1/2}
        e_{j_1}(\xi)e_{k_1}(\xi)
        \right|        \\
        \le\  &
        \left(
        \sum_{j_1,k_1=1}^\infty
        a_{j_1,i_2,\dots,i_r;k_1,i_2,\dots,i_r}^2
        \right)^{1/2}
        \left(
        \sum_{j_1=1}^\infty
        \lambda_{j_1}^{s_1} e_{j_1}(\xi)^2
        \right)^{1/2}
        \left(
        \sum_{k_1=1}^\infty
        \lambda_{k_1}^{s_1} e_{k_1}(\xi)^2
        \right)^{1/2}  \\
        \le\  &
        \|\Sigma_r\|_{([H]^{s_1})^{\otimes 2r}} M_{s_1}^2.
    \end{align*}
    Since \(\int e_{i_1}(\xi)^2=1\), we conclude that
    \begin{align*}
        |A_\lambda|
        \le\  &
        \|\Sigma_r\|_{([H]^{s_1})^{\otimes 2r}} M_{s_1}^2
        \sum_{\mathbf{i}=1}^\infty
        \left(
        \prod_{\ell=2}^r \lambda_{i_\ell}^{s_1}
        \right)
        \bigl(
        \frac{\Lambda_{\mathbf{i}}}{\Lambda_{\mathbf{i}}+\lambda}
        \bigr)^2
        =
        O(\lambda^{-1/\beta}),
    \end{align*}
    by the second part of \Cref{lem:appendix-effective-dimension} with \(a=1\) and \(p=2\).
    Consequently,
    \[
        V_{\mathrm{point}}(\lambda;\Sigma_r)
        =
        r A_\lambda
        =
        O(\lambda^{-1/\beta}).
    \]

    We now turn to \(B_\lambda\).  There is nothing to prove when
    \(r=1\), so assume \(r\ge2\).
    Expanding as above and integrating out the non-overlapping variables gives
    \begin{align*}
        B_\lambda
        =\  & \E\bigl[
            g_{0,\lambda}(\xi,t_2,\dots,t_r;t_1',\xi,t_3',\dots,t_r')
        \bigr]                   \\
        =\  &
        \sum_{\mathbf{i}=1}^\infty
        \sum_{j_1,k_2=1}^\infty
        a_{j_1,i_2,\dots,i_r;i_1,k_2,i_3,\dots,i_r}
        \lambda_{j_1}^{s_1/2} \lambda_{k_2}^{s_1/2}
        \lambda_{i_1}^{s_1/2} \lambda_{i_2}^{s_1/2}
        \left(
        \prod_{\ell=3}^r \lambda_{i_\ell}^{s_1}
        \right)
        \bigl(
        \frac{\Lambda_{\mathbf{i}}}{\Lambda_{\mathbf{i}}+\lambda}
        \bigr)^2
        \\
            & \qquad\times
        \int
        e_{j_1}(\xi)e_{k_2}(\xi)e_{i_1}(\xi)e_{i_2}(\xi)
         d\rho(\xi).
    \end{align*}
    We prove that \(B_\lambda=O(1)\).  Since
    \(\Lambda_{\mathbf{i}}/(\Lambda_{\mathbf{i}}+\lambda)\le1\), Cauchy-Schwarz
    gives
    \begin{align*}
        |B_\lambda|
        \le\  &
        \left(
        \sum_{\mathbf{i}=1}^\infty
        \sum_{j_1,k_2=1}^\infty
        a_{j_1,i_2,\dots,i_r;i_1,k_2,i_3,\dots,i_r}^2
        \right)^{1/2}                         \\
        &\times
        \Biggl(
        \sum_{\mathbf{i}=1}^\infty
        \sum_{j_1,k_2=1}^\infty
        \lambda_{j_1}^{s_1} \lambda_{k_2}^{s_1}
        \lambda_{i_1}^{s_1} \lambda_{i_2}^{s_1}
        \prod_{\ell=3}^r \lambda_{i_\ell}^{2s_1}
        \left(
        \int
        e_{j_1}(\xi)e_{k_2}(\xi)e_{i_1}(\xi)e_{i_2}(\xi)
         d\rho(\xi)
        \right)^2
        \Biggr)^{1/2}.
    \end{align*}
    The first factor is bounded by
    \[
        \|\Sigma_r\|_{([H]^{s_1})^{\otimes 2r}},
    \]
    because it is a partial \(\ell^2\)-sum of the coefficient tensor of
    \(\Sigma_r\).  In the second factor, the variables \(i_3,\dots,i_r\)
    appear only through
    \(\prod_{\ell=3}^r \lambda_{i_\ell}^{2s_1}\), and therefore their sum is finite
    since \(s_1>1/\beta\).  It remains to bound the nonnegative series
    \[
        S_4
        \coloneqq
        \sum_{i_1,i_2,j_1,k_2=1}^\infty
        \lambda_{i_1}^{s_1} \lambda_{i_2}^{s_1}
        \lambda_{j_1}^{s_1} \lambda_{k_2}^{s_1}
        \left(
        \int
        e_{i_1}(\xi)e_{i_2}(\xi)e_{j_1}(\xi)e_{k_2}(\xi)
         d\rho(\xi)
        \right)^2.
    \]
    Let
    \[
        L_{s_1}(x,y)
        \coloneqq
        \sum_{j=1}^\infty
        \lambda_j^{s_1} e_j(x)e_j(y).
    \]
    Expanding the fourth power for finite truncations of \(L_{s_1}\) gives,
    formally,
    \[
        S_4
        =
        \int_{\mca I} \int_{\mca I}
        L_{s_1}(x,y)^4 d\rho(x)d\rho(y).
    \]
    By Cauchy-Schwarz and \Cref{lem:appendix-supnorm},
    \[
        |L_{s_1}(x,y)|
        \le
        \left(
        \sum_{j=1}^\infty \lambda_j^{s_1} e_j(x)^2
        \right)^{1/2}
        \left(
        \sum_{j=1}^\infty \lambda_j^{s_1} e_j(y)^2
        \right)^{1/2}
        \le
        M_{s_1}^2.
    \]
    The same bound applies to the finite truncations, so dominated convergence
    justifies the preceding identity and gives
    \(S_4 \le M_{s_1}^8<\infty\).  Hence \(B_\lambda=O(1)\).
    Consequently,
    \[
        r^2 \E[(g_{0,\lambda})_1]
        =
        rA_\lambda+r(r-1)B_\lambda
        =
        V_{\mathrm{point}}(\lambda;\Sigma_r)
        +
        O(1).
    \]

    The same decomposition works for \((f_{0,\lambda})_1\).
    For the first pattern, one only replaces
    \(\bigl(\frac{\Lambda_{\mathbf{i}}}{\Lambda_{\mathbf{i}}+\lambda}\bigr)^2\)
    by
    \(\frac{\Lambda_{\mathbf{i}}}{\Lambda_{\mathbf{i}}+\lambda}\), and then
    applies the second part of \Cref{lem:appendix-effective-dimension} with
    \(p=1\).  For the off-diagonal pattern, the preceding \(O(1)\) argument is
    unchanged because it only uses
    \(\Lambda_{\mathbf{i}}/(\Lambda_{\mathbf{i}}+\lambda)\le1\).
    Thus \(\E[(f_{0,\lambda})_1]=O(\lambda^{-1/\beta})\).
\end{proof}

\begin{lemma}\label{lem:appendix-I0-vpoint-lower}
    Suppose Assumption~\ref{ass:prelim-ae-lower-regularity} holds.  If
    \(\Sigma_r \neq0\), then
    \[
        V_{\mathrm{point}}(\lambda;\Sigma_r)
        \asymp
        \lambda^{-1/\beta}.
    \]
\end{lemma}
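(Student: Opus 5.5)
The upper bound \(V_{\mathrm{point}}(\lambda;\Sigma_r)=O(\lambda^{-1/\beta})\) is already contained in \Cref{lem:appendix-I0-expectations}, so only the lower bound \(V_{\mathrm{point}}(\lambda;\Sigma_r)=\Omega(\lambda^{-1/\beta})\) needs proof. The plan is to rewrite \(A_\lambda=V_{\mathrm{point}}/r\) as an integral over the shared point \(\xi\) of a positive semidefinite quadratic form. We then extract a nonnegative integrand that factorizes into a \(\xi\)-dependent covariance weight times a spectral projection diagonal. Finally, Assumption~\ref{ass:prelim-ae-lower-regularity} supplies the lower bound on that diagonal.

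\textbf{Step 1: rewrite \(A_\lambda\) as a nonnegative integrand.} Integrate out \(\tilde{\bm t},\tilde{\bm t}'\) using the eigen-expansion \(\langle T_{r,\lambda}^{-1}K(\bm t),T_{r,\lambda}^{-1}K(\bm t')\rangle_{L^2}=\sum_{\mathbf i}\phi_{\mathbf i}^2 e_{\mathbf i}(\bm t)e_{\mathbf i}(\bm t')\) with \(\phi_{\mathbf i}=\Lambda_{\mathbf i}/(\Lambda_{\mathbf i}+\lambda)\). This gives
\[
A_\lambda=\int \sum_{\mathbf i}\phi_{\mathbf i}^2\, e_{i_1}(\xi)^2\, \Gamma_{\tilde{\mathbf i}}(\xi)\,d\rho(\xi),
\qquad
\Gamma_{\tilde{\mathbf i}}(\xi)\coloneqq \Var\Bigl(X(\xi)\textstyle\int \prod_{a\ge2}X(t_a)e_{i_a}(t_a)\,d\rho^{\otimes(r-1)}\Bigr)\ge0,
\]
where the variance is over \(X\). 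Writing \(\Gamma\) this way makes each term nonnegative, because \(\Sigma_r\) restricted to this pattern is the covariance of \(X(\xi)\prod_{a\ge 2}X(t_a)\). Positivity then lets us drop all terms except \(i_2=\dots=i_r=i^\ast\) for a fixed multi-index \(\tilde{\mathbf i}^\ast\) chosen so that \(w(\xi)\coloneqq\Gamma_{\tilde{\mathbf i}^\ast}(\xi)\) is not \(\rho\)-a.e.\ zero. Such an index exists: if every \(\Gamma_{\tilde{\mathbf i}}\) vanished a.e., then the diagonal \(\Sigma_r(\xi,\tilde{\bm t};\xi,\tilde{\bm t})\) would vanish a.e. By continuity and positive semidefiniteness this forces \(\Sigma_r=0\), contradicting the hypothesis. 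This nondegeneracy argument is the step I expect to be the main obstacle. It needs care with continuous representatives, which the \([H]^{s_1}\) regularity together with \Cref{lem:appendix-supnorm} provides, and with the exchange of sums and integrals, which is handled by monotone convergence since all terms are nonnegative.

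\textbf{Step 2: reduce to a one-dimensional diagonal.} With \(c\coloneqq\prod_{a\ge2}\lambda_{i^\ast_a}\), Step 1 gives
\[
A_\lambda\ge \int w(\xi)\sum_{j}\Bigl(\frac{c\lambda_j}{c\lambda_j+\lambda}\Bigr)^2 e_j(\xi)^2\,d\rho(\xi).
\]
Group the indices by eigenspaces, keep only those with \(\theta_m\ge\lambda/c\), and bound the factor below by \(1/4\). The inner sum is then at least \(\tfrac14\sum_{m\le N}k_m(\xi,\xi)\), where \(N=N(\lambda)\) is the number of distinct eigenvalues \(\ge \lambda/c\). By Assumption~\ref{ass:prelim-eigen-decay}, \(\sum_{m\le N}d_m\asymp(\lambda/c)^{-1/\beta}\asymp\lambda^{-1/\beta}\).

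\textbf{Step 3: apply lower regularity.} Assumption~\ref{ass:prelim-ae-lower-regularity} gives \(\liminf_N \sum_{m\le N}k_m(\xi,\xi)/\sum_{m\le N}d_m>0\) for a.e.\ \(\xi\). Fatou's lemma, applied to \(w(\xi)\) times this ratio as \(\lambda\to0\), then yields
\[
\liminf_{\lambda\to0}\lambda^{1/\beta}A_\lambda\gtrsim\int w(\xi)\,\liminf_N\frac{\sum_{m\le N}k_m(\xi,\xi)}{\sum_{m\le N}d_m}\,d\rho(\xi)>0,
\]
where the final integral is positive because \(w\ge0\) is not a.e.\ zero and the liminf is a.e.\ positive. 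Hence \(V_{\mathrm{point}}=rA_\lambda=\Omega(\lambda^{-1/\beta})\), and combined with the upper bound this proves the claim.
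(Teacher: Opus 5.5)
Your proof is correct, and after the common first step it takes a leaner route than the paper.

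\textbf{Common first step.} Both arguments write $A_\lambda$ as an integral over the shared point $\xi$ of a nonnegative weight against a spectrally weighted diagonal, and both keep a single block $\tilde{\mathbf i}^\ast$. Your $\Gamma_{\tilde{\mathbf i}}(\xi)$ is exactly the paper's principal-submatrix quadratic form multiplied by $\prod_{a\ge2}\lambda_{i_a}^{s_1}$. So your variance representation and the paper's positive-semidefinite-submatrix argument express the same fact.

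\textbf{Where the routes diverge.} The paper uses absolute continuity of the integral to fix $\epsilon$. It then constructs a set $E_\epsilon$ with $\rho(E_\epsilon)\ge 1-\epsilon$ on which $\sum_{m\le N}k_m(\xi,\xi)\ge c_\epsilon\sum_{m\le N}d_m$ holds uniformly for $N\ge N_\epsilon$. Finally it applies Abel summation to compare the full sum $\sum_m(\cdot)^2k_m(\xi,\xi)$ with $\sum_m(\cdot)^2d_m\asymp\lambda^{-1/\beta}$ from \Cref{lem:appendix-effective-dimension}. You instead truncate at $\theta_m\ge\lambda/c$, so you only need the eigenvalue count $\#\{j:\lambda_j\ge u\}\asymp u^{-1/\beta}$. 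You then apply Fatou's lemma directly to the pointwise $\liminf$ in Assumption~\ref{ass:prelim-ae-lower-regularity}. This is legitimate because $N(\lambda)\to\infty$ and the ratios are nonnegative.

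\textbf{What each buys.} You obtain only $\liminf_{\lambda\to0}\lambda^{1/\beta}A_\lambda>0$ with an inexplicit constant. The paper's set $E_\epsilon$ instead gives a bound that is uniform in $\lambda$. For the $\asymp$ claim this makes no difference.

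\textbf{The step you flagged as the main obstacle.} It is simpler than your diagonal-and-continuity route. Integrating your weight gives
\[
\int\Gamma_{\tilde{\mathbf i}}\,d\rho=\prod_{a\ge2}\lambda_{i_a}^{s_1}\sum_j a_{(j,\tilde{\mathbf i});(j,\tilde{\mathbf i})}\lambda_j^{s_1},
\]
where $(a_{\mathbf j;\mathbf k})$ is the positive semidefinite coefficient array of $\Sigma_r$ in the tensor basis. A nonzero positive semidefinite array has some diagonal entry $a_{\mathbf i;\mathbf i}>0$. Taking $\tilde{\mathbf i}^\ast=(i_2,\dots,i_r)$ from that $\mathbf i$ gives $\int w\,d\rho\ge a_{\mathbf i;\mathbf i}\Lambda_{\mathbf i}^{s_1}>0$ immediately; this is how the paper does it.

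\textbf{A small justification fix.} The first exchange of sum and integral, before integrating out $\tilde{\bm t},\tilde{\bm t}'$, involves terms of indefinite sign. It should therefore be justified by dominated convergence, using $\sum_{\mathbf i}\phi_{\mathbf i}^2|e_{\mathbf i}(\bm t)e_{\mathbf i}(\bm t')|\le M^r\mcaN_2^{(r)}(\lambda)$, rather than by monotone convergence.
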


\begin{proof}
    The upper bound is part of \Cref{lem:appendix-I0-expectations}, so it remains
    to prove the lower bound.  We use the notation from the proof of
    \Cref{lem:appendix-I0-expectations}; in particular,
    \(V_{\mathrm{point}}(\lambda;\Sigma_r)=rA_\lambda\).  Rearranging the
    expression for \(A_\lambda\) obtained there gives
    \begin{align*}
        A_\lambda
        =
        \sum_{i_2,\dots,i_r=1}^\infty
        \left(
        \prod_{\ell=2}^r \lambda_{i_\ell}^{s_1}
        \right)
        \int
        &\left(
        \sum_{j_1,k_1=1}^\infty
        a_{j_1,i_2,\dots,i_r;k_1,i_2,\dots,i_r}
        \lambda_{j_1}^{s_1/2} \lambda_{k_1}^{s_1/2}
        e_{j_1}(\xi)e_{k_1}(\xi)
        \right)
        \\
        &\times
        \left(
        \sum_{m=1}^\infty
        \left(
        \frac{\theta_m \prod_{\ell=2}^r \lambda_{i_\ell}}
             {\theta_m \prod_{\ell=2}^r \lambda_{i_\ell}+\lambda}
        \right)^2
        k_m(\xi,\xi)
        \right)
         d\rho(\xi),
    \end{align*}
    where, when \(r=1\), the product over \(\ell=2,\dots,r\) and the
    corresponding summation are omitted.

    For any fixed indices \(i_2,\dots,i_r\), the matrix
    \[
        \bigl(
        a_{j_1,i_2,\dots,i_r;k_1,i_2,\dots,i_r}
        \bigr)_{j_1,k_1 \ge1}
    \]
    is a principal submatrix of the positive semidefinite coefficient matrix of
    \(\Sigma_r\).  Therefore, for every \(\xi\),
    \[
        \sum_{j_1,k_1=1}^\infty
        a_{j_1,i_2,\dots,i_r;k_1,i_2,\dots,i_r}
        \lambda_{j_1}^{s_1/2} \lambda_{k_1}^{s_1/2}
        e_{j_1}(\xi)e_{k_1}(\xi)
        \ge0 .
    \]
    Since \(\Sigma_r \neq0\), the positive semidefinite coefficient matrix has a
    positive diagonal entry.  Choose
    \(\bm{i}=(i_1,\dots,i_r)\) such that
    \(a_{\bm{i};\bm{i}}>0\), and fix these indices \(i_2,\dots,i_r\) in what
    follows.

    Moreover,
    \[
        \begin{aligned}
        \int
        \sum_{j_1,k_1=1}^\infty
        a_{j_1,i_2,\dots,i_r;k_1,i_2,\dots,i_r}
        \lambda_{j_1}^{s_1/2} \lambda_{k_1}^{s_1/2}
        e_{j_1}(\xi)e_{k_1}(\xi)
         d\rho(\xi)
        &=
        \sum_{j_1=1}^\infty
        a_{j_1,i_2,\dots,i_r;j_1,i_2,\dots,i_r}
        \lambda_{j_1}^{s_1}
        \\
        &\ge
        a_{i_1,\dots,i_r;i_1,\dots,i_r}
        \lambda_{i_1}^{s_1}
        >
        0 .
    \end{aligned}
\]
    Since the integrand in the last display is nonnegative, choose
    \(\epsilon>0\) small enough that the same integral remains positive after
    restriction to any set of measure at least \(1-\epsilon\).  To apply
    Assumption~\ref{ass:prelim-ae-lower-regularity} directly, set
    \[
        \ell_N(\xi)
        =
        \frac{\sum_{m=1}^N k_m(\xi,\xi)}{\sum_{m=1}^N d_m},
        \qquad
        b_n(\xi)
        =
        \inf_{N\ge n}\ell_N(\xi).
    \]
    Then \(b_n(\xi)\uparrow\liminf_{N\to\infty}\ell_N(\xi)>0\) for
    \(\rho\)-almost every \(\xi\).  Consequently, the increasing sets
    \[
        E_n
        =
        \{\xi\in\mca I:b_n(\xi)\ge n^{-1}\}
    \]
    have union of full measure.  Choose \(N_\epsilon\) such that
    \(\rho(E_{N_\epsilon})\ge1-\epsilon\), and set
    \(E_\epsilon=E_{N_\epsilon}\) and
    \(c_\epsilon=N_\epsilon^{-1}\).  It follows that
    \[
        \sum_{m=1}^{N} k_m(\xi,\xi)
        \ge
        c_\epsilon
        \sum_{m=1}^{N} d_m,
        \qquad
        \xi\in E_\epsilon,\ N\ge N_\epsilon.
    \]

    The sequence
    \[
        \left(
        \frac{\theta_m \prod_{\ell=2}^r \lambda_{i_\ell}}
             {\theta_m \prod_{\ell=2}^r \lambda_{i_\ell}+\lambda}
        \right)^2,
        \qquad m\ge1,
    \]
    is non-increasing in \(m\).  Abel summation gives, uniformly for
    \(\xi\in E_\epsilon\),
    \[
        \sum_{m=1}^{\infty}
        \left(
        \frac{\theta_m \prod_{\ell=2}^r \lambda_{i_\ell}}
             {\theta_m \prod_{\ell=2}^r \lambda_{i_\ell}+\lambda}
        \right)^2
        k_m(\xi,\xi)
        \ge
        c_\epsilon
        \sum_{m=1}^{\infty}
        \left(
        \frac{\theta_m \prod_{\ell=2}^r \lambda_{i_\ell}}
             {\theta_m \prod_{\ell=2}^r \lambda_{i_\ell}+\lambda}
        \right)^2
        d_m
        -
        O_\epsilon(1).
    \]
    The \(O_\epsilon(1)\) term only removes the finitely many indices
    \(m<N_\epsilon\).
    Since
    \[
        \sum_{m=1}^{\infty}
        \left(
        \frac{\theta_m \prod_{\ell=2}^r \lambda_{i_\ell}}
             {\theta_m \prod_{\ell=2}^r \lambda_{i_\ell}+\lambda}
        \right)^2
        d_m
        =
        \sum_{j=1}^{\infty}
        \left(
        \frac{\lambda_j}
             {\lambda_j+\lambda/\prod_{\ell=2}^r \lambda_{i_\ell}}
        \right)^2
        \asymp
        \lambda^{-1/\beta},
    \]
    by \Cref{lem:appendix-effective-dimension} with \(r=1\) and \(p=2\), we
    have
    \[
        \sum_{m=1}^{\infty}
        \left(
        \frac{\theta_m \prod_{\ell=2}^r \lambda_{i_\ell}}
             {\theta_m \prod_{\ell=2}^r \lambda_{i_\ell}+\lambda}
        \right)^2
        k_m(\xi,\xi)
        \gtrsim
        \lambda^{-1/\beta},
        \qquad
        \xi\in E_\epsilon,
    \]
    for all sufficiently small \(\lambda\).  Keeping only the nonnegative
    summand indexed by the fixed \(i_2,\dots,i_r\) in the expression for
    \(A_\lambda\), and restricting the integral to \(E_\epsilon\), gives
    \[
        A_\lambda
        \gtrsim
        \lambda^{-1/\beta}.
    \]
    Hence
    \(V_{\mathrm{point}}(\lambda;\Sigma_r)=rA_\lambda
    \gtrsim \lambda^{-1/\beta}\), as desired.
\end{proof}

\subsection{The Four Trace Estimates}

\begin{proposition}\label{prop:appendix-I0-pop-phi}
    Assume \(\lambda\to0\) satisfies \((\ln \frac{1}{\lambda})^{r-1}=o(m)\).
    Then
    \[
        \Tr\bigl(
        T_r^{1/2} T_{r,\lambda}^{-1} G_0 T_{r,\lambda}^{-1} T_r^{1/2}
        \bigr)
        =
        V_{\mathrm{curve}}(\lambda;\Sigma_r)
        +
        \frac{1}{m}V_{\mathrm{point}}(\lambda;\Sigma_r)
        +
        o \left(
        \frac{\lambda^{-1/\beta}}{m}
        \right).
    \]
    In particular,
    \[
        \Tr\bigl(
        T_r^{1/2} T_{r,\lambda}^{-1} G_0 T_{r,\lambda}^{-1} T_r^{1/2}
        \bigr)
        =
        O \left(
        1+\frac{\lambda^{-1/\beta}}{m}
        \right).
    \]
\end{proposition}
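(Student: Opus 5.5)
Taking expectations in \cref{eq:appendix-g0-trace-expansion}, the trace equals the deterministic sum
\[
\sum_{q=0}^r c_{r,q,m}\,\E\bigl[(g_{0,\lambda})_q\bigr],
\]
since each \(U\)-statistic is unbiased for the expectation of its kernel. Here \(G_0=\E\hat G_0\), and the trace commutes with expectation because everything is trace class and bounded. The plan is to identify the \(q=0\) and \(q=1\) terms with the two deterministic quantities and show that the terms with \(q\ge2\) are negligible.

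\emph{The term \(q=0\).} By \Cref{lem:appendix-I0-overlap}, \(c_{r,0,m}=(m)_{2r}/(m)_r^2=1-O(1/m)\). Moreover \(\E[(g_{0,\lambda})_0]=\E[g_{0,\lambda}]=V_{\mathrm{curve}}\), because symmetrization does not change the expectation over i.i.d.\ arguments. \Cref{lem:appendix-I0-expectations} gives \(V_{\mathrm{curve}}=O(1)\). Hence this term equals \(V_{\mathrm{curve}}+O(1/m)\), and \(O(1/m)=o(\lambda^{-1/\beta}/m)\) because \(\lambda\to0\).

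\emph{The term \(q=1\).} Here \(c_{r,1,m}=r^2(m)_{2r-1}/(m)_r^2=\frac{r^2}{m}(1+O(1/m))\). By \Cref{lem:appendix-I0-expectations}, \(r^2\E[(g_{0,\lambda})_1]=V_{\mathrm{point}}+O(1)\) with \(V_{\mathrm{point}}=O(\lambda^{-1/\beta})\). So this term equals
\[
\frac1m V_{\mathrm{point}}+O\Bigl(\frac1m\Bigr)+O\Bigl(\frac{\lambda^{-1/\beta}}{m^2}\Bigr),
\]
and both errors are \(o(\lambda^{-1/\beta}/m)\).

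\emph{The terms \(q\ge2\).} This is the main obstacle, since only crude tools are available there. We have \(c_{r,q,m}\lesssim m^{-q}\le m^{-2}\). For the kernel, use the sup-norm bound \(\|g_{0,\lambda}\|_\infty\le C_{\Sigma_r}M^r\mcaN_2^{(r)}(\lambda)\) from \Cref{lem:appendix-I0-kernel-bounds}. Together with \Cref{lem:appendix-effective-dimension} this gives \(\lvert\E[(g_{0,\lambda})_q]\rvert\lesssim\lambda^{-1/\beta}(\ln\frac1\lambda)^{r-1}\). Thus the \(q\ge2\) terms are
\[
O\Bigl(\frac{\lambda^{-1/\beta}(\ln\frac1\lambda)^{r-1}}{m^2}\Bigr)=o\Bigl(\frac{\lambda^{-1/\beta}}{m}\Bigr),
\]
exactly by the hypothesis \((\ln\frac1\lambda)^{r-1}=o(m)\). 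This is where that assumption is needed.

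If one wanted a sharper result, one could try to integrate out the non-overlapping variables as in the proof of \Cref{lem:appendix-I0-expectations}. However, the sup-norm route already suffices, so I would use it.

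Summing the three contributions gives the displayed expansion. The "in particular" bound then follows from \(V_{\mathrm{curve}}=O(1)\) and \(V_{\mathrm{point}}=O(\lambda^{-1/\beta})\).
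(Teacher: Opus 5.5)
Your proposal is correct and follows the paper's proof essentially line by line. You take expectations in \cref{eq:appendix-g0-trace-expansion}, match the \(q=0\) and \(q=1\) terms to \(V_{\mathrm{curve}}\) and \(\frac{1}{m}V_{\mathrm{point}}\) via \Cref{lem:appendix-I0-expectations} (using \(c_{r,0,m}=1+O(m^{-1})\) and \(c_{r,1,m}=r^2/m+O(m^{-2})\)), and bound the \(q\ge2\) terms by \(m^{-2}\|g_{0,\lambda}\|_\infty\lesssim m^{-2}\mcaN_2^{(r)}(\lambda)\) together with \((\ln\frac1\lambda)^{r-1}=o(m)\); your remark that trace and expectation commute is a small justification the paper leaves implicit.
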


\begin{proof}
    Take expectations in \Cref{eq:appendix-g0-trace-expansion}:
    \[
        \Tr\bigl(
        T_r^{1/2} T_{r,\lambda}^{-1} G_0 T_{r,\lambda}^{-1} T_r^{1/2}
        \bigr)
        =
        \sum_{q=0}^r
        c_{r,q,m}
        \E[(g_{0,\lambda})_q].
    \]
    Since
    \[
        \frac{(m)_{2r}}{(m)_r^2}
        =
        1+O(m^{-1}),
        \qquad
        \E[g_{0,\lambda}]
        =
        V_{\mathrm{curve}}(\lambda;\Sigma_r)
        =
        O(1),
    \]
    the \(q=0\) term is
    \[
        V_{\mathrm{curve}}(\lambda;\Sigma_r)
        +
        O(m^{-1})
        =
        V_{\mathrm{curve}}(\lambda;\Sigma_r)
        +
        o \left(
        \frac{\lambda^{-1/\beta}}{m}
        \right).
    \]
    Also,
    \[
        c_{r,1,m}
        =
        \frac{r^2}{m}
        +
        O(m^{-2}).
    \]
    By \Cref{lem:appendix-I0-expectations},
    \[
        r^2 \E[(g_{0,\lambda})_1]
        =
        V_{\mathrm{point}}(\lambda;\Sigma_r)+O(1),
        \qquad
        \E[(g_{0,\lambda})_1]
        =
        O(\lambda^{-1/\beta}).
    \]
    Hence the \(q=1\) term is
    \[
        c_{r,1,m} \E[(g_{0,\lambda})_1]
        =
        \frac{1}{m}V_{\mathrm{point}}(\lambda;\Sigma_r)
        +
        o \left(
        \frac{\lambda^{-1/\beta}}{m}
        \right).
    \]
    For \(q\ge 2\), \(|\E[(g_{0,\lambda})_q]|\le \|g_{0,\lambda}\|_\infty\), so
    \[
        \sum_{q=2}^r c_{r,q,m} \E[(g_{0,\lambda})_q]
        =
        O \left(
        \frac{\mcaN_2^{(r)}(\lambda)}{m^2}
        \right)
        =
        O \left(
        \frac{\lambda^{-1/\beta}(\ln \frac{1}{\lambda})^{r-1}}{m^2}
        \right)
        =
        o \left(
        \frac{\lambda^{-1/\beta}}{m}
        \right),
    \]
    by \Cref{lem:appendix-I0-kernel-bounds,lem:appendix-effective-dimension}.
    This proves the expansion, and the final \(O(\cdot)\) bound follows from \(\E[g_{0,\lambda}]=O(1)\).
\end{proof}

\begin{proposition}\label{prop:appendix-I0-fluctuation-phi}
    For every \(0<\delta<1\), with probability at least \(1-(r+1)\delta\),
    \begin{align*}
              & \left|
        \Tr\bigl(
        T_r^{1/2} T_{r,\lambda}^{-1}
        (\hat{G}_0-G_0)
        T_{r,\lambda}^{-1} T_r^{1/2}
        \bigr)
        \right|        \\
        \le\  &
        C\Biggl(
        \sqrt{
            \frac{\mcaN_4^{(r)}(\lambda)\ln(2/\delta)}{nm}
        }
        +
        \frac{\mcaN_2^{(r)}(\lambda)\ln(2/\delta)}{nm}
        +
        \frac{\mcaN_2^{(r)}(\lambda)}{m}
        \sqrt{
            \frac{\ln(2/\delta)}{nm}
        }
        \Biggr).
    \end{align*}
    Therefore,
    \[
        \begin{aligned}
             & \Tr\bigl(
            T_r^{1/2} T_{r,\lambda}^{-1}
            (\hat{G}_0-G_0)
            T_{r,\lambda}^{-1} T_r^{1/2}
            \bigr)       \\
             & \qquad=
            O_\bbP \left(
            \frac{\lambda^{-1/(2\beta)}(\ln \frac{1}{\lambda})^{(r-1)/2}}{m^{1/2} n^{1/2}}
            +
            \lambda^{-1/\beta}
            \left(\ln \frac{1}{\lambda}\right)^{r-1}
            \left(
            \frac{1}{mn}
            +
            \frac{1}{m^{3/2} n^{1/2}}
            \right)
            \right).
        \end{aligned}
    \]
\end{proposition}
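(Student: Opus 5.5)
The plan is to start from the overlap expansion \cref{eq:appendix-g0-trace-expansion}. Taking expectations in it and subtracting gives
\[
    \Tr\bigl(T_r^{1/2} T_{r,\lambda}^{-1}(\hat{G}_0-G_0)T_{r,\lambda}^{-1} T_r^{1/2}\bigr)
    =
    \sum_{q=0}^r c_{r,q,m}\Bigl(U_{m,2r-q}\bigl((g_{0,\lambda})_q\bigr)-\E[(g_{0,\lambda})_q]\Bigr).
\]
Each summand is a centered scalar \(U\)-statistic of order \(k=2r-q\) built from the i.i.d.\ array \((t_{ij})\), averaged first within curves and then over curves. This is exactly the structure covered by \Cref{lem:appendix-U-bernstein-scalar}. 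We apply that lemma once for each \(q\in\{0,\dots,r\}\) at level \(\delta\) and take a union bound, which gives the probability \(1-(r+1)\delta\). Since \(r\) is fixed, \(n\lfloor m/(2r-q)\rfloor\asymp nm\) (recall \(m\ge2r\)), and the constants depending on \(r\) are absorbed into \(C\).

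For \(q=0\) the kernel is \((g_{0,\lambda})_0\), the symmetrization of \(g_{0,\lambda}\) evaluated on \(2r\) distinct points, and \(c_{r,0,m}\le1\). Its variance is at most \(\E[g_{0,\lambda}^2]\le C_{\Sigma_r}^2\mcaN_4^{(r)}(\lambda)\), because symmetrization does not increase the second moment (Jensen). Its sup norm is at most \(2C_{\Sigma_r}M^r\mcaN_2^{(r)}(\lambda)\), both by \Cref{lem:appendix-I0-kernel-bounds}. Bernstein then produces the first two terms, \(\sqrt{\mcaN_4^{(r)}\ln(2/\delta)/(nm)}+\mcaN_2^{(r)}\ln(2/\delta)/(nm)\).

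For \(q\ge1\) we have \(c_{r,q,m}=O(m^{-q})\le O(m^{-1})\). Here we only use the sup-norm bound \(\|(g_{0,\lambda})_q\|_\infty\le\|g_{0,\lambda}\|_\infty\le C\mcaN_2^{(r)}(\lambda)\) and apply Hoeffding's version, \Cref{lem:appendix-U-hoeffding-scalar}. This gives a deviation of order \(\mcaN_2^{(r)}\sqrt{\ln(2/\delta)/(nm)}\), and multiplying by \(c_{r,q,m}\) yields the third term \(\frac{\mcaN_2^{(r)}}{m}\sqrt{\ln(2/\delta)/(nm)}\).

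The \(O_\bbP\) form follows by fixing \(\delta\) and substituting \(\mcaN_p^{(r)}(\lambda)\asymp\lambda^{-1/\beta}(\ln\frac1\lambda)^{r-1}\) from \Cref{lem:appendix-effective-dimension}, for \(p=2,4\).

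The one step needing care is the second-moment bound for \(q=0\). The sharp first term depends on bounding the variance by \(\mcaN_4^{(r)}\) rather than \((\mcaN_2^{(r)})^2\). This uses the fact that on \(2r\) distinct independent points the symmetrized kernel still has \(L^2(\rho^{\otimes2r})\) norm at most \(\|g_{0,\lambda}\|_{L^2}\), together with the orthonormal expansion in the proof of \Cref{lem:appendix-I0-kernel-bounds}. The \(q\ge1\) terms are deliberately treated crudely; they are only needed at the \(m^{-3/2}n^{-1/2}\) scale.
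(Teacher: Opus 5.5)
Your proposal is correct and follows essentially the same route as the paper. Like the paper, you subtract expectations in the overlap expansion. You then apply the scalar \(U\)-statistic Bernstein inequality to the \(q=0\) term, with variance proxy \(\E[g_{0,\lambda}^2]\le C_{\Sigma_r}^2\mcaN_4^{(r)}(\lambda)\) and centered sup-norm \(2\|g_{0,\lambda}\|_\infty\), and the Hoeffding version to the \(q\ge1\) terms, using \(c_{r,q,m}=O(m^{-q})\). A union bound over the \(r+1\) terms and the effective-dimension lemma finish the argument. Your explicit Jensen argument that symmetrization does not increase the second moment fills in a step the paper leaves implicit.
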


\begin{proof}
    Subtract the expectation in \Cref{eq:appendix-g0-trace-expansion}:
    \[
        \Tr\bigl(
        T_r^{1/2} T_{r,\lambda}^{-1}
        (\hat{G}_0-G_0)
        T_{r,\lambda}^{-1} T_r^{1/2}
        \bigr)
        =
        \sum_{q=0}^r
        c_{r,q,m}
        \bigl(
        U_{m,2r-q}((g_{0,\lambda})_q)
        -
        \E[(g_{0,\lambda})_q]
        \bigr).
    \]
    For \(q=0\), \Cref{lem:appendix-U-bernstein-scalar} gives
    \[
        \left|
        U_{m,2r}(g_{0,\lambda})
        -
        \E[g_{0,\lambda}]
        \right|
        \le
        \sqrt{
            \frac{2\E[g_{0,\lambda}^2]\ln(2/\delta)}{n\lfloor m/(2r)\rfloor}
        }
        +
        \frac{2\|g_{0,\lambda}\|_\infty\ln(2/\delta)}{n\lfloor m/(2r)\rfloor}.
    \]
    For each \(q\ge 1\), \Cref{lem:appendix-U-hoeffding-scalar} yields
    \[
        \left|
        U_{m,2r-q}((g_{0,\lambda})_q)
        -
        \E[(g_{0,\lambda})_q]
        \right|
        \le
        \|g_{0,\lambda}\|_\infty
        \sqrt{
            \frac{2\ln(2/\delta)}{n\lfloor m/(2r-q)\rfloor}
        }.
    \]
    Insert these bounds, use \(c_{r,q,m}=O(m^{-q})\) for \(q\ge 1\), and then substitute the estimates from \Cref{lem:appendix-I0-kernel-bounds}.
    Finally use \Cref{lem:appendix-effective-dimension} to rewrite the result in powers of \(\lambda\).
\end{proof}

\begin{proposition}\label{prop:appendix-I0-pop-resolvent}
    Assume \(\lambda\to0\) satisfies \(\left(\ln \frac{1}{\lambda}\right)^{r-1}=o(m)\).
    Then
    \[
        \Tr\bigl(
        T_{r,\lambda}^{-1/2} G_0 T_{r,\lambda}^{-1/2}
        \bigr)
        =
        \frac{(m)_{2r}}{(m)_r^2}\E[f_{0,\lambda}]
        +
        O \left(
        \frac{\lambda^{-1/\beta}}{m}
        \right).
    \]
    In particular,
    \[
        \Tr\bigl(
        T_{r,\lambda}^{-1/2} G_0 T_{r,\lambda}^{-1/2}
        \bigr)
        =
        O \left(
        1+\frac{\lambda^{-1/\beta}}{m}
        \right).
    \]
\end{proposition}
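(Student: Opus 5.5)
We follow the proof of \Cref{prop:appendix-I0-pop-phi} verbatim, with \(g_{0,\lambda}\) replaced by \(f_{0,\lambda}\).

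\textbf{Step 1: overlap expansion.} Taking expectations in \cref{eq:appendix-f0-trace-expansion}, which is just \Cref{lem:appendix-I0-overlap} applied with \(h=f_{0,\lambda}\), gives
\[
    \Tr\bigl(T_{r,\lambda}^{-1/2} G_0 T_{r,\lambda}^{-1/2}\bigr)
    =
    \sum_{q=0}^r c_{r,q,m}\,\E[(f_{0,\lambda})_q].
\]
The \(q=0\) term is exactly \(\frac{(m)_{2r}}{(m)_r^2}\E[f_{0,\lambda}]\), because \(c_{r,0,m}=(m)_{2r}/(m)_r^2\) and \((f_{0,\lambda})_0=f_{0,\lambda}\) after symmetrization (the expectation is unaffected). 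This produces the leading term of the statement, so what remains is to show that the terms with \(q\ge1\) are \(O(\lambda^{-1/\beta}/m)\).

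\textbf{Step 2: the \(q=1\) term.} Here \(c_{r,1,m}=r^2/m+O(m^{-2})\). \Cref{lem:appendix-I0-expectations} gives \(\E[(f_{0,\lambda})_1]=O(\lambda^{-1/\beta})\), so this term is \(O(\lambda^{-1/\beta}/m)\).

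\textbf{Step 3: the terms with \(q\ge2\).} We have \(c_{r,q,m}=O(m^{-q})\le O(m^{-2})\), and \(|\E[(f_{0,\lambda})_q]|\le\|f_{0,\lambda}\|_\infty\le C_{\Sigma_r}M^r\mcaN_1^{(r)}(\lambda)\) by \Cref{lem:appendix-I0-kernel-bounds}. \Cref{lem:appendix-effective-dimension} then bounds these terms by
\[
    O\bigl(\lambda^{-1/\beta}(\ln\tfrac1\lambda)^{r-1}/m^2\bigr),
\]
which is \(o(\lambda^{-1/\beta}/m)\) because \((\ln\frac1\lambda)^{r-1}=o(m)\).

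\textbf{Step 4: the ``in particular'' bound.} Since \((m)_{2r}/(m)_r^2\le1\) and \(\E[f_{0,\lambda}]=O(1)\) by \Cref{lem:appendix-I0-expectations}, the expansion above immediately gives \(O(1+\lambda^{-1/\beta}/m)\).

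The only step with real content is Step 2, the bound on the one-overlap expectation. That bound has already been established in \Cref{lem:appendix-I0-expectations}: the diagonal overlap pattern is handled by the second part of \Cref{lem:appendix-effective-dimension} with \(p=1\), and the off-diagonal pattern is \(O(1)\) by the \(S_4\) argument. So I expect no genuine obstacle beyond the careful bookkeeping of the constants \(c_{r,q,m}\).
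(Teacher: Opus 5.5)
Your proposal is correct and follows the paper's proof: the paper reruns the argument of \Cref{prop:appendix-I0-pop-phi} with \(f_{0,\lambda}\) in place of \(g_{0,\lambda}\), using \Cref{lem:appendix-I0-kernel-bounds,lem:appendix-I0-expectations}. You have simply written out the bookkeeping that the paper leaves implicit, namely the \(q=0\), \(q=1\), and \(q\ge2\) overlap terms and the bound \((m)_{2r}/(m)_r^2\le 1\).
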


\begin{proof}
    This is the same argument as in \Cref{prop:appendix-I0-pop-phi}, now using \Cref{eq:appendix-f0-trace-expansion} and the bounds for \(f_{0,\lambda}\) from \Cref{lem:appendix-I0-kernel-bounds,lem:appendix-I0-expectations}.
\end{proof}

\begin{proposition}\label{prop:appendix-I0-fluctuation-resolvent}
    For every \(0<\delta<1\), with probability at least \(1-(r+1)\delta\),
    \begin{align*}
              & \left|
        \Tr\bigl(
        T_{r,\lambda}^{-1/2}
        (\hat{G}_0-G_0)
        T_{r,\lambda}^{-1/2}
        \bigr)
        \right|        \\
        \le\  &
        C\Biggl(
        \sqrt{
            \frac{\mcaN_2^{(r)}(\lambda)\ln(2/\delta)}{nm}
        }
        +
        \frac{\mcaN_1^{(r)}(\lambda)\ln(2/\delta)}{nm}
        +
        \frac{\mcaN_1^{(r)}(\lambda)}{m}
        \sqrt{
            \frac{\ln(2/\delta)}{nm}
        }
        \Biggr).
    \end{align*}
    Therefore,
    \[
        \begin{aligned}
             & \Tr\bigl(
            T_{r,\lambda}^{-1/2}
            (\hat{G}_0-G_0)
            T_{r,\lambda}^{-1/2}
            \bigr)       \\
             & \qquad=
            O_\bbP \left(
            \frac{\lambda^{-1/(2\beta)}(\ln \frac{1}{\lambda})^{(r-1)/2}}{m^{1/2} n^{1/2}}
            +
            \lambda^{-1/\beta}
            \left(\ln \frac{1}{\lambda}\right)^{r-1}
            \left(
            \frac{1}{mn}
            +
            \frac{1}{m^{3/2} n^{1/2}}
            \right)
            \right).
        \end{aligned}
    \]
\end{proposition}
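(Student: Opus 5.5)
The plan is to copy the proof of \Cref{prop:appendix-I0-fluctuation-phi}, replacing the kernel \(g_{0,\lambda}\) by \(f_{0,\lambda}\) throughout. Subtracting the expectation of \cref{eq:appendix-f0-trace-expansion} gives the exact identity
\[
    \Tr\bigl(
    T_{r,\lambda}^{-1/2}(\hat{G}_0-G_0)T_{r,\lambda}^{-1/2}
    \bigr)
    =
    \sum_{q=0}^r
    c_{r,q,m}
    \bigl(
    U_{m,2r-q}((f_{0,\lambda})_q)-\E[(f_{0,\lambda})_q]
    \bigr).
\]
This reduces the operator trace to \(r+1\) scalar \(U\)-statistics of the within-curve-then-across-curves type. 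The failure probability \((r+1)\delta\) then comes from a union bound, with one event per overlap level \(q\).

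For the \(q=0\) term, where \(c_{r,0,m}\le 1\), I would apply \Cref{lem:appendix-U-bernstein-scalar} with kernel order \(2r\) and effective sample size \(n\lfloor m/(2r)\rfloor\asymp nm\), since \(r\) is fixed and \(m\ge 2r\). The variance proxy is \(\E[f_{0,\lambda}^2]\le C_{\Sigma_r}^2\mcaN_2^{(r)}(\lambda)\), and the sup bound is \(\|f_{0,\lambda}-\E f_{0,\lambda}\|_\infty\le 2C_{\Sigma_r}M^r\mcaN_1^{(r)}(\lambda)\), both from \Cref{lem:appendix-I0-kernel-bounds}. Together these give the first two terms of the bound.

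For each \(q\ge1\), I would apply \Cref{lem:appendix-U-hoeffding-scalar}. The symmetrized overlap kernel satisfies \(\|(f_{0,\lambda})_q\|_\infty\le\|f_{0,\lambda}\|_\infty\), so its range is at most \(2\|f_{0,\lambda}\|_\infty\). Combining this with \(c_{r,q,m}=O(m^{-q})\le O(m^{-1})\) gives a contribution of order \(\mcaN_1^{(r)}(\lambda)m^{-1}\sqrt{\ln(2/\delta)/(nm)}\), which is the third term.

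The \(O_\bbP\) form follows by fixing \(\delta\) and substituting \(\mcaN_p^{(r)}(\lambda)\asymp\lambda^{-1/\beta}(\ln\frac1\lambda)^{r-1}\) for \(p=1,2\) from \Cref{lem:appendix-effective-dimension}. The step needing the most care is checking that the symmetrization and overlap restriction do not enlarge the second moment or sup norm. This holds by Jensen's inequality and because the sup is taken pointwise. Apart from that, the argument is a direct specialization of the earlier proof.
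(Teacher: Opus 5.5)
Your proposal is correct and is essentially the paper's proof. The paper also repeats the argument of \Cref{prop:appendix-I0-fluctuation-phi} with \(f_{0,\lambda}\) in place of \(g_{0,\lambda}\): the scalar \(U\)-statistic Bernstein bound for \(q=0\), the Hoeffding bound for \(q\ge1\), and a union bound over the \(r+1\) overlap levels, with the kernel estimates taken from \Cref{lem:appendix-I0-kernel-bounds}.
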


\begin{proof}
    Repeat the proof of \Cref{prop:appendix-I0-fluctuation-phi} with \(f_{0,\lambda}\) in place of \(g_{0,\lambda}\), and use \Cref{lem:appendix-I0-kernel-bounds}.
\end{proof}
 \section{Trace Bounds for the First Noise Term}
\label{app:i1}

This appendix records the bounds used in \Cref{prop:I1-main}.
As in the preceding trace appendix, it is helpful to separate the proof into
three layers:
the overlap decomposition, the estimates for the two auxiliary kernels,
and finally the four trace estimates that feed back into the main text.
Throughout this appendix, assume $m\ge 2r-1$.

\subsection{Overlap with One Distinguished Point}

We reuse the notation \(U_{m,q}^{(i)}\) and \(U_{m,q}\) from \Cref{lem:appendix-I0-overlap}.
For a measurable kernel \(h:\mca{I}\times \mca{I}^{r-1} \times \mca{I}^{r-1} \to\R\), define the separate symmetrization
\[
    h^{\mr{sym}}(\xi;\tilde{\bm{t}};\tilde{\bm{t}}')
    \coloneqq
    \frac{1}{((r-1)!)^2}
    \sum_{\pi,\rho\in \mathfrak{S}_{r-1}}
    h\bigl(
    \xi;
    t_{\pi(1)},\dots,t_{\pi(r-1)};
    t_{\rho(1)}',\dots,t_{\rho(r-1)}'
    \bigr).
\]
For \(0\le q\le r-1\), define the \((2r-1-q)\)-variable kernel
\[
    h_q(u_0,\dots,u_{2r-2-q})
    \coloneqq
    h^{\mr{sym}}\bigl(
    u_0;
    (u_1,\dots,u_{r-1});
    (u_1,\dots,u_q,u_r,\dots,u_{2r-2-q})
    \bigr).
\]
We also abbreviate
\[
    d_{r,q,m}
    \coloneqq
    \binom{r-1}{q}^2 q!
    \frac{(m)_{2r-1-q}}{m(m-1)_{r-1}^2},
    \qquad
    0\le q\le r-1.
\]

\begin{lemma}\label{lem:appendix-I1-overlap}
    For every measurable \(h:\mca{I}\times \mca{I}^{r-1} \times \mca{I}^{r-1} \to\R\) and every \(1\le i\le n\),
    \[
        \frac{1}{m(m-1)_{r-1}^2}
        \sum_{l=1}^m
        \sum_{\mathbf{j},\mathbf{k}\in I_{m,l}^{r-1}}
        h(t_{il};t_{i\mathbf{j}};t_{i\mathbf{k}})
        =
        \sum_{q=0}^{r-1}
        d_{r,q,m}
        U_{m,2r-1-q}^{(i)}(h_q).
    \]
\end{lemma}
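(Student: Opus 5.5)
The identity is a pure counting statement, proved exactly like \Cref{lem:appendix-I0-overlap}: classify each term of the left-hand side by the overlap between \(J(\mathbf{j})\) and \(J(\mathbf{k})\), then turn each class into a symmetrized \(U\)-statistic.

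Fix \(i\). Each triple \((l,\mathbf{j},\mathbf{k})\) with \(\mathbf{j},\mathbf{k}\in I_{m,l}^{r-1}\) is assigned its overlap size \(q\coloneqq|J(\mathbf{j})\cap J(\mathbf{k})|\), which ranges over \(0\le q\le r-1\). Since \(l\notin J(\mathbf{j})\cup J(\mathbf{k})\), the index set \(\{l\}\cup J(\mathbf{j})\cup J(\mathbf{k})\) has exactly \(1+2(r-1)-q=2r-1-q\) distinct elements. For a fixed \(q\), the combinatorial pattern is specified by three choices:
\begin{itemize}
  \item which \(q\) positions of \(\mathbf{j}\) are shared, in \(\binom{r-1}{q}\) ways;
  \item which \(q\) positions of \(\mathbf{k}\) are shared, in \(\binom{r-1}{q}\) ways;
  \item a bijection matching the shared positions of \(\mathbf{j}\) to those of \(\mathbf{k}\), in \(q!\) ways.
\end{itemize}
Once the pattern is fixed, the triples are in bijection with ordered tuples \(\bm{u}=(u_0,\dots,u_{2r-2-q})\in I_m^{2r-1-q}\). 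The assignment is: \(u_0=l\); \(u_1,\dots,u_q\) are the shared indices; \(u_{q+1},\dots,u_{r-1}\) are the remaining entries of \(\mathbf{j}\); and \(u_r,\dots,u_{2r-2-q}\) are the remaining entries of \(\mathbf{k}\). Distinctness of the entries of \(\bm{u}\) is exactly the constraint \(\mathbf{j},\mathbf{k}\in I_{m,l}^{r-1}\) with overlap exactly \(q\). Hence each pattern contributes \((m)_{2r-1-q}\) triples.

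Under this bijection, a fixed pattern sends the summand to \(h(t_{iu_0};\,\pi\text{-permuted }(t_{iu_1},\dots,t_{iu_{r-1}});\,\rho\text{-permuted }(t_{iu_1},\dots,t_{iu_q},t_{iu_r},\dots,t_{iu_{2r-2-q}}))\) for permutations \(\pi,\rho\in\mathfrak{S}_{r-1}\) determined by the pattern. The distinguished first argument is never permuted, since \(l\) plays a fixed role. Summing over \(\bm{u}\in I_m^{2r-1-q}\), the summand for a pattern equals \((m)_{2r-1-q}\,U^{(i)}_{m,2r-1-q}\) applied to this permuted kernel.

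The remaining task is to show that the total over the \(\binom{r-1}{q}^2q!\) patterns equals \(\binom{r-1}{q}^2q!\,(m)_{2r-1-q}\,U^{(i)}_{m,2r-1-q}(h_q)\). The sum over \(\bm{u}\in I_m^{2r-1-q}\) is invariant under relabelling the coordinates \(u_1,\dots,u_{2r-2-q}\). Using this invariance:
\begin{itemize}
  \item permuting \(u_1,\dots,u_q\) jointly, and \(u_{q+1},\dots,u_{r-1}\) and \(u_r,\dots,u_{2r-2-q}\) separately, shows the pattern-sum is invariant under the product of the corresponding subgroups;
  \item averaging the permuted kernel over all \((\pi,\rho)\in\mathfrak{S}_{r-1}^2\) therefore reproduces \(h^{\mathrm{sym}}\) in the canonical arrangement, which is \(h_q\) by definition.
\end{itemize}
Equivalently, every pattern yields the same value of \(\sum_{\bm{u}}h^{\mathrm{sym}}(\cdots)\), and \(h\) may be replaced by \(h^{\mathrm{sym}}\) at the outset. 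The replacement is justified because the full left-hand sum is invariant under permuting the entries within \(\mathbf{j}\) and within \(\mathbf{k}\): both maps are bijections of \(I_{m,l}^{r-1}\).

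Dividing by \(m(m-1)_{r-1}^2\) then gives the coefficient \(d_{r,q,m}\), which proves the identity.

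The main obstacle is bookkeeping rather than estimation. It lies in justifying the symmetrization step: the reduction to \(h^{\mathrm{sym}}\) must be made at the start using invariance of the index sets, rather than per pattern. I would follow the same wording as \Cref{lem:appendix-I0-overlap}. As a sanity check, setting \(h\equiv1\) gives \(\sum_q\binom{r-1}{q}^2q!\,(m)_{2r-1-q}=m(m-1)_{r-1}^2\), which is a Vandermonde-type count I would verify for \(r=2\).
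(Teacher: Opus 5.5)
Your proposal is correct and follows the same route as the paper: classify the triples $(l,\mathbf{j},\mathbf{k})$ by overlap size $q$, count the $\binom{r-1}{q}^2 q!$ overlap patterns, note that each pattern contributes $(m)_{2r-1-q}$ ordered index tuples, and symmetrize over the two $(r-1)$-blocks to obtain $h_q$. Your observation that $h$ may be replaced by $h^{\mathrm{sym}}$ at the level of the full sum, because permuting coordinates is a bijection of $I_{m,l}^{r-1}$, rigorously fills in the step the paper compresses into ``averaging over the separate permutations of the two $(r-1)$-blocks produces $h_q$.''
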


\begin{proof}
    Fix \(l\).
    The two tuples \(\mathbf{j},\mathbf{k}\in I_{m,l}^{r-1}\) may overlap in exactly \(q\) indices, where \(0\le q\le r-1\).
    For fixed \(q\), there are \(\binom{r-1}{q}^2 q!\) ways to choose the overlapping positions and match them.
    Once the overlap pattern is fixed, the total number of distinct sample indices is \(2r-1-q\), so the remaining choices are counted by \((m)_{2r-1-q}\).
    Averaging over the separate permutations of the two \((r-1)\)-blocks produces \(h_q\), which gives the stated formula after normalization.
\end{proof}

\subsection{Two Auxiliary Kernels}

Recall that
\[
    S_r(\xi;\tilde{\bm{t}})
    =
    \sum_{a=1}^r
    K(t_1,\dots,t_{a-1},\xi,t_a,\dots,t_{r-1})
    \in
    \caH^{\otimes r}.
\]
For \(\xi\in\mca{I}\) and
\(\tilde{\bm{t}},\tilde{\bm{t}}'\in\mca{I}^{r-1}\), define
\begin{align}
    g_{1,\lambda}(\xi;\tilde{\bm{t}};\tilde{\bm{t}}')
     & \coloneqq
    \mu_{2r-2}(\tilde{\bm{t}};\tilde{\bm{t}}')
    \ang{
        T_{r,\lambda}^{-1} S_r(\xi;\tilde{\bm{t}}),
        T_{r,\lambda}^{-1} S_r(\xi;\tilde{\bm{t}}')
    }_{L^2},
    \label{eq:I1-gpop} \\
    f_{1,\lambda}(\xi;\tilde{\bm{t}};\tilde{\bm{t}}')
     & \coloneqq
    \mu_{2r-2}(\tilde{\bm{t}};\tilde{\bm{t}}')
    \ang{
        T_{r,\lambda}^{-1/2} S_r(\xi;\tilde{\bm{t}}),
        T_{r,\lambda}^{-1/2} S_r(\xi;\tilde{\bm{t}}')
    }_{\caH^{\otimes r}}.
    \label{eq:I1-fpop}
\end{align}

\begin{lemma}\label{lem:appendix-I1-kernel-bounds}
    Let \(C_{\mu_{2r-2}}\) be the constant supplied by
    Assumption~\ref{ass:source}\textup{(iv)}, so that
    \(\abs{\mu_{2r-2}}\le C_{\mu_{2r-2}}\).
    Then
    \[
        \|g_{1,\lambda}\|_\infty
        \le
        r^2 C_{\mu_{2r-2}} M^r \mcaN_2^{(r)}(\lambda),
    \]
    and
    \[
        \|f_{1,\lambda}\|_\infty
        \le
        r^2 C_{\mu_{2r-2}} M^r \mcaN_1^{(r)}(\lambda).
    \]
\end{lemma}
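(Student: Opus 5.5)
The argument copies the proof of \Cref{lem:appendix-I0-kernel-bounds}: first bound \(\mu_{2r-2}\) uniformly, then bound the inner product by Cauchy--Schwarz, and control each insertion sum \(S_r\) with the triangle inequality together with \Cref{lem:appendix-phi-T}.

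Fix \(\xi\in\mca{I}\) and \(\tilde{\bm{t}},\tilde{\bm{t}}'\in\mca{I}^{r-1}\). By Assumption~\ref{ass:source}\textup{(iv)}, \(\abs{\mu_{2r-2}(\tilde{\bm{t}};\tilde{\bm{t}}')}\le C_{\mu_{2r-2}}\). Cauchy--Schwarz in \(L^2(\mca{I}^r,\rho^{\otimes r})\) then gives
\[
  \abs{g_{1,\lambda}(\xi;\tilde{\bm{t}};\tilde{\bm{t}}')}
  \le
  C_{\mu_{2r-2}}
  \norm{T_{r,\lambda}^{-1}S_r(\xi;\tilde{\bm{t}})}_{L^2}
  \norm{T_{r,\lambda}^{-1}S_r(\xi;\tilde{\bm{t}}')}_{L^2}.
\]
Since \(S_r(\xi;\tilde{\bm{t}})\) is a sum of \(r\) feature vectors \(K(\bm{u}^{(a)})\), where \(\bm{u}^{(a)}\) is the point of \(\mca{I}^r\) obtained by inserting \(\xi\) in position \(a\), the triangle inequality yields
\[
  \norm{T_{r,\lambda}^{-1}S_r(\xi;\tilde{\bm{t}})}_{L^2}
  \le
  \sum_{a=1}^r \norm{T_{r,\lambda}^{-1}K(\bm{u}^{(a)})}_{L^2}
  \le
  r\sup_{\bm{t}\in\mca{I}^r}\norm{T_{r,\lambda}^{-1}K(\bm{t})}_{L^2}
  \le
  r\bigl(M^r\mcaN_2^{(r)}(\lambda)\bigr)^{1/2},
\]
where the last step is the first bound of \Cref{lem:appendix-phi-T}. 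Multiplying the two factors gives \(r^2 C_{\mu_{2r-2}}M^r\mcaN_2^{(r)}(\lambda)\), which is the claimed bound on \(\|g_{1,\lambda}\|_\infty\).

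The bound on \(f_{1,\lambda}\) follows in the same way, with the \(\caH^{\otimes r}\) inner product in place of the \(L^2\) one and the second inequality of \Cref{lem:appendix-phi-T}, namely \(\sup_{\bm{t}}\|T_{r,\lambda}^{-1/2}K(\bm{t})\|_{\caH^{\otimes r}}^2\le M^r\mcaN_1^{(r)}(\lambda)\), in place of the first. This gives \(r^2C_{\mu_{2r-2}}M^r\mcaN_1^{(r)}(\lambda)\).

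Every step is routine. The only point to check is that the inserted tuples \(\bm{u}^{(a)}\) lie in \(\mca{I}^r\), so the suprema in \Cref{lem:appendix-phi-T}, which run over all of \(\mca{I}^r\), cover them regardless of where \(\xi\) is inserted. The crude triangle inequality, rather than an exact expansion of \(\|S_r\|^2\), is what produces the factor \(r^2\).
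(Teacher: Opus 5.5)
Your proposal is correct and follows the paper's proof essentially verbatim: bound \(\mu_{2r-2}\) by \(C_{\mu_{2r-2}}\), apply Cauchy--Schwarz, then split each insertion sum \(S_r\) with the triangle inequality into \(r\) feature vectors and bound each one by the corresponding supremum in \Cref{lem:appendix-phi-T}. The \(f_{1,\lambda}\) case is handled the same way in the paper, using the second bound of that lemma.
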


\begin{proof}
    For every \(\xi\in \mca I\) and \(\tilde{\bm{t}}\in \mca I^{r-1}\),
    \begin{align*}
        \|T_{r,\lambda}^{-1} S_r(\xi;\tilde{\bm{t}})\|_{L^2}
         & \le
        \sum_{a=1}^r
        \left\|
        T_{r,\lambda}^{-1}
        K(t_1,\dots,t_{a-1},\xi,t_a,\dots,t_{r-1})
        \right\|_{L^2} \\
         & \le
        r\sup_{\mathbf{u}\in \mca I^r}
        \|T_{r,\lambda}^{-1} K(\mathbf{u})\|_{L^2}
        \le
        r\sqrt{M^r \mcaN_2^{(r)}(\lambda)},
    \end{align*}
    by \Cref{lem:appendix-phi-T}.
    Therefore
    \begin{align*}
        |g_{1,\lambda}(\xi;\tilde{\bm{t}};\tilde{\bm{t}}')|
         & \le
        C_{\mu_{2r-2}}
        \|T_{r,\lambda}^{-1} S_r(\xi;\tilde{\bm{t}})\|_{L^2}
        \|T_{r,\lambda}^{-1} S_r(\xi;\tilde{\bm{t}}')\|_{L^2} \\
         & \le
        r^2 C_{\mu_{2r-2}} M^r \mcaN_2^{(r)}(\lambda).
    \end{align*}
    The proof for \(f_{1,\lambda}\) is identical, using
    \[
        \|T_{r,\lambda}^{-1/2} S_r(\xi;\tilde{\bm{t}})\|_{\caH^{\otimes r}}
        \le
        r\sup_{\mathbf{u}\in \mca I^r}
        \|T_{r,\lambda}^{-1/2} K(\mathbf{u})\|_{\caH^{\otimes r}}
        \le
        r\sqrt{M^r \mcaN_1^{(r)}(\lambda)}
    \]
    from \Cref{lem:appendix-phi-T}. This gives the second bound.
\end{proof}

\begin{lemma}\label{lem:appendix-I1-expectations}
    For \(1\le a,b\le r\), define
    \begin{align*}
        V_{\mathrm{noise},\lambda}^{(a,b)}
         & \coloneqq
        \E\Bigl[
            \mu_{2r-2}(\tilde{\bm{t}};\tilde{\bm{t}}')
            \left\langle
            \begin{aligned}
                 &
                T_{r,\lambda}^{-1}
                K(t_1,\dots,t_{a-1},\xi,t_a,\dots,t_{r-1}),
                \\
                 &
                T_{r,\lambda}^{-1}
                K(t_1',\dots,t_{b-1}',\xi,t_b',\dots,t_{r-1}')
            \end{aligned}
            \right\rangle_{L^2}
            \Bigr],
    \end{align*}
    where the expectation is taken over independent
    \(\xi,t_1,\dots,t_{r-1},t_1',\dots,t_{r-1}'\).
    By symmetry, \(V_{\mathrm{noise},\lambda}^{(a,a)}\) does not depend on \(a\).
    By definition
    \begin{equation}\label{eq:appendix-I1-diagonal-main}
        V_{\mathrm{noise}}(\lambda;\mu_{2r-2})
        =
        \sum_{a=1}^r V_{\mathrm{noise},\lambda}^{(a,a)}
        =
        r V_{\mathrm{noise},\lambda}^{(1,1)}.
    \end{equation}
    We have
    \[
        \E[g_{1,\lambda}]
        =
        V_{\mathrm{noise}}(\lambda;\mu_{2r-2})
        +
        O(1),
        \qquad
        V_{\mathrm{noise}}(\lambda;\mu_{2r-2})
        =
        O(\lambda^{-1/\beta}),
        \qquad
        \E[f_{1,\lambda}]
        =
        O(\lambda^{-1/\beta}).
    \]
    If \(\mu_{2r-2} \neq0\), then
    \[
        V_{\mathrm{noise}}(\lambda;\mu_{2r-2})
        \asymp
        \lambda^{-1/\beta},
        \qquad
        \E[g_{1,\lambda}]
        \asymp
        \lambda^{-1/\beta}.
    \]
\end{lemma}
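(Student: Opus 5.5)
The plan follows the proof of \Cref{lem:appendix-I0-expectations}, with \(\mu_{2r-2}\) in place of \(\Sigma_r\).

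\emph{Step 1 (expansion).} Expand the insertion sums in \(g_{1,\lambda}\) by bilinearity:
\[
  \E[g_{1,\lambda}]=\sum_{a,b=1}^r V_{\mathrm{noise},\lambda}^{(a,b)}.
\]
The diagonal pairs \(a=b\) give \(V_{\mathrm{noise}}\) by \cref{eq:appendix-I1-diagonal-main}. The \(r(r-1)\) off-diagonal pairs \(a\ne b\) are the analogue of \(B_\lambda\) there.

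Expand \(\mu_{2r-2}\in([H]^{s_2})^{\otimes(2r-2)}\) with coefficients \(a_{\mathbf{j};\mathbf{k}}\). For \(a\ne b\), write the kernel inner product as \(\sum_{\mathbf{i}}(\Lambda_{\mathbf{i}}/(\Lambda_{\mathbf{i}}+\lambda))^2\prod_\ell e_{i_\ell}(\cdot)e_{i_\ell}(\cdot)\), and integrate out the nonshared variables by orthonormality. The \(\xi\)-integral is then a product of four eigenfunctions. The same Cauchy--Schwarz argument applies:
\begin{itemize}
\item the coefficient factor is bounded by \(\|\mu_{2r-2}\|\);
\item the remaining factor is controlled by \(S_4\le M_{s_2}^8\), via \(L_{s_2}(x,y)^4\) and \Cref{lem:appendix-supnorm};
\item the untouched indices contribute convergent sums \(\sum\lambda_j^{2s_2}\).
\end{itemize}
This uses only \(\Lambda_{\mathbf{i}}/(\Lambda_{\mathbf{i}}+\lambda)\le1\) and gives \(V^{(a,b)}_{\mathrm{noise},\lambda}=O(1)\), hence \(\E[g_{1,\lambda}]=V_{\mathrm{noise}}+O(1)\).

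\emph{Step 2 (upper bounds).} For \(V^{(1,1)}_{\mathrm{noise},\lambda}\), integrate over \(\tilde{\bm t},\tilde{\bm t}'\):
\[
  V^{(1,1)}_{\mathrm{noise},\lambda}=\sum_{\mathbf{i}}a_{\tilde{\mathbf{i}};\tilde{\mathbf{i}}}\prod_{\ell\ge2}\lambda_{i_\ell}^{s_2}\Bigl(\frac{\Lambda_{\mathbf{i}}}{\Lambda_{\mathbf{i}}+\lambda}\Bigr)^2,
  \qquad \tilde{\mathbf{i}}=(i_2,\dots,i_r),
\]
where \(\int e_{i_1}(\xi)^2=1\). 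This is simpler than \(A_\lambda\), since \(\xi\) enters only through the kernel and not through \(\mu_{2r-2}\).

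Since \(|a_{\tilde{\mathbf{i}};\tilde{\mathbf{i}}}|\le\|\mu_{2r-2}\|\), the second part of \Cref{lem:appendix-effective-dimension} (with \(p=2\), \(s=s_2>1/\beta\)) gives \(O(\lambda^{-1/\beta})\). The same computation with \(p=1\), together with the \(O(1)\) off-diagonal argument, gives \(\E[f_{1,\lambda}]=O(\lambda^{-1/\beta})\).

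\emph{Step 3 (lower bound).} This is the main obstacle, but it is easier than \Cref{lem:appendix-I0-vpoint-lower}. Positive semidefiniteness of \(\mu_{2r-2}\) is the key: it is a second-moment kernel \(\E[Z(\tilde{\bm t})Z(\tilde{\bm t}')]\). Hence \(a_{\tilde{\mathbf{i}};\tilde{\mathbf{i}}}\ge0\) for every \(\tilde{\mathbf{i}}\), and all summands above are nonnegative.

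Since \(\mu_{2r-2}\ne0\), some diagonal entry \(a_{\tilde{\mathbf{i}}^*;\tilde{\mathbf{i}}^*}>0\). Keep only that \(\tilde{\mathbf{i}}^*\) and sum over \(i_1\). This gives a constant multiple of the \(r=1\) effective dimension \(\mcaN_2^{(1)}(\lambda/\prod_{\ell\ge2}\lambda_{i^*_\ell})\asymp\lambda^{-1/\beta}\).

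When \(r=1\), \(\mu_0=1\) and the statement is immediate. No lower regularity assumption is needed here, because \(\xi\) integrates out exactly.

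Combining these, \(V_{\mathrm{noise}}\asymp\lambda^{-1/\beta}\). Since the \(O(1)\) off-diagonal terms are negligible as \(\lambda\to0\), also \(\E[g_{1,\lambda}]\asymp\lambda^{-1/\beta}\).
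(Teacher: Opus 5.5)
Your Steps 2 and 3 are exactly the paper's argument: the diagonal formula for \(V^{(1,1)}_{\mathrm{noise},\lambda}\), the upper bound from the second part of \Cref{lem:appendix-effective-dimension}, and the lower bound from a positive diagonal coefficient of the positive semidefinite array together with the \(r=1\) effective dimension. The error is in Step 1.

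For \(a\ne b\) you claim that, after integrating out the nonshared variables, the \(\xi\)-integral is a product of four eigenfunctions, and you then import the \(S_4\le M_{s_2}^8\) bound from \(B_\lambda\). That structure does not arise here. As you note yourself in Step 2, \(\mu_{2r-2}(\tilde{\bm t};\tilde{\bm t}')\) does not depend on \(\xi\). So \(\xi\) enters only through the factor \(e_{i_a}(\xi)e_{i_b}(\xi)\) coming from the two kernel features, and \(\int e_{i_a}e_{i_b}\,d\rho=\mathbf 1_{\{i_a=i_b\}}\). In \(B_\lambda\) the two extra eigenfunctions of \(\xi\) came from the \(\xi\)-dependence of \(\Sigma_r\), via the coefficient indices \(j_1,k_2\). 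No such indices exist for \(\mu_{2r-2}\), so the splitting you describe has nothing to act on, and \(L_{s_2}^4\) never appears.

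The conclusion \(V^{(a,b)}_{\mathrm{noise},\lambda}=O(1)\) is still true, and the correct computation is simpler. Integrating \(\tilde{\bm t},\tilde{\bm t}'\) against the expansion of \(\mu_{2r-2}\) gives
\[
V^{(a,b)}_{\mathrm{noise},\lambda}=\sum_{\mathbf i:\,i_a=i_b} a_{\mathbf i^{\setminus a};\mathbf i^{\setminus b}}\,\lambda_{i_a}^{s_2}\prod_{\ell\ne a,b}\lambda_{i_\ell}^{s_2}\Bigl(\frac{\Lambda_{\mathbf i}}{\Lambda_{\mathbf i}+\lambda}\Bigr)^2 .
\]
The entrywise bound \(|a_{\mathbf j;\mathbf k}|\le\|\mu_{2r-2}\|_{([H]^{s_2})^{\otimes(2r-2)}}\) and the fact that the squared ratio is at most \(1\) then give \(|V^{(a,b)}_{\mathrm{noise},\lambda}|\le\|\mu_{2r-2}\|_{([H]^{s_2})^{\otimes(2r-2)}}\bigl(\sum_j\lambda_j^{s_2}\bigr)^{r-1}<\infty\). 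This is the paper's argument.

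A Cauchy--Schwarz bound also works on this correct formula, because \(\mathbf i\mapsto(\mathbf i^{\setminus a},\mathbf i^{\setminus b})\) is injective on \(\{i_a=i_b\}\). But the \(S_4\) step must be dropped. With Step 1 replaced this way, the rest of your proposal goes through unchanged, including \(\E[f_{1,\lambda}]=O(\lambda^{-1/\beta})\) and \(\E[g_{1,\lambda}]\asymp\lambda^{-1/\beta}\).
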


\begin{proof}
    Expand \(\mu_{2r-2}\) in the tensor basis of
    \(([H]^{s_2})^{\otimes(2r-2)}\):
    \[
        \mu_{2r-2}(\tilde{\bm{t}};\tilde{\bm{t}}')
        =
        \sum_{\mathbf{j},\mathbf{k}\in \bbN^{r-1}}
        a_{\mathbf{j};\mathbf{k}}
        \prod_{\ell=1}^{r-1} \lambda_{j_\ell}^{s_2/2} e_{j_\ell}(t_\ell)
        \prod_{\ell=1}^{r-1} \lambda_{k_\ell}^{s_2/2} e_{k_\ell}(t_\ell').
    \]
    Since \(\mu_{2r-2} \in ([H]^{s_2})^{\otimes(2r-2)}\), the coefficient array satisfies
    \[
        \sum_{\mathbf{j},\mathbf{k}\in \bbN^{r-1}} a_{\mathbf{j};\mathbf{k}}^2 <\infty,
    \]
    hence every coefficient is bounded by
    \(\|\mu_{2r-2}\|_{([H]^{s_2})^{\otimes(2r-2)}}\).
    Moreover, because \(\mu_{2r-2}\) is a moment kernel, the matrix
    \((a_{\mathbf{j};\mathbf{k}})_{\mathbf{j},\mathbf{k}\in\bbN^{r-1}}\)
    is positive semidefinite.

    For \(\mathbf{i}=(i_1,\dots,i_r)\in\bbN^r\) and \(1\le a\le r\), write
    \[
        \mathbf{i}^{\setminus a}
        \coloneqq
        (i_1,\dots,i_{a-1},i_{a+1},\dots,i_r)\in \bbN^{r-1}.
    \]
    Expanding the insertion sum in \(g_{1,\lambda}\), then using orthonormality in
    \(\xi,\tilde{\bm{t}},\tilde{\bm{t}}'\), gives
    \begin{align}
        \E[g_{1,\lambda}]
        =
        \sum_{a,b=1}^r
        \sum_{\mathbf{i}\in\bbN^r}
        \mathbf{1}_{\{i_a=i_b\}}
        a_{\mathbf{i}^{\setminus a};\mathbf{i}^{\setminus b}}
        \Bigl(
        \prod_{\ell\neq a} \lambda_{i_\ell}^{s_2/2}
        \Bigr)
        \Bigl(
        \prod_{\ell\neq b} \lambda_{i_\ell}^{s_2/2}
        \Bigr)
        \bigl(
        \frac{\Lambda_{\mathbf{i}}}{\Lambda_{\mathbf{i}}+\lambda}
        \bigr)^2.
        \label{eq:appendix-I1-g-expectation}
    \end{align}
    The key simplification, compared with \(\E[(g_{0,\lambda})_1]\), is that the shared variable
    \(\xi\) only contributes the product \(e_{i_a}(\xi)e_{i_b}(\xi)\).
    After integration in \(\xi\), this turns into the indicator of the event \(i_a=i_b\).
    So no fourth power of a basis function appears here.

    The diagonal part \(a=b\) in \Cref{eq:appendix-I1-g-expectation} is precisely
    \(V_{\mathrm{noise}}(\lambda;\mu_{2r-2})\).
    Indeed, expanding \(V_{\mathrm{noise},\lambda}^{(a,a)}\) in the same way gives
    \[
        V_{\mathrm{noise}}(\lambda;\mu_{2r-2})
        =
        \sum_{a=1}^r
        \sum_{\mathbf{i}\in\bbN^r}
        a_{\mathbf{i}^{\setminus a};\mathbf{i}^{\setminus a}}
        \Bigl(
        \prod_{\ell\neq a} \lambda_{i_\ell}^{s_2}
        \Bigr)
        \bigl(
        \frac{\Lambda_{\mathbf{i}}}{\Lambda_{\mathbf{i}}+\lambda}
        \bigr)^2.
    \]
    Using \(|a_{\mathbf{j};\mathbf{j}}|\le \|\mu_{2r-2}\|_{([H]^{s_2})^{\otimes(2r-2)}}\),
    \begin{align*}
        V_{\mathrm{noise}}(\lambda;\mu_{2r-2})
        \le\  &
        \|\mu_{2r-2}\|_{([H]^{s_2})^{\otimes(2r-2)}}
        \sum_{a=1}^r
        \sum_{\mathbf{i}\in\bbN^r}
        \Bigl(
        \prod_{\ell\neq a} \lambda_{i_\ell}^{s_2}
        \Bigr)
        \bigl(
        \frac{\Lambda_{\mathbf{i}}}{\Lambda_{\mathbf{i}}+\lambda}
        \bigr)^2 \\
        =\    &
        O(\lambda^{-1/\beta}),
    \end{align*}
    by the second part of \Cref{lem:appendix-effective-dimension} with \(p=2\).

    Next consider the off-diagonal contribution \(a\neq b\).
    Parameterizing the equality constraint by the common value \(q \coloneqq i_a=i_b\)
    and the remaining \(r-2\) free indices \(u_1,\ldots,u_{r-2}\), we obtain
    \begin{align*}
              & \sum_{\substack{a,b=1 \\a\neq b}}^r
        \sum_{\mathbf{i}\in\bbN^r}
        \mathbf{1}_{\{i_a=i_b\}}
        \left|
        a_{\mathbf{i}^{\setminus a};\mathbf{i}^{\setminus b}}
        \right|
        \Bigl(
        \prod_{\ell\neq a} \lambda_{i_\ell}^{s_2/2}
        \Bigr)
        \Bigl(
        \prod_{\ell\neq b} \lambda_{i_\ell}^{s_2/2}
        \Bigr)
        \bigl(
        \frac{\Lambda_{\mathbf{i}}}{\Lambda_{\mathbf{i}}+\lambda}
        \bigr)^2                      \\
        \le\  &
        \|\mu_{2r-2}\|_{([H]^{s_2})^{\otimes(2r-2)}}
        \sum_{\substack{a,b=1         \\a\neq b}}^r
        \sum_{\mathbf{i}\in\bbN^r}
        \mathbf{1}_{\{i_a=i_b\}}
        \Bigl(
        \prod_{\ell\neq a} \lambda_{i_\ell}^{s_2/2}
        \Bigr)
        \Bigl(
        \prod_{\ell\neq b} \lambda_{i_\ell}^{s_2/2}
        \Bigr)                        \\
        =\    &
        \|\mu_{2r-2}\|_{([H]^{s_2})^{\otimes(2r-2)}}
        \sum_{\substack{a,b=1         \\a\neq b}}^r
        \sum_{q=1}^{\infty}
        \sum_{u_1,\dots,u_{r-2}=1}^{\infty}
        \lambda_q^{s_2}
        \prod_{\ell=1}^{r-2} \lambda_{u_\ell}^{s_2}
        <\infty,
    \end{align*}
    because \(s_2>1/\beta\) implies \(\sum_j \lambda_j^{s_2}<\infty\).
    Thus the off-diagonal contribution is \(O(1)\).
    Combining the diagonal and off-diagonal parts proves
    \[
        \E[g_{1,\lambda}]
        =
        V_{\mathrm{noise}}(\lambda;\mu_{2r-2})
        +
        O(1).
    \]

    The same decomposition works for \(f_{1,\lambda}\).
    One only replaces
    \(\bigl(\frac{\Lambda_{\mathbf{i}}}{\Lambda_{\mathbf{i}}+\lambda}\bigr)^2\)
    by
    \(\frac{\Lambda_{\mathbf{i}}}{\Lambda_{\mathbf{i}}+\lambda}\).
    The diagonal terms are then \(O(\lambda^{-1/\beta})\) by the second part of
    \Cref{lem:appendix-effective-dimension} with \(p=1\), while the off-diagonal terms are still \(O(1)\).
    Hence
    \[
        \E[f_{1,\lambda}]
        =
        O(\lambda^{-1/\beta}).
    \]

    Finally, assume \(\mu_{2r-2} \neq0\).
    Since the coefficient matrix is positive semidefinite and nonzero, there exists
    \(\mathbf{j}^\ast=(j_1^\ast,\dots,j_{r-1}^\ast)\in\bbN^{r-1}\)
    such that \(a_{\mathbf{j}^\ast;\mathbf{j}^\ast}>0\).
    Every diagonal coefficient \(a_{\mathbf{j};\mathbf{j}}\) is nonnegative.
    Therefore, for a lower bound it is enough to keep only the single diagonal piece
    \(a=b=1\) with \(i_2=j_1^\ast,\dots,i_r=j_{r-1}^\ast\):
    \[
        V_{\mathrm{noise}}(\lambda;\mu_{2r-2})
        \ge
        a_{\mathbf{j}^\ast;\mathbf{j}^\ast}
        \prod_{\ell=1}^{r-1} \lambda_{j_\ell^\ast}^{s_2}
        \sum_{q=1}^{\infty}
        \left(
        \frac{\prod_{\ell=1}^{r-1} \lambda_{j_\ell^\ast} \lambda_q}{\prod_{\ell=1}^{r-1} \lambda_{j_\ell^\ast} \lambda_q+\lambda}
        \right)^2.
    \]
    By the \(r=1\) case of \Cref{lem:appendix-effective-dimension}, applied with regularization parameter \(\lambda/\prod_{\ell=1}^{r-1} \lambda_{j_\ell^\ast}\),
    \[
        \sum_{q=1}^{\infty}
        \left(
        \frac{\prod_{\ell=1}^{r-1} \lambda_{j_\ell^\ast} \lambda_q}{\prod_{\ell=1}^{r-1} \lambda_{j_\ell^\ast} \lambda_q+\lambda}
        \right)^2
        \asymp
        \lambda^{-1/\beta}.
    \]
    Hence
    \[
        V_{\mathrm{noise}}(\lambda;\mu_{2r-2})
        \gtrsim
        \lambda^{-1/\beta}
    \]
    for all sufficiently small \(\lambda\).
    Together with the already proved upper bound, this gives
    \[
        V_{\mathrm{noise}}(\lambda;\mu_{2r-2})
        \asymp
        \lambda^{-1/\beta}.
    \]
    Since \(\E[g_{1,\lambda}]=V_{\mathrm{noise}}(\lambda;\mu_{2r-2})+O(1)\), we also have
    \(\E[g_{1,\lambda}]\asymp \lambda^{-1/\beta}\).
\end{proof}

\subsection{The Four Trace Estimates}

By \Cref{lem:appendix-I1-overlap}, applying the overlap decomposition to \(g_{1,\lambda}\) and \(f_{1,\lambda}\) gives
\begin{align}
    \Tr\bigl(
    T_r^{1/2} T_{r,\lambda}^{-1} \hat{G}_1 T_{r,\lambda}^{-1} T_r^{1/2}
    \bigr)
     & =
    \sum_{q=0}^{r-1}
    d_{r,q,m}
    U_{m,2r-1-q} \bigl((g_{1,\lambda})_q\bigr),
    \label{eq:appendix-g1-trace-expansion} \\
    \Tr\bigl(
    T_{r,\lambda}^{-1/2} \hat{G}_1 T_{r,\lambda}^{-1/2}
    \bigr)
     & =
    \sum_{q=0}^{r-1}
    d_{r,q,m}
    U_{m,2r-1-q} \bigl((f_{1,\lambda})_q\bigr).
    \label{eq:appendix-f1-trace-expansion}
\end{align}

\begin{proposition}\label{prop:appendix-I1-pop-phi}
    Assume \(\lambda\to0\) satisfies \(\left(\ln \frac{1}{\lambda}\right)^{r-1}=o(m)\).
    Then
    \[
        \Tr\bigl(
        T_r^{1/2} T_{r,\lambda}^{-1} G_1 T_{r,\lambda}^{-1} T_r^{1/2}
        \bigr)
        =
        \frac{(m)_{2r-1}}{m(m-1)_{r-1}^2}\E[g_{1,\lambda}]
        +
        O \left(
        \frac{\lambda^{-1/\beta}
            \left(\ln \frac{1}{\lambda}\right)^{r-1}}{m}
        \right).
    \]
    In particular, if \(\mu_{2r-2} \neq0\), then
    \[
        \Tr\bigl(
        T_r^{1/2} T_{r,\lambda}^{-1} G_1 T_{r,\lambda}^{-1} T_r^{1/2}
        \bigr)
        =
        \frac{(m)_{2r-1}}{m(m-1)_{r-1}^2}
        V_{\mathrm{noise}}(\lambda;\mu_{2r-2})
        +
        o(\lambda^{-1/\beta})
        \asymp
        \lambda^{-1/\beta}.
    \]
\end{proposition}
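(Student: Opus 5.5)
The plan mirrors the proof of \Cref{prop:appendix-I0-pop-phi}, but uses the one-distinguished-point decomposition. First take expectations in \cref{eq:appendix-g1-trace-expansion}. Since $G_1=\E[\hat{G}_1]$ and each $U$-statistic is unbiased for the expectation of its kernel, this gives
\[
  \Tr\bigl(T_r^{1/2} T_{r,\lambda}^{-1} G_1 T_{r,\lambda}^{-1} T_r^{1/2}\bigr)
  =
  \sum_{q=0}^{r-1} d_{r,q,m}\,\E[(g_{1,\lambda})_q].
\]
The $q=0$ term has $\E[(g_{1,\lambda})_0]=\E[g_{1,\lambda}]$, because separate symmetrization does not change the expectation over i.i.d.\ arguments. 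Its coefficient is $d_{r,0,m}=\frac{(m)_{2r-1}}{m(m-1)_{r-1}^2}$. This produces exactly the displayed leading term.

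Next I would bound the terms with $q\ge1$ crudely. I would use $|\E[(g_{1,\lambda})_q]|\le \|g_{1,\lambda}\|_\infty\le r^2 C_{\mu_{2r-2}}M^r\mcaN_2^{(r)}(\lambda)$ from \Cref{lem:appendix-I1-kernel-bounds}. For the counting factor, $d_{r,q,m}=\binom{r-1}{q}^2q!\,\frac{(m)_{2r-1-q}}{m(m-1)_{r-1}^2}$. Here $(m)_{2r-1-q}=O(m^{2r-1-q})$ while the denominator is $\asymp m^{2r-1}$, so $d_{r,q,m}=O(m^{-q})\le O(m^{-1})$ for $q\ge1$, and $r$ is fixed. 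Summing and applying \Cref{lem:appendix-effective-dimension}, $\mcaN_2^{(r)}(\lambda)\asymp\lambda^{-1/\beta}(\ln\frac1\lambda)^{r-1}$, gives a remainder of $O(\lambda^{-1/\beta}(\ln\frac1\lambda)^{r-1}/m)$. This is the first claim. Unlike the $I_0$ case, I do not need to isolate a $q=1$ correction, because the leading term is already of order $\lambda^{-1/\beta}$.

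For the ``in particular'' part, I would substitute $\E[g_{1,\lambda}]=V_{\mathrm{noise}}(\lambda;\mu_{2r-2})+O(1)$ from \Cref{lem:appendix-I1-expectations}. Since $d_{r,0,m}=1+O(1/m)$ is bounded, the $O(1)$ error is $o(\lambda^{-1/\beta})$ because $\lambda\to0$. The hypothesis $(\ln\frac1\lambda)^{r-1}=o(m)$ makes the remainder $O(\lambda^{-1/\beta}(\ln\frac1\lambda)^{r-1}/m)=o(\lambda^{-1/\beta})$. Finally, $\mu_{2r-2}\neq0$ gives $V_{\mathrm{noise}}\asymp\lambda^{-1/\beta}$ by the same lemma, and $d_{r,0,m}\to1$, so the whole trace is $\asymp\lambda^{-1/\beta}$.

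The only step that needs care is the asymptotics of the coefficients $d_{r,q,m}$. In particular, I must check that $d_{r,0,m}=\frac{(m-1)_{2r-2}}{(m-1)_{r-1}^2}=1+O(1/m)$, which uses $m\ge 2r-1$. The rest is bookkeeping with results already established.
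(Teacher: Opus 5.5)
Your proposal is correct and follows the paper's proof essentially step for step. You take expectations in the overlap expansion \cref{eq:appendix-g1-trace-expansion}, keep the $q=0$ term with coefficient $d_{r,0,m}$, and bound the $q\ge1$ terms by $d_{r,q,m}=O(m^{-q})$ times $\|g_{1,\lambda}\|_\infty=O(\mcaN_2^{(r)}(\lambda))$; for the second claim you then invoke \Cref{lem:appendix-I1-expectations} together with $(\ln\frac{1}{\lambda})^{r-1}=o(m)$, and your explicit check that $d_{r,0,m}=(m-1)_{2r-2}/(m-1)_{r-1}^2\to1$ fills in a detail the paper leaves implicit.
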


\begin{proof}
    Taking expectations in \Cref{eq:appendix-g1-trace-expansion} gives
    \[
        \Tr\bigl(
        T_r^{1/2} T_{r,\lambda}^{-1} G_1 T_{r,\lambda}^{-1} T_r^{1/2}
        \bigr)
        =
        \sum_{q=0}^{r-1}
        d_{r,q,m}
        \E[(g_{1,\lambda})_q].
    \]
    The \(q=0\) term is exactly
    \[
        \frac{(m)_{2r-1}}{m(m-1)_{r-1}^2}\E[g_{1,\lambda}].
    \]
    For \(q\ge1\),
    \[
        |\E[(g_{1,\lambda})_q]|
        \le
        \|g_{1,\lambda}\|_\infty,
    \]
    so
    \[
        \sum_{q=1}^{r-1}
        d_{r,q,m}
        \E[(g_{1,\lambda})_q]
        =
        O \left(
        \frac{\|g_{1,\lambda}\|_\infty}{m}
        \right),
    \]
    because \(d_{r,q,m}=O(m^{-q})\) for \(q\ge1\).
    By \Cref{lem:appendix-I1-kernel-bounds,lem:appendix-effective-dimension},
    \[
        \|g_{1,\lambda}\|_\infty
        =
        O \left(
        \lambda^{-1/\beta}
        \left(\ln \frac{1}{\lambda}\right)^{r-1}
        \right),
    \]
    which proves the first expansion.
    If \(\mu_{2r-2} \neq0\), then \Cref{lem:appendix-I1-expectations} gives
    \[
        \E[g_{1,\lambda}]
        =
        V_{\mathrm{noise}}(\lambda;\mu_{2r-2})
        +
        O(1),
        \qquad
        V_{\mathrm{noise}}(\lambda;\mu_{2r-2})
        \asymp
        \lambda^{-1/\beta}.
    \]
    Since \((\ln \frac{1}{\lambda})^{r-1}=o(m)\), the overlap remainder is
    \(o(\lambda^{-1/\beta})\), and the additional \(O(1)\) term is also
    \(o(\lambda^{-1/\beta})\). Therefore
    \[
        \Tr\bigl(
        T_r^{1/2} T_{r,\lambda}^{-1} G_1 T_{r,\lambda}^{-1} T_r^{1/2}
        \bigr)
        =
        \frac{(m)_{2r-1}}{m(m-1)_{r-1}^2}
        V_{\mathrm{noise}}(\lambda;\mu_{2r-2})
        +
        o(\lambda^{-1/\beta})
        \asymp
        \lambda^{-1/\beta}.
    \]
\end{proof}

\begin{proposition}\label{prop:appendix-I1-fluctuation-phi}
    For every \(0<\delta<1\), with probability at least \(1-r\delta\),
    \[
        \left|
        \Tr\bigl(
        T_r^{1/2} T_{r,\lambda}^{-1}
        (\hat{G}_1-G_1)
        T_{r,\lambda}^{-1} T_r^{1/2}
        \bigr)
        \right|
        \le
        C
        \mcaN_2^{(r)}(\lambda)
        \sqrt{
            \frac{\ln(2/\delta)}{mn}
        }.
    \]
    Therefore,
    \[
        \Tr\bigl(
        T_r^{1/2} T_{r,\lambda}^{-1}
        (\hat{G}_1-G_1)
        T_{r,\lambda}^{-1} T_r^{1/2}
        \bigr)
        =
        O_\bbP \left(
        \frac{\lambda^{-1/\beta}
            \left(\ln \frac{1}{\lambda}\right)^{r-1}}{\sqrt{mn}}
        \right).
    \]
\end{proposition}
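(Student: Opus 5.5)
The plan mirrors \Cref{prop:appendix-I0-fluctuation-phi}. Subtract the expectation in \cref{eq:appendix-g1-trace-expansion} to obtain
\[
    \Tr\bigl(
    T_r^{1/2} T_{r,\lambda}^{-1}
    (\hat{G}_1-G_1)
    T_{r,\lambda}^{-1} T_r^{1/2}
    \bigr)
    =
    \sum_{q=0}^{r-1}
    d_{r,q,m}
    \bigl(
    U_{m,2r-1-q}((g_{1,\lambda})_q)
    -
    \E[(g_{1,\lambda})_q]
    \bigr).
\]
This gives \(r\) scalar \(U\)-statistic deviations. Each is controlled at level \(\delta\), and a union bound yields probability at least \(1-r\delta\).

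For every \(q\), I would apply the Hoeffding-type bound \Cref{lem:appendix-U-hoeffding-scalar} to the kernel \((g_{1,\lambda})_q\), which has order \(k=2r-1-q\le 2r-1\). Symmetrization preserves sup norms, so the range of \((g_{1,\lambda})_q\) is at most \(2\|g_{1,\lambda}\|_\infty\). \Cref{lem:appendix-I1-kernel-bounds} bounds this by \(2r^2C_{\mu_{2r-2}}M^r\mcaN_2^{(r)}(\lambda)\). Since \(m\ge 2r-1\), we have \(\lfloor m/k\rfloor\ge m/(2k)\gtrsim m\). Hence each deviation is at most \(C\mcaN_2^{(r)}(\lambda)\sqrt{\ln(2/\delta)/(nm)}\).

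The coefficients satisfy \(d_{r,0,m}=(m)_{2r-1}/(m(m-1)_{r-1}^2)\le 1\), and \(d_{r,q,m}=O(m^{-q})\le C\) for \(q\ge1\). Summing the \(r\) bounds with these coefficients gives the displayed inequality. No Bernstein refinement is needed here, unlike the \(q=0\) step in the noise-free case: the target rate \(\mcaN_2^{(r)}/\sqrt{mn}\) is already the crude sup-norm rate.

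For the \(O_\bbP\) statement, fix \(\delta\) small and substitute \(\mcaN_2^{(r)}(\lambda)\asymp\lambda^{-1/\beta}(\ln\frac1\lambda)^{r-1}\) from \Cref{lem:appendix-effective-dimension}. I expect no real obstacle. The only care needed is checking that the effective sample size in the \(U\)-statistic lemma is \(n\lfloor m/(2r-1-q)\rfloor\asymp nm\) uniformly in \(q\), and that the constant \(C\) depends only on \(r\), \(M\) and \(C_{\mu_{2r-2}}\).
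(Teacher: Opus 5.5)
Your proposal is correct and follows the paper's proof: subtract expectations in the overlap expansion, apply the Hoeffding-type scalar \(U\)-statistic bound to each kernel \((g_{1,\lambda})_q\) with range controlled by \(2\|g_{1,\lambda}\|_\infty\), take a union bound over the \(r\) values of \(q\), and finish with \(\sum_q d_{r,q,m}=O(1)\) and the effective-dimension estimate for \(\mcaN_2^{(r)}(\lambda)\). Your extra checks, namely \(d_{r,0,m}\le 1\) and \(\lfloor m/k\rfloor\gtrsim m\) uniformly in \(q\), are correct details that the paper leaves implicit.
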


\begin{proof}
    Subtract expectations in \Cref{eq:appendix-g1-trace-expansion}:
    \[
        \Tr\bigl(
        T_r^{1/2} T_{r,\lambda}^{-1}
        (\hat{G}_1-G_1)
        T_{r,\lambda}^{-1} T_r^{1/2}
        \bigr)
        =
        \sum_{q=0}^{r-1}
        d_{r,q,m}
        \bigl(
        U_{m,2r-1-q}((g_{1,\lambda})_q)
        -
        \E[(g_{1,\lambda})_q]
        \bigr).
    \]
    For each \(0\le q\le r-1\), \Cref{lem:appendix-U-hoeffding-scalar} gives, with probability at least \(1-\delta\),
    \[
        \left|
        U_{m,2r-1-q}((g_{1,\lambda})_q)
        -
        \E[(g_{1,\lambda})_q]
        \right|
        \le
        \|g_{1,\lambda}\|_\infty
        \sqrt{
            \frac{2\ln(2/\delta)}{n\lfloor m/(2r-1-q)\rfloor}
        }.
    \]
    Taking a union bound over \(q=0,\dots,r-1\), we obtain with probability at least \(1-r\delta\),
    \[
        \left|
        \Tr\bigl(
        T_r^{1/2} T_{r,\lambda}^{-1}
        (\hat{G}_1-G_1)
        T_{r,\lambda}^{-1} T_r^{1/2}
        \bigr)
        \right|
        \le
        C\|g_{1,\lambda}\|_\infty
        \sqrt{
            \frac{\ln(2/\delta)}{mn}
        }
        \sum_{q=0}^{r-1} d_{r,q,m}.
    \]
    Since \(\sum_{q=0}^{r-1} d_{r,q,m}=O(1)\), the first claim follows from \Cref{lem:appendix-I1-kernel-bounds}.
    The \(O_\bbP\)-statement is then immediate from \Cref{lem:appendix-effective-dimension}.
\end{proof}

\begin{proposition}\label{prop:appendix-I1-pop-resolvent}
    Assume \(\lambda\to0\) satisfies \(\left(\ln \frac{1}{\lambda}\right)^{r-1}=o(m)\).
    Then
    \[
        \Tr\bigl(
        T_{r,\lambda}^{-1/2} G_1 T_{r,\lambda}^{-1/2}
        \bigr)
        =
        \frac{(m)_{2r-1}}{m(m-1)_{r-1}^2}\E[f_{1,\lambda}]
        +
        O \left(
        \frac{\lambda^{-1/\beta}
            \left(\ln \frac{1}{\lambda}\right)^{r-1}}{m}
        \right),
    \]
    and in particular
    \[
        \Tr\bigl(
        T_{r,\lambda}^{-1/2} G_1 T_{r,\lambda}^{-1/2}
        \bigr)
        =
        O \left(
        \lambda^{-1/\beta}
        \right).
    \]
\end{proposition}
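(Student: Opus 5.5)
The plan mirrors the proof of \Cref{prop:appendix-I0-pop-resolvent} and \Cref{prop:appendix-I1-pop-phi}, with \(f_{1,\lambda}\) in place of \(g_{1,\lambda}\). First take expectations in \Cref{eq:appendix-f1-trace-expansion}. Since \(G_1=\E[\hat G_1]\) and the trace is linear and continuous on the relevant positive trace-class operators, this gives
\[
    \Tr\bigl(T_{r,\lambda}^{-1/2} G_1 T_{r,\lambda}^{-1/2}\bigr)
    =
    \sum_{q=0}^{r-1} d_{r,q,m}\,\E[(f_{1,\lambda})_q].
\]
Because the design points on one curve are i.i.d. and the \(U\)-statistic averages over distinct indices, \(\E[U_{m,2r-1-q}((f_{1,\lambda})_q)]=\E[(f_{1,\lambda})_q]\). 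The \(q=0\) summand is exactly \(\frac{(m)_{2r-1}}{m(m-1)_{r-1}^2}\E[f_{1,\lambda}]\), because \((f_{1,\lambda})_0\) is the separately symmetrized kernel evaluated at fully independent arguments, and it has the same mean as \(f_{1,\lambda}\).

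For the overlap terms \(q\ge1\), I bound \(|\E[(f_{1,\lambda})_q]|\le\|f_{1,\lambda}\|_\infty\le r^2C_{\mu_{2r-2}}M^r\mcaN_1^{(r)}(\lambda)\) by \Cref{lem:appendix-I1-kernel-bounds}. The combinatorial coefficient satisfies \(d_{r,q,m}=O(m^{-q})\), since \((m)_{2r-1-q}/(m(m-1)_{r-1}^2)\asymp m^{2r-1-q}/m^{2r-1}\). Hence
\[
    \sum_{q\ge1} d_{r,q,m}\E[(f_{1,\lambda})_q] = O\bigl(\mcaN_1^{(r)}(\lambda)/m\bigr),
\]
and \Cref{lem:appendix-effective-dimension} converts this into \(O\bigl(\lambda^{-1/\beta}(\ln\frac1\lambda)^{r-1}/m\bigr)\). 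This gives the first display.

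For the ``in particular'' statement, use \(\E[f_{1,\lambda}]=O(\lambda^{-1/\beta})\) from \Cref{lem:appendix-I1-expectations}. The prefactor \(\frac{(m)_{2r-1}}{m(m-1)_{r-1}^2}\) equals \(\frac{(m-r)_{r-1}}{(m-1)_{r-1}}\le1\), so it is bounded. Finally, the hypothesis \((\ln\frac1\lambda)^{r-1}=o(m)\) makes the remainder \(o(\lambda^{-1/\beta})\), and the two pieces together give \(O(\lambda^{-1/\beta})\).

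No step is a real obstacle; everything was prepared in the preceding lemmas. The only point needing care is the bookkeeping that the \(q=0\) term really reproduces \(\E[f_{1,\lambda}]\) with the stated prefactor, and this follows directly from \Cref{lem:appendix-I1-overlap}.
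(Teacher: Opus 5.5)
Your proposal is correct and follows essentially the same route as the paper. You take expectations in the overlap expansion of \(\Tr(T_{r,\lambda}^{-1/2}\hat{G}_1T_{r,\lambda}^{-1/2})\), keep the \(q=0\) term exactly, bound each \(q\ge1\) term by \(d_{r,q,m}\|f_{1,\lambda}\|_\infty=O(\mcaN_1^{(r)}(\lambda)/m)\), and finish with \(\E[f_{1,\lambda}]=O(\lambda^{-1/\beta})\). Your remarks that the prefactor equals \((m-r)_{r-1}/(m-1)_{r-1}\le1\), and that \((\ln\frac{1}{\lambda})^{r-1}=o(m)\) makes the remainder \(o(\lambda^{-1/\beta})\), spell out what the paper's one-line conclusion leaves implicit.
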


\begin{proof}
    Taking expectations in \Cref{eq:appendix-f1-trace-expansion}, we get
    \[
        \Tr\bigl(
        T_{r,\lambda}^{-1/2} G_1 T_{r,\lambda}^{-1/2}
        \bigr)
        =
        \sum_{q=0}^{r-1}
        d_{r,q,m}
        \E[(f_{1,\lambda})_q].
    \]
    The \(q=0\) term is exactly
    \[
        \frac{(m)_{2r-1}}{m(m-1)_{r-1}^2}\E[f_{1,\lambda}].
    \]
    For \(q\ge1\),
    \[
        |\E[(f_{1,\lambda})_q]|
        \le
        \|f_{1,\lambda}\|_\infty,
    \]
    so
    \[
        \sum_{q=1}^{r-1}
        d_{r,q,m}
        \E[(f_{1,\lambda})_q]
        =
        O \left(
        \frac{\|f_{1,\lambda}\|_\infty}{m}
        \right)
        =
        O \left(
        \frac{\lambda^{-1/\beta}
            \left(\ln \frac{1}{\lambda}\right)^{r-1}}{m}
        \right),
    \]
    by \Cref{lem:appendix-I1-kernel-bounds,lem:appendix-effective-dimension}.
    This proves the expansion.
    Since \(\E[f_{1,\lambda}]=O(\lambda^{-1/\beta})\) by \Cref{lem:appendix-I1-expectations}, the whole trace is \(O(\lambda^{-1/\beta})\).
\end{proof}

\begin{proposition}\label{prop:appendix-I1-fluctuation-resolvent}
    For every \(0<\delta<1\), with probability at least \(1-r\delta\),
    \[
        \left|
        \Tr\bigl(
        T_{r,\lambda}^{-1/2}
        (\hat{G}_1-G_1)
        T_{r,\lambda}^{-1/2}
        \bigr)
        \right|
        \le
        C
        \mcaN_1^{(r)}(\lambda)
        \sqrt{
            \frac{\ln(2/\delta)}{mn}
        }.
    \]
    Therefore,
    \[
        \Tr\bigl(
        T_{r,\lambda}^{-1/2}
        (\hat{G}_1-G_1)
        T_{r,\lambda}^{-1/2}
        \bigr)
        =
        O_\bbP \left(
        \frac{\lambda^{-1/\beta}
            \left(\ln \frac{1}{\lambda}\right)^{r-1}}{\sqrt{mn}}
        \right).
    \]
\end{proposition}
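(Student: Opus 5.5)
The plan is to mirror the proof of \Cref{prop:appendix-I1-fluctuation-phi} exactly, replacing \(g_{1,\lambda}\) by \(f_{1,\lambda}\) throughout. First, subtract expectations in the overlap expansion \cref{eq:appendix-f1-trace-expansion}. This gives
\[
    \Tr\bigl(
    T_{r,\lambda}^{-1/2}(\hat{G}_1-G_1)T_{r,\lambda}^{-1/2}
    \bigr)
    =
    \sum_{q=0}^{r-1}
    d_{r,q,m}
    \bigl(
    U_{m,2r-1-q}((f_{1,\lambda})_q)-\E[(f_{1,\lambda})_q]
    \bigr),
\]
so that the fluctuation becomes a fixed finite combination of centered scalar \(U\)-statistics of orders \(2r-1-q\). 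Each is averaged within curves and then over the \(n\) curves, which is exactly the setting of \Cref{lem:appendix-U-hoeffding-scalar}.

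Next, for each \(q\), the kernel \((f_{1,\lambda})_q\) is a symmetrization of \(f_{1,\lambda}\) evaluated on a subdiagonal. It is therefore bounded in sup norm by \(\|f_{1,\lambda}\|_\infty \le r^2 C_{\mu_{2r-2}} M^r \mcaN_1^{(r)}(\lambda)\), using \Cref{lem:appendix-I1-kernel-bounds}. Applying \Cref{lem:appendix-U-hoeffding-scalar} with \(L=\|f_{1,\lambda}\|_\infty\) gives, with probability at least \(1-\delta\),
\[
\bigl|U_{m,2r-1-q}((f_{1,\lambda})_q)-\E(f_{1,\lambda})_q\bigr| \le \|f_{1,\lambda}\|_\infty\sqrt{2\ln(2/\delta)/(n\lfloor m/(2r-1-q)\rfloor)}.
\]
Since \(r\) is fixed and \(m\ge 2r-1\), we have \(\lfloor m/(2r-1-q)\rfloor \gtrsim m\). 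A union bound over the \(r\) values of \(q\) then yields probability at least \(1-r\delta\).

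To sum, note that \(\sum_q d_{r,q,m}=O(1)\): the term \(d_{r,0,m}\to1\) and \(d_{r,q,m}=O(m^{-q})\) for \(q\ge1\). Hence the total is at most \(C\mcaN_1^{(r)}(\lambda)\sqrt{\ln(2/\delta)/(mn)}\). The \(O_\bbP\) form then follows from \(\mcaN_1^{(r)}(\lambda)\asymp\lambda^{-1/\beta}(\ln\frac1\lambda)^{r-1}\) (\Cref{lem:appendix-effective-dimension}) by taking \(\delta\) fixed but small.

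No real obstacle is expected. The only point needing care is checking that the Hoeffding bound applies with range \(2L\) for the symmetrized subdiagonal kernels, i.e.\ that their essential range is controlled by \(\|f_{1,\lambda}\|_\infty\). This is immediate because they are averages of values of \(f_{1,\lambda}\).
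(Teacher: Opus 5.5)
Your proposal is correct and follows the paper's proof: the paper also repeats the argument of \Cref{prop:appendix-I1-fluctuation-phi} with \(f_{1,\lambda}\) in place of \(g_{1,\lambda}\). That argument applies \Cref{lem:appendix-U-hoeffding-scalar} to each centered overlap term in \cref{eq:appendix-f1-trace-expansion}, takes a union bound over the \(r\) values of \(q\), and uses \(\sum_q d_{r,q,m}=O(1)\) together with \Cref{lem:appendix-I1-kernel-bounds,lem:appendix-effective-dimension}.
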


\begin{proof}
    Repeat the proof of \Cref{prop:appendix-I1-fluctuation-phi}, starting from \Cref{eq:appendix-f1-trace-expansion} and using \(\|f_{1,\lambda}\|_\infty\) in place of \(\|g_{1,\lambda}\|_\infty\).
    The bound from \Cref{lem:appendix-I1-kernel-bounds} then gives the claimed inequality, and \Cref{lem:appendix-effective-dimension} turns it into the stated \(O_\bbP\)-estimate.
\end{proof}
 \section{Omitted Proofs}
\label{app:omitted-proofs}

\par\noindent{\bf Proof of \Cref{ex:lower-regular-sphere}.}\par\noindent
Let \(\mca H_q^d(\bbS^d)\) be the space of degree-\(q\) spherical
harmonics, and write \(a_q=\dim \mca H_q^d(\bbS^d)\).  Its reproducing
kernel is
\[
    Z_q(x,y)
    =
    \sum_{\ell=1}^{a_q} Y_{q,\ell}(x)Y_{q,\ell}(y),
\]
for any orthonormal basis \((Y_{q,\ell})_{\ell=1}^{a_q}\).  The addition
formula for spherical harmonics gives
\(Z_q(x,x)=a_q\) for every \(x\in\bbS^d\)
\citep[Corollary~1.27]{dai2013ApproximationTheoryHarmonic}.  By the
Funk-Hecke formula, \(T\) acts on \(\mca H_q^d(\bbS^d)\) as multiplication
by a scalar \(\gamma_q\).  Therefore, for each positive eigenvalue
\(\theta_m\) of \(T\), the eigenspace \(V_m\) is the direct sum of the
harmonic spaces with \(\gamma_q=\theta_m\).  Since the diagonal of the
orthogonal projection kernel is additive over orthogonal direct sums,
\[
    k_m(x,x)
    =
    \sum_{\gamma_q=\theta_m} Z_q(x,x)
    =
    \sum_{\gamma_q=\theta_m} a_q
    =
    d_m,
    \qquad x\in\bbS^d.
\]
Hence, for every \(N\ge1\),
\[
    \sum_{m=1}^N k_m(x,x)
    =
    \sum_{m=1}^N d_m,
    \qquad x\in\bbS^d.
\]
This proves Assumption~\ref{ass:prelim-ae-lower-regularity}.  The same
identity also gives the regular RKHS condition.
\hfill\BlackBox\\[2mm]

\par\noindent{\bf Proof of \Cref{ex:lower-regular-ball}.}\par\noindent
Let \(V_q^d\) be the space of orthogonal polynomials of degree exactly
\(q\) on \(\bbB^d\) with respect to the weight
\(W(x)=(1-\norm{x}^2)^{-1/2}\), and set
\(a_q=\dim V_q^d=\binom{q+d-1}{q}\).  The Funk-Hecke formula on the ball
implies that \(T\) acts on \(V_q^d\) as multiplication by a scalar
\(\gamma_q\) for a dot-product kernel
\citep[Theorem~11.1.9]{dai2013ApproximationTheoryHarmonic}.  Let \(P_q\)
denote the reproducing kernel of \(V_q^d\).  Thus, for each positive
eigenvalue \(\theta_m\),
\[
    k_m(x,x)
    =
    \sum_{\gamma_q=\theta_m} P_q(x,x),
    \qquad
    d_m
    =
    \sum_{\gamma_q=\theta_m} a_q .
\]
With
\(\eta=(d-1)/2\) and \(u=2\norm{x}^2-1\), the explicit formula of
\citet[Corollary~11.1.8]{dai2013ApproximationTheoryHarmonic} yields
\[
    P_q(x,x)
    =
    \frac{q+\eta}{2\eta}
    \left\{
    C_q^\eta(1)+C_q^\eta(u)
    \right\},
    \qquad -1\le u\le1,
\]
where \(C_q^\eta\) is the Gegenbauer polynomial.  We use the standard
Gegenbauer bound \citep[Appendix~B.2]{dai2013ApproximationTheoryHarmonic}
\[
    \abs{C_q^\eta(u)}
    \le
    C_q^\eta(1),
    \qquad -1\le u\le1,
\]
and the standard value formula for \(C_q^\eta(1)\):
\[
    C_q^\eta(1)
    =
    \binom{q+d-2}{q},
    \qquad
    a_q
    =
    \binom{q+d-1}{q}
    =
    \frac{q+d-1}{d-1}\binom{q+d-2}{q}.
\]
Since \(\eta=(d-1)/2\), this gives the exact comparison
\[
    \frac{q+\eta}{\eta}C_q^\eta(1)
    =
    \frac{2q+d-1}{q+d-1}a_q .
\]
Fix \(x\neq0\), and set \(u=2\norm{x}^2-1\).  If \(\norm{x}<1\), then
\(u\in(-1,1)\), and the classical Darboux asymptotic formula for Gegenbauer polynomials
\citep[Appendix~B.2]{dai2013ApproximationTheoryHarmonic} gives
\[
    \frac{C_q^\eta(u)}{C_q^\eta(1)}
    =
    O(q^{-\eta})
    \to0 .
\]
If \(\norm{x}=1\), then \(u=1\).  In both cases, the displayed formula for
\(P_q(x,x)\) implies that there exist \(q_x\) and \(c_x>0\) such that
\[
    P_q(x,x)
    \ge
    c_x a_q,
    \qquad
    q\ge q_x .
\]
Using \(k_m(x,x)=\sum_{\gamma_q=\theta_m} P_q(x,x)\), we obtain, for this fixed
\(x\neq0\),
\[
    \sum_{m=1}^N k_m(x,x)
    \ge
    c_x
    \sum_{m=1}^N d_m
    -
    O(1),
\]
where the \(O(1)\) term accounts for the finitely many \(q<q_x\) that may contribute to the sum.
Since \(\sum_{m=1}^N d_m \to\infty\), this yields
\[
    \liminf_{N\to\infty}
    \frac{\sum_{m=1}^N k_m(x,x)}
    {\sum_{m=1}^N d_m}
    \ge
    c_x,
    \qquad
    x\neq0 .
\]
Since \(\rho(\{0\})=0\), Assumption~\ref{ass:prelim-ae-lower-regularity}
follows.
\hfill\BlackBox\\[2mm]

\par\noindent{\bf Proof of \Cref{ex:exact-power-law-mu}.}\par\noindent
Fix \(s'<s\).  By the tensor-product definition of the interpolation
spaces,
\[
    \norm{\mu_r}_{([H]^{s'})^{\otimes r}}^2
    =
    \sum_{\bm{i}\in\bbN^r} c_{\bm{i}}^2 \Lambda_{\bm{i}}^{-s'}.
\]
Since \(c_{\bm{i}}^2 \asymp \Lambda_{\bm{i}}^{s+\frac{1}{\beta}}\), the last
display is bounded by a constant multiple of
\[
    \sum_{\bm{i}\in\bbN^r}
    \Lambda_{\bm{i}}^{s+\frac{1}{\beta}-s'}
    =
    \left(
    \sum_{j=1}^{\infty} \lambda_j^{s+\frac{1}{\beta}-s'}
    \right)^r
    <\infty,
\]
because \(s+\frac{1}{\beta}-s'>1/\beta\).  Hence
\(\mu_r \in([H]^{s'})^{\otimes r}\), which proves
Assumption~\ref{ass:source}\textup{(i)}.

If \(s\ge2\), Assumption~\ref{ass:source}\textup{(ii)} is void.  Suppose
now that \(s<2\).  Let \(N=N_r(\lambda)\), and let
\((\lambda_j^{(r)})_{j\ge1}\) be the non-increasing rearrangement of
\(\{\Lambda_{\bm{i}}:\bm{i}\in\bbN^r\}\).  By
\Cref{lem:appendix-effective-dimension},
\[
    N
    \asymp
    \lambda^{-1/\beta}
    \left(\ln\frac{1}{\lambda}\right)^{r-1},
    \qquad
    \lambda_j^{(r)}
    \asymp
    j^{-\beta}(\ln(ej))^{\beta(r-1)}.
\]
Therefore,
\begin{align*}
    \sum_{\Lambda_{\bm{i}}<\lambda}c_{\bm{i}}^2
     & \asymp
    \sum_{j>N} \bigl(\lambda_j^{(r)} \bigr)^{s+1/\beta}                 \\
     & \asymp
    \sum_{j>N} j^{-\beta (s+1/\beta)}(\ln(ej))^{\beta(s+1/\beta)(r-1)} \\
     & \asymp
    \int_N^\infty x^{-(s\beta+1)}(\ln(ex))^{(s\beta+1)(r-1)}\dd x .
\end{align*}
Since \(\beta s+1>1\), the tail estimate in
\Cref{lem:appendix-log-power-integrals} yields
\[
    \int_N^\infty x^{-(s\beta+1)}(\ln(ex))^{(s\beta+1)(r-1)}\dd x
    \asymp
    N^{-\beta s}(\ln(eN))^{(\beta s+1)(r-1)}.
\]
Using the estimate for \(N\) and
\(\ln(eN)\asymp\ln(1/\lambda)\), we obtain
\[
    \sum_{\Lambda_{\bm{i}}<\lambda}c_{\bm{i}}^2
    \asymp
    \lambda^s
    \left(\ln\frac{1}{\lambda}\right)^{r-1}
    =
    \Omega(\lambda^s).
\]
This proves Assumption~\ref{ass:source}\textup{(ii)}.
\hfill\BlackBox\\[2mm]

\par\noindent{\bf Proof of \Cref{ex:bias-borderline-coefficients}.}\par\noindent
By the coefficient assumption,
\[
    B(\lambda;\mu_r)^2
    \asymp
    \sum_{\mathbf{i}=1}^{\infty}
    \Lambda_{\mathbf{i}}^{s+1/\beta}
    \left(
    \frac{\lambda}{\Lambda_{\mathbf{i}}+\lambda}
    \right)^2.
\]

Let \((\lambda_j^{(r)})_{j\ge1}\) be the non-increasing rearrangement of
the tensor eigenvalues \(\{\Lambda_{\mathbf{i}}:\mathbf{i}\in\bbN^r\}\),
and define
\[
    N
    \coloneqq
    \#\{j\ge1:\lambda_j^{(r)} \ge\lambda\}.
\]
By \Cref{lem:appendix-effective-dimension},
\[
    N
    \asymp
    \lambda^{-1/\beta}
    \left(\ln\frac{1}{\lambda}\right)^{r-1},
    \qquad
    \ln(eN)\asymp\ln\frac{1}{\lambda}.
\]

We next record the two summation estimates needed below.  By
\Cref{lem:appendix-effective-dimension},
\[
    \bigl(\lambda_j^{(r)} \bigr)^q
    \asymp
    j^{-\beta q}(\ln(ej))^{\beta q(r-1)}
\]
for each fixed \(q\in\mathbb{R}\).  Integral comparison gives
\[
    \sum_{j\le N} \bigl(\lambda_j^{(r)} \bigr)^q
    \asymp
    1+
    \int_1^N
    x^{-\beta q}(\ln(ex))^{\beta q(r-1)}
    \dd x.
\]
Applying \Cref{lem:appendix-log-power-integrals} to the shifted-log
integral, equivalently after discarding the bounded initial interval
and using \(\ln(ex)\asymp\ln x\) for \(x\ge2\), yields
\[
    \sum_{j\le N} \bigl(\lambda_j^{(r)} \bigr)^q
    \asymp
    \begin{cases}
        N^{1-\beta q}(\ln(eN))^{\beta q(r-1)},
         & q<1/\beta, \\
        (\ln(eN))^r,
         & q=1/\beta, \\
        1,
         & q>1/\beta,
    \end{cases}
\]
where the constant \(1\) accounts for the finitely many small indices in
the convergent case.  Similarly, for \(q>1/\beta\), another integral
comparison gives
\[
    \sum_{j>N} \bigl(\lambda_j^{(r)} \bigr)^q
    \asymp
    \int_N^\infty
    x^{-\beta q}(\ln(ex))^{\beta q(r-1)}
    \dd x.
\]
Applying the tail part of \Cref{lem:appendix-log-power-integrals} gives
\[
    \sum_{j>N} \bigl(\lambda_j^{(r)} \bigr)^q
    \asymp
    N^{1-\beta q}(\ln(eN))^{\beta q(r-1)}.
\]

Split the bias according to the two regions
\(\Lambda_{\mathbf{i}}\ge\lambda\) and \(\Lambda_{\mathbf{i}}<\lambda\):
\[
    B(\lambda;\mu_r)^2
    \asymp
    B_{\ge}(\lambda)+B_{<}(\lambda),
\]
where
\[
    B_{\ge}(\lambda)
    \coloneqq
    \sum_{\Lambda_{\mathbf{i}}\ge\lambda}
    \Lambda_{\mathbf{i}}^{s+1/\beta}
    \left(
    \frac{\lambda}{\Lambda_{\mathbf{i}}+\lambda}
    \right)^2
\]
and
\[
    B_{<}(\lambda)
    \coloneqq
    \sum_{\Lambda_{\mathbf{i}}<\lambda}
    \Lambda_{\mathbf{i}}^{s+1/\beta}
    \left(
    \frac{\lambda}{\Lambda_{\mathbf{i}}+\lambda}
    \right)^2.
\]
On the first region,
\[
    B_{\ge}(\lambda)
    \asymp
    \lambda^2
    \sum_{\Lambda_{\mathbf{i}}\ge\lambda}
    \Lambda_{\mathbf{i}}^{s+1/\beta-2}
    =
    \lambda^2
    \sum_{j\le N}
    \bigl(\lambda_j^{(r)} \bigr)^{s+1/\beta-2}.
\]
Applying the partial-sum estimate above with
\(q=s+1/\beta-2\), we obtain
\[
    B_{\ge}(\lambda)
    \asymp
    \begin{cases}
        \lambda^s \left(\ln\frac{1}{\lambda}\right)^{r-1},
         & s<2, \\
        \lambda^2 \left(\ln\frac{1}{\lambda}\right)^r,
         & s=2, \\
        \lambda^2,
         & s>2.
    \end{cases}
\]

On the second region,
\[
    B_{<}(\lambda)
    \asymp
    \sum_{\Lambda_{\mathbf{i}}<\lambda}
    \Lambda_{\mathbf{i}}^{s+1/\beta}
    =
    \sum_{j>N}
    \bigl(\lambda_j^{(r)} \bigr)^{s+1/\beta}
    \asymp
    \lambda^s
    \left(
    \ln\frac{1}{\lambda}
    \right)^{r-1}.
\]
Since this term is of lower order than \(B_{\ge}(\lambda)\) when
\(s\ge2\), and has the same order when \(s<2\), we conclude that
\[
    B(\lambda;\mu_r)^2
    \asymp
    \begin{cases}
        \lambda^s \left(\ln\frac{1}{\lambda}\right)^{r-1},
         & s<2, \\
        \lambda^2 \left(\ln\frac{1}{\lambda}\right)^r,
         & s=2, \\
        \lambda^2,
         & s>2.
    \end{cases}
\]
\hfill\BlackBox\\[2mm]

\par\noindent{\bf Proof of \Cref{thm:main}.}\par\noindent
Since \(\lambda=\Omega((mn)^{-\theta})\) for some \(\theta<\beta\), we have
\(v_r=\lambda^{-1/\beta}(\log(1/\lambda))^r/(mn)\to0\), and
Assumption~\ref{ass:source}\textup{(iv)} gives
\(\Sigma_r \neq0\) and \(\mu_{2r-2} \neq0\), so \Cref{thm:variance-summary}
applies.  Combining the variance expansion in \Cref{thm:variance-summary},
the bias expansion in \Cref{thm:bias-main}, and the bias--variance
decomposition \Cref{eq:bias-variance} gives the result.
\hfill\BlackBox\\[2mm]

\par\noindent{\bf Proof of \Cref{cor:main-noiseless}.}\par\noindent
As in the proof of \Cref{thm:main}, the condition on \(\lambda\) implies
\(v_r=\lambda^{-1/\beta}(\log(1/\lambda))^r/(mn)\to0\).  Combining the
noise-free variance expansion in \Cref{cor:variance-noiseless}, the bias
expansion in \Cref{thm:bias-main}, and the bias--variance decomposition
\Cref{eq:bias-variance} proves the claim.
\hfill\BlackBox\\[2mm]

\par\noindent{\bf Proof of \Cref{thm:minimax-lower-bound}.}\par\noindent
We prove the two terms by restricting to two submodels of
\(\mca P_{r,s}(R)\).

For the sampling term, let \(A\) take the values \(1\) and \(2\) with equal
probability, independently across curves, and consider the submodel
\[
    X=A f.
\]
Its target is \(\mu_r=\vartheta_r f^{\otimes r}\), where
\(\vartheta_r\coloneqq\E A^r>0\).
Let \(N=mn\), choose
\[
    J\asymp N^{1/(s\beta+1)},
    \qquad
    a_J=\eta J^{-(s\beta+1)/2},
\]
and write
\[
    f_{\omega}
    =
    b e_1+u_{\omega},
    \qquad
    u_{\omega}
    =
    a_J\sum_{j=J+1}^{2J}\omega_j e_j,
    \qquad
    \omega\in\{-1,1\}^{J}.
\]
Here \(b,\eta>0\) are fixed sufficiently small constants.  By
Assumption~\ref{ass:prelim-eigen-decay},
\[
    \norm{u_{\omega}}_{[H]^s}^2
    =
    a_J^2\sum_{j=J+1}^{2J}\lambda_j^{-s}
    \lesssim
    a_J^2J^{s\beta+1}
    =
    \eta^2.
\]
Since \(A\le2\) almost surely,
\[
    \left(
        \E\norm{A f_{\omega}}_{[H]^s}^{2r}
    \right)^{1/(2r)}
    \le
    2\left(b\lambda_1^{-s/2}+C\eta\right).
\]
Thus \(b\) and \(\eta\) may be chosen, independently of \(n,m\), so that
the laws of \(A f_{\omega}\) all belong to \(\mca P_{r,s}(R)\).

The Varshamov--Gilbert bound gives a subset
\(\Omega\subset\{-1,1\}^{J}\) such that
\[
    \log\abs{\Omega}\gtrsim J,
    \qquad
    d_{\mathrm H}(\omega,\omega')\ge J/8
    \quad
    \text{for distinct }\omega,\omega'\in\Omega.
\]
To compare the corresponding moment functions, let
\(E_J=\spn\{e_{J+1},\ldots,e_{2J}\}\).  Projecting
\(f_{\omega}^{\otimes r}-f_{\omega'}^{\otimes r}\) onto the orthogonal sum
of the \(r\) tensor subspaces having one coordinate in \(E_J\) and all other
coordinates in \(\spn\{e_1\}\) gives
\[
    b^{r-1}\sum_{\ell=1}^r
    e_1^{\otimes(\ell-1)}
    \otimes(u_{\omega}-u_{\omega'})
    \otimes e_1^{\otimes(r-\ell)}.
\]
The summands are mutually orthogonal.  Consequently,
\begin{align*}
    \norm{
        \vartheta_r f_{\omega}^{\otimes r}
        -\vartheta_r f_{\omega'}^{\otimes r}
    }_{L^2(\rho^{\otimes r})}^2
    & \ge
    \vartheta_r^2 r b^{2r-2}
    \norm{u_{\omega}-u_{\omega'}}_{L^2(\rho)}^2 \\
    & =
    4\vartheta_r^2 r b^{2r-2}a_J^2
    d_{\mathrm H}(\omega,\omega')
    \gtrsim
    a_J^2J.
\end{align*}

Let \(P_{\omega}\) denote the distribution of the observed data under
\(f_{\omega}\).  Revealing the latent amplitudes \(A_1,\ldots,A_n\) can only
increase the information.  Conditional on these amplitudes and the design
points, the observations are Gaussian, so the data-processing inequality gives
\begin{align*}
    \operatorname{KL}(P_{\omega},P_{\omega'})
    & \le
    \frac{mn\E A^2}{2\sigma^2}
    \norm{f_{\omega}-f_{\omega'}}_{L^2(\rho)}^2 \\
    & \lesssim
    mn a_J^2J
    \asymp
    \eta^2J.
\end{align*}
Taking \(\eta\) sufficiently small makes this a fixed small fraction of
\(\log\abs{\Omega}\).  Fano's lemma therefore yields
\[
    \inf_{\widehat\mu}
    \sup_{P\in\mca P_{r,s}(R)}
    \E_P
    \norm{\widehat\mu-\mu_r(P)}_{L^2(\rho^{\otimes r})}^2
    \gtrsim
    a_J^2J
    \asymp
    (mn)^{-s\beta/(s\beta+1)}.
\]

For the curve-level term, fix a nonzero \(h\in[H]^s\) with sufficiently
small norm and let \(X=A h\), where
\[
    \bbP_p(A=2)=p,
    \qquad
    \bbP_p(A=1)=1-p,
\]
for \(p\) in a fixed neighborhood of \(1/2\).  These laws also lie in
\(\mca P_{r,s}(R)\), and
\[
    \mu_{r,p}
    =
    \bigl(1+(2^r-1)p\bigr)h^{\otimes r}.
\]
The observed data are obtained from the independent amplitudes
\(A_1,\ldots,A_n\) through a Markov kernel that does not depend on \(p\).
Hence, for \(p,q\) near \(1/2\),
\[
    \operatorname{KL}(P_p,P_q)
    \le
    n\operatorname{KL}\bigl(\operatorname{Bern}(p),
    \operatorname{Bern}(q)\bigr)
    \lesssim
    n(p-q)^2.
\]
Choose \(p,q\) with \(\abs{p-q}=c_0n^{-1/2}\), where \(c_0>0\) is a
sufficiently small constant.  The last display is then bounded by a fixed
small constant, whereas
\[
    \norm{\mu_{r,p}-\mu_{r,q}}_{L^2(\rho^{\otimes r})}^2
    =
    (2^r-1)^2(p-q)^2\norm{h}_{L^2(\rho)}^{2r}
    \asymp
    n^{-1}.
\]
Le Cam's two-point lemma gives the lower bound \(n^{-1}\).  Taking the maximum
of the lower bounds from the two submodels, and using
\(a\vee b\ge(a+b)/2\), proves the theorem.
\hfill\BlackBox\\[2mm]

\par\noindent{\bf Proof of \Cref{cor:sparse-saturation}.}\par\noindent
Write
\[
    \mu_r=\sum_{\bm{i}} c_{\bm{i}} e_{\bm{i}}.
\]
By Assumption~\ref{ass:source}\textup{(ii)}, choose \(\bm{i}_\ast\) such that
\(c_{\bm{i}_\ast} \neq0\).  The eigenvalues of
\(\lambda T_{r,\lambda}^{-1}\) are
\(\lambda/(\Lambda_{\bm{i}}+\lambda)\).  Hence, for all small enough
\(\lambda\),
\[
    B(\lambda;\mu_r)^2
    =
    \sum_{\bm{i}}
    c_{\bm{i}}^2
    \left(
    \frac{\lambda}{\Lambda_{\bm{i}}+\lambda}
    \right)^2
    \ge
    c_{\bm{i}_\ast}^2
    \left(
    \frac{\lambda}{\Lambda_{\bm{i}_\ast}+\lambda}
    \right)^2
    \gtrsim
    \lambda^2 .
\]
On the other hand, \(\Cref{lem:bias-source-upper}\) gives
\(B(\lambda;\mu_r)^2=O(\lambda^2)\).  Thus
\(B(\lambda;\mu_r)^2\asymp\lambda^2\).
By \Cref{thm:main} in the noisy case, or \Cref{cor:main-noiseless} in the noiseless case, we have
\[
    \operatorname{MSE}_{r,\lambda}(\mca T)
    =
    \Theta_\bbP \left(
    n^{-1}
    +
    \frac{\lambda^{-1/\beta}}{mn}
    +
    \lambda^2
    \right).
\]
Since
\[
    \lambda^2+\frac{\lambda^{-1/\beta}}{mn}
    \gtrsim
    (mn)^{-2\beta/(2\beta+1)}
\]
for every \(\lambda>0\), the lower bound follows.
\hfill\BlackBox\\[2mm]

\par\noindent{\bf Proof of \Cref{lem:variance-E-expansion}.}\par\noindent
Since \(\mathbf{j}\) and \(\mathbf{k}\) have no repeated indices inside
themselves, each noise variable \(\ep_{i\ell}\) can appear at most once in
\(Y_{i\mathbf{j}}\) and at most once in \(Y_{i\mathbf{k}}\).  Therefore, in
the product \(Y_{i\mathbf{j}}Y_{i\mathbf{k}}\), every \(\ep_{i\ell}\) can
appear only \(0\), \(1\), or \(2\) times.  Terms in which a noise variable
appears exactly once have conditional expectation zero; terms in which a noise
variable appears twice must use a common sample location in
\(J(\mathbf{j})\cap J(\mathbf{k})\), and each such matched pair contributes
\(\sigma^2\).  Hence
\begin{equation}\label{eq:raw-moment-polynomial}
    \E[Y_{i\mathbf{j}}Y_{i\mathbf{k}} \mid \mca{T}]
    =
    \sum_{A\subseteq J(\mathbf{j})\cap J(\mathbf{k})}
    \sigma^{2|A|}
    \mu_{2r-2|A|} \bigl(\bm{t}^{A}_{i,\mathbf{j},\mathbf{k}}\bigr).
\end{equation}
Subtracting \(\mu_r(t_{i\mathbf{j}})\mu_r(t_{i\mathbf{k}})\) only changes the
coefficient of \(\sigma^0\), and that coefficient becomes
\(\Sigma_r(t_{i\mathbf{j}};t_{i\mathbf{k}})\).  Grouping the remaining terms
according to \(d=|A|\) gives \cref{eq:b-polynomial}.
\hfill\BlackBox\\[2mm]

\par\noindent{\bf Proof of \Cref{lem:appendix-operator-concentration}.}\par\noindent
This is the tensor-product analogue of \citet[Proposition~4.6 and 4.7]{li2024GeneralizationErrorCurves}.
We split the proof into two auxiliary steps.

\medskip
\noindent\textbf{Auxiliary Lemma 1.}
For every \(\delta\in(0,1)\), with probability at least \(1-\delta\),
\begin{equation}\label{eq:appendix-operator-concentration-explicit}
    \|T_{r,\lambda}^{-1/2}(\hat{T}_r-T_r)T_{r,\lambda}^{-1/2}\|
    \le
    \sqrt{u_{r,\delta}(\lambda)}
    +
    \frac{2}{3}u_{r,\delta}(\lambda),
\end{equation}
where
\begin{equation}\label{eq:appendix-operator-concentration-u}
    u_{r,\delta}(\lambda)
    \coloneqq
    \frac{
        2M^r \mcaN_1^{(r)}(\lambda)
    }{
        n\lfloor m/r\rfloor
    }
    \ln \left(
    \frac{
            4(\|T_r\|+\lambda)\mcaN_1^{(r)}(\lambda)
        }{
            \delta\|T_r\|
        }
    \right).
\end{equation}

\smallskip
\emph{Proof.}
Define the operator-valued kernel
\[
    Z_{r,\lambda}(\bm{t})
    \coloneqq
    T_{r,\lambda}^{-1/2}
    \bigl(
    K(\bm{t})\otimes K(\bm{t})
    \bigr)
    T_{r,\lambda}^{-1/2},
    \qquad
    \bm{t}\in \mca I^r.
\]
Then \(Z_{r,\lambda}(\bm{t})\) is a positive self-adjoint Hilbert-Schmidt operator on \(\caH^{\otimes r}\), and
\[
    \frac{1}{n}\sum_{i=1}^n
    \frac{1}{(m)_r}
    \sum_{\mathbf{j}\in I_m^r}
    Z_{r,\lambda}(t_{i\mathbf{j}})
    =
    T_{r,\lambda}^{-1/2} \hat{T}_r T_{r,\lambda}^{-1/2}.
\]
Also,
\[
    \E Z_{r,\lambda}(\bm{t})
    =
    T_{r,\lambda}^{-1/2} T_r T_{r,\lambda}^{-1/2},
\]
so the centered \(U\)-statistic in \Cref{lem:appendix-U-bernstein-operator} is exactly
\[
    T_{r,\lambda}^{-1/2}(\hat{T}_r-T_r)T_{r,\lambda}^{-1/2}.
\]

By \Cref{lem:appendix-phi-T},
\[
    \|Z_{r,\lambda}(\bm{t})\|
    =
    \|T_{r,\lambda}^{-1/2} K(\bm{t})\|_{\caH^{\otimes r}}^2
    \le
    M^r \mcaN_1^{(r)}(\lambda)
    =:
    L_{r,\lambda}.
\]
Taking expectations gives
\[
    \|T_{r,\lambda}^{-1/2} T_r T_{r,\lambda}^{-1/2}\|
    \le
    \E\|Z_{r,\lambda}(\bm{t})\|
    \le
    L_{r,\lambda}.
\]
Hence
\[
    \|Z_{r,\lambda}(\bm{t})-\E Z_{r,\lambda}(\bm{t})\|
    \le
    2L_{r,\lambda}
    =:
    L.
\]

Next, since \(Z_{r,\lambda}(\bm{t})\) is positive and rank one,
\begin{equation*}
    Z_{r,\lambda}(\bm{t})^2
    =
    \|T_{r,\lambda}^{-1/2} K(\bm{t})\|_{\caH^{\otimes r}}^2
    Z_{r,\lambda}(\bm{t})
    \preceq
    L_{r,\lambda} Z_{r,\lambda}(\bm{t}).
\end{equation*}
Using \(\E(B-\E B)^2\preceq \E B^2\), we obtain
\begin{align*}
    \E\bigl[
        (Z_{r,\lambda}(\bm{t})-\E Z_{r,\lambda}(\bm{t}))^2
        \bigr]
     & \preceq
    \E[Z_{r,\lambda}(\bm{t})^2]  \\
     & \preceq
    L_{r,\lambda} \E[Z_{r,\lambda}(\bm{t})] \\
     & =
    L_{r,\lambda} T_{r,\lambda}^{-1/2} T_r T_{r,\lambda}^{-1/2}
    =:
    S_{r,\lambda}.
\end{align*}
Since \(T_r\) and \(T_{r,\lambda}\) commute,
\[
    S_{r,\lambda}
    =
    L_{r,\lambda} T_r T_{r,\lambda}^{-1}.
\]
Therefore
\[
    \|S_{r,\lambda}\|
    =
    L_{r,\lambda}\|T_r T_{r,\lambda}^{-1}\|
    =
    L_{r,\lambda} \frac{\|T_r\|}{\|T_r\|+\lambda},
\]
and
\[
    \Tr S_{r,\lambda}
    =
    L_{r,\lambda} \Tr(T_r T_{r,\lambda}^{-1})
    =
    L_{r,\lambda} \mcaN_1^{(r)}(\lambda).
\]
Hence
\[
    \frac{\Tr S_{r,\lambda}}{\|S_{r,\lambda}\|}
    =
    \frac{(\|T_r\|+\lambda)\mcaN_1^{(r)}(\lambda)}{\|T_r\|}.
\]

Apply \Cref{lem:appendix-U-bernstein-operator} with \(k=r\) and \(D=\E Z_{r,\lambda}(\bm{t})\).
With probability at least \(1-\delta\),
\begin{align*}
          & \|T_{r,\lambda}^{-1/2}(\hat{T}_r-T_r)T_{r,\lambda}^{-1/2}\| \\
    \le\  &
    \sqrt{
    \frac{
    2\|S_{r,\lambda}\|
    }{
    n\lfloor m/r\rfloor
    }
    \ln \left(
    \frac{
        4\Tr S_{r,\lambda}
    }{
        \delta\|S_{r,\lambda}\|
    }
    \right)
    }
    +
    \frac{
        2L
    }{
        3n\lfloor m/r\rfloor
    }
    \ln \left(
    \frac{
        4\Tr S_{r,\lambda}
    }{
        \delta\|S_{r,\lambda}\|
    }
    \right).
\end{align*}
Since
\[
    \|S_{r,\lambda}\|
    \le L_{r,\lambda}=
    M^r \mcaN_1^{(r)}(\lambda),
    \qquad
    \frac{
    4\Tr S_{r,\lambda}
    }{
    \delta\|S_{r,\lambda}\|
    }
    =
    \frac{
        4(\|T_r\|+\lambda)\mcaN_1^{(r)}(\lambda)
    }{
        \delta\|T_r\|
    },
\]
this is exactly \cref{eq:appendix-operator-concentration-explicit}.

\medskip
\noindent\textbf{Auxiliary Lemma 2.}
If \(A\) is a self-adjoint operator with \(\|A\|\le \frac{1}{2}\), then \(I+A\) is invertible and
\[
    \|(I+A)^{-1}\|
    \le
    \frac{1}{1-\|A\|}
    \le 2.
\]

\smallskip
\emph{Proof.}
The spectrum of \(A\) is contained in \([-\|A\|,\|A\|]\subset[-1/2,1/2]\), so the spectrum of \(I+A\) lies in
\([1/2,3/2]\). Therefore \(I+A\) is invertible and
\[
    \|(I+A)^{-1}\|
    =
    \frac{1}{\min\sigma(I+A)}
    \le 2.
\]

We now return to the main claim.
By \Cref{lem:appendix-effective-dimension},
\[
    \mcaN_1^{(r)}(\lambda)
    \asymp
    \lambda^{-1/\beta}
    \left(
    \ln \frac{1}{\lambda}
    \right)^{r-1}.
\]
Hence, for every fixed \(\delta\in(0,1)\),
\begin{align*}
    u_{r,\delta}(\lambda)
     & \le
    C_\delta
    \frac{
        \lambda^{-1/\beta}
        (\ln \frac{1}{\lambda})^{r-1}
    }{
        n\lfloor m/r\rfloor
    }
    \ln \left(
    C_\delta
    \lambda^{-1/\beta}
    \left(
        \ln \frac{1}{\lambda}
        \right)^{r-1}
    \right) \\
     & \le
    C_\delta
    \frac{
        \lambda^{-1/\beta}
        (\ln \frac{1}{\lambda})^{r}
    }{
        n\lfloor m/r\rfloor
    },
\end{align*}
for all sufficiently small \(\lambda\), because
\[
    \ln \left(
    C_\delta
    \lambda^{-1/\beta}
    \left(
        \ln \frac{1}{\lambda}
        \right)^{r-1}
    \right)
    =
    O_\delta \left(
    \ln \frac{1}{\lambda}
    \right).
\]
Since \(m\ge r\), we have \(\lfloor m/r\rfloor\ge m/(2r)\), hence
\begin{equation}\label{eq:appendix-operator-concentration-u-vr}
    u_{r,\delta}(\lambda)
    \le
    C_{\delta,r}
    \frac{
        \lambda^{-1/\beta}
        (\ln \frac{1}{\lambda})^{r}
    }{
        mn
    }
    =
    C_{\delta,r} v_r.
\end{equation}
Combining \cref{eq:appendix-operator-concentration-explicit} with \cref{eq:appendix-operator-concentration-u-vr},
we obtain
\[
    \|T_{r,\lambda}^{-1/2}(\hat{T}_r-T_r)T_{r,\lambda}^{-1/2}\|
    =
    O_\bbP(\sqrt{v_r}),
\]
because \(u_{r,\delta}(\lambda)\to0\) and
\[
    \sqrt{u_{r,\delta}(\lambda)}
    +
    \frac{2}{3}u_{r,\delta}(\lambda)
    \le
    2\sqrt{u_{r,\delta}(\lambda)}
    \le
    C_{\delta,r} \sqrt{v_r}
\]
for all sufficiently small \(\lambda\).

Since this implies
\[
    \|T_{r,\lambda}^{-1/2}(\hat{T}_r-T_r)T_{r,\lambda}^{-1/2}\|
    =
    o_\bbP(1),
\]
the event
\[
    \left\{
    \|T_{r,\lambda}^{-1/2}(\hat{T}_r-T_r)T_{r,\lambda}^{-1/2}\|
    \le \frac{1}{2}
    \right\}
\]
has probability tending to one.
On this event,
\[
    T_{r,\lambda}^{-1/2} \hat{T}_{r,\lambda} T_{r,\lambda}^{-1/2}
    =
    I
    +
    T_{r,\lambda}^{-1/2}(\hat{T}_r-T_r)T_{r,\lambda}^{-1/2}.
\]
Hence, by Auxiliary Lemma 2,
\[
    \|T_{r,\lambda}^{1/2} \hat{T}_{r,\lambda}^{-1} T_{r,\lambda}^{1/2}\|
    =
    \left\|
    \bigl(
    T_{r,\lambda}^{-1/2} \hat{T}_{r,\lambda} T_{r,\lambda}^{-1/2}
    \bigr)^{-1}
    \right\|
    \le 2.
\]
\hfill\BlackBox\\[2mm]

\par\noindent{\bf Proof of \Cref{lem:variance-phi-hatT}.}\par\noindent
The resolvent identity gives
\[
    \hat{T}_{r,\lambda}^{-1}-T_{r,\lambda}^{-1}
    =
    \hat{T}_{r,\lambda}^{-1}(T_r-\hat{T}_r)T_{r,\lambda}^{-1}.
\]
For \(h\in \caH^{\otimes r}\), we use the identity
\[
    \|h\|_{L^2(\mca I^r)}
    =
    \|T_r^{1/2} h\|_{\caH^{\otimes r}}.
\]
Therefore, for every \(\bm{t}\in \mca I^r\),
\begin{align*}
    \|\hat{T}_{r,\lambda}^{-1} K(\bm{t})\|_{L^2}
    \le\  &
    \|T_{r,\lambda}^{-1} K(\bm{t})\|_{L^2} \\
          & +
    \|T_r^{1/2} \hat{T}_{r,\lambda}^{-1}
    (\hat{T}_r-T_r)T_{r,\lambda}^{-1} K(\bm{t})\|_{\caH^{\otimes r}}.
\end{align*}

Set
\[
    \Delta_r
    \coloneqq
    T_{r,\lambda}^{-1/2}(\hat{T}_r-T_r)T_{r,\lambda}^{-1/2}.
\]
Then
\begin{align*}
          &
    \|T_r^{1/2} \hat{T}_{r,\lambda}^{-1}
    (\hat{T}_r-T_r)T_{r,\lambda}^{-1} K(\bm{t})\|_{\caH^{\otimes r}} \\
    \le\  &
    \|T_r^{1/2} T_{r,\lambda}^{-1/2}\|
    \|T_{r,\lambda}^{1/2} \hat{T}_{r,\lambda}^{-1} T_{r,\lambda}^{1/2}\|
    \|\Delta_r\|
    \|T_{r,\lambda}^{-1/2} K(\bm{t})\|_{\caH^{\otimes r}}.
\end{align*}
Since \(0\le T_r \le T_{r,\lambda}\),
\[
    \|T_r^{1/2} T_{r,\lambda}^{-1/2}\|\le 1.
\]
On the high-probability event from
\Cref{lem:appendix-operator-concentration},
\[
    \|T_{r,\lambda}^{1/2} \hat{T}_{r,\lambda}^{-1} T_{r,\lambda}^{1/2}\|
    \le 2.
\]
Hence, on that event,
\[
    \|T_r^{1/2} \hat{T}_{r,\lambda}^{-1}
    (\hat{T}_r-T_r)T_{r,\lambda}^{-1} K(\bm{t})\|_{\caH^{\otimes r}}
    \le
    2\|\Delta_r\|
    \|T_{r,\lambda}^{-1/2} K(\bm{t})\|_{\caH^{\otimes r}}.
\]
By \Cref{lem:appendix-phi-T},
\[
    \sup_{\bm{t}\in \mca I^r}
    \|T_{r,\lambda}^{-1/2} K(\bm{t})\|_{\caH^{\otimes r}}^2
    \le
    M^r \mcaN_1^{(r)}(\lambda),
\]
and
\[
    \sup_{\bm{t}\in \mca I^r}
    \|T_{r,\lambda}^{-1} K(\bm{t})\|_{L^2}^2
    \le
    M^r \mcaN_2^{(r)}(\lambda).
\]
Since \(v_r \to0\), \Cref{lem:appendix-operator-concentration} also gives
\[
    \|\Delta_r\|
    =
    o_\bbP(1),
\]
so with probability tending to one we may additionally assume
\(\|\Delta_r\|\le1\).  On that event,
\[
    \sup_{\bm{t}\in \mca I^r}
    \|T_r^{1/2} \hat{T}_{r,\lambda}^{-1}
    (\hat{T}_r-T_r)T_{r,\lambda}^{-1} K(\bm{t})\|_{\caH^{\otimes r}}
    \le
    2\sqrt{M^r \mcaN_1^{(r)}(\lambda)}.
\]
Since
\(\mcaN_1^{(r)}(\lambda)\asymp \mcaN_2^{(r)}(\lambda)\) by
\Cref{lem:appendix-effective-dimension}, this is bounded by
\(C\sqrt{M^r \mcaN_2^{(r)}(\lambda)}\) for all sufficiently small \(\lambda\).
This proves \cref{eq:phi-hatT-bound-main}.
\hfill\BlackBox\\[2mm]

\par\noindent{\bf Proof of \Cref{lem:variance-overlap-count}.}\par\noindent
Classify pairs \((\mathbf{j},\mathbf{k})\) by the overlap size
\(q=q(\mathbf{j},\mathbf{k})\).  For a fixed \(q\), the number of ordered
pairs \((\mathbf{j},\mathbf{k})\in I_m^r \times I_m^r\) with exactly \(q\)
common sample indices is
\[
    \binom{r}{q}^2 q! (m)_{2r-q}.
\]
Indeed, choose the \(q\) shared positions in \(\mathbf{j}\), choose the \(q\)
shared positions in \(\mathbf{k}\), match them by a bijection, and then choose
the remaining \(2r-q\) distinct sample indices in order.  For each such pair
there are \(\binom{q}{d}\) choices of the \(d\) common locations that enter the
inner sum.  Summing over \(q=d,\ldots,r\) gives
\cref{eq:variance-overlap-count}.
\hfill\BlackBox\\[2mm]
 \section{Table of Notation}
\label{app:notation}

Table~\ref{tab:notation} lists the symbols used most frequently in the paper.

{\small
\begin{longtable}{p{0.28\textwidth}p{0.62\textwidth}}
\caption{Frequently used notation.}
\label{tab:notation}\\
\toprule
Symbol & Meaning \\
\midrule
\endfirsthead
\caption[]{Frequently used notation (continued).}\\
\toprule
Symbol & Meaning \\
\midrule
\endhead
\midrule
\multicolumn{2}{r}{\emph{Continued on next page}}\\
\endfoot
\bottomrule
\endlastfoot
\(\mca{I},\rho\) & compact sampling space and design distribution \\
\(n,m,r,\lambda,\sigma^2\) & number of sample paths, number of observations per path, moment order, regularization parameter, and noise variance \\
\(X_i,t_{ij},\ep_{ij},y_{ij}\) & latent curve, design point, measurement noise, and observation \\
\(\mca{T}\) & collection of all design points \((t_{ij})_{1\le i\le n, 1\le j\le m}\) \\
\(\mu_r,\mu_0,\Sigma_r\) & \(r\)-th moment function, the convention \(\mu_0=1\), and the covariance kernel of the \(r\)-fold product process \\
\(\mca P_{r,s}(R)\) & uniform \([H]^s\)-moment class used for the minimax lower bound \\
\(I_m^r,\falling{m}{r}\) & ordered distinct coordinate set and its cardinality \\
\(Y_{i\bm{j}},t_{i\bm{j}}\) & product pseudo-response and corresponding \(r\)-tuple of design points \\
\(\caH,k_x,K(\bm{x}),\caH^{\otimes r}\) & RKHS, scalar feature \(k(x,\cdot)\), tensor feature \(k_{x_1} \otimes\cdots\otimes k_{x_r}\), and tensor-product RKHS \\
\([H]^s,([H]^s)^{\otimes r}\) & interpolation space and its tensor-product analogue \\
\(M_\alpha,\alpha_0\) & embedding norm of \([H]^\alpha\hookrightarrow L^\infty(\mca I,\rho)\) and embedding index \\
\((\lambda_j,e_j)\) & Mercer eigenpairs of \(T\), with eigenvalues repeated according to multiplicity \\
\((\theta_m,V_m,d_m,k_m)\) & distinct eigenvalues, associated eigenspaces, their dimensions, and spectral projection kernels \\
\(e_{\bm{i}},\Lambda_{\bm{i}},c_{\bm{i}}\) & tensor eigenfunction, tensor eigenvalue, and coefficient in the expansion of \(\mu_r\) \\
\(T,T_r,\hat{T}_r\) & base integral operator, tensor-product operator \(T_r=T^{\otimes r}\), and empirical tensor operator \\
\(T_{r,\lambda},\hat{T}_{r,\lambda}\) & shorthand for \(T_r+\lambda I\) and \(\hat{T}_r+\lambda I\) \\
\(\hat{\zeta}_r,\tilde{\zeta}_r\) & empirical and conditional-mean right-hand sides for KRR \\
\(\hat{\mu}_{r,\lambda},\tilde{\mu}_{r,\lambda}\) & KRR estimator and its conditional mean \\
\(\operatorname{MSE}_{r,\lambda}(\mca T)\) & conditional generalization error given the design points \\
\(\Var(\lambda),\Bias(\lambda)^2\) & conditional variance and conditional bias terms in the proof decomposition \\
\(B(\lambda;\mu_r)^2\) & deterministic squared bias scale \\
\(V_{\mathrm{curve}},V_{\mathrm{point}},V_{\mathrm{noise}}\) & deterministic variance coefficients in the main expansion, with prefactors \(1/n\), \(1/(mn)\), and \(\sigma^2/(mn)\), respectively \\
\(\hat{V}_d,\tilde{V}_d,V_d\) & variance contribution of order \(\sigma^{2d}\), with mixed and population versions used for the leading terms \\
\(\hat{G}_0,G_0,\hat{G}_1,G_1\) & empirical and population operators packaging the noise-free and first-noise trace terms \\
\(\lambda_j^{(r)},N_r(u)\) & non-increasing rearrangement and counting function of the tensor eigenvalues \(\Lambda_{\bm{i}}\) \\
\(\mcaN_p^{(r)}(\lambda)\) & tensor effective dimension \\
\(\Delta_r,v_r\) & normalized empirical-operator fluctuation and its scale \(\lambda^{-1/\beta}(\ln(1/\lambda))^r/(mn)\) \\
\end{longtable}
}


\begin{thebibliography}{36}
\providecommand{\natexlab}[1]{#1}
\providecommand{\url}[1]{\texttt{#1}}
\expandafter\ifx\csname urlstyle\endcsname\relax
  \providecommand{\doi}[1]{doi: #1}\else
  \providecommand{\doi}{doi: \begingroup \urlstyle{rm}\Url}\fi

\bibitem[Arcones(1995)]{arcones1995BernsteintypeInequalityUstatistics}
Miguel~A. Arcones.
\newblock A {{Bernstein-type}} inequality for {{U-statistics}} and
  {{U-processes}}.
\newblock \emph{Statistics \& Probability Letters}, 22\penalty0 (3):\penalty0
  239--247, February 1995.
\newblock \doi{10.1016/0167-7152(94)00072-G}.

\bibitem[Bauer et~al.(2007)Bauer, Pereverzev, and
  Rosasco]{bauer2007RegularizationAlgorithmsLearning}
Frank Bauer, Sergei Pereverzev, and Lorenzo Rosasco.
\newblock On regularization algorithms in learning theory.
\newblock \emph{Journal of Complexity}, 23\penalty0 (1):\penalty0 52--72,
  February 2007.
\newblock \doi{10.1016/j.jco.2006.07.001}.

\bibitem[Bordelon et~al.(2020)Bordelon, Canatar, and
  Pehlevan]{bordelon2020SpectrumDependentLearning}
Blake Bordelon, Abdulkadir Canatar, and Cengiz Pehlevan.
\newblock Spectrum {{Dependent Learning Curves}} in {{Kernel Regression}} and
  {{Wide Neural Networks}}.
\newblock In \emph{Proceedings of the 37th {{International Conference}} on
  {{Machine Learning}}}, pages 1024--1034. PMLR, November 2020.
\newblock URL \url{https://proceedings.mlr.press/v119/bordelon20a.html}.

\bibitem[Cai and Yuan(2010)]{cai2010NonparametricCovarianceFunction}
T~Tony Cai and Ming Yuan.
\newblock Nonparametric {{Covariance Function Estimation}} for {{Functional}}
  and {{Longitudinal Data}}.
\newblock Technical report, {University of Pennsylvania and Georgia Institute
  of Technology}, Atlanta, GA, 2010.

\bibitem[Cai and Yuan(2011)]{cai2011OptimalEstimationMean}
T.~Tony Cai and Ming Yuan.
\newblock Optimal estimation of the mean function based on discretely sampled
  functional data: {{Phase}} transition.
\newblock \emph{The Annals of Statistics}, 39\penalty0 (5), October 2011.
\newblock \doi{10.1214/11-AOS898}.

\bibitem[Caponnetto and De~Vito(2007)]{caponnetto2007OptimalRatesRegularized}
A.~Caponnetto and E.~De~Vito.
\newblock Optimal {{Rates}} for the {{Regularized Least-Squares Algorithm}}.
\newblock \emph{Foundations of Computational Mathematics}, 7\penalty0
  (3):\penalty0 331--368, July 2007.
\newblock \doi{10.1007/s10208-006-0196-8}.

\bibitem[Cui et~al.(2022)Cui, Loureiro, Krzakala, and
  Zdeborov{\'a}]{cui2022GeneralizationErrorRates}
Hugo Cui, Bruno Loureiro, Florent Krzakala, and Lenka Zdeborov{\'a}.
\newblock Generalization error rates in kernel regression: The crossover from
  the noiseless to noisy regime*.
\newblock \emph{Journal of Statistical Mechanics: Theory and Experiment},
  2022\penalty0 (11):\penalty0 114004, November 2022.
\newblock \doi{10.1088/1742-5468/ac9829}.

\bibitem[Dai and Xu(2013)]{dai2013ApproximationTheoryHarmonic}
Feng Dai and Yuan Xu.
\newblock \emph{Approximation {{Theory}} and {{Harmonic Analysis}} on
  {{Spheres}} and {{Balls}}}.
\newblock Springer {{Monographs}} in {{Mathematics}}. Springer New York, New
  York, NY, 2013.
\newblock \doi{10.1007/978-1-4614-6660-4}.

\bibitem[DLMF()]{NIST:DLMF}
DLMF.
\newblock {{{\emph{NIST}}}}{\emph{ digital library of mathematical functions}},
  March 2026.
\newblock URL \url{https://dlmf.nist.gov/}.

\bibitem[Fischer and Steinwart(2020)]{fischer2020SobolevNormLearning}
Simon Fischer and Ingo Steinwart.
\newblock Sobolev norm learning rates for regularized least-squares algorithms.
\newblock \emph{Journal of Machine Learning Research}, 21\penalty0
  (205):\penalty0 1--38, 2020.
\newblock URL \url{http://jmlr.org/papers/v21/19-734.html}.

\bibitem[Giraudo(2025)]{giraudo2025ExponentialInequalityHilbertvalued}
Davide Giraudo.
\newblock An exponential inequality for {{Hilbert-valued U}} -statistics of
  i.i.d. data.
\newblock \emph{Journal of Multivariate Analysis}, 207:\penalty0 105406, May
  2025.
\newblock \doi{10.1016/j.jmva.2025.105406}.

\bibitem[Gupta and Sriperumbudur(2026)]{gupta2026MinimaxOptimalEstimation}
Naveen Gupta and Bharath~K. Sriperumbudur.
\newblock Minimax {{Optimal Estimation}} of {{Mean}} and {{Covariance
  Functions}} with {{Spectral Regularization}}, March 2026.

\bibitem[Hall et~al.(2006)Hall, M{\"u}ller, and
  Wang]{hall2006PropertiesPrincipalComponent}
Peter Hall, Hans-Georg M{\"u}ller, and Jane-Ling Wang.
\newblock Properties of principal component methods for functional and
  longitudinal data analysis.
\newblock \emph{The Annals of Statistics}, 34\penalty0 (3), June 2006.
\newblock \doi{10.1214/009053606000000272}.

\bibitem[Hoeffding(1963)]{hoeffding1963ProbabilityInequalitiesSums}
Wassily Hoeffding.
\newblock Probability {{Inequalities}} for {{Sums}} of {{Bounded Random
  Variables}}.
\newblock \emph{Journal of the American Statistical Association}, 58\penalty0
  (301):\penalty0 13--30, March 1963.
\newblock \doi{10.1080/01621459.1963.10500830}.

\bibitem[Hsing and Eubank(2015)]{hsing2015TheoreticalFoundationsFunctional}
Tailen Hsing and Randall Eubank.
\newblock \emph{Theoretical {{Foundations}} of {{Functional Data Analysis}},
  with an {{Introduction}} to {{Linear Operators}}}.
\newblock Wiley {{Series}} in {{Probability}} and {{Statistics}}. Wiley, 1
  edition, May 2015.
\newblock \doi{10.1002/9781118762547}.

\bibitem[Li et~al.(2026)Li, Lindquist, Gunning, and
  Crainiceanu]{li2026FunctionalMomentsRegression}
Mingyuan Li, Martin~A. Lindquist, Edward Gunning, and Ciprian Crainiceanu.
\newblock Functional {{Moments Regression}}, 2026.

\bibitem[Li and Hsing(2010)]{li2010UniformConvergenceRates}
Yehua Li and Tailen Hsing.
\newblock Uniform convergence rates for nonparametric regression and principal
  component analysis in functional/longitudinal data.
\newblock \emph{The Annals of Statistics}, 38\penalty0 (6), December 2010.
\newblock \doi{10.1214/10-AOS813}.

\bibitem[Li et~al.(2023)Li, Zhang, and Lin]{li2023AsymptoticLearningCurves}
Yicheng Li, Haobo Zhang, and Qian Lin.
\newblock On the {{Asymptotic Learning Curves}} of {{Kernel Ridge Regression}}
  under {{Power-law Decay}}.
\newblock In \emph{Thirty-Seventh {{Conference}} on {{Neural Information
  Processing Systems}}}, 2023.
\newblock URL \url{https://openreview.net/forum?id=E4P5kVSKlT}.

\bibitem[Li et~al.(2024{\natexlab{a}})Li, Gan, Shi, and
  Lin]{li2024GeneralizationErrorCurves}
Yicheng Li, Weiye Gan, Zuoqiang Shi, and Qian Lin.
\newblock Generalization {{Error Curves}} for {{Analytic Spectral Algorithms}}
  under {{Power-law Decay}}, 2024{\natexlab{a}}.

\bibitem[Li et~al.(2024{\natexlab{b}})Li, Zhang, and
  Lin]{li2024SaturationEffectKernel}
Yicheng Li, Haobo Zhang, and Qian Lin.
\newblock On the {{Saturation Effect}} of {{Kernel Ridge Regression}},
  2024{\natexlab{b}}.

\bibitem[Lin et~al.(2020)Lin, Rudi, Rosasco, and
  Cevher]{lin2020OptimalRatesSpectral}
Junhong Lin, Alessandro Rudi, Lorenzo Rosasco, and Volkan Cevher.
\newblock Optimal rates for spectral algorithms with least-squares regression
  over {{Hilbert}} spaces.
\newblock \emph{Applied and Computational Harmonic Analysis}, 48\penalty0
  (3):\penalty0 868--890, May 2020.
\newblock \doi{10.1016/j.acha.2018.09.009}.

\bibitem[Minsker(2017)]{minsker2017ExtensionsBernsteinsInequality}
Stanislav Minsker.
\newblock On some extensions of {{Bernstein}}'s inequality for self-adjoint
  operators.
\newblock \emph{Statistics \& Probability Letters}, 127:\penalty0 111--119,
  August 2017.
\newblock \doi{10.1016/j.spl.2017.03.020}.

\bibitem[Montgomery and
  Vaughan(2006)]{montgomery2006MultiplicativeNumberTheory}
Hugh~L. Montgomery and Robert~C. Vaughan.
\newblock \emph{Multiplicative {{Number Theory I}}: {{Classical Theory}}}.
\newblock Cambridge University Press, 1 edition, November 2006.
\newblock \doi{10.1017/CBO9780511618314}.

\bibitem[Ramsay and Silverman(1997)]{ramsay1997FunctionalDataAnalysis}
James~O Ramsay and Bernard~W Silverman.
\newblock \emph{Functional Data Analysis}.
\newblock Springer, 1997.

\bibitem[Rice and Silverman(1991)]{rice1991EstimatingMeanCovariance}
John~A. Rice and B.~W. Silverman.
\newblock Estimating the {{Mean}} and {{Covariance Structure Nonparametrically
  When}} the {{Data}} are {{Curves}}.
\newblock \emph{Journal of the Royal Statistical Society Series B: Statistical
  Methodology}, 53\penalty0 (1):\penalty0 233--243, September 1991.
\newblock \doi{10.1111/j.2517-6161.1991.tb01821.x}.

\bibitem[Sriperumbudur and Sterge(2022)]{sriperumbudur2022ApproximateKernelPCA}
Bharath~K. Sriperumbudur and Nicholas Sterge.
\newblock Approximate kernel {{PCA}}: {{Computational}} versus statistical
  trade-off.
\newblock \emph{The Annals of Statistics}, 50\penalty0 (5), October 2022.
\newblock \doi{10.1214/22-AOS2204}.

\bibitem[Steinwart and Christmann(2008)]{steinwart2008SupportVectorMachines}
Ingo Steinwart and Andreas Christmann.
\newblock \emph{Support {{Vector Machines}}}.
\newblock Information {{Science}} and {{Statistics}}. Springer New York, New
  York, NY, 2008.
\newblock \doi{10.1007/978-0-387-77242-4}.

\bibitem[Steinwart and Scovel(2012)]{steinwart2012MercersTheoremGeneral}
Ingo Steinwart and Clint Scovel.
\newblock Mercer's {{Theorem}} on {{General Domains}}: {{On}} the
  {{Interaction}} between {{Measures}}, {{Kernels}}, and {{RKHSs}}.
\newblock \emph{Constructive Approximation}, 35\penalty0 (3):\penalty0
  363--417, June 2012.
\newblock \doi{10.1007/s00365-012-9153-3}.

\bibitem[Terada and Yara(2026)]{terada2026TheoryNonparametricCovariance}
Yoshikazu Terada and Atsutomo Yara.
\newblock A {{Theory}} of {{Nonparametric Covariance Function Estimation}} for
  {{Discretely Observed Data}}, 2026.

\bibitem[Tropp(2015)]{tropp2015IntroductionMatrixConcentration}
Joel~A. Tropp.
\newblock An {{Introduction}} to {{Matrix Concentration Inequalities}}, 2015.

\bibitem[Wang et~al.(2016)Wang, Chiou, and
  M{\"u}ller]{wang2016FunctionalDataAnalysis}
Jane-Ling Wang, Jeng-Min Chiou, and Hans-Georg M{\"u}ller.
\newblock Functional {{Data Analysis}}.
\newblock \emph{Annual Review of Statistics and Its Application}, 3\penalty0
  (1):\penalty0 257--295, June 2016.
\newblock \doi{10.1146/annurev-statistics-041715-033624}.

\bibitem[Wang et~al.(2022)Wang, Wong, and
  Zhang]{wang2022LowRankCovarianceFunction}
Jiayi Wang, Raymond K.~W. Wong, and Xiaoke Zhang.
\newblock Low-{{Rank Covariance Function Estimation}} for {{Multidimensional
  Functional Data}}.
\newblock \emph{Journal of the American Statistical Association}, 117\penalty0
  (538):\penalty0 809--822, April 2022.
\newblock \doi{10.1080/01621459.2020.1820344}.

\bibitem[Wong and
  Zhang(2019)]{wong2019NonparametricOperatorregularizedCovariance}
Raymond~K.W. Wong and Xiaoke Zhang.
\newblock Nonparametric operator-regularized covariance function estimation for
  functional data.
\newblock \emph{Computational Statistics \& Data Analysis}, 131:\penalty0
  131--144, March 2019.
\newblock \doi{10.1016/j.csda.2018.05.013}.

\bibitem[Yao et~al.(2005)Yao, M{\"u}ller, and
  Wang]{yao2005FunctionalDataAnalysis}
Fang Yao, Hans-Georg M{\"u}ller, and Jane-Ling Wang.
\newblock Functional {{Data Analysis}} for {{Sparse Longitudinal Data}}.
\newblock \emph{Journal of the American Statistical Association}, 100\penalty0
  (470):\penalty0 577--590, June 2005.
\newblock \doi{10.1198/016214504000001745}.

\bibitem[Zhang et~al.(2024)Zhang, Li, and
  Lin]{zhang2024OptimalityMisspecifiedSpectral}
Haobo Zhang, Yicheng Li, and Qian Lin.
\newblock On the optimality of misspecified spectral algorithms.
\newblock \emph{Journal of Machine Learning Research}, 25\penalty0
  (188):\penalty0 1--50, 2024.
\newblock URL \url{http://jmlr.org/papers/v25/23-0383.html}.

\bibitem[Zhang and Wang(2016)]{zhang2016SparseDenseFunctional}
Xiaoke Zhang and Jane-Ling Wang.
\newblock From sparse to dense functional data and beyond.
\newblock \emph{The Annals of Statistics}, 44\penalty0 (5), October 2016.
\newblock \doi{10.1214/16-AOS1446}.

\end{thebibliography}
\end{document}